\theoremstyle{plain}
\begin{document}

\def\a{\alpha} 
 \def\b{\beta}
 \def\e{\epsilon}
 \def\d{\delta}
  \def\D{\Delta}
 \def\c{\chi}
 \def\k{\kappa}
 \def\g{\gamma}
 \def\t{\tau}
\def\ti{\tilde}
 \def\N{\mathbb N}
 \def\Q{\mathbb Q}
 \def\Z{\mathbb Z}
 \def\C{\mathbb C}
 \def\F{\mathbb F}
 \def\ovF{\overline\F}
 \def\bfN{\mathbf N}
 \def\cG{\mathcal G}
 \def\cT{\mathcal T}
 \def\cX{\mathcal X}
 \def\cY{\mathcal Y}
 \def\cC{\mathcal C}
 \def\cD{\mathcal D}
 \def\cZ{\mathcal Z}
 \def\cO{\mathcal O}
 \def\cW{\mathcal W}
 \def\cL{\mathcal L}
 \def\bfC{\mathbf C}
 \def\bfZ{\mathbf Z}
 \def\bfO{\mathbf O}
 \def\G{\Gamma}
 \def\go{\rightarrow}
 \def\do{\downarrow}
 \def\ra{\rangle}
 \def\la{\langle}
 \def\fix{{\rm fix}}
 \def\ind{{\rm ind}}
 \def\rfix{{\rm rfix}}
 \def\diam{{\rm diam}}
 \def\uni{{\rm uni}}
 \def\diag{{\rm diag}}
 \def\Irr{{\rm Irr}}
 \def\Syl{{\rm Syl}}
 \def\Gal{{\rm Gal}}
 \def\Tr{{\rm Tr}}
 \def\M{{\cal M}}
 \def\cE{{\mathcal E}}
\def\td{\tilde\delta}
\def\tx{\tilde\xi} 
\def\DC{D^\circ}
 
\def\Ker{{\rm Ker}}
 \def\rank{{\rm rank}}
 \def\soc{{\rm soc}}
 \def\Cl{{\rm Cl}}
 \def\A{{\sf A}}
 \def\sP{{\sf P}}
 \def\sQ{{\sf Q}}
 \def\SSS{{\sf S}}
  \def\SQ{{\SSS^2}}
 \def\St{{\sf {St}}}
 \def\p{\ell}
 \def\ps{\ell^*}
 \def\SC{{\rm sc}}
 \def\supp{{\rm supp}}
  \def\cR{{\mathcal R}}
 \newcommand{\tw}[1]{{}^#1}
 
 \def\Sym{{\rm Sym}}
 \def\PSL{{\rm PSL}}
 \def\SL{{\rm SL}}
 \def\Sp{{\rm Sp}}
 \def\GL{{\rm GL}}
 \def\SU{{\rm SU}}
 \def\GU{{\rm GU}}
 \def\SO{{\rm SO}}
 \def\PO{{\rm P}\Omega}
 \def\Spin{{\rm Spin}}
 \def\PSp{{\rm PSp}}
 \def\PSU{{\rm PSU}}
 \def\PGL{{\rm PGL}}
 \def\PGU{{\rm PGU}}
 \def\Iso{{\rm Iso}}
 \def\GO{{\rm GO}}
 \def\Ext{{\rm Ext}}
 \def\E{{\cal E}}
 \def\l{\lambda}
 \def\ve{\varepsilon}
 \def\Lie{\rm Lie}
 \def\s{\sigma}
 \def\O{\Omega}
 \def\o{\omega}
 \def\ot{\otimes}
 \def\op{\oplus}
 \def\oc{\overline{\chi}}
 \def\pf{\noindent {\bf Proof.$\;$ }}
 \def\Proof{{\it Proof. }$\;\;$}
 \def\no{\noindent}
 %{\quad\sqbox\vspace{1mm}\par}
\def\hal{\unskip\nobreak\hfil\penalty50\hskip10pt\hbox{}\nobreak
 \hfill\vrule height 5pt width 6pt depth 1pt\par\vskip 2mm}

 \renewcommand{\thefootnote}{}

\newtheorem{theorem}{Theorem}
 \newtheorem{thm}{Theorem}[section]
 \newtheorem{prop}[thm]{Proposition}
 \newtheorem{lem}[thm]{Lemma}
 \newtheorem{lemma}[thm]{Lemma}
 \newtheorem{defn}[thm]{Definition}
 \newtheorem{cor}[thm]{Corollary}
 \newtheorem{coroll}[theorem]{Corollary}
 \newtheorem{rem}[thm]{Remark}
 \newtheorem{exa}[thm]{Example}
 \newtheorem{cla}[thm]{Claim}

\numberwithin{equation}{section}
\parskip 1mm

\title{Surjective word maps and Burnside's $p^aq^b$ theorem}

\author[Guralnick, Liebeck, O'Brien, Shalev, and Tiep]
{Robert M. Guralnick \and Martin W. Liebeck \and E.A. O'Brien \and Aner Shalev \and Pham Huu Tiep }
\date{}

\thanks{The first author was partially supported by  NSF
grants DMS-1001962, DMS-1302886, and the Simons Foundation Fellowship 
224965.  He also thanks the Institute for Advanced Study for its support.
The third author was partially supported by the Marsden
Fund of New Zealand via grant UOA 105.
The fourth author was supported by ERC Advanced Grant 247034,
ISF grant 1117/13 and the Vinik Chair of Mathematics which he holds.
The fifth author was partially supported by the NSF grant DMS-1201374, the Simons Foundation
Fellowship 305247, the Mathematisches Forschungsinstitut Oberwolfach, and the EPSRC. 
Parts of the paper were written while the fifth author visited the Department of Mathematics, Harvard 
University, and Imperial College, London. He thanks Harvard University and 
Imperial College for generous hospitality and stimulating environments.}
\thanks{The authors thank Frank L\"ubeck for providing them with the proof of Lemma \ref{main2-slu34}.}

\begin{abstract} 
We prove surjectivity of certain word maps on finite non-abelian simple groups. 
More precisely, we prove the following:
if $N$ is a product of two prime powers, 
then the word
map $(x,y) \mapsto x^Ny^N$ is surjective on every finite non-abelian simple group; 
if $N$ is an odd integer, then the word map $(x,y,z) \mapsto x^Ny^Nz^N$ is surjective 
on every finite quasisimple group. 
These generalize classical theorems of Burnside and Feit-Thompson. 
We also prove asymptotic results about the surjectivity of the 
word map $(x,y) \mapsto x^Ny^N$ that depend on the number of prime factors of
the integer $N$.   
\end{abstract}

\maketitle

 % \footnotetext{2010 {\it Mathematics Subject Classification: }
 % }

\tableofcontents

\section{Introduction}

The theory of word maps on finite non-abelian 
simple groups -- that is, maps of the form $(x_1,\ldots ,x_k)
\mapsto w(x_1,\ldots ,x_k)$ for some word $w$ in the free group $F_k$ of
rank $k$ -- has attracted much attention. It was
shown in \cite[1.6]{LS} that for a given nontrivial word $w$, every
element of every sufficiently large finite simple group $G$ can be
expressed as a product of $C(w)$ values of $w$ in $G$, where $C(w)$
depends only on $w$; and this has been improved to $C(w)
= 2$ in \cite{LarSh, LarShT, Sh}. Improving $C(w)$ to 1 is not possible in
general, as is shown by power words $x_1^n$, which cannot be
surjective on any finite group of order non-coprime to $n$. 

Certain word maps are surjective on all groups -- namely, those in cosets of 
the form $x_1^{e_1} \ldots x_k^{e_k}F_k'$ where the $e_i$ are integers with 
${\rm gcd}(e_1,\ldots,e_k)=1$
(see \cite[3.1.1]{S}). 
The word maps for a small number of other words have been shown to be 
surjective on all finite simple groups.
These include the commutator word $[x_1,x_2]$,
whose surjectivity was conjectured by Ore in 1951 and proved in 
2010 (see \cite{ore} and the references therein).

The main result of this paper is the following.

\begin{theorem}\label{main1}
Let $p,q$ be primes, let $a,b$ be non-negative integers, and let $N=p^aq^b$. 
The word map $(x,y) \mapsto x^Ny^N$ is surjective on all finite (non-abelian) simple groups.
\end{theorem}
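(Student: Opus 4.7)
The plan is to show that the image set $G^N := \{g^N : g \in G\}$ satisfies $G^N \cdot G^N = G$ for every finite non-abelian simple group $G$. A first key observation is that $G^N$ is a conjugation-invariant subset of $G$ containing every element of order coprime to $N$: if $\gcd(|g|,N)=1$ and $kN \equiv 1 \pmod{|g|}$, then $g = (g^k)^N$. By Burnside's classical $p^aq^b$ theorem, $|G|$ is divisible by at least three distinct primes, so some prime $r \notin \{p,q\}$ divides $|G|$, guaranteeing a plentiful supply of nontrivial $N'$-elements to work with.

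The main strategy is to find, for each simple group $G$, two conjugacy classes $C_1, C_2$ of $N'$-elements whose product satisfies $C_1 \cdot C_2 = G$. By Frobenius's class-multiplication formula, this reduces to verifying the character-sum bound
\[
\sum_{1 \ne \chi \in \Irr(G)} \frac{|\chi(C_1)||\chi(C_2)|}{\chi(1)} < 1,
\]
which then forces every conjugacy class of $G$ to appear in $C_1 \cdot C_2$. The search for suitable $C_1, C_2$ and the verification of this inequality must then be carried out family by family through the classification of finite simple groups.

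For alternating groups $A_n$ with $n \ge 5$, I would select two classes whose cycle types consist only of parts coprime to $N$ (which exist in abundance by elementary number theory for $n$ not too small) and apply Fomin--Lulov style character bounds together with the Larsen--Shalev machinery. For finite simple groups of Lie type in defining characteristic $\ell$, there are two regimes. When $\ell \notin \{p,q\}$, every unipotent element has $\ell$-power order and hence lies in $G^N$, so I would use two well-chosen unipotent classes (for instance combinations of regular and subregular, or long-root classes) and bound the character sum via Steinberg-character estimates, Deligne--Lusztig theory, and fixed-point-ratio bounds. When $\ell \in \{p,q\}$, unipotent classes are no longer of $N'$-order, and one must instead rely on semisimple classes: regular semisimple elements lying in maximal tori whose orders avoid both $p$ and $q$ supply such classes, and Deligne--Lusztig character formulas control the resulting sum. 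Sporadic simple groups are finite in number and can be handled by direct inspection of character tables, with computer assistance where needed.

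The hardest step, I expect, is the treatment of groups of Lie type of small rank over small fields when the defining characteristic lies in $\{p,q\}$. In this regime $G$ has comparatively few $N'$-classes, their sizes are small, and the character-ratio bound becomes tight or fails for certain small-degree representations such as the Steinberg, Weil, or cuspidal characters; finding even a single pair $(C_1, C_2)$ for which $C_1 \cdot C_2 = G$ may be obstructed on genuinely structural grounds. These boundary cases will almost certainly require delicate ad hoc analysis and in places computer-assisted verification, consistent with the authors' acknowledgment of L\"ubeck's contribution to Lemma \ref{main2-slu34}.
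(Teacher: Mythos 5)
Your skeleton — every element of order prime to $N$ is an $N$th power, so try to write each $g$ as a product of elements from two $N'$-classes and verify this by Frobenius character sums, running through the classification — is indeed the frame of the paper's proof. But two things go wrong. First, your stated criterion drops the factor $|\chi(g)|$: for a fixed $g$ the Frobenius count is proportional to $1+\sum_{\chi\neq 1_G}\chi(s)\chi(t)\oc(g)/\chi(1)$, so your inequality $\sum_{\chi\ne 1_G}|\chi(s)||\chi(t)|/\chi(1)<1$ only yields $g\in s^G\cdot t^G$ if one also knows $|\chi(g)|\le 1$, which is false in general; the correct sufficient condition is $\sum_{\chi\neq 1_G}|\chi(s)\chi(t)\chi(g)|/\chi(1)<1$ (Lemma \ref{basic}), and controlling $|\chi(g)|$ is precisely where the work lies.

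Second, and this is the genuine gap: your plan presumes that when the defining characteristic lies in $\{p,q\}$ one can always choose a single, $g$-independent pair of regular semisimple $N'$-classes in tori of order prime to $N$ and push the bound through, with only ``small rank over small fields'' as problem cases. That works exactly when the second prime avoids the special primes $\cR(G)$ of Theorem \ref{prime1} (Corollary \ref{prime2}(i)); but when $N=\ell^a t^b$ with $\ell$ the defining characteristic and $t\in\cR(G)$ — e.g.\ $t$ a primitive prime divisor attached to the very tori one would like to use — the pairs of weakly orthogonal tori on which only $1_G$ and $\St$ survive are no longer available, many low-degree characters (Weil and small unipotent characters) are nonzero on any admissible pair, and elements $g$ with huge centralizers (transvection-like elements, or elements acting as a scalar on a large subspace) defeat any uniform bound. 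This happens in unbounded rank over arbitrary fields, not just in boundary cases. The bulk of the paper is devoted to exactly this situation, via mechanisms absent from your outline: the reduction to \emph{unbreakable} elements with explicit centralizer bounds (Section 3), the inductive conditions $\sP(N)$, $\sP_u(N)$ with $g$-dependent choices of the second class (matching $\det(g)$ or the spinor norm), delicate Weil-character estimates, the proportion-of-$\{p,t\}$-singular-elements argument (Corollary \ref{prime2}(ii), Proposition \ref{ratio}), the reality/two-$2$-element argument (Corollary \ref{main-real}), and substantial computer verification of base cases. Without these (or substitutes), the proposal does not get past the critical case. Smaller divergences: for $\A_n$ the paper bypasses character bounds entirely using Bertram's theorem on products of two $\ell$-cycles, and for defining characteristic outside $\{p,q\}$ it simply quotes Ellers--Gordeev rather than estimating products of unipotent classes.
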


This result generalizes various theorems.

First, it implies the classical Burnside $p^aq^b$-theorem stating that groups 
of this order are soluble. Indeed, if $G$ is a non-soluble group of order
$N = p^aq^b$, then $G$ has a non-abelian composition factor $S$ whose order
divides $N$. Thus $S$ is a (non-abelian) finite simple group satisfying
the identity $x^N=1$, so the word map $x^Ny^N$ on $S$ has the trivial
image $\{ 1 \}$, contradicting Theorem \ref{main1}.

Theorem \ref{main1} also implies the surjectivity of $x^2y^2$ and more
generally of the words $x^{p^a}y^{p^a}$ (for a prime $p$), as
established in \cite{GM, LOST}. 

In \cite[Cor.\ 1.5]{GM} it is shown that $x^{6^a}y^{6^a}$ is surjective on all
(non-abelian) finite simple groups, again a particular case
of Theorem \ref{main1}.

This theorem is best possible in the sense that it cannot be 
extended to the case where $N$ is a product of three or more prime powers, 
since such a number can be the exponent of a simple group. Indeed, the
smallest example is that of $\A_5$.

If $N_1, N_2$ are positive integers such that $N_1N_2$ is 
divisible by at most two primes, then $x^{N_1}y^{N_2}$ is surjective on all 
(non-abelian) finite simple groups, since 
$(x^{N_2})^{N_1}(y^{N_1})^{N_2} = x^{N_1N_2}y^{N_1N_2}$ is surjective
by Theorem \ref{main1}. 

%\vspace{4mm} 
But some more general questions, including the following, have a negative answer. 
 If $N$ is not divisible by the exponent of a finite simple group $G$, is $x^Ny^N$ surjective on $G$? If $N$ is odd, is 
$x^Ny^N$ surjective on all finite non-abelian simple groups? If $N = p^aq^b$ for some primes $p,q$,
is $x^Ny^N$ surjective on all finite quasisimple groups, or does it hit at least all non-central elements
of every quasisimple group? See Remark \ref{general}.

However, we prove the following result which generalizes the celebrated Feit-Thompson theorem:

\begin{theorem}\label{main2}
Let $N$ be an odd positive integer. The word map $(x,y,z) \mapsto x^Ny^Nz^N$ is surjective on all finite quasisimple groups. In fact, every element
of every finite quasisimple group is a product of three $2$-elements.
\end{theorem}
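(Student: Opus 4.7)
The plan is first to observe that the second sentence of Theorem~\ref{main2} implies the first. If $N$ is odd and $t$ is a $2$-element, then $\gcd(N,|\langle t\rangle|)=1$, so $x\mapsto x^N$ is a bijection on $\langle t\rangle$ and $t=s^N$ for some (necessarily $2$-element) $s\in\langle t\rangle$. Applying this to a decomposition $g=t_1t_2t_3$ yields $g=s_1^Ns_2^Ns_3^N$. Thus the heart of the matter is to show that every element of every finite quasisimple group $G$ is a product of three $2$-elements. The natural route is to locate, in each $G$, a single conjugacy class $C$ of $2$-elements such that $C\cdot C\cdot C=G$; by Frobenius' formula this is equivalent to
\[
\Bigl|\sum_{\chi\ne 1_G}\frac{\chi(c)^3\,\overline{\chi(g)}}{\chi(1)}\Bigr|<1\quad\text{for every }g\in G,
\]
which reduces to character-value estimates of the form $|\chi(c)|\le\chi(1)^\delta$ with $\delta$ suitably below $1/3$, combined with convergence of a Witten-zeta-type sum $\sum_{\chi\ne 1_G}\chi(1)^{-s}$ for small $s>0$.

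Invoking the classification of finite simple groups, I would treat the quasisimple covers family by family. For the sporadic groups this is a finite check from their character tables. For $G$ an alternating group or its double cover, I would argue combinatorially, exploiting the abundant supply of $2$-power-order permutations (products of disjoint $2^k$-cycles) together with cycle-type arithmetic to write every element as a product of three such. For $G$ of Lie type over $\F_q$ with $q=p^f$, the strategy splits by characteristic. When $p=2$ the unipotent elements are automatically $2$-elements, and I would take $C$ to be a carefully chosen unipotent class (e.g.\ regular or subregular), using Gluck-type bounds and the unipotent-character estimates of Liebeck--Shalev and Larsen--Shalev--Tiep to control $|\chi(c)|/\chi(1)$. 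When $p$ is odd every $2$-element is semisimple, so I would take $C$ to be a regular semisimple $2$-element whose centralizer is a small torus, invoking Deligne--Lusztig theory for the requisite bounds.

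The hard part will be the small cases: low-rank groups of Lie type over small fields, such as $\PSL_2(q)$, $\PSU_3(q)$, $\PSp_4(q)$, the Suzuki groups, the small Ree groups $\tw2G_2(q)$, and certain small-rank exceptional types, for which the generic character estimates are too weak to conclude. Each such group requires a tailored, often computer-assisted analysis of its explicit character table; the cited lemma of L\"ubeck (Lemma~\ref{main2-slu34}) is emblematic of the bespoke computations required. A further subtlety is that a quasisimple group $G$ may have $Z(G)$ of order divisible by an odd prime, in which case a decomposition in $S=G/Z(G)$ does not automatically lift (a central element of odd order is not itself a $2$-element); I would address this by working directly in $G$ throughout, choosing $C$ so that $C\cdot C\cdot C$ already covers $Z(G)$.
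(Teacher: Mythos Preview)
Your strategy is essentially that of the paper: reduce to the three-$2$-elements statement, split via CFSG, handle sporadics by direct computation, alternating groups combinatorially, and Lie-type groups via the Frobenius formula applied to a suitable $2$-element class. The paper takes shortcuts you do not mention: in characteristic~$2$ it simply invokes the Ellers--Gordeev theorem (every non-central element of $\cG^F$ is already a product of two unipotent, hence $2$-, elements; see Lemma~\ref{lie2}), and for $\A_n$ an explicit factorisation of odd cycles shows two $2$-elements suffice (Lemma~\ref{alt-odd}). The lifting issue is handled not by covering $\bfZ(G)$ with $C^3$ but by a clean reduction (Lemma~\ref{schur}) to the $2'$-universal cover, after which any leftover central factor is itself a $2$-element.

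The substantive point you underestimate is the \emph{existence}, in odd characteristic, of a regular semisimple $2$-element lying in the commutator subgroup (determinant~$1$, or spinor norm~$+1$). This is not automatic and is the technical heart of the proof: the paper devotes \S7.2 (Lemmas~\ref{reg-slu}--\ref{reg-so}) to building such elements by binary decomposition of $n$ and concatenation of Singer-type $2$-elements in $2$-power-dimensional blocks, and \S7.4 to the analogous construction in exceptional groups, where one works inside subsystem subgroups, computes fixed spaces on the adjoint module, and settles for almost-regular $2$-elements (Lemma~\ref{2eltcent}). Your generic $|\chi(c)|\le\chi(1)^\delta$ plus Witten-zeta approach is also not how the paper proceeds: such uniform exponent bounds are unavailable in the low-rank range. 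Instead the paper splits $\Irr(G)$ at a threshold~$D$, controls the tail by $\sum_{\chi(1)\ge D}|\chi(s)|^3/\chi(1)\le|\bfC_G(s)|^{3/2}/D$ via Cauchy--Schwarz, and treats the finitely many characters of degree below~$D$ (Weil characters and their relatives) by the Tiep--Zalesskii low-degree classification together with explicit eigenvalue-multiplicity bounds on $s$ coming from the construction in \S7.2. For certain small-field unitary and symplectic cases (e.g.\ $\SU_n(3)$, $\Sp_{2n}(3)$) even this is not enough and the paper falls back on the breakable/unbreakable dichotomy, varying the third class with $\det(g)$.
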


As mentioned above, this result is best possible in the sense that
it does not hold for $x^Ny^N$; it also implies the surjectivity
of $x^{N_1}y^{N_2}z^{N_3}$ for odd numbers $N_1, N_2, N_3$.

A key ingredient of our proof of Theorem \ref{main2} is the construction of certain $2$-elements in simple groups $G$ of 
Lie type in odd characteristic that are regular if $G$ is classical (see \S7.2) and almost regular if $G$ is exceptional
(see \S7.4). This construction may be useful in other situations. There are other results of the same 
flavor as the second statement of Theorem \ref{main2}, such as \cite[Theorem 3.8]{GT3} where $p$-elements are
considered instead of $2$-elements. There is also considerable literature on the case of involutions, 
see e.g.\ \cite{Mac} and the references therein. 
These imply results like Theorem \ref{main2} with longer products
$x_1^N x_2^N \ldots x_t^N$, where $N$ is not divisible by the exponent of the simple group in question, see for example \ \cite[Corollary 3.9]{GT3}.

\smallskip 
Recall that the main results of \cite{LarSh, LarShT} assert that, given two non-trivial words $w_1$ and $w_2$, the product 
$w_1w_2$ is surjective on all finite non-abelian simple groups of {\it sufficiently large} order (depending on $w_1$ and $w_2$).   
In particular, once we fix a positive integer $N$, the word $x^Ny^N$ is surjective on all sufficiently large simple groups.
Theorem \ref{main1} (and \ref{main2}) shows that, for all $N$ of the prescribed form, the word map 
$x^Ny^N$ (respectively $x^Ny^Nz^N$) is in fact surjective on {\it all} simple groups (respectively 
quasisimple groups).
 
As mentioned above, one cannot generalize Theorem \ref{main1} for products of more than two prime powers. However,
we prove results of that flavor by imposing asymptotic conditions on the simple groups. To formulate   
these results, define $\pi(N) = k$ and 
$\O(N) = \sum^k_{i=1}\alpha_i$ if the integer $N$ has the prime factorization 
$N = \prod^k_{i=1}p_i^{\alpha_i}$ (with $p_1 < \ldots < p_k$ and $\alpha_i > 0$). 

\begin{theorem}\label{main3}
Given a positive integer $k$, there is some $f(k)$ such that for all positive integers $N$ with $\pi(N) \leq k$,
the word map $(x,y) \mapsto x^Ny^N$ is surjective on all finite simple groups $S$, 
where $S$ is either an alternating group $\A_n$ with $n \geq f(k)$, or a 
simple Lie-type group of rank $\geq f(k)$ and defined over $\F_q$ with $q \geq f(k)$.
\end{theorem}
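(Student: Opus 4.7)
The plan is to reduce surjectivity of the word map to the existence, in each $S$ under consideration, of a conjugacy class $C$ whose element orders are coprime to $N$ and which satisfies $C \cdot C = S$. The reduction is elementary: if $x \in S$ has order $m$ coprime to $N$, then $x = (x^j)^N$ where $jN \equiv 1 \pmod{m}$, so $x$ lies in $S^N := \{s^N : s \in S\}$. Hence any such class $C$ is contained in $S^N$, and $C \cdot C = S$ forces $S^N \cdot S^N = S$, i.e.\ $(x,y) \mapsto x^Ny^N$ is surjective. (The cases $\pi(N) \le 2$ are already covered by Theorem \ref{main1}, so we may restrict attention to $\pi(N) \ge 3$.)

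To locate such a class: for $S = \A_n$ with $n \ge f(k)$, since $\pi(N) \le k$, prime-counting estimates allow us to pick an odd prime $p$ with $p \nmid N$, $p \le n-2$, and (say) $p > n/2$ once $n$ is sufficiently large in terms of $k$; take $C$ to be the single $\A_n$-class of $p$-cycles (single because $p \le n-2$). For $S$ of Lie type of rank $r \ge f(k)$ over $\F_q$ with $q \ge f(k)$, fix a maximal torus $T \le S$ of Coxeter (or similarly generic) type. A Zsygmondy-type argument shows that once $r$ and $q$ are large enough in terms of $k$, $|T|$ has more than $k$ distinct prime divisors, so some prime $p \mid |T|$ satisfies $p \nmid N$; let $C$ be the conjugacy class of an element $x \in T$ of order $p$, which for generic $p$ is regular semisimple with $C_S(x) = T$.

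To prove $C \cdot C = S$, apply Frobenius' character formula: $C \cdot C = S$ iff
$$\sum_{\chi \in \Irr(S)} \frac{\chi(c)^2\, \overline{\chi(g)}}{\chi(1)} > 0 \text{ for all } g \in S,$$
where $c \in C$. The trivial character contributes $1$, so it suffices to bound $\sum_{\chi \ne 1} |\chi(c)|^2 |\chi(g)| / \chi(1) < 1$ uniformly in $g \ne 1$. Combine character-ratio bounds $|\chi(c)|/\chi(1),\, |\chi(g)|/\chi(1) \le \chi(1)^{-\sigma}$ (via Murnaghan--Nakayama / Fomin--Lulov-type estimates for $\A_n$ with $c$ a $p$-cycle, and via the Deligne--Lusztig-based bounds of \cite{LarSh, LarShT} for regular semisimple elements in Lie type) with Witten-zeta estimates $\sum_{\chi \ne 1} \chi(1)^{-s} \to 0$ as $n \to \infty$, or as $r, q \to \infty$, for any fixed $s > 0$. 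Together these force the sum below $1$ once the group parameters exceed a threshold $f(k)$.

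The main obstacle is coordinating the $k$-dependent choices uniformly. When $N$ is divisible by all small primes, the prime $p$ we are forced to use lies close to $n$ in the alternating case (or corresponds to a large-degree cyclotomic divisor of $|T|$ in the Lie-type case), and the character-ratio exponent $\sigma$ typically degrades as $p$ grows; one must verify that the rate of Witten-zeta convergence compensates for this degradation as a function of $k$. In the Lie-type case there is the additional delicate task of arranging that the chosen element of order $p$ is genuinely regular semisimple with torus centralizer of the expected generic type --- requiring control over which small primes can divide $\Phi_d(q)$ for small $d$ --- so that the sharp Deligne--Lusztig character bounds really apply.
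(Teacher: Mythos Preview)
For alternating groups your outline matches the paper's Proposition~\ref{alt2}: both locate a prime $\ell \nmid N$ in an interval near $n$ and use that $\ell$-cycles are $N$th powers; the paper invokes Bertram's result \cite[Corollary 2.1]{B} directly rather than running character estimates. For Lie-type groups, however, there is a genuine gap. Your claim that a single Coxeter-type torus $T$ has more than $k$ distinct prime divisors once rank and $q$ are large is false in general: in $\SL_n(q)$ with $n$ prime the Coxeter torus has order $(q^n-1)/(q-1)$, Zsygmondy guarantees only one primitive prime divisor, and this quantity can itself be prime for arbitrarily large $n$ and $q$. So the pigeonhole step ``some prime divisor of $|T|$ misses $N$'' simply fails. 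Even granting a regular semisimple $N'$-element $c$, establishing $C\cdot C = S$ for a \emph{single} class via uniform bounds $|\chi(\cdot)|/\chi(1) \le \chi(1)^{-\sigma}$ is substantially harder than necessary: for an arbitrary non-central $g$ of support $1$ the available ratio bound is only of order $q^{-1/481}$ (cf.\ \cite[Theorem~4.3.6]{LarShT}), which cannot on its own offset $\sum_\chi |\chi(c)|^2 = |\bfC_G(c)|$, a quantity growing like a power of $q$ of degree comparable to the rank.

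The paper avoids both problems by constructing $k+1$ \emph{pairs} $(s_i,t_i)$ of regular semisimple elements (Proposition~\ref{tori}) rather than working with a single class. Their orders are products of carefully chosen primitive prime divisors $\ps(q^m-\alpha)$ arranged so that distinct pairs have pairwise coprime orders (Lemma~\ref{pair}); since $\pi(N)\le k$, at least one pair $(s,t)$ consists of $N'$-elements. Crucially, within each pair $s_i$ and $t_i$ lie in \emph{weakly orthogonal} maximal tori of shapes $T_{n-a,a}$ and $T_{n-a-1,a+1}$ (Propositions~\ref{C-bounds}, \ref{A-bounds} and \cite[\S3]{LarShT}); this forces all but a bounded number $B_1$ of irreducible characters to vanish on one of $s,t$, with the surviving values bounded by a constant $B_2$. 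Then for $q \ge (B_1B_2^2)^{481}$ and any non-central $g$, the bound $|\chi(g)|/\chi(1) < q^{-1/481} \le (B_1B_2^2)^{-1}$ gives $\sum_{\chi\ne 1}|\chi(s)\chi(t)\oc(g)|/\chi(1) < 1$, hence $g \in s^G\cdot t^G$. The weak-orthogonality reduction to boundedly many characters --- rather than a Witten-zeta argument over all of $\Irr(G)$ --- is the mechanism your proposal is missing.
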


\begin{theorem}\label{main4}
Given a positive integer $k$, there is some $g(k)$ such that for all positive integers $N$ with 
$\O(N) \leq k$, the word map $(x,y) \mapsto x^Ny^N$ is surjective on all finite simple groups $S$, where $S$ is either an alternating group $\A_n$ with $n \geq g(k)$, or a 
simple Lie-type group of rank $\geq g(k)$.
\end{theorem}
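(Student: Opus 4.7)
The plan is to exhibit, for each simple group $S$ in the statement, two conjugacy classes $C_1, C_2 \subseteq S$ that lie in the image of the power map $z \mapsto z^N$ and satisfy $C_1 \cdot C_2 = S$. The condition $\Omega(N) \leq k$ bounds both $\pi(N)$ and the number of primes dividing $N$ by $k$; the hypothesis that $n$ or the Lie rank $r$ is at least $g(k)$ will give us enough room to dodge these primes. Note that since $\Omega(N) \leq k$ implies $\pi(N) \leq k$, Theorem \ref{main3} already covers the case of Lie type groups with $q \geq f(k)$; the substantive new content is the case of small $q$ and large rank.

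For the alternating case $S = \A_n$: if $x \in S$ has order $\ell$ with $\gcd(\ell,N) = 1$, then choosing $m$ with $mN \equiv 1 \pmod{\ell}$ gives $x = (x^m)^N$, so every element of order coprime to $N$ lies in the image of the $N$-th power map. It therefore suffices to find two conjugacy classes in $\A_n$, all of whose cycle lengths are coprime to $N$, whose product is $\A_n$. Since at most $k$ distinct primes divide $N$, these are the only primes to avoid. For $n \geq g(k)$ one can partition $n$ into cycle lengths coprime to $N$ in several ways — for instance by locating a prime in $(n/2, n]$ coprime to $N$ via Bertrand's postulate (there are $\leq k$ bad primes to dodge) and filling in the remainder with small factors coprime to $N$. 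Classical results of Bertram (and its descendants) on products of conjugacy classes of $\A_n$ with large support then ensure $C_1 \cdot C_2 = \A_n$ for judiciously chosen such classes.

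For $S$ of Lie type of rank $r \geq g(k)$: maximal tori are parametrized up to conjugacy by classes $[w]$ in the Weyl group $W$, and the order of the torus indexed by $w$ factors through $\Phi_d(q)$ for each $d$-cycle of $w$. By Zsygmondy's theorem, for all but finitely many $d$ the polynomial $\Phi_d(q)$ admits a primitive prime divisor $\ell_d \equiv 1 \pmod{d}$, and these primes are pairwise distinct as $d$ varies. Since $N$ has at most $k$ prime divisors and $r$ is large, one can choose $d_1, d_2 \leq r$ such that $\Phi_{d_i}(q)$ has a prime factor $\ell_i$ coprime to $N$; pick regular semisimple elements $t_i \in S$ of order $\ell_i$ in the corresponding torus. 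Each $t_i$ then lies in the image of $z \mapsto z^N$, and character-ratio estimates for regular semisimple classes (as in Gluck, Liebeck--Shalev, Fulman--Guralnick, or Guralnick--Tiep) force $t_1^S \cdot t_2^S = S$ once $r$ is large enough in terms of $k$.

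The main obstacle is the Lie-type case uniformly in $q$: unlike Theorem \ref{main3}, we cannot assume $q \geq f(k)$, so the argument must be valid at the smallest $q$. This requires handling the finitely many small-$q$ exceptions in Zsygmondy's theorem; making the torus/regular-semisimple construction uniform across classical and exceptional types; and guaranteeing the character-ratio bound at the chosen torus without a large-$q$ hypothesis. The key mechanism that carries this through is that large rank provides very many different Weyl group cycle types, and correspondingly many tori, so one can always find one whose order has a prime factor outside the at most $k$ primes dividing $N$, and whose associated regular semisimple element has very small character ratios.
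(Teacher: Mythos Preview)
Your outline for the alternating case and for Lie-type groups with large $q$ is fine and matches the paper. The gap is in the small-$q$, large-rank case, which you correctly identify as the heart of the matter but do not actually address.

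Your plan there is to find regular semisimple $t_1,t_2$ of order coprime to $N$ and use character-ratio estimates to force $t_1^S\cdot t_2^S = S$. This does not work. Character-ratio bounds of the shape $|\chi(g)|/\chi(1) \le q^{-c\sqrt{\supp(g)}}$ are useless when both $q$ and $\supp(g)$ are bounded. Concretely, take $G=\SL_n(2)$ and $g$ a transvection: for the Weil character $\tau$ of degree $2^n-2$ one has $\tau(g)=2^{n-1}-2$, so $\tau(g)/\tau(1)\to 1/2$ as $n\to\infty$. No choice of $t_1,t_2$ with a bounded number $B_1$ of nonvanishing characters and bounded values $B_2$ will make $\sum_{\chi\ne 1}|\chi(t_1)\chi(t_2)\chi(g)|/\chi(1)<1$ for such $g$, because that sum is at least $B_2^{-2}\cdot |\tau(g)|/\tau(1)$ whenever $\tau$ survives (and for generic torus pairs it does). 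Large rank buys you many tori to dodge the primes of $N$, but it does nothing to shrink $|\chi(g)|/\chi(1)$ for elements of small support.

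The paper abandons character sums entirely for such $g$. If $\supp(g)$ is small, $g$ has a primary eigenvalue $\lambda$ and acts as $\lambda$ on a huge subspace $U$. One then chooses a chain $U_1\subset\cdots\subset U_{k+1}\subset U$ of subspaces of carefully prescribed dimensions and shows that the special prime sets $\cR(\Cl(U_j^\perp))$ from Theorem~\ref{prime1} are pairwise disjoint; since $\pi(N)\le k$, some $\cR_i$ misses $N$, and Theorem~\ref{prime1} applied to $\Cl(U_i^\perp)$ writes $g|_{U_i^\perp}$ as a product of two $N$th powers. The scalar piece $g|_{U_i}=\lambda\cdot 1$ is central in $\Cl(U_i)$, and here is where the full hypothesis $\Omega(N)\le k$ (not just $\pi(N)\le k$) is actually used: Lemma~\ref{center} shows that a central element of $\Cl_m(q)$ is an $N$th power provided $m$ is divisible by a constant $D(k,Q)$, and the proof of that lemma needs to bound the $p$-adic valuation of $N$ for each prime $p\mid |z|$. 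Your proposal never invokes $\Omega(N)\le k$ beyond the implication $\pi(N)\le k$, which is a sign that the mechanism is missing.
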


Neither Theorem \ref{main3} nor \ref{main4} holds for finite simple Lie-type 
groups of bounded rank, cf.\ Example \ref{bounded}. It remains an open question whether Theorem \ref{main3} holds
for finite simple Lie-type groups of unbounded rank over fields of bounded size.

\smallskip
We use the notation of \cite{KL} for finite groups of Lie type.
For $\e = \pm$, the group $\SL^\e_n(q)$ is $\SL_n(q)$ when $\e = +$ and $\SU_n(q)$ when $\e = -$, and similarly for $\GL^\e_n(q)$, $\PSL^\e_n(q)$.
Also, $E_6^\e(q)$ is $E_6(q)$ if $\e = +$ and
$\tw2 E_6(q)$ if $\e = -$.  We use the convention that if $\e = \pm$ then expressions such as $q-\e$ mean $q-\e 1$.

\section{Preliminaries}
The following plays a key role in our proofs.

\begin{thm}{\rm \cite[Theorem 1.1]{GT3}}\label{prime1}
Let $\cG$ be a simple simply connected algebraic group in characteristic 
$p > 0$ and let $F~:~\cG \to \cG$ be a Frobenius endomorphism such that 
$G := \cG^F$ is quasisimple. There exist (not necessarily distinct)
primes $r,s_1,s_2$, all different from $p$, and 
regular semisimple $x,y \in G$ such that $|x| = r$, 
$y$ is an $\{s_1,s_2\}$-element, and $x^G\cdot y^G \supseteq G \setminus \bfZ(G)$.
In fact $s_1 = s_2$ unless $\cG$ is of type $B_{2n}$ or $C_{2n}$.
\end{thm}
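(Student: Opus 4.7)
The plan is to apply Frobenius' character-theoretic criterion: a given $g \in G$ lies in $x^G \cdot y^G$ if and only if
$$ \sum_{\chi \in \Irr(G)} \frac{\chi(x)\chi(y)\overline{\chi(g)}}{\chi(1)} \neq 0. $$
So I would first construct candidates for $x$ and $y$ whose centralizers in $G$ are small and controllable (ideally cyclic maximal tori), and then bound the character sum above uniformly over all non-central $g$, showing that the trivial-character contribution dominates.

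Concretely, I would take $r$ to be a Zsigmondy-type primitive prime divisor of $q^e-1$ for a carefully chosen $e$, so that any element $x$ of order $r$ is regular semisimple with centralizer a specific maximal torus $T_1$. Similarly I would look for primes $s_1,s_2 \neq p$ so that a suitable $\{s_1,s_2\}$-element $y$ is regular semisimple whose centralizer is another maximal torus $T_2$, chosen so that the dual tori $T_1^*, T_2^*$ in the dual group meet in the identity element only; whenever $|T_2|$ is (up to a small factor) a prime-power $\Phi_d(q)$-value we can take $s_1=s_2$, but for types $B_{2n}$ and $C_{2n}$ the natural choice of $T_2$ has order divisible by two coprime cyclotomic factors, which is the combinatorial reason we are forced into $s_1 \neq s_2$ in these cases.

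Next I would apply Deligne--Lusztig theory to estimate $\chi(x)$ and $\chi(y)$. Because $x$ is regular semisimple and concentrated in a single torus class, $\chi(x) = 0$ for every $\chi$ whose Lusztig series $\cE(G,s^*)$ does not meet $T_1^*$, and for the surviving characters the values are controlled by dimensions of representations of the relative Weyl group $N_G(T_1)/T_1$, hence bounded by a constant depending only on the Lie type. The same is true for $\chi(y)$ via $T_2$. By the dual-torus disjointness, the only characters $\chi$ for which $\chi(x)\chi(y) \neq 0$ are unipotent characters, which are few and well-understood. The trivial character supplies the main term $|x^G||y^G|/|G|$, of order roughly $|G|/(|T_1||T_2|)$, while every other contribution is bounded by $|\chi(x)||\chi(y)||\chi(g)|/\chi(1)$, and the generic lower bound $\chi(1) \gg |G|/(|T_1||T_2|)$ for nontrivial unipotent characters in a ``cuspidal'' series forces the total error to be smaller than the main term.

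The principal obstacle will be the small-rank and small-$q$ cases, where Zsigmondy primes may fail and the asymptotic bounds on unipotent character degrees become too weak for the generic argument. For these one needs a genuine case-by-case analysis: for classical groups of small rank (types up to $D_4$, $\tw2D_4$ and a few unitary cases such as $\SU_3(q)$, $\SU_4(q)$), I would work directly with the known character tables to exhibit the required $x,y$ and verify the sum is nonzero on each non-central class; for exceptional groups $G_2, \tw2G_2, \tw3D_4, F_4, \tw2F_4, E_6, \tw2E_6, E_7, E_8$ I would use the explicit Lusztig parametrization of unipotent characters together with computer-algebra verification where necessary. A secondary obstacle is proving the $s_1 \neq s_2$ dichotomy is genuinely necessary only in types $B_{2n}, C_{2n}$: here one has to justify why in all other types the torus $T_2$ can be chosen so that $|T_2|$ has a prime-power part large enough to carry a regular semisimple element all by itself, which reduces to a finite verification for each infinite family after extracting the correct Zsigmondy prime.
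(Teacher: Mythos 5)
The paper does not prove this statement at all --- it is imported verbatim from \cite[Theorem 1.1]{GT3}, and the only trace of its proof kept here is Table \ref{primes}, which records the primitive prime divisors $r,s_1,s_2$ produced by that argument; your outline (Frobenius' formula, Zsigmondy primes giving regular semisimple elements in weakly orthogonal maximal tori, Deligne--Lusztig theory cutting the sum down to unipotent characters --- generically just $1_G$ and $\St$ --- and separate case analysis for small rank and small $q$) is essentially the same strategy as in \cite{GT3}, which this paper itself reprises in analogous situations such as Proposition \ref{slu-generic}. The one imprecision is your explanation of the $B_{2n}/C_{2n}$ exception: the point is not that $|T_2|$ fails to be close to a prime power, but that in $\Sp_{2n}(q)$ and $\Spin_{2n+1}(q)$ with $n$ even the cyclic maximal torus of order $q^n-1$ contains no \emph{regular} element whose order is a single Zsigmondy prime (the eigenvalues $\lambda^{\pm q^i}$ collide in pairs when the exponent is even), so $y$ must combine the two primes $\p(p,nf)$ and $\p(p,nf/2)$, exactly as recorded in Table \ref{primes}.
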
 

Throughout the paper, by a {\it finite simple group of Lie type in characteristic $p$} we mean
a simple non-abelian group $S = G/\bfZ(G)$ for some $G = \cG^F$ as in Theorem \ref{prime1}.
In this notation, let $q = p^f$ denote the common absolute value of the eigenvalues of $F$ acting on the character group of an $F$-stable maximal torus (so that $f$ is half-an-integer if 
$G$ is a Suzuki-Ree group). For each group $G$
%mentioned in Theorem \ref{prime1} 
and $S = G/\bfZ(G)$, we refer to the set 
$\{r,s_1,s_2\}$ specified in the proof of \cite[Theorem 1.1]{GT3} as $\cR(G)$ and $\cR(S)$. 
 
\begin{cor}\label{prime2}
In the notation of Theorem \ref{prime1}, let $S = G/\bfZ(G)$ be simple non-abelian. 
%Then the following statements hold.

\begin{enumerate}[\rm(i)]

\item Theorem \ref{main1} holds for $S$, unless possibly $N = p^at^b$ with $t \in \{r,s_1,s_2\}$.

\item Suppose $N = p^at^b$ for some prime $t$ and $|\cX| < |G|/2$, where $\cX$ is 
the set of all elements of $G$ of order divisible by $p$ or by $t$. The word map
$(x,y) \mapsto x^Ny^N$ is surjective on $G$.
\end{enumerate}
\end{cor}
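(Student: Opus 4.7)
For part (i), my plan is to exploit Theorem \ref{prime1} directly. Given any $\bar g\in S$, if $\bar g=1$ use $\bar u=\bar v=1$; otherwise lift $\bar g$ to some $g\in G\setminus\bfZ(G)$ and, by Theorem \ref{prime1}, write $g=x^c\cdot y^d$ for appropriate $c,d\in G$, where $|x|=r$ and $y$ is an $\{s_1,s_2\}$-element. The non-exception condition on $N$ is designed precisely so that $\gcd(N,rs_1s_2)=1$; in particular $N$ is coprime to $|x|$ and to $|y|$, so the $N$-th power map is a bijection on each of the cyclic subgroups $\langle x\rangle$ and $\langle y\rangle$. Picking $u_0\in\langle x\rangle$ and $v_0\in\langle y\rangle$ with $u_0^N=x$ and $v_0^N=y$ gives $g=(u_0^c)^N(v_0^d)^N$, and projecting to $S$ produces $\bar g$ as a value of the word $u^Nv^N$.

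For part (ii), the plan is a short counting/doubling argument. Set $A:=\{w^N : w\in G\}$, and note that $A=A^{-1}$ since $(w^N)^{-1}=(w^{-1})^N$. For any $w\in G\setminus\cX$, the order $|w|$ is coprime to both $p$ and $t$, hence coprime to $N=p^at^b$, so the $N$-th power map is bijective on $\langle w\rangle$ and $w\in A$. Consequently $|A|\ge|G\setminus\cX|>|G|/2$. For an arbitrary target $g\in G$, the subsets $A$ and $gA^{-1}=gA$ of $G$ each have cardinality exceeding $|G|/2$, so they must intersect; any element $a=u^N$ in $A\cap gA$ comes with $v^N\in A$ satisfying $a=gv^{-N}$, yielding the required factorization $g=u^Nv^N$.

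The main subtlety. Part (ii) is essentially elementary once one makes the root-extraction observation on $G\setminus\cX$. The delicate point in part (i) is matching the literal statement ``$N\neq p^at^b$ for any $t\in\{r,s_1,s_2\}$'' with the clean coprimality condition $\gcd(N,rs_1s_2)=1$ actually used in the argument. Because $N=p_1^\alpha p_2^\beta$ has at most two distinct prime divisors and, by Theorem \ref{prime1}, $s_1=s_2$ unless $\cG$ has type $B_{2n}$ or $C_{2n}$, the match is immediate in the generic case; the $B_{2n}/C_{2n}$ setting, where $\{r,s_1,s_2\}$ may contain three distinct primes, is the only place that requires a bit of extra bookkeeping, using the decomposition of $y$ into its commuting $s_1$- and $s_2$-parts to reduce back to the coprime situation.
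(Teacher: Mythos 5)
Your part (ii) is correct and is essentially the paper's own argument: the paper intersects $g(G\setminus\cX)$ with $G\setminus\cX$ directly instead of passing to the full set $A$ of $N$th powers, but the counting and the observation that every element of $G\setminus\cX$ is an $N$th power are the same.

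Part (i), however, has a genuine gap. The non-exception hypothesis is that $N$ cannot be written as $p^at^b$ with $t\in\{r,s_1,s_2\}$; this is \emph{not} equivalent to $\gcd(N,rs_1s_2)=1$. For instance $N=r^as_1^b$ (with $r\neq s_1$, $a,b>0$), or $N=r^a\ell^b$ with $\ell$ a prime outside $\{p,r,s_1,s_2\}$, are non-exceptional values of $N$: they are divisible by two distinct primes different from $p$, so they are not of the form $p^at^b$ with $t\in\{r,s_1,s_2\}$, yet they are divisible by $r$. For such $N$ your construction collapses: since $r\mid N$, one has $w^N=1$ for every $w\in\langle x\rangle$, so $x$ admits no $N$th root inside $\langle x\rangle$ (and similarly for the relevant part of $y$ when $s_1$ or $s_2$ divides $N$), and Theorem \ref{prime1} alone does not produce the required factorization; the ``extra bookkeeping'' you propose for types $B_{2n}$, $C_{2n}$ does not address this at all, and indeed the problem already occurs when $s_1=s_2$. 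The missing ingredient, which is how the paper handles every non-exceptional $N$ with $p\nmid N$, is the theorem of Ellers and Gordeev \cite{EG}: every non-central element of $G$ is a product of two $p$-elements, and $p$-elements are $N$th powers because their orders are powers of $p$, coprime to $N$. With that case disposed of, your torus argument correctly covers the remaining non-exceptional case $N=p^a\ell^b$ with $\ell\notin\{r,s_1,s_2\}$, where $\gcd(N,rs_1s_2)=1$ genuinely holds.
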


\pf
(i) By \cite[Corollary, p.\ 3661]{EG}, every non-central element of $G$ is a product of two $p$-elements.
Hence Theorem \ref{main1} holds for $S$ if $p \nmid N$. On the other hand, if 
$N = p^at^b$ with $t \notin \{r,s_1,s_2\}$, then the elements $x$ and $y$ in Theorem 
\ref{prime1} are $N$th powers, so Theorem \ref{main1} again holds for $S$. 

(ii) Let $g \in G$.  By assumption, $|G \setminus \cX| > |G|/2$, so 
$g(G \setminus \cX) \cap (G \setminus \cX) \neq \emptyset$.  Hence 
$g = xy^{-1}$ for some $x,y \in G \setminus \cX$. Note that every element of $G \setminus \cX$ is an 
$N$th power, whence the claim follows.
\hal

Recall that if $a \geq 2$ and $n \geq 3$ are integers and $(a,n) \neq (2,6)$, then $a^n-1$ has 
a {\it primitive prime divisor}, i.e.\ a prime divisor that does not divide $\prod^{n-1}_{i=1}(a^i-1)$,
cf.\ \cite{Zs}. In what follows, we fix one such prime divisor for given $(a,n)$ and denote it 
by $\p(a,n)$. Next we record the 
primes $r,s_1,s_2$ mentioned in Theorem \ref{prime1} in Table \ref{primes} (for larger groups 
$G$). The third column
of Table \ref{primes} contains one entry precisely when $s_1 = s_2$. 
\begin{table}[htb]
\[
\begin{array}{|c|c|c|c|}
\hline 
G & r & s_1,s_2 & (n,q) \neq \\
\hline
\begin{array}{c}\SL_n(q),\\ n \geq 4 \end{array}
   & \p(p,nf) & \p(p,(n-1)f) & (6,2),(7,2), (4,4) \\ \hline

\begin{array}{c}\SU_n(q),\\ n \geq 5 \hbox{ odd} \end{array}
   & \p(p,2nf) & \begin{array}{ll} \p(p,(n-1)f), & n \equiv 1 \bmod 4\\
                                                  \p(p,(n-1)f/2), & n \equiv 3 \bmod 4 \end{array}
    & (7,4) \\ \hline
\begin{array}{c}\SU_n(q),\\ n \geq 4 \hbox{ even} \end{array}
   & \p(p,(2n-2)f) & \begin{array}{ll} \p(p,nf), & n \equiv 0 \bmod 4\\
                                                       \p(p,nf/2), & n \equiv 2 \bmod 4 \end{array}
    & (4,2),(6,4) \\ \hline   

\begin{array}{c}\Sp_{2n}(q),\\ \Spin_{2n+1}(q),\\ n \geq 3 \hbox{ odd} \end{array}
   & \p(p,2nf) & \p(p,nf) & (3,4) \\ \hline
\begin{array}{c}\Sp_{2n}(q),\\ \Spin_{2n+1}(q),\\ n \geq 6 \hbox{ even} \end{array}
   & \p(p,2nf) & \p(p,nf), \p(p,nf/2) & (6,2), (12,2) \\ \hline
\Sp_{24}(2) & 241 & 13, 7 & \\   
\Sp_{12}(2) & 13 & 3, 7 & \\  
   \hline

\begin{array}{c}\Spin^+_{2n}(q),\\ n \geq 4 \end{array}
   & \p(p,(2n-2)f) & \begin{array}{ll}\p(p,nf), & n \hbox{ odd}\\
                                                      \p(p,(n-1)f), & n \hbox{ even} \end{array} &  (4,2)\\ \hline   

\begin{array}{c}\Spin^-_{2n}(q),\\ n \geq 4 \end{array}
   & \p(p,2nf) & \p(p,(2n-2)f) & (4,2) \\ \hline

\tw2 B_2(q^2) & \p(2,8f) & \p(2,8f) & q^2 > 8\\ \hline
\tw2 G_2(q^2) & \p(3,12f) & \p(3,12f) & q^2 > 27\\ \hline
\tw2 F_4(q^2) & \p(2,24f) & \p(2,12f) & q^2 > 8\\ \hline
G_2(q) & \p(p,3f) & \p(p,3f) & q \neq 2,4\\ \hline
\tw3 D_4(q) & \p(p,12f) & \p(p,12f) & \\ \hline
F_4(q) & \p(p,12f) & \p(p,8f) & \\ \hline
E_6(q)_{\SC} & \p(p,9f) & \p(p,8f) & \\ \hline
\tw2 E_6(q)_{\SC} & \p(p,18f) & \p(p,8f) & \\ \hline
E_7(q)_{\SC} & \p(p,18f) & \p(p,7f) & \\ \hline
E_8(q) & \p(p,24f) & \p(p,20f) & \\ \hline

\end{array}
\]
\caption{Special primes for simple groups of Lie type} \label{primes}
\end{table} 

% We frequently use the following statements.

\begin{lem}\label{basic}
Let $G$ be a finite group, fix $g_1,g_2 \in G$, and let $g \in G$.

{\rm (i)} Then $g \in g_1^G \cdot g_2^G$ if and only if
$$\sum_{\chi \in \Irr(G)}\frac{\chi(g_1)\chi(g_2)\oc(g)}{\chi(1)} \neq 0.$$
In particular, $g \in g_1^G\cdot g_2^G$ if 
$$\left|\sum_{\chi \in \Irr(G),~\chi(1) > 1}\frac{\chi(g_1)\chi(g_2)\oc(g)}{\chi(1)} \right|
     < \left|\sum_{\chi \in \Irr(G),~\chi(1)=1}\frac{\chi(g_1)\chi(g_2)\oc(g)}{\chi(1)} \right|.$$
     
{\rm (ii)} For $D \in \N$,
$$\left|\sum_{\chi \in \Irr(G),~\chi(1) \geq D}\frac{\chi(g_1)\chi(g_2)\oc(g)}{\chi(1)}\right| 
    \leq \frac{1}{D}\left(|\bfC_G(g_1)|\cdot|\bfC_G(g_2)|\cdot|\bfC_G(g)|\right)^{1/2}.$$
\end{lem}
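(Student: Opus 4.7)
The plan for part (i) is to invoke the classical Frobenius character formula: the number of pairs $(x,y) \in g_1^G \times g_2^G$ with $xy = g$ equals
\[
\frac{|g_1^G|\cdot|g_2^G|}{|G|}\sum_{\chi \in \Irr(G)}\frac{\chi(g_1)\chi(g_2)\oc(g)}{\chi(1)}.
\]
Since the prefactor is a positive rational, $g \in g_1^G \cdot g_2^G$ if and only if this character sum is nonzero, giving the first assertion. For the ``in particular'' clause, the triangle inequality guarantees that the full sum cannot vanish whenever the contribution of characters with $\chi(1) > 1$ has strictly smaller modulus than the contribution of the linear characters.

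For part (ii), the plan is to peel off the factor $1/\chi(1)$ using the hypothesis $\chi(1) \geq D$, reducing the problem to an unweighted sum amenable to H\"older's inequality in $\ell^3(\Irr(G))$. Concretely,
\[
\left|\sum_{\chi(1) \geq D}\frac{\chi(g_1)\chi(g_2)\oc(g)}{\chi(1)}\right| \leq \frac{1}{D}\sum_\chi |\chi(g_1)|\cdot|\chi(g_2)|\cdot|\chi(g)|,
\]
and H\"older with three $\ell^3$ factors yields
\[
\sum_\chi |\chi(g_1)|\cdot|\chi(g_2)|\cdot|\chi(g)| \leq \prod_{i=1}^3\left(\sum_\chi |\chi(h_i)|^3\right)^{1/3},
\]
where $(h_1,h_2,h_3) = (g_1,g_2,g)$. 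Each $\ell^3$-sum is controlled by column orthogonality $\sum_\chi |\chi(h)|^2 = |\bfC_G(h)|$, which in particular forces the pointwise bound $|\chi(h)| \leq |\bfC_G(h)|^{1/2}$ for every $\chi$; hence
\[
\sum_\chi |\chi(h)|^3 \leq |\bfC_G(h)|^{1/2}\sum_\chi|\chi(h)|^2 = |\bfC_G(h)|^{3/2}.
\]
Substituting the three cube-root bounds produces exactly the desired right-hand side $\frac{1}{D}(|\bfC_G(g_1)|\cdot|\bfC_G(g_2)|\cdot|\bfC_G(g)|)^{1/2}$.

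I do not anticipate a substantial obstacle: part (i) is Frobenius plus the triangle inequality, and part (ii) packages three elementary ingredients (termwise absolute value, H\"older, and column orthogonality). The only tactical choice is to use H\"older with $p=q=r=3$ rather than iterated Cauchy--Schwarz, exploiting the fact that the $\ell^\infty$-bound $|\chi(h)| \leq \sqrt{|\bfC_G(h)|}$ is built into the $\ell^2$-identity for free.
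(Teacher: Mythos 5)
Your proposal is correct and in substance matches the paper's argument: part (i) is exactly the Frobenius count plus the triangle inequality, and part (ii) rests on the same three ingredients the paper uses, namely $1/\chi(1)\le 1/D$, the pointwise bound $|\chi(h)|\le |\bfC_G(h)|^{1/2}$ from column orthogonality, and the identity $\sum_\chi|\chi(h)|^2=|\bfC_G(h)|$. The only (cosmetic) difference is the packaging in (ii): the paper bounds the factor $|\oc(g)|$ pointwise by $|\bfC_G(g)|^{1/2}$ and then applies Cauchy--Schwarz to the remaining product $|\chi(g_1)\chi(g_2)|$, whereas you treat the three elements symmetrically via H\"older in $\ell^3$; both give precisely the stated bound.
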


\pf
%(i) is well known. 
The first is the well known result of Frobenius.
For (ii), note that $|\chi(g)| \leq |\bfC_G(g)|^{1/2}$ for $\chi \in \Irr(G)$ 
by the second orthogonality relation for complex characters. By the Cauchy-Schwarz inequality,
$$\sum_{\chi \in \Irr(G)}|\chi(g_1)\chi(g_2)| 
    \leq \left(\sum_{\chi \in \Irr(G)}|\chi(g_1)|^2 \cdot \sum_{\chi \in \Irr(G)}|\chi(g_2)|^2\right)^{1/2} 
    = (|\bfC_G(g_1)|\cdot|\bfC_G(g_2)|)^{1/2}.$$
\hal

\begin{lem}\label{alt-spor}
Theorem \ref{main1} holds for all alternating groups $\A_n$, $5 \leq n \leq 18$, and for
all $26$ sporadic finite simple groups.
\end{lem}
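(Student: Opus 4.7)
The plan is to reduce, for each of the forty groups $G$ in question (the alternating groups $\A_5,\ldots,\A_{18}$ and the $26$ sporadic simple groups), the statement to a finite computation with the character table of $G$. Setting $G^{[N]} := \{x^N : x \in G\}$, the claim is that $G^{[N]}\cdot G^{[N]} = G$ for every $N = p^aq^b$.

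The first step is to cut the range of $N$ down to a finite list per group. Since $x^N$ depends only on $N$ modulo $o(x)$, the set $G^{[N]}$ depends only on $N$ modulo $e := \exp(G)$, so we may replace $N$ by $p^{\min(a,v_p(e))}q^{\min(b,v_q(e))}$. If $p \nmid e$ then $x \mapsto x^{p^a}$ is a bijection on $G$ and the factor $p^a$ is vacuous; hence if neither $p$ nor $q$ divides $e$ then surjectivity is trivial, and if exactly one of them does, then the statement reduces to the single-prime-power case $(x,y)\mapsto x^{q^b}y^{q^b}$ already known by \cite{LOST}. It therefore suffices, for each $G$, to verify surjectivity for the finite list of pairs $(p^a, q^b)$ with $p \neq q$ prime divisors of $|G|$, $1 \leq a \leq v_p(e)$, and $1 \leq b \leq v_q(e)$.

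Next, for each residual triple $(G, p^a, q^b)$, the power-map data in the GAP Character Table Library (or the ATLAS) identify the conjugacy classes contained in $G^{[N]}$. Any $g$ already in $G^{[N]}$ is trivially in the image, since $g = g\cdot 1^N$; for each remaining target class $g^G$, I would search among $N$-th-power classes $C_1, C_2$ for a pair with $g \in C_1\cdot C_2$, certified by nonvanishing of the Frobenius structure constant $\sum_{\chi\in\Irr(G)}\chi(x_1)\chi(x_2)\overline{\chi(g)}/\chi(1)$ as in Lemma \ref{basic}(i); in practice the estimate of Lemma \ref{basic}(ii) often already suffices once $C_1, C_2$ are chosen with small centralizers.

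The principal obstacle is combinatorial rather than conceptual: the number of triples (group, $N$, target class) to inspect is substantial, running into the hundreds once $|G|$ has several prime divisors, so the verification is naturally carried out by computer algebra. For the alternating groups one could alternatively exploit the explicit description of cycle types in $\A_n^{[N]}$ via the action of powering on a permutation's cycle structure, but the uniform character-theoretic approach handles both families simultaneously and is readily automated.
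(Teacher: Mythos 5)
Your proposal is correct, and its engine is the same as the paper's: for each of the forty groups one certifies, by a computer calculation with the character table (GAP/{\sc Magma}), that suitable classes of $N$th powers multiply onto every target class, using the Frobenius structure-constant criterion of Lemma \ref{basic}. The difference is in the reduction to finitely many checks. You loop over all residual exponent pairs $(p^a,q^b)$ with $a\le v_p(e)$, $b\le v_q(e)$, computing for each $N$ the classes contained in $G^{[N]}$ via the power maps; the paper instead verifies, once for each pair of primes $p,q$ dividing $|G|$, the stronger exponent-free statement that every $g\in G$ is a product of two $\{p,q\}'$-elements. Since a $\{p,q\}'$-element is an $N$th power for every $N=p^aq^b$, this single check per prime pair disposes of all $a,b\ge 0$ at once, so no reduction modulo the exponent is needed, nor any appeal to the single-prime-power case (which, incidentally, is due to \cite{GM,LOST} rather than \cite{LOST} alone -- though your scheme could simply absorb it by allowing $b=0$). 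Your finer search is a larger computation but slightly more robust, since it may use $N$th powers whose orders are divisible by $p$ or $q$ when the exponents are small; the paper's test is smaller and uniform. Two small points of hygiene in your write-up: the replacement of $N$ by $p^{\min(a,v_p(e))}q^{\min(b,v_q(e))}$ is justified by the identity $G^{[N]}=G^{[\gcd(N,\exp G)]}$ (an easy cyclic-subgroup argument), not literally by congruence of $N$ modulo $\exp(G)$, since these two integers need not be congruent; and this identity, not the congruence, is what makes your finite list exhaustive.
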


\pf
For each of these groups $G$ and for every two primes $p,q$ dividing $|G|$, we 
verify that each $g \in G$ can be written as a product of two $\{p,q\}'$-elements.
We do this by applying Lemma \ref{basic} to the character table
of the relevant group. 
Some of these character tables are available in the
Character Table Library of {\sf GAP} \cite{GAP};
the remainder were constructed directly using
the {\sc Magma} \cite{Magma} implementation of the
algorithm of Unger \cite{Unger}.
\hal

\begin{prop}\label{alt1}
Theorem \ref{main1} holds for $S = \A_n$ if $n \geq 19$.
\end{prop}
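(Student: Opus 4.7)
Let $g \in \A_n$ with $n \geq 19$, and write $N = p^a q^b$. When one of $a, b$ is zero, $N$ is a prime power and the result follows from \cite{GM, LOST}, so we may assume $a, b \geq 1$ and $p < q$. The first observation is that any $h \in \A_n$ whose cycle lengths are all coprime to $pq$ is automatically an $N$th power in $\A_n$: indeed $|h|$ is then coprime to $N$, so choosing $m$ with $Nm \equiv 1 \pmod{|h|}$ gives $h = (h^m)^N$ with $h^m \in \langle h \rangle \subseteq \A_n$. Thus it suffices to produce $u, v \in \A_n$ with $g = uv$ such that all cycle lengths of $u$ and $v$ are coprime to $pq$.

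The plan is to apply Lemma \ref{basic} to a single conjugacy class $C = \sigma^{\A_n} \subset \A_n$ consisting of $N$th powers, and to show that $C \cdot C = \A_n$. A natural choice is to take $\sigma$ of cycle type $(r, 1^{n-r})$, where $r$ is an odd prime in $(n/2, n]$ distinct from $p$ and $q$. Such an $r$ exists for $n \geq 19$: Bertrand's postulate together with standard estimates on $\pi(x)$ (and a brief numerical check for small $n$) shows that $(n/2, n]$ contains at least three primes, leaving room to avoid $\{p, q\}$. Since $r$ is odd, $\sigma \in \A_n$, and every element of $C$ has cycle lengths in $\{1, r\}$, both coprime to $pq$.

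By Lemma \ref{basic}(i), the inclusion $g \in C \cdot C$ follows once one verifies
\[
\Bigl| \sum_{\chi \neq 1} \frac{\chi(\sigma)^2\, \overline{\chi(g)}}{\chi(1)} \Bigr| < 1
\]
uniformly in $g \in \A_n$. The condition $r > n/2$ is essential: by the Murnaghan--Nakayama rule, $\chi^\lambda(\sigma) = 0$ unless $\lambda \vdash n$ contains a (necessarily unique) $r$-hook, in which case $|\chi^\lambda(\sigma)| \leq \chi^\mu(1)$ where $\mu \vdash n-r$ is obtained by removing that hook. This drastically restricts the support of the sum to characters indexed by such $\lambda$, and combined with character bounds of Larsen--Shalev type for $|\chi(g)|/\chi(1)$, it should yield the required estimate.

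The main obstacle is obtaining uniform control in $g$; the estimate degrades precisely when $g$ has few moved points and hence large centralizer, where the Cauchy--Schwarz bound of Lemma \ref{basic}(ii) is far too crude. For such $g$ one must either sharpen the character sum by keeping more of the explicit Murnaghan--Nakayama information, or fall back on a direct combinatorial factorization of $g$ inside an embedded $\A_k$ with $k$ small, eventually appealing to Lemma \ref{alt-spor} as a base. Handling this "few moved points" regime in a clean way, without a painful case split on the cycle type of $g$, is the delicate part of the argument.
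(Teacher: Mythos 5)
Your reduction to writing $g = uv$ with $u,v$ of order coprime to $N$ is the same first step as the paper's, but the way you try to realize it has a genuine gap, and in fact the covering statement you aim for is false as stated. If $\sigma$ is an $r$-cycle with $n/2 < r < 3n/4$, then $\sigma^{\A_n}\cdot\sigma^{\A_n} \neq \A_n$: an orbit-by-orbit Riemann--Hurwitz type count for a factorization $c_1c_2 = g$ into two $r$-cycles shows that the number of cycles of $g$ is at most $n + 2r - 2m$, where $m \ge r$ is the size of the union of the two supports; hence an element with many short cycles, e.g.\ of cycle type $(2^8,3)$ in $\A_{19}$, is not a product of two $13$-cycles. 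This is exactly why Bertram's theorem \cite[Corollary 2.1]{B}, which the paper quotes, requires $\ell \ge \lfloor 3n/4\rfloor$: that threshold is sharp, and the obstruction comes from elements with many small cycles, not from the ``few moved points'' regime you single out (elements with a large fixed space are in fact products of two long cycles). Consequently no character estimate can close the argument for $r \in (n/2,3n/4)$, and even when $r \ge 3n/4$ you have not carried the estimate out --- you acknowledge that the large-centralizer regime is unresolved --- so the proof is incomplete on its own terms.

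A second, smaller issue: insisting that $r$ be prime and distinct from $p,q$ is both unnecessary and occasionally impossible at the bottom of the range. For $n = 19$ and $\{p,q\} = \{17,19\}$ the only primes in $[\lfloor 3n/4\rfloor, n] = [14,19]$ are $17$ and $19$, so no admissible prime exists in the range where the covering result actually holds. The paper avoids both problems at once with an elementary observation: among three odd integers $m, m+2, m+4$ lying in $[\lfloor 3n/4\rfloor, n]$ (available once $n\ge 19$), at most one is divisible by $p$ and at most one by $q$, so some odd $\ell$ in that interval is coprime to $N$; an $\ell$-cycle then has order coprime to $N$, hence is an $N$th power in $\A_n$, and Bertram's theorem writes every $g \in \A_n$ as a product of two $\ell$-cycles. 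No primality, no character theory, and no asymptotic estimates are needed.
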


\pf
Since $n \geq 19$, there are at least $6$ consecutive integers in the interval
$[\lfloor 3n/4 \rfloor, n]$. In particular, we can find an odd integer $m$ such that 
$\lfloor 3n/4 \rfloor \leq m < m+4 \leq n$. Suppose now that $N = p^aq^b$. Among $m$, $m+2$, and 
$m+4$, at most one integer is divisible by $p$, and similarly for $q$. Hence there is some 
$\ell \in \{m,m+2,m+4\}$ that is coprime to $N$. According to \cite[Corollary 2.1]{B}, each 
$g \in \A_n$ is a product of two $\ell$-cycles. Since every $\ell$-cycle is an $N$th power
in $S$, we are done.
\hal

\begin{prop}\label{alt2}
Given a positive integer $k$, there is some $f(k)$ such that for all $n \geq f(k)$ and 
for all positive integers $N$ with at most $k$ distinct prime factors, the word map 
$(x,y) \mapsto x^Ny^N$ is surjective on $S = \A_n$.
\end{prop}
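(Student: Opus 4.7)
The plan is to imitate the proof of Proposition~\ref{alt1}: produce an odd integer $\ell \in [\lfloor 3n/4 \rfloor, n]$ with $\gcd(\ell, N) = 1$, and then apply \cite[Corollary 2.1]{B} to write every $g \in \A_n$ as a product of two $\ell$-cycles, each of which is an $N$th power in $\A_n$ because $\gcd(\ell, N) = 1$. In Proposition~\ref{alt1} it sufficed to look at the three candidates $m$, $m+2$, $m+4$, because $N$ had at most two prime divisors; for general $k$ we must locate such an $\ell$ inside an interval whose length we can control while $N$ is allowed many and arbitrarily large prime factors.

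The existence of such an $\ell$ is a purely number-theoretic question, governed by Jacobsthal's function $g(m)$, defined as the smallest integer so that every interval of $g(m)$ consecutive integers contains one coprime to $m$. A classical bound (for instance Iwaniec's estimate $g(m) = O(\omega(m)^2 \log^2(\omega(m)+1))$, or even the elementary estimate $g(m) \leq 2^{\omega(m)+1}$ obtainable from the Chinese Remainder Theorem) furnishes a function $h\colon \N \to \N$ with $g(m) \leq h(\omega(m))$. Applied to $m := \mathrm{lcm}(N, 2)$, which has at most $k+1$ distinct prime divisors, this shows that any interval of length at least $h(k+1)$ contains an integer coprime to $2N$, equivalently an odd integer coprime to $N$.

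Setting $f(k) := 4 h(k+1) + 8$ ensures that for every $n \geq f(k)$ the interval $[\lfloor 3n/4 \rfloor, n]$ has length exceeding $h(k+1)$, and hence contains the desired $\ell$. The cited theorem of Bertram then yields the required factorization of $g$ as a product of two $N$th powers, completing the proof. The main obstacle is precisely the number-theoretic input on Jacobsthal's function; the treatment of the oddness requirement (by passing from $N$ to $\mathrm{lcm}(N,2)$) and the application of Bertram's theorem are routine. Thus the essential content of the argument is a sieve-type estimate that guarantees an odd integer coprime to $N$ in a short interval, independently of the magnitudes of the prime factors of $N$.
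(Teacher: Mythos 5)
Your argument is correct, but it takes a genuinely different route from the paper. The paper handles the number-theoretic step by the prime number theorem: for $n \geq f(k)$ the interval $[3n/4,n]$ contains at least $k+1$ distinct primes, so by pigeonhole one of them, $\ell$, is coprime to $N$; Bertram's result \cite[Corollary 2.1]{B} then finishes exactly as in your write-up. You instead drop the requirement that $\ell$ be prime and only ask for an odd integer coprime to $N$ in the interval, which you extract from a bound on Jacobsthal's function $g(m)$ in terms of $\omega(m)$ applied to $m=\mathrm{lcm}(N,2)$. Both reductions to Bertram are sound, and your bookkeeping ($f(k)=4h(k+1)+8$, passing to $\mathrm{lcm}(N,2)$ to force $\ell$ odd so that an $\ell$-cycle lies in $\A_n$ and is an $N$th power there) is fine. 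What each approach buys: yours yields an explicit $f(k)$ directly from whatever Jacobsthal bound you invoke, and with a genuinely elementary sieve bound (inclusion--exclusion gives, say, $g(m)\leq (\omega(m)+1)2^{\omega(m)}+1$, using $\prod_i(1-1/p_i)\geq 1/(\omega(m)+1)$) it avoids the prime number theorem altogether; the paper's argument is shorter and, quantitatively, gives a smaller $f(k)$ (roughly $O(k\log k)$ from the density of primes, versus $O(k^2\log^2 k)$ from Iwaniec or exponential from the elementary bound), at the cost of invoking PNT. One small caveat: your parenthetical claim that $g(m)\leq 2^{\omega(m)+1}$ follows from the Chinese Remainder Theorem is not justified as stated (CRT is the tool for lower-bound constructions for $g$); either cite Kanold/Iwaniec or use the elementary inclusion--exclusion estimate above, which is all your choice of $f(k)$ actually requires.
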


\pf
Choosing $f(k)$ large enough, we see by the prime number theorem that, for every $n \geq f(k)$, 
the interval $[3n/4,n]$ contains at least $k+1$ distinct primes $p_1, \ldots, p_{k+1}$. Given
a positive integer $N$ with at most $k$ distinct primes factors, at least one of the $p_i$'s, 
call it $\ell$, 
does not divide $N$, whence all $\ell$-cycles are $N$th powers. Hence the claim follows from
\cite[Corollary 2.1]{B}.
\hal

%The next simple observation is helpful in various situations.

\begin{lem}\label{real1}
If $g$ is a real element of a finite group $G$, then $g$ is a product of two
$2$-elements of $G$.
\end{lem}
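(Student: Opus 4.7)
My plan is to exhibit an explicit factorization of the form $g = h^{-1}\cdot(hg)$, where $h$ will be chosen to be a $2$-element of $G$ inverting $g$. The reality hypothesis provides some $x\in G$ with $xgx^{-1}=g^{-1}$, and this will be my starting point.

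First I will arrange that the inverting element is itself a $2$-element. Writing $|x|=2^a m$ with $m$ odd, I take $h:=x^m$, the $2$-part of $x$; it has order $2^a$. Since $m$ is odd, conjugation by $h=x^m$ is the $m$-fold iterate of inversion on $\langle g\rangle$ and is therefore still inversion, so $hgh^{-1}=g^{-1}$. If the degenerate case $a=0$ occurs, then $h=1$ and the inversion relation forces $g=g^{-1}$, so $g$ is itself a $2$-element and the lemma is trivial.

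Once $h$ is a nontrivial $2$-element inverting $g$, I claim that $hg$ is automatically a $2$-element as well. The key is the dihedral-style identity $(hg)^2=h^2$, which comes from $gh=hg^{-1}$ via $(hg)^2=h(gh)g=h(hg^{-1})g=h^2$. Iterating, $(hg)^{|h|}=h^{|h|}=1$, so $|hg|$ divides $|h|$, a power of $2$. The factorization $g=h^{-1}\cdot(hg)$ now exhibits $g$ as a product of two $2$-elements.

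I anticipate no real obstacle here; the whole argument reduces to a short computation, with the one delicate point being the reduction to a $2$-element conjugator via passage to the $2$-part of the original inverting element.
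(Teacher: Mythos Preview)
Your proof is correct and follows essentially the same approach as the paper: pass to the $2$-part $h=x^m$ of an inverting element, verify that $h$ still inverts $g$, compute $(hg)^2=h^2$ so that $hg$ is a $2$-element, and write $g=h^{-1}\cdot(hg)$. The paper's argument is identical up to notation (it writes the key identity as $xgxg=x^2\cdot x^{-1}gx\cdot g=x^2$), and your explicit treatment of the degenerate case $a=0$ is a harmless addition.
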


\pf
By assumption, $g^{-1} = xgx^{-1}$ for some $x \in G$. Replace $x$ by 
$x^{|x|_{2'}}$ to obtain a $2$-element. Now 
$$xgxg = x^{2} \cdot x^{-1}gx \cdot g = x^2 \cdot g^{-1} \cdot g = x^2,$$
so $xg$ is a $2$-element as well. Since $g = x^{-1} \cdot xg$, the claim follows.
\hal

In particular, the following is an immediate consequence of Lemma \ref{real1}:

\begin{cor}\label{real2}
If $G$ is a finite real group and $N$ is an odd integer, then the word map
$(x,y) \mapsto x^Ny^N$ is surjective on $G$.
\end{cor}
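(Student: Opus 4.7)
The plan is to derive the corollary in essentially one line from Lemma \ref{real1}. Fix an arbitrary $g \in G$. Since $G$ is real (every element is conjugate to its inverse), $g$ is in particular a real element of $G$, so by Lemma \ref{real1} we may write $g = ab$ with $a, b$ two $2$-elements of $G$.

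Now I would use the hypothesis that $N$ is odd. Since $|a|$ and $|b|$ are powers of $2$, we have $\gcd(N, |a|) = \gcd(N, |b|) = 1$, so the map $z \mapsto z^N$ is a bijection on each of the cyclic subgroups $\langle a \rangle$ and $\langle b \rangle$. Consequently there exist $x \in \langle a \rangle$ and $y \in \langle b \rangle$ with $a = x^N$ and $b = y^N$, and hence $g = x^N y^N$, proving surjectivity.

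There is no real obstacle: all the work has already been done in Lemma \ref{real1}, and the only additional observation needed is that a $2$-element is automatically an $N$th power when $N$ is odd. The corollary is therefore truly immediate.
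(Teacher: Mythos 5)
Your argument is correct and is exactly the route the paper intends: the corollary is stated there as an immediate consequence of Lemma \ref{real1}, with the only extra ingredient being precisely your observation that a $2$-element is an $N$th power whenever $N$ is odd. Nothing further is needed.
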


\begin{cor}\label{main-real}
Let $q = p^f$ be an odd prime power. Theorem  \ref{main1} holds for the following simple 
groups:

\begin{enumerate}[\rm(i)]
\item $\PSp_{2n}(q)$ and $\Omega_{2n+1}(q)$, where $n \geq 3$ and $q \equiv 1 \bmod 4$;

\item $\PO^+_{4n}(q)$, where $n \geq 3$ and $q \equiv 1 \bmod 4$;

\item $\PO^{-}_{4n}(q)$, where $n \geq 2$;

\item $\PO^+_8(q)$, $\O_9(q)$, and $\tw3 D_4(q)$. 
\end{enumerate}
If $N$ is an arbitrary odd integer, then the word map $(x,y) \mapsto x^Ny^N$ is surjective on 
each of these groups. The same conclusion holds for $G = \Spin^-_{4n}(q)$ with $n \geq 2$,
and $G = \Omega^+_{4n}(q)$ with $n \geq 2$ and $q \equiv 1 \bmod 4$, and $G = \Omega^+_8(q)$.
\end{cor}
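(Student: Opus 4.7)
\medskip
\noindent\textbf{Proof proposal.}
The plan is to show that each group $G$ listed in the corollary is a real group---i.e., every $g \in G$ is $G$-conjugate to $g^{-1}$---and then to combine this with Corollaries \ref{real2} and \ref{prime2}(i). Granting realness, the second sentence of the corollary (surjectivity of $(x,y) \mapsto x^N y^N$ for all odd $N$, on both the listed simple groups and their listed covers) is immediate from Corollary \ref{real2}.

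To prove the Theorem \ref{main1} assertion, write $N = p'^a q'^b$ with $p', q'$ prime. If $N$ is odd we apply Corollary \ref{real2}, so assume $N$ is even, i.e.\ $\{p', q'\} \ni 2$. Since the characteristic $p$ of $G$ is odd, $2 \mid p-1$, and so $2$ is not a primitive prime divisor of $p^m-1$ for any $m \geq 2$; inspecting Table \ref{primes} shows $2 \notin \{r, s_1, s_2\}$ for each of the simple groups appearing in the corollary. If $p \nmid N$, the first sentence of the proof of Corollary \ref{prime2}(i) already yields the claim. Otherwise $p \mid N$, and then $N$ has the form $p^a t^b$ with $t=2 \notin \{r,s_1,s_2\}$, so the second sentence of that proof applies.

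The principal work---and the main obstacle---is the realness verification itself. For $\Sp_{2n}(q)$ and $\O_{2n+1}(q)$ with $q \equiv 1 \bmod 4$, the classical theorem of Wonenburger (with later refinements by Gow and others) shows every element is a product of two involutions; this strong realness descends to the simple quotients $\PSp_{2n}(q)$ and $\Omega_{2n+1}(q)$. Analogous strongly real results handle $\Omega^-_{4n}(q)$ (for all odd $q$) and $\Omega^+_{4n}(q)$ (when $q \equiv 1 \bmod 4$), yielding realness of $\PO^\pm_{4n}(q)$. The remaining cases $\Omega^+_8(q)$ and $\tw3 D_4(q)$ require a separate argument based on the triality automorphism of the $D_4$ Dynkin diagram, which supplies the conjugating element in exactly those classes where the generic construction breaks down. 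The subtlest point will be lifting realness from the adjoint side to the spin cover $\Spin^-_{4n}(q)$, and similarly for the other listed covers: one must arrange that the conjugating element acts trivially on the center, so that the auxiliary claim about the covers also follows.
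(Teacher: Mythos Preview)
Your approach is exactly the paper's: show the groups are real, apply Corollary~\ref{real2} for odd $N$, and invoke Corollary~\ref{prime2}(i) for even $N$ (your unpacking of why $2\notin\{r,s_1,s_2\}$ when $p$ is odd is correct and is precisely what that corollary implicitly uses).

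The only real divergence is in how you handle the realness input. The paper does not re-prove it at all; it simply cites \cite[Theorem~1.2]{TZ3}, which states that every group listed---including the covers $\Spin^-_{4n}(q)$, $\Omega^+_{4n}(q)$ with $q\equiv 1\bmod 4$, and $\Omega^+_8(q)$---is a real group. So what you describe as ``the principal work and the main obstacle'' is in fact a black box here. Your Wonenburger/Gow sketch is on the right track for the bulk of the cases, but as written it has gaps you would need to fill if you insisted on avoiding the citation: you do not say how $\Omega_9(q)$ is handled when $q\equiv 3\bmod 4$ (it appears in part~(iv) with no congruence restriction, and your strongly-real argument for $B_n$ used $q\equiv 1\bmod 4$); and the lifting to $\Spin^-_{4n}(q)$ is, as you acknowledge, delicate and not actually carried out. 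The cleaner route is simply to cite \cite{TZ3} for realness and keep the rest of your argument as is.
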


\pf
By \cite[Theorem 1.2]{TZ3}, all of these groups $G$ are real, whence 
the statement follows from Corollary \ref{real2} when $N$ is odd. If 
$G$ is simple and $N$ is even, then the statement follows from Corollary \ref{prime2}(i).
\hal

Corollary \ref{main-real} implies that Theorem \ref{main1} holds for many simple 
symplectic or orthogonal groups over $\F_q$ when $q \equiv 1 \bmod 4$. To handle the 
groups over $\F_q$ with $q \equiv 3 \bmod 4$ or $2|q$, we use the following result:

\begin{prop}\label{ratio}
Let $S$ be a non-abelian simple group of Lie type in characteristic $p$. Suppose $N = p^at^b$ with $t \in \cR(S)$,
where $\cR(S)$ is defined after Theorem \ref{prime1}. The word map $(x,y) \mapsto x^Ny^N$ is 
surjective on $G$, where $S = G/\bfZ(G)$ and
$G$ is one of the following groups:

\begin{enumerate}[\rm(i)]
\item $\Sp_{2n}(q)$, where $2|q \geq 8$ and $2 \nmid n \geq 3$; 

\item $\Sp_{2n}(q)$, where $2 \nmid q  \geq 11$ and $2 \nmid n  \geq 3$; 

\item $\O_{2n+1}(q)$, where $2 \nmid q \geq 7$, $2 \nmid n \geq 3$, and
$(n,q) \neq (3,7)$; 

\item $\Omega^\pm_{2n}(q)$, where $n \geq 4$, $q \geq 5$, and 
$n \neq 5,7$ when $q=5$.
\end{enumerate}
\end{prop}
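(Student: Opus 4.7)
The plan is to invoke Corollary \ref{prime2}(ii): since $N = p^a t^b$ with $t \in \cR(S)$, surjectivity of $(x,y) \mapsto x^N y^N$ on $G$ reduces to the assertion $|\cX| < |G|/2$, where $\cX$ is the set of elements of $G$ whose order is divisible by $p$ or by $t$. I split $|\cX| \leq |\cX_p| + |\cX_t|$ into $p$-singular and $t$-singular contributions and bound each proportion separately.

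For $|\cX_t|/|G|$, I would exploit the fact, read off from Table \ref{primes}, that $t \in \cR(S)$ is a Zsigmondy primitive prime divisor $\p(p,e)$ for an explicit $e$. Standard primitive prime divisor arguments then show that the Sylow $t$-subgroup of $G$ is cyclic, that it lies inside a unique (up to $G$-conjugacy) maximal torus $T$, and that the centralizer in $G$ of any order-$t$ element of this torus is $T$ itself. Hence every $t$-singular element of $G$ lies in a unique conjugate of $T$, giving
$$\frac{|\cX_t|}{|G|} \;=\; \frac{|G : N_G(T)|\cdot|T|\bigl(1 - 1/|T|_t\bigr)}{|G|} \;=\; \frac{1 - 1/|T|_t}{|W_T|} \;\leq\; \frac{1}{|W_T|},$$
where $W_T := N_G(T)/T$ is the relative Weyl group. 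For the tori arising in cases (i)--(iv), $|W_T|$ is at least $n$, giving $|\cX_t|/|G| \leq 1/n \leq 1/3$.

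For $|\cX_p|/|G|$, I would use Steinberg's formula for the number of unipotent elements together with Jordan decomposition: a $p$-singular element of $G$ has the form $su$ with $s$ semisimple, $u \neq 1$ unipotent in $\bfC_G(s)$, and $u$ commuting with $s$. Summing the contributions $|s^G|\cdot(\mathrm{unipotent\ count}(\bfC_G(s)) - 1)$ over semisimple classes produces a bound of the form $|\cX_p|/|G| \leq c/(q-1)$ with $c$ depending only on the Lie type and rank. The thresholds $q \geq 5, 7, 8, 11$ imposed in the four parts are calibrated precisely so that this bound, added to the $1/n$ bound above, stays strictly below $1/2$.

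The main obstacle will be the boundary cases where $n \in \{3,4\}$ and $q$ sits at the lower allowable threshold, because there the two individual bounds are simultaneously near-tight. For such groups one would either sharpen the $p$-singular count by enumerating unipotent classes inside centralizers of semisimple elements case by case, or invoke the character-sum bound of Lemma \ref{basic}(ii) directly with the regular semisimple pair of Theorem \ref{prime1}. The exclusions $(n,q) = (3,7)$ in part (iii) and $n \in \{5,7\}$ with $q = 5$ in part (iv) correspond precisely to the parameter values for which the crude estimate $|\cX|/|G| < 1/2$ barely fails; these residual groups are presumably handled by a dedicated companion lemma elsewhere in the paper.
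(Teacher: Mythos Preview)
Your overall strategy matches the paper's exactly: invoke Corollary~\ref{prime2}(ii), split $\cX = \cX_p \cup \cX_t$, bound $|\cX_t|/|G|$ via the normalizer of the unique maximal torus $T$ containing the $t$-elements, and bound $|\cX_p|/|G|$ by the Guralnick--L\"ubeck estimates (the paper cites \cite[Theorem 2.3]{GL} for the explicit constants $c(q)$ rather than rederiving them).

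The genuine gap is in your torus bound. You assert $|W_T| \geq n$, giving $|\cX_t|/|G| \leq 1/n$; the paper shows that in cases (i)--(iii) one actually has $|N_G(T)/T|$ divisible by $2n$. The extra factor $2$ comes from the observation (via \cite[Proposition 3.1]{TZ3}) that the central involution of the Weyl group inverts $T$, so $|N_G(T)/T|$ is even, while it already contains a cyclic group of odd order $n$. This factor is not cosmetic: with only $1/n$, already $\Sp_6(11)$ gives $\tfrac{1}{3} + \tfrac{3}{10} + \tfrac{1}{100} > \tfrac{1}{2}$, and similarly $\Sp_6(8)$, $\O_{11}(7)$, etc.\ all fail. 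So your ``boundary obstacles'' are not a handful of corner cases but essentially all small-$n$ instances, and the fallbacks you propose would amount to a different argument rather than a patch.

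For case (iv) you are also missing a structural step. The paper first uses Corollary~\ref{main-real} to discard $n=4$ (and the even-$n$, $q=5$ cases $n=6,8$), since the relevant orthogonal groups are real. Only then does it analyze the torus: here $T$ comes from $\SO^\pm_2(q^m)$ with $m \in \{n-1,n\}$, so one only gets $|W_T| \geq n-1$ rather than $2n$, and the resulting inequality $\tfrac{1}{n-1} + \tfrac{1}{q-1} + \tfrac{2}{(q-1)^2} \leq \tfrac{1}{2}$ is what forces the exclusions $n=5,7$ at $q=5$.
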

 
\pf
By Corollary \ref{prime2}(ii), it suffices to show that $|\cX| < |G|/2$ for
$\cX = \cX_p \cup \cX_t$, where $\cX_s$ is  the set of all elements of $G$ that have order divisible by 
$s$ for $s \in \{p,t\}$. 
We use \cite[Theorem 2.3]{GL} which states that $|\cX_p|/|G| < c(q)$,
where 
$$c(q) :=  \left\{ \begin{array}{ll}2/(q-1) + 1/(q-1)^2, & G = \Sp_{2n}(q), ~2|q\\ 
   3/(q-1) + 1/(q-1)^2, & G = \Sp_{2n}(q), ~2 \nmid q \\ 2/(q-1) + 2/(q-1)^2, & G = \Omega_{2n+1}(q)
     \mbox{ or }\Omega^\pm_{2n}(q). 
    \end{array} \right.$$
(Note that this result applies to $G$ since $\bfZ(G)$ is a $p'$-group.)
To estimate $|\cX_t|$, observe that every nontrivial $t$-element $x$ of  $G$ 
is regular semisimple, 
with $\bfC_G(x)$ being a conjugate $T^g$ of a fixed maximal torus $T$ of $G$. Hence
if $y \in \cX_t$ has the $t$-part equal to $x$ then $y \in T^g$. 
It follows that 
$$|\cX_t|/|G| < |T|/|\bfN_G(T)|.$$ 
%by \cite[Lemma 2.1]{GL} (as $1 \in T$ but $1 \notin \cX_t$). 
For cases (i)--(iii), 
$\bfN_G(T)/T$ contains a cyclic group of odd order $n$.
Moreover, since the central involution of the Weyl group of $G$ inverts $T$, 
cf.\ \cite[Proposition 3.1]{TZ3}, $|\bfN_G(T)/T|$ is even.
It follows that $2n$ divides $|\bfN_G(T)/T|$. If in addition $G \neq \O_7(7)$, then
$$\frac{|\cX|}{|G|} \leq \frac{|\cX_t|}{|G|} + \frac{|\cX_p|}{|G|} < \frac{1}{2n} + c(q) < 0.49.$$
%respectively. 
%When $G = \O_7(7)$, note that $t = 19$ or $43$ and 
%$|T:P_t| \geq 9$, respectively $4$. It follows that $|\bfN_G(T)/P_t|  \geq 4 \cdot 6$ and so 
%$|\cX|/|G| <  0.44$ -- needs to be revised.

In case (iv), we may by Corollary \ref{main-real} assume 
that $n \geq 5$. Note that $T$ is constructed using two kinds of cyclic maximal tori.
The first is  
$$T_1 = \SO^+_2(q^m) \cap G \leq \SO^+_{2m}(q)$$
with $m$ odd, where 
$$\bfN_{\Omega^+_{2m}(q)}(T_1)/T_1 \cong  C_m.$$ 
The second is  
$$T_2 = \SO^-_2(q^m) \cap G \leq \SO^-_{2m}(q),$$
where 
$$\bfN_{\Omega^-_{2m}(q)}(T_2)/T_2  \hookleftarrow C_m.$$   
Furthermore, $m \in \{n-1,n\}$.  
Hence, if $q \geq 7$, or $q = 5$ and $n \geq 9$, then 
$$\frac{|\cX|}{|G|} \leq \frac{|\cX_t|}{|G|} + \frac{|\cX_p|}{|G|} <
    \frac{1}{n-1}+ \frac{1}{q-1} + \frac{2}{(q-1)^2} \leq 1/2,$$
as desired. If $q = 5$ and $n = 6,8$ then we are done by Corollary \ref{main-real}.
%In the remaining cases where $q=5$ and $n=5,7$, note that $|T/P_t| \geq 2$. (Indeed, if $m = n$,
%then $|T| = (5^n \pm 1)/2$ is divisible by $3$ or $2$ and so $|P_t| \leq |T|/2$. If $m = n-1$, then,
%since $m$ is even, $T$ is constructed using $T_2 = \SO^-_2(5^m)$, and
%$$T = (\SO^-_2(5^m) \times \SO^\e_2(5)) \cap G,$$
%where $G = \O^\e_{2n}(5)$ and $\e = \pm$. Since $P_t < T_2$,  $|P_t| \leq |T|/4$.)  It follows 
%that $|\cX_t|/|G| \leq 1/2(n-1) \leq 1/8$ and so $|\cX|/|G| < 1/2$. -- need to
%be revised.
\hal

\begin{lem}\label{orbit}
Let $G$ be a finite group and let $g \in G$. If $\cO$ is a $\Gal(\overline\Q/\Q)$-orbit on 
$$\{ \chi \in \Irr(G) \mid \chi(g) \neq 0\},$$ 
then 
$$\sum_{\chi \in \cO}|\chi(g)|^2 \geq |\cO|.$$
In particular,
$$|\{ \chi \in \Irr(G) \mid \chi(g) \neq 0\}| \leq |\bfC_G(g)|.$$ 
\end{lem}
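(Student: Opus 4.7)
The plan is to exploit that $\chi(g)$ is an algebraic integer for every $\chi \in \Irr(G)$, combined with the Galois symmetry of the orbit $\cO$, to produce an honest rational integer from the orbit product, and then pass to the bound $|\cO|$ by AM--GM.

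First I fix a Galois orbit $\cO$ as in the statement and consider $\Pi := \prod_{\chi \in \cO}\chi(g)$. Each factor is an algebraic integer, so $\Pi \in \overline{\Z}$. For any $\sigma \in \Gal(\overline\Q/\Q)$, the assignment $\chi \mapsto \chi^\sigma$ (with $\chi^\sigma(h) = \sigma(\chi(h))$) permutes $\cO$, so
$$\sigma(\Pi) = \prod_{\chi\in\cO}\sigma(\chi(g)) = \prod_{\chi\in\cO}\chi^\sigma(g) = \Pi.$$
Hence $\Pi \in \overline{\Z}\cap\Q = \Z$. By hypothesis none of the factors vanishes, so $|\Pi|\geq 1$. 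Applying AM--GM to the $|\cO|$ nonnegative reals $|\chi(g)|^2$ then gives
$$\frac{1}{|\cO|}\sum_{\chi\in\cO}|\chi(g)|^2 \;\geq\; \Bigl(\prod_{\chi\in\cO}|\chi(g)|^2\Bigr)^{1/|\cO|} = |\Pi|^{2/|\cO|} \geq 1,$$
which is the first assertion of the lemma.

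For the ``in particular'' part, I partition $\{\chi \in \Irr(G) \mid \chi(g)\neq 0\}$ into its Galois orbits $\cO_1,\dots,\cO_k$ and sum the inequality just established. Combined with the second orthogonality relation $\sum_{\chi \in \Irr(G)}|\chi(g)|^2 = |\bfC_G(g)|$, this yields
$$|\{\chi\in\Irr(G)\mid \chi(g)\neq 0\}| = \sum_{i=1}^k|\cO_i| \leq \sum_{i=1}^k\sum_{\chi\in\cO_i}|\chi(g)|^2 \leq |\bfC_G(g)|,$$
as required. I do not anticipate any real obstacle; the only slightly nonobvious point is the observation that the product over a complete Galois orbit is a Galois-fixed algebraic integer, hence an integer, which then forces $|\Pi|\geq 1$.
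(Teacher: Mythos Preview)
Your proof is correct and follows essentially the same argument as the paper: the orbit product is a nonzero Galois-fixed algebraic integer, hence a nonzero rational integer, and then AM--GM (which the paper slightly misnames as Cauchy--Schwarz) gives the orbit bound, with the final inequality following by summing over orbits and second orthogonality.
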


\pf
Note that $\prod_{\chi \in \cO}\chi(g)$ is a nonzero algebraic integer fixed by 
$\Gal(\overline\Q/\Q)$, whence it is a nonzero integer. The Cauchy-Schwarz
 inequality implies that 
$$\sum_{\chi \in \cO}|\chi(g)|^2 \geq |\cO| \cdot |\prod_{\chi \in \cO}\chi(g)|^{2/|\cO|} \geq |\cO|.$$
%For the second statement, 
Let $\cO_1, \ldots, \cO_t$ denote all of the distinct 
$\Gal(\overline\Q/\Q)$-orbits on $\{ \chi \in \Irr(G) \mid \chi(g) \neq 0\}$. The first 
statement implies that 
$$|\bfC_G(g)| = \sum_{\chi \in \Irr(G),~\chi(g) \neq 0}|\chi(g)|^2 = 
   \sum^t_{i=1}\sum_{\chi \in \cO_i}|\chi(g)|^2 \geq \sum^t_{i=1}|\cO_i|.$$
\hal
 
\begin{rem}\label{general}
{\rm Some natural generalizations of Theorem \ref{main1} are false.}

{\rm (i)} It is not true that for every $N = p^aq^b$ the word map $(x,y) \mapsto x^Ny^N$ is always surjective on every quasisimple group $G$, or at least hits all the non-central elements of $G$.  {\rm For instance, 
if $N = 20$, then this map does not hit any element of order $5$ in $G = \SL_2(5)$ (indeed, 
$x^{20}$ has order $1$ or $3$ in $G$, and if $x \in G$ has order $3$ then 
$\{1\} \cup x^G \cup x^G \cdot x^G$ does not contain any element of order $5$ of $G$).}

\smallskip
{\rm (ii)} It is not true that for every odd integer $N$ the word map $(x,y) \mapsto x^Ny^N$ is always surjective on every non-abelian simple group $G$. {\rm For instance, consider a
prime power $q > 3$ where $q \equiv 3 \bmod 8$ and set $G : = \PSL_2(q)$ and $N := q(q^2-1)/8$. 
Note that $x^N$ has order $1$ or $2$ for every $x \in G$. 
%Note that for any $x \in G$, $x^N$ has order $1$ or $2$. 
It follows that every element of $G$ that is hit by the 
word map $(x,y) \mapsto x^Ny^N$ is either an involution or a product of two involutions, so 
it is real. On the other hand, the nontrivial unipotent elements of $G$ are not real. The same arguments show that
the word map $(x,y) \mapsto x^Ny^N$ is not surjective on the Ree group $G = \tw2 G_2(q)$, if $q = 3^{2a+1} > 3$ and 
$N = |G|_{2'}$. 
It is an open question whether these two families of simple groups exhaust all the simple groups $G$ on which
the word map $(x,y) \mapsto x^Ny^N$ is not surjective for some odd $N$.}
\end{rem}

\begin{exa}\label{bounded}
{\em By \cite[Corollary 4.2]{AST}, there are infinitely primes $p$ such that $\O(p^2-1) \leq 21$. For every 
such prime $p$, the exponent of $\PSL_2(p)$ divides $N_p := p(p^2-1)$, so 
the word map $(x,y) \mapsto x^{N_p}y^{N_p}$ cannot be surjective on $\PSL_2(p)$ (its image consists only of the identity element); on the other hand, $\pi(N_p) \leq \O(N_p) \leq 22$. Thus 
neither Theorem \ref{main3} nor \ref{main4} holds for finite simple groups of Lie type and 
bounded rank.}
\end{exa} 
 
\section{Centralizers of unbreakable elements}

\subsection{Symplectic and orthogonal groups}

\begin{defn}\label{unbr-spo}
{\em Let $\Cl(V) = \Sp(V)$ or $\O (V)$ be a finite symplectic or orthogonal group. An element $x $ of $\Cl(V)$ is {\it breakable} if there is a proper, nonzero, non-degenerate subspace $U$ of $V$ such that $x  = x_1x_2 \in \Cl(U) \times \Cl(U^\perp)$ (with $x_1 \in \Cl(U)$, $x_2 \in \Cl(U^\perp)$), and either 

(i) $\Cl(U)$ and $\Cl(U^\perp)$ are both perfect, or

(ii) $\Cl(U^\perp)$ is perfect and $x_1 = \pm 1_{U}$.

\noindent
Otherwise, $x$ is {\it unbreakable}.}
\end{defn}

\begin{lemma}\label{spunb}
Let $G = \Sp_{2n}(q) = \Sp(V)$ with $n\ge 2$, and assume that $n\ge 4$ if $q=3$ and that $n\ge 7$ if $q=2$.
If $x \in G$ is unbreakable, then $|\bfC_G(x)|\le N$ where $N$ is as in Table $\ref{spbd}$.
\end{lemma}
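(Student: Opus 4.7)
The plan is to exploit the primary decomposition of $x$ acting on $V$, together with the restrictive force of unbreakability, to reduce to a small list of shapes on which the centralizer is easy to bound. First I would write $V$ as the orthogonal direct sum $V = \bigoplus_i V_i$, where each $V_i$ is the $x$-invariant non-degenerate subspace attached either to a self-reciprocal irreducible factor $f$ of the minimal polynomial of $x$, or to a reciprocal pair $\{f,f^*\}$ (with $f^*(T) = T^{\deg f}f(T^{-1})/f(0)$). This decomposition is orthogonal because $x$ preserves the symplectic form, and it is canonical, so $\bfC_G(x)$ permutes the $V_i$ and
$$|\bfC_G(x)| \le \prod_i |\bfC_{\Sp(V_i)}(x|_{V_i})| \cdot \prod |S_{d_i}|,$$
where the second factor accounts for permutations of isomorphic components. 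Standard formulas in \cite{KL} express each factor in terms of the Jordan-block partition of the semisimple-plus-unipotent part of $x|_{V_i}$.

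Next I would bring in unbreakability. Recall $\Sp_{2m}(q)$ is perfect except in the three cases $(2m,q) \in \{(2,2),(2,3),(4,2)\}$. If two distinct indices $i \neq j$ satisfy $\dim V_i, \dim V_j \ge 6$ (or $\ge 4$ when $q \ge 4$, or $\ge 2$ when $q \ge 5$), then grouping $U := V_i$ and $U^\perp = V \ominus U$ expresses $x$ as a product in $\Sp(U) \times \Sp(U^\perp)$ with both factor groups perfect, violating unbreakability of type (i). Similarly, if $x|_{V_i} = \pm 1_{V_i}$ for some $i$ and $V \ominus V_i$ supports a perfect symplectic group, then unbreakability of type (ii) is violated. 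The lower bounds on $n$ in the hypothesis ($n \ge 4$ when $q=3$, $n \ge 7$ when $q=2$) are precisely what ensures that after removing any small component we still have a perfect symplectic factor, so these two forbidden configurations really are forbidden. This leaves only a short list of admissible decomposition shapes: essentially a single dominant $V_i$, possibly together with at most one tiny component $V_j$ on which $x$ acts nontrivially and with $\Sp(V_j)$ non-perfect.

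For each admissible shape I would then read off $|\bfC_G(x)|$ from the formulas in step one: the dominant component contributes at most the centralizer of a regular semisimple-plus-small-unipotent element in $\Sp_{2m}(q)$ with $2m$ close to $2n$, and the small component contributes a bounded factor. Verifying that the resulting products do not exceed the entries in Table \ref{spbd} is then a finite check on a small number of shapes. The main obstacle I expect is the bookkeeping in the shape-classification step, particularly the borderline cases at $q=2,3$ where several non-perfect symplectic groups appear simultaneously and where the difference between an element behaving as $\pm 1_{V_j}$ versus acting nontrivially on a small $V_j$ changes which type of breakability is available; the numerical hypotheses on $n$ are there to defeat exactly these borderline configurations, and pinning down that they do so in every subcase is the real work.
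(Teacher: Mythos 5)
Your reduction via the primary decomposition captures only part of what unbreakability forbids, and this leaves a genuine gap. The decomposition $V=\bigoplus_i V_i$ into components attached to self-reciprocal irreducible factors (or reciprocal pairs) of the minimal polynomial separates distinct eigenvalue blocks, but it does not see the Jordan structure of $x$ inside a single component, and breakability in the sense of Definition \ref{unbr-spo} can arise from non-degenerate $x$-invariant subspaces that split one primary component. For instance a unipotent element with Jordan form $J_2^n$ (or $(J_1^{2n-2},J_2)$, or $-1$ times such) has a single primary component, yet it lies in a subgroup $\Sp_2(q)\times \Sp_{2n-2}(q)$ and its centralizer has order roughly $q^{n^2}$, far above every entry of Table \ref{spbd}. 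Consequently your claim that the dominant component contributes ``at most the centralizer of a regular semisimple-plus-small-unipotent element'' is unjustified and false in general: nothing in your argument rules out a highly non-regular unipotent part concentrated on the dominant component. The same issue recurs inside each factor $\GL^{\e}_{a_i}(q^{b_i})$ of $\bfC_G(s)$ for the semisimple part $s$ of $x$: a Jordan block $J_c$ of the unipotent part there sits inside a natural non-degenerate subspace (a symplectic group of dimension $2cb_i$), so unbreakability also constrains the Jordan type within these factors, which your proposal never addresses.

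The missing ingredient is precisely where the paper's proof does most of its work: the orthogonal decomposition of the (possibly negated) unipotent action into summands $V(2m)$ (a single Jordan block on a non-degenerate space) and $W(m)$ (a pair of Jordan blocks $J_m^2$), as in \cite[Chapter 6 and 3.12]{LSei}. Each such summand is non-degenerate, so it yields a breaking of $x$ unless $V\downarrow x$ is a single $W(n)$ or $V(2n)$ --- up to the few extra configurations allowed when $q=2,3$, where $\Sp_2(2)$, $\Sp_2(3)$ and $\Sp_4(2)$ fail to be perfect --- and only after this classification (and the analogous single-block restriction inside the $\GL^{\e}_{a}(q^{b})$ factors) can one read off $|\bfC_G(x)|$ from \cite[7.1, 7.3]{LSei} and compare with Table \ref{spbd}. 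Your primary-decomposition bookkeeping is compatible with this analysis, but on its own it does not deliver the stated bound.
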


\begin{table}[htb]
\[
\begin{array}{l|l|l}
\hline 
n & q & N \\
\hline
\hbox{odd} & q>3, q\hbox{ odd} & q^{2n-1}(q^2-1) \\
                     & q>3, q\hbox{ even} & 2q^{2n}(q+1) \\
                      & q=3 & 24 \cdot 3^{2n-2} \\
\hbox{even} & q>3, q\hbox{ odd} & 2q^n \\
                     & q>3, q\hbox{ even} & q^{2n}(q^2-1) \\
                      & q=3 & 48\cdot 3^{2n+1} \\
\hbox{any} & q=2 & 9\cdot 2^{2n+9} \\
\hline
\end{array}
\]
\caption{Upper bounds for symplectic groups} \label{spbd}
\end{table} 

\pf Assume first that $x$ is unipotent and $q$ is odd. By \cite[3.12]{LSei}, $V\downarrow x$ is an orthogonal sum of non-degenerate subspaces of the form $W(m)$ and $V(2m)$, where $x$ acts on $W(m)$ as $J_m^2$, the sum of two Jordan blocks of size $m$, and on $V(2m)$ as $J_{2m}$. Moreover, if $m$ is even then $W(m) \cong V(m)^2$ as $x$-modules. For $q>3$ the symplectic group $\Sp(V(m))$ is perfect for every $m\ge 2$, so the unbreakability of $x$ implies that 
$V\downarrow x$ is either $W(n)$ with $n$ odd, or $V(2n)$. The corresponding orders of $\bfC_G(x)$ are given by \cite[7.1]{LSei}, and the largest are those in Table \ref{spbd} for $q>3$ odd. If $q=3$ then $\Sp_2(3)$ is not perfect, so there are more unbreakable possibilities for $x$:
\[
\begin{array}{l|l}
\hline
V\downarrow x & |\bfC_G(x)| \\
\hline
V(2n) & 2\cdot 3^n \\
V(2n-2)+V(2) & 4\cdot 3^{n+2} \\
W(n) \,(n\hbox{ odd}) & 24\cdot 3^{2n-2} \\
W(n-1)+V(2)\,(n\hbox{ even}) & 48\cdot 3^{2n+1} \\
\hline
\end{array}
\]
Again, the values of $|\bfC_G(x)|$ are given by \cite[7.1]{LSei}, and the largest are those in Table \ref{spbd}.

Next assume $x$ is unipotent and $q$ is even. Again, $V\downarrow x$ is an orthogonal sum of non-degenerate subspaces of the form $W(m)$ and $V(2m)$ (see \cite[Chapter 6]{LSei}). If $q\ge 4$, the unbreakability of $x$ implies that 
$V\downarrow x$ is either $W(n)$ or $V(2n)$. The corresponding orders of $\bfC_G(x)$ are given by \cite[7.3]{LSei}, and the largest are those in Table \ref{spbd} for $q>2$ even. If $q=2$ then neither $\Sp_2(2)$ nor $\Sp_4(2)$ is perfect, so for $n\ge 7$, the possible $V \downarrow x$ for unbreakable $x$ are of the form $X+Y$, where $X = W(n-k)$ or $V(2n-2k)$ and $Y = W(k)$, $V(2k)$ or $V(2)^k$ for some $k\le 2$. By \cite[7.3]{LSei}, the largest centralizer order occurs for $W(n-2)+W(2)$, and is at most $9\cdot 2^{2n+9}$, as in Table \ref{spbd}.

%We have now handled the case where $x$ is unipotent. 
Now suppose $x$ is not unipotent and write $x=su$ with semisimple part $s$ and unipotent part $u$. If $s \in \bfZ(G)$ then the argument for the unipotent case above applies, so assume $s \not \in \bfZ(G)$. Then 
\[
\bfC_G(s) = \Sp_{2r}(q) \times \Sp_{2t}(q) \times \prod \GL_{a_i}^{\e_i}(q^{b_i}),
\]
where $2r,2t$ are the dimensions of the 1- and $-1$- eigenspaces of $s$ (with $t=0$ for $q$ even), and $r+t+2\sum a_ib_i = n$. 

If $q>3$ then the unbreakability of $x$ implies that $r=t=0$ and $a_1b_1 = n$; write $a=a_1,b=b_1$. Moreover, in $\bfC_G(s) = \GL_{a}^{\e}(q^{b})$, $u$ must be a single Jordan block $J_{a}$. So from 
\cite[7.1]{LSei}, 
$|\bfC_G(x)| = |\bfC_{\bfC_G(s)}(u)| = (q^b-\e)q^{b(a-1)} \le q^n+1$, giving the result in this case.

Now consider $q=3$. As $x$ is unbreakable, either $2r$ or $2t$ is equal to $2n-2$, or $a_1b_1 \in \{n-1,n\}$. In the former case, $u = u_1u_2 \in \bfC_G(s) = \Sp_{2n-2}(3) \times H$ with $H = \Sp_2(3)$ or $\GU_1(3)$, and unbreakability forces $V_{2n-2} \downarrow {u_1}$ to be $W(n-1)$ ($n$ even) or $V(2n-2)$. Now \cite[7.1]{LSei} shows that $|\bfC_G(x)|$ is less than the bound in Table \ref{spbd}. In the latter case $u = u_1u_2 \in \bfC_G(s) = \GL_a^\e(q^b) \times H$ with either $ab=n$, $H=1$,  or $ab=n-1$, $H \in \{ \Sp_2(3), \GU_1(3)\}$. If $ab=n$, unbreakability forces $u_1$ to be $J_a$ or $(J_{a-1},J_1)$;  likewise if $ab=n-1$, then $u_1 = J_a$. In either case $|\bfC_G(x)|$ is less than the bound in Table \ref{spbd}.

Finally, suppose $q=2$. Here unbreakability forces either $r \ge n-2$ or $a_1b_1 \ge n-2$. If $r\ge n-2$ then $u = u_1u_2 \in \bfC_G(s) = \Sp_{2r}(2) \times H$ with $H \le \Sp_{2n-2r}(2)$, and $V_{2r}\downarrow u_1$ is $V(2r)$, $W(r)$, or 
$V(2n-4)+V(2)$ ($r=n-1$) or $W(n-2)+V(2)$ ($r=n-1$). The largest possible value of $|\bfC_G(x)|$ is less than the value $9\cdot 2^{2n+9}$ in Table \ref{spbd}. If $a_1b_1 = n-k\ge n-2$ then, writing $a=a_1,b=b_1$, we see that $u = u_1u_2 \in \Sp_{2k}(2) \times \GL_a^\e(q^b)$. The largest value of $|\bfC_G(x)|$ occurs when $a=n, b=1, \e=-1$ and $u = u_2 = (J_{n-2},J_1^2)$; here $\bfC_G(x) = \bfC_{\GU_n(2)}(u)$ again has order less than the bound in Table \ref{spbd}. \hal
 
\begin{lemma}\label{orthogunb}
Let $G = \O(V) = \O_{2n}^\e(q)$ ($n\ge 4$) or $G = \O(V)= \O_{2n+1}(q)$ ($n\ge 3$, $q$ odd), and assume 
further that $\dim V \ge 13$ if $q\le3$.
If $x \in G$ is unbreakable, then $|\bfC_G(x)|\le M$, where $M$ is as in Table $\ref{orthogbd}$.
\end{lemma}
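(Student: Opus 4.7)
The plan is to mirror the strategy of Lemma \ref{spunb}, replacing the symplectic Jordan block data of Liebeck--Seitz by its orthogonal analogue. First I would decompose $x = su$ into its semisimple and unipotent parts and handle the case $s \in \bfZ(G)$ separately. In that unipotent case, I invoke the Liebeck--Seitz description of unipotent classes in orthogonal groups: in odd characteristic $V\downarrow u$ splits as an orthogonal sum of non-degenerate blocks $V(m)$ (odd $m$) and $W(m)$ (the $J_m^2$ summand on a $2m$-dimensional non-degenerate space), subject to the usual parity/discriminant constraints; in even characteristic one also has the $V(2m)$ and small exceptional pieces carrying a quadratic form. Because $\Omega(U)$ is perfect for every non-degenerate subspace $U$ of large enough dimension (e.g.\ $\dim U \ge 5$ in odd characteristic, and $\dim U \ge 5$ or $6$ when $q \le 3$), unbreakability forces $V \downarrow u$ to be a single block $V(\dim V)$ or $W(n)$, or at worst one of these plus a single small exceptional summand forced by the non-perfectness of some $\Omega^\pm_k(q)$ with $k \le 4$.

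I would then read off $|\bfC_G(u)|$ for each surviving Jordan type from the centralizer orders tabulated in \cite[Chapter~7]{LSei}, take the maximum over the surviving list, and verify that it matches the corresponding row of Table \ref{orthogbd}. For the non-unipotent case, a non-central semisimple $s \in G$ has centralizer of the schematic form
\[
\bfC_G(s) \;=\; \O^{\e_1}_{2r}(q) \;\times\; \O^{\e_2}_{2t+\eta}(q) \;\times\; \prod_i \GL^{\d_i}_{a_i}(q^{b_i}),
\]
where $2r,2t$ are the dimensions of the $\pm 1$-eigenspaces, $\eta \in \{0,1\}$ according to the parity of $\dim V$, and the $\GL^{\d_i}$ factors come from pairs of non-real eigenvalues. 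Unbreakability forces all but one of the orthogonal factors to be absorbed into a $\pm 1_U$-summand of Definition \ref{unbr-spo}(ii), and forces the $\GL^\d$-factor (if present) either to exhaust the remaining space or to be paired with a single small non-perfect orthogonal piece. Inside the surviving factor, $u$ must be a single Jordan block (or a minor variant when small-$q$ non-perfectness is available). Applying the $\Omega$- and $\GL^{\pm}$-centralizer formulas then gives the required bound.

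The main obstacle will be the bookkeeping for small $q$ and small-dimensional summands, because several of the groups that arise as factors, notably $\Omega^+_4(q)$ (a central product of two copies of $\SL_2(q)$), $\Omega^-_4(q) \cong \PSL_2(q^2)$, and $\Omega^\pm_{2n}(q)$ for $(n,q)$ with $q \le 3$ and $n$ small, are either not quasisimple or fail to be perfect. Each such exception enlarges the list of unbreakable Jordan types and must be cross-checked against the centralizer bound in Table \ref{orthogbd}. This is exactly why the hypothesis $\dim V \ge 13$ is imposed when $q \le 3$: it guarantees that any splitting $V = U \perp U^\perp$ arising in Definition \ref{unbr-spo} has one summand of dimension $\ge 7$, on which $\Omega$ is perfect, so that the only extra unbreakable configurations are localized in a summand of dimension at most $6$ whose centralizer contribution is easy to bound. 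Once this finite list of small-$q$ exceptions is enumerated, the final comparison with Table \ref{orthogbd} is a direct computation from \cite[Chapters 6--7]{LSei}, entirely parallel to the symplectic case.
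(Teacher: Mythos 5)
Your overall strategy (decompose $x=su$, use the Liebeck--Seitz block description, let unbreakability prune the block types, then read centralizer orders from \cite[Chapter 7]{LSei}) is indeed the route the paper takes, but your key pruning step is wrong in a way that matters. You assert that unbreakability forces $V\downarrow u$ to be a single block $V(\dim V)$ or $W(n)$ (possibly plus one small exceptional summand), and that in the non-unipotent case the surviving factor carries a \emph{single} Jordan block, with all further configurations blamed on non-perfectness of small groups at small $q$. This misreads Definition \ref{unbr-spo}: breakability requires the two factors to lie in $\Omega(U)\times\Omega(U^\perp)$ (or $x_1=\pm 1_U$), and since $\Omega$ has index $2$ in $\SO$ (and a block $J_{2k}$ in even characteristic lies in $\GO_{2k}^\e(q)\setminus\Omega_{2k}^\e(q)$), many visibly block-diagonal elements are \emph{not} breakable. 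Concretely: for $2\mid q$ a single block $V(2n)$ is not even an element of $G$, and the unbreakable unipotent types are $W(n)$ or $V(2n-2k)+V(2k)$ for \emph{arbitrary} $k$ -- a two-block configuration your enumeration excludes. For $q\ge 5$ odd the elements attaining the bound $q^{2n-2}(q+1)^2$ of Table \ref{orthogbd} are $x=-u$ with $V\downarrow u = V(n)^2$ ($n$ odd), and $x$ with $\bfC_G(s)=\GU_n(q)\cap G$ and $u=J_{n/2}^2$ -- both have \emph{two} Jordan blocks and have nothing to do with small-$q$ non-perfectness. Your plan, taken literally, would declare these elements breakable (or omit them), so the centralizer maximum you compute would not cover all unbreakable elements; the proof fails at exactly the step that determines the table entries for $q>3$.

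What is missing is the spinor-norm/discriminant bookkeeping that the paper uses to decide which block sums can actually be split inside $\Omega$: for instance $-1\in\Omega^+_{4k}(q)$, so a $W(2k)$-piece in the $-1$-eigenspace can be broken off via clause (ii) of Definition \ref{unbr-spo}; two odd blocks $V(2k_i+1)+V(2k_j+1)$ can be broken only when their combined discriminant is a square (whence at most two such blocks survive, and more than two is impossible); and a projection of $u$ to a $\GL^{\d}_{a}(q^{b})$ factor can have up to two Jordan blocks before a breakable pair appears. You would need to supply these arguments (and, for $q=2,3$, the counting along the lines of \cite[5.15]{ore}, noting that the present definition of breakable differs from the one there, which changes the $q=3$, $\dim V=2n$ maximizer from $-u$ with $V(1)^4+W(n-2)$ to the unipotent $W(2)+W(n-2)$) before the comparison with Table \ref{orthogbd} can legitimately be called a direct computation.
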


\begin{table}[htb]
\[
\begin{array}{l|l}
\hline 
 q & M \\
\hline
 q>3 & q^{2n-2}(q+1)^2 \\
q=2 & 3\cdot 2^{2n+6} \\
q=3 & 2^6\cdot 3^{2n+4}\,(\dim V = 2n) \\
        & 2^4\cdot 3^{2n+3}\,(\dim V = 2n+1) \\
\hline
\end{array}
\]
\caption{Upper bounds for orthogonal groups} \label{orthogbd}
\end{table} 

\pf First consider the case where $q\geq 4$ is even, so $G = \O_{2n}^\e(q)$. 

Assume $x$ is unipotent. By \cite[Chapter 6]{LSei}, $V \downarrow x$ is an orthogonal sum of non-degenerate subspaces of the form $V(2k)$ (a single Jordan block $J_{2k} \in \GO_{2k}^\e(q)\setminus \O_{2k}^\e(q)$) and $W(k)$ (two singular Jordan blocks $J_k^2 \in \O_{2k}^+(q)$). Since $x$ is unbreakable, $V\downarrow x$ is $W(n)$ or $V(2n-2k)+V(2k)$ for some $k$. The order of $\bfC_G(x)$ is given by \cite[7.1]{LSei}, and the largest value occurs for $W(n)$. It is $q^{2n-3}|\Sp_2(q)|$ for $n$ even, and $q^{2n-2}|\SO_2^\pm(q)|$ for $n$ odd; the former is less than the bound in Table \ref{orthogbd} for $q>3$. 

If $x = su$ is non-unipotent with semisimple part $s$ and unipotent part $u$, then $\bfC_G(s) = \O_{2k}^\d(q) \times \prod \GL_{a_i}^{\e_i}(q^{b_i})$ with $2k = \dim \bfC_V(s)$ and $k+\sum a_ib_i = n$. As each $\GL_{a_i}^{\e_i}(q^{b_i}) \le \O_{2a_ib_i}(q)$, the unbreakability of $x$ implies that either $k \ge n-1$ or $a_1b_1 \ge n-1$. In the former case $u = u_1u_2 \in \bfC_G(s) =  \O_{2n-2}^\d(q) \times \GL_1^\nu(q)$, and as in the previous paragraph $|\bfC_{ \O_{2n-2}^\d(q)}(u_1)|$ is at most $q^{2n-5}|\Sp_2(q)|$, which gives the conclusion. In the latter case $u = u_1u_2 \in 
\bfC_G(s) =  \O_{2k}^\d(q) \times \GL_a^\nu(q^b)$ with $k\le 1$ and $ab = n-k$, and unbreakability forces $u_2 \in  \GL_a^\nu(q^b)$ to be either $J_a$, or $(J_{a-1},J_1)$ with $a=n, b=1$. Then $\bfC_G(x) = \bfC_{\bfC_G(s)}(u)$ has smaller order than the bound in Table \ref{orthogbd}.

Now consider the case where $q \ge 5 $ is odd. 

For $x$ unipotent, $V \downarrow x$ is an orthogonal sum of non-degenerate spaces $W(2k)$ (namely, $J_{2k}^2 \in \O_{4k}^+(q)$) and $V(2k+1)$ (namely, $J_{2k+1} \in \O_{2k+1}(q)$). The unbreakability of $x$ implies that $V\downarrow x = W(n)$ or $V(2n+1)$, giving the conclusion by \cite[7.1]{LSei}. 

For $x=su$ non-unipotent, write 
\[
\bfC_G(s) =  (\O_a(q) \times \O_b(q) \times \prod \GL_{a_i}^{\e_i}(q^{b_i})) \cap G,
\]
where $a = \dim \bfC_V(s), b=\dim \bfC_V(-s)$ and $a+b+\sum 2a_ib_i = \dim V$. As $\GL_r^\e(q) \le \SO_{2r}(q)$ and $s$ has determinant one, $b$ is even. If $a \ne 0$ then $V_a \downarrow u$ is either $W(2k)$ or $V(2k+1)$ and $x$ is breakable. Hence $a=0$. Moreover, $-1 \in \O_{4k}^+(q)$ (see \cite[2.5.13]{KL}), 
so if $u_0$ is a unipotent element of type $W(2k)$, then $-u_0 \in \O_{4k}^+(q)$. Hence by unbreakability, if $b\ne 0$ then either $b=\dim V$ and $V_b\downarrow u = W(n)$, or $V_b \downarrow u$ is a sum of an even number of spaces $V(2k_i+1)$. The former case satisfies the conclusion as above, so assume the latter holds. If there are more than two of the spaces $V(2k_i+1)$, then there exist $i,j$ such that the discriminant of $V(2k_i+1)+V(2k_j+1)$ is a square; if $u_1$ is the projection of $u$ to this space then $-u_1 = -(J_{2k_1+1},J_{2k_j+1}) \in \O_{2k_i+2k_j+2}(q)$, contradicting unbreakability. Hence either $b=0$ or $V_b \downarrow u$ is a sum of two spaces $V(2k_i+1)$. Likewise, the projection of $u$ to a factor 
$\GL_{a_i}^{\e_i}(q^{b_i}))$ has at most two Jordan blocks; here, the only extra point to note is that if $b_i = 1$ and there are three blocks $J_1,J_k,J_l$ with the projection of $s$ to the $J_1$ block giving an element of $\O_2(q)$, then the projection of $s$ to the other blocks gives elements of $\O_{2k}(q),\O_{2l}(q)$, and $x$ is breakable. 

It follows from all these observations together with \cite[7.1]{LSei} that the largest value of $|\bfC_G(x)|$ occurs when either $b=\dim V$ and $V\downarrow u = V(n)^2$ ($n$ odd), or $\bfC_G(s) = \GU_n(q)\cap G$ and $u = (J_{n/2}^2) \in \GU_n(q)$ ($n$ even). In either case $|\bfC_G(x)| \le q^{2n-2}(q+1)^2$, as in Table \ref{orthogbd}. 

Next suppose $q=3$. Following the proof of the $q=3$ case of \cite[5.15]{ore}, for $\dim V = 2n$ the largest possibility for $|\bfC_G(x)|$ is as in Table \ref{orthogbd}, and arises when $x$ is unipotent and $V\downarrow x = W(2)+W(n-2)$; note that the larger bound given in \cite[5.15]{ore} occurs when $x = -u$ with $V\downarrow u = V(1)^4+W(n-2)$, but this element $x$ is breakable according to our definition (which is different from the definition in \cite{ore}). For $\dim V = 2n+1$ the largest value of $|\bfC_G(x)|$ is as in \cite[5.15.]{ore}.

Finally, if $q=2$ the proof of \cite[5.15]{ore} gives the bound in Table \ref{orthogbd}. \hal

\begin{lemma}\label{blocks}
{\rm (i)} Let $q=2$ or $3$, and let $G = Sp(V)$ or $\O(V)$ with the assumptions on $\dim V$ as in Lemmas $\ref{spunb}$ and $\ref{orthogunb}$. Let $\bar V = V \otimes_{\F_q} \bar \F_q$ and let $\a \in \bar \F_q$ satisfy either $\a^{q-1}=1$ or $\a^{q+1}=1$. If $x \in G$ is unbreakable, then $\dim {\rm Ker}_{\bar V}(x-\a I) \le 4$.

{\rm (ii)} Let $q=5$ and let $G = \O(V) = \O^{\pm}_{2n}(5)$ with $n\ge 5$.  Let $\bar V = V \otimes_{\F_q} \bar \F_q$ and let $\a \in \bar \F_q$ satisfy $\a^{q-1}=1$ or $\a^{q+1}=1$. If $x \in G$ is unbreakable, then $\dim {\rm Ker}_{\bar V}(x-\a I) \le 2$.
\end{lemma}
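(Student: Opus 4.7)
The plan is to use the Jordan decomposition $x = su$ with commuting semisimple $s$ and unipotent $u$, and to exploit the identity
\[
\Ker_{\bar V}(x - \alpha I) \;=\; \Ker_{\bar V}(s - \alpha I) \cap \Ker_{\bar V}(u - I),
\]
which holds because $x = \alpha u$ on the $\alpha$-eigenspace of $s$. Hence it suffices to bound the dimension of the $1$-eigenspace of $u$ restricted to $V_\alpha := \Ker_{\bar V}(s - \alpha I)$.

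In the notation used in the proofs of Lemmas \ref{spunb} and \ref{orthogunb}, $\bfC_G(s)$ decomposes as a product of classical-group factors acting on $\bfC_V(\pm s)$ together with factors $\GL_{a_i}^{\e_i}(q^{b_i})$ acting on Galois-grouped generalized eigenspaces of $s$. The hypothesis that $\alpha^{q-1}=1$ or $\alpha^{q+1}=1$ forces $V_\alpha$ to be supported on: a symplectic or orthogonal factor when $\alpha = \pm 1$; a $\GL_{a_i}(q)$ factor (with $b_i = 1$) when $\alpha \in \F_q \setminus \{\pm 1\}$; or a $\GU_{a_i}(q)$ factor (with $b_i = 1$, $\e_i = -$) when $\alpha \in \F_{q^2}\setminus\F_q$ with $\alpha^{q+1}=1$. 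In each case, $\dim V_\alpha$ equals either a $\pm 1$-eigenspace dimension of $s$ on $V$ or the $\F_q$-rank $a_i$ of one linear factor.

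I would then invoke the unbreakability analysis from the proofs of Lemmas \ref{spunb} and \ref{orthogunb}. For $q = 2, 3$, unbreakability forces $\bfC_G(s)$ to be essentially one dominant classical-group factor, possibly enlarged by a single secondary factor of very small rank (e.g.\ $\Sp_2(3)$, $\GU_1(3)$, $V(2)$), and forces $u$ inside each factor to have at most two Jordan blocks. The $1$-eigenspace of $u$ on each factor therefore has dimension at most $2$, and at most two factors can contribute simultaneously, giving $\dim\Ker_{\bar V}(x-\alpha I) \le 4$; this is sharp, as witnessed by the unipotent $x$ with $V \downarrow x = W(n-2)+W(2)$ in $\O^\pm_{2n}(3)$. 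For part (ii), the hypotheses $q = 5$ and $n \ge 5$ eliminate most secondary factors because $\Sp_2(5)$ and $\Omega_k^\pm(5)$ for $k \ge 3$ are perfect in the relevant ranges. The extremal unbreakable configurations surviving the analysis of Lemma \ref{orthogunb}---namely $x = -u$ with $V\downarrow u = V(n)^2$, and $\bfC_G(s) = \GU_n(5)\cap G$ with $u = (J_{n/2}^2)$---each give $\dim V_\alpha = 2$, yielding the sharper bound.

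The main obstacle will be the bookkeeping when $s \ne 1$: one must verify that no factor $\GL_{a_i}^{\e_i}(q^{b_i})$ with $b_i > 1$ can support any $V_\alpha$ for admissible $\alpha$ (because the Galois orbit of $\alpha$ would then have length $> 1$ over $\F_q$, incompatible with $\alpha^{q\pm 1}=1$), and that combining the dominant factor with an allowed small secondary factor never pushes the fix-space dimension past $4$ (respectively $2$). The case where $s \ne 1$ has eigenspaces on both the $\pm 1$-side and a $\GL^{\e}$-side requires the most care, since contributions from two distinct factors must be summed.
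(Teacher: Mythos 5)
Your core reduction is sound and is essentially how the paper proceeds: $\Ker_{\bar V}(x-\a I)$ is the fixed space of $u$ on the $\a$-eigenspace of $s$, and that eigenspace is carried by a classical or $\GL^{\e}/\GU$ factor of $\bfC_G(s)$ with $b_i=1$. But your bookkeeping in part (i) contains a genuine error. For a fixed admissible $\a$, the $\a$-eigenspace of $s$ lies in exactly \emph{one} factor of $\bfC_G(s)$ (the unique factor whose eigenvalue orbit contains $\a$), so the step ``at most two factors can contribute simultaneously'' has no content; what must be bounded is the number of Jordan blocks of $u$ inside that single factor. And your claim that unbreakability forces at most two Jordan blocks per factor is false for $q=2,3$: the proofs of Lemmas \ref{spunb} and \ref{orthogunb} list unbreakable elements with three or four blocks inside one factor, e.g.\ the unipotent classes $W(n-2)+W(2)$ in $\Sp_{2n}(2)$ and in $\O^{\pm}_{2n}(3)$ (four blocks, all with eigenvalue $1$ --- your own ``sharp'' example, where $s=1$ and $\bfC_G(s)=G$ is a single factor, not two), $W(n-1)+V(2)$ in $\Sp_{2n}(3)$ (three blocks), and the case $\bfC_G(s)=\GU_n(2)$ with $u=(J_{n-2},J_1^2)$ (three blocks with eigenvalue of order $3$). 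As written, your intermediate claims would prove the bound $2$ for $q=2,3$, which is false; the stated total of $4$ only comes out right because the unjustified factor-doubling offsets the incorrect per-factor bound.

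To repair part (i) you need an argument that caps the block count in the single relevant factor at four. The paper does this as follows: for $\a=\pm1$ the bound is read off the explicit lists of unbreakable types in the proofs of Lemmas \ref{spunb} and \ref{orthogunb} (the number of unipotent Jordan blocks of $\pm x$ is at most $4$); for $\a$ of order $q+1$ it argues directly that each Jordan block of $x$ on $\bar V$ with eigenvalue $\a$ and size $k$ spans, together with its conjugate block, a non-degenerate $2k$-dimensional $\F_q$-subspace $W$ with $x^W \in \Sp(W)$ or $\SO(W)$, so that five or more such blocks would yield an $x$-invariant non-degenerate splitting with both classical factors perfect, contradicting unbreakability. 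Your part (ii) follows the paper's route and is fine in outline, but the verification must cover \emph{all} unbreakable configurations from the $q\ge 5$ analysis of Lemma \ref{orthogunb} (namely $a=0$, at most two odd blocks $V(2k_i+1)$ on the $-1$-eigenspace, and at most two Jordan blocks in each $\GL^{\e_i}$ factor), not merely the two extremal largest-centralizer cases you cite.
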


\pf (i) For $\a = \pm 1$ the lemma implies that the number of unipotent Jordan blocks of $\pm x$ is at most 4, which follows from the proofs of Lemmas \ref{spunb} and \ref{orthogunb}. In the other case, $\a$ has order $q+1$. A Jordan block of $x$ on $\bar V$ with eigenvalue $\a$ and dimension $k$ corresponds to a non-degenerate subspace $W$ of $V$ of dimension $2k$ such that $x^W$ lies in $\Sp(W)$ or $\SO(W)$. Hence the unbreakability of $x$ implies that there can be no more than four such blocks. 

\vspace{2mm}
(ii)  If $x = \pm u$ with $u$ unipotent, then the proof of Lemma 3.3 (for the case where $q \ge 5$ is odd) shows that $V\downarrow u$ is $W(n)$ or $V(2k_1+1)+V(2k_2+1)$ for some $k_1,k_2$, giving the result in this case. Now suppose $x=su$ with semisimple part $s \ne \pm 1$, and let $\bfC_G(s) = \O_a(5) \times \O_b(5) \times \prod \GL_{a_i}^{\e_i}(5^{b_i})$ as in Lemma 3.3. That proof shows that $a=0$, $b$ is even, $V_b\downarrow u$ is the sum of zero or two odd-dimensional spaces $V(2k_i+1)$, and the projection of $u$ to each factor  $\GL_{a_i}^{\e_i}(5^{b_i})$ has at most 2 Jordan blocks. The conclusion of (ii) follows. \hal

\subsection{Linear and unitary groups}
\begin{defn}\label{br-slu}
 {\em (i) An element of the general linear group $\GL_n(2)$ is 
{\it breakable} if it lies in a natural subgroup of the form $\GL_a(2) \times \GL_b(2)$ where $a+b=n$, 
$1\le a\le b$ and $a,b \ne 2$. 
 
(ii) An element of the unitary group $\GU_n(2)$ is 
{\it breakable} if it lies in a natural subgroup of the form $\GU_a(2) \times \GU_b(2)$ where $a+b=n$, $1\le a\le b$ and $a,b \ne 2, 3$. 

(iii) An element of the general linear or unitary group $\GL^\e_n(3)$  is 
{\it breakable} if it lies in a natural subgroup of the form $\GL^\e_a(3) \times \GL^\e_b(3)$ where $a+b=n$, $1\le a\le b$ and $a,b \ne 2$.

(iv) If $q \geq 4$, then an element of $\GL^\e_n(q)$  
is {\it breakable} % (or {\it decomposable}) -- EOB never used 
if it lies in a natural subgroup of the form 
$\GL^\e_a(q) \times \GL^\e_b(q)$ where $a+b=n$ and $1\le a\le b$.}
\end{defn} 

%Note that in case (iv) of Definition \ref{br-slu}, 
If $q \geq 4$ and $x \in G = \GL^\e_n(q)$ is unbreakable, then  
\begin{equation}\label{cent-slu}
  |\bfC_G(x)| \leq \left\{ \begin{array}{ll}q^n-1, & \e = +,\\ q^{n-1}(q+1), & \e = -, \end{array} \right.
\end{equation}
%if $x \in G = \GL^\e_n(q)$ is unbreakable 
(cf.\ \cite[Lemma 6.7]{ore} for the case $\e = -$).  
\begin{lemma}\label{LUL-sl2}
If $n\ge 7$ and $x \in G=\GL_n(2)$ is unbreakable, then either 

{\rm (i)} $|\bfC_G(x)| \le 2^{n+2}$, or 

{\rm (ii)} $|\bfC_G(x)| = 9 \cdot 2^n$, $2|n$, and $x \in \GL_{n/2}(4)$.
\end{lemma}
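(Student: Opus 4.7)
The plan is to analyze unbreakable $x$ via the $\F_2[x]$-module structure of $V = \F_2^n$. First, I would decompose $V = \bigoplus_{i=1}^s M_i$ into indecomposable summands $M_i \cong \F_2[t]/(p_i(t)^{k_i})$ for irreducible $p_i \neq t$ (since $x \in \GL$). By Krull--Schmidt, every $x$-invariant direct sum decomposition $V = U \oplus W$ corresponds, up to reshuffling of isomorphic summands, to a partition of the multiset $\{M_1, \dots, M_s\}$, and unbreakability forces every nontrivial partial sum of $\dim M_i$ to lie in $\{2, n-2\}$. A short combinatorial check using $n \geq 7$ rules out $s \geq 3$: singleton subsets force each $\dim M_i \in \{2, n-2\}$; if all $\dim M_i = 2$ then $n = 2s$ and a subset of size $2$ has sum $4 \notin \{2, n-2\}$ as soon as $s \geq 4$, while $s = 3$ forces the excluded $n = 6$; if instead some $\dim M_i = n-2$, the remaining $\geq 2$ summands must sum to $2$ with each dimension in $\{2, n-2\}$, which is impossible. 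Hence either $V$ is cyclic (Case A) or $s = 2$ with $\dim M_1 = 2$ and $\dim M_2 = n-2$ (Case B).

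Case A is immediate: $\bfC_G(x) \cong (\F_2[t]/(p_1^{k_1}))^*$ has order $2^n - 2^{n-\deg p_1} < 2^n \leq 2^{n+2}$, giving (i). For Case B with $p_1 \neq p_2$, one has $\bfC_G(x) = \mathrm{Aut}(M_1) \times \mathrm{Aut}(M_2)$; here $|\mathrm{Aut}(M_1)| \leq 3$ and $|\mathrm{Aut}(M_2)| \leq 2^{n-2} - 2^{n-3} = 2^{n-3}$ (maximum when $\deg p_2 = 1$), yielding $|\bfC_G(x)| \leq 3 \cdot 2^{n-3}$ and again (i).

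The main case, and the principal obstacle, is Case B with $p_1 = p_2 =: p$, so that both cyclic summands lie in a single primary component. I would compute $\mathrm{End}_{\F_2[x]}(V)$ via its $2 \times 2$ block description, with diagonal blocks $\mathrm{End}(M_i)$ and off-diagonals $\mathrm{Hom}(M_1, M_2) \cong \mathrm{soc}(M_2)$ and $\mathrm{Hom}(M_2, M_1) \cong M_1$, each of size $2^{\deg p}$, obtaining $|\mathrm{End}_{\F_2[x]}(V)| = 2^{n+4}$. Passing to $V/pV \cong \F_{2^{\deg p}}^{\,2}$, the induced map is upper-triangular (since $\mathrm{soc}(M_2) \subset pM_2$ when $k_2 \geq 2$) and invertibility reduces by Nakayama to nonvanishing of the two diagonal entries in $\F_{2^{\deg p}}$. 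The resulting centralizer order is $|\bfC_G(x)| = 2^{(\deg p)(a+3b)}(1 - 2^{-\deg p})^2$, where $a = \dim M_2/\deg p$ and $b = \dim M_1/\deg p$. For $p = t+1$ (so $\deg p = 1$, $a = n-2$, $b = 2$) this evaluates to exactly $2^{n+2}$, meeting (i). For $p = t^2+t+1$ (so $\deg p = 2$, $a = (n-2)/2$, $b = 1$, forcing $n$ even) it evaluates to $9 \cdot 2^n$; moreover a Hensel lift of the root of $p$ into $\F_2[x] = \F_2[t]/(p^{k_2})$ (valid since $p' = 1$) embeds $\F_4$ into $\F_2[x]$, endowing $V$ with a compatible $\F_4$-structure of dimension $n/2$ under which $x$ is $\F_4$-linear, hence $x \in \GL_{n/2}(4)$. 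This is conclusion (ii).
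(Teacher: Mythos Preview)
Your approach is correct and takes a genuinely different route from the paper's. The paper argues via the Jordan decomposition $x=su$: it first treats unipotent $x$ (Jordan form $J_n$ or $J_{n-2}+J_2$), then for $s\neq 1$ writes $\bfC_G(s)=\prod_i\GL_{a_i}(2^{b_i})$, uses unbreakability to force $a_1b_1\in\{n,n-2\}$ and to constrain the Jordan form of $u$ inside the large factor, and reads off centralizer orders from \cite[7.1]{LSei}. Your argument instead works directly with the indecomposable $\F_2[x]$-module decomposition, which packages the semisimple and unipotent information simultaneously; the combinatorial constraint on partial sums of $\dim M_i$ is exactly the translation of unbreakability, and your explicit endomorphism-ring computation replaces the appeal to \cite{LSei}. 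Your route is more self-contained (no external centralizer tables, and the Hensel/Teichm\"uller lift gives the $\GL_{n/2}(4)$ embedding cleanly), while the paper's route meshes better with the parallel arguments for $\GU_n(2)$ and $\GL_n^\e(3)$ that follow it.

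Two small slips, neither fatal. First, in Case~B with $p_1\neq p_2$ you claim $|\mathrm{Aut}(M_2)|$ is maximised at $\deg p_2=1$; in fact it is \emph{minimised} there, the maximum $2^{n-2}-1$ occurring when $M_2$ is a field. The bound $|\bfC_G(x)|\le 3(2^{n-2}-1)<2^{n+2}$ still holds, so the conclusion is unaffected. Second, when $p_1=p_2$ with $\deg p=1$ one has $k_1=2$, so $\mathrm{Hom}(M_1,M_2)$ is the $p^2$-torsion of $M_2$, not $\mathrm{soc}(M_2)$, and both off-diagonals have size $2^{\dim M_1}=4$ rather than $2^{\deg p}=2$. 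Your displayed formula $|\bfC_G(x)|=2^{d(a+3b)}(1-2^{-d})^2$ and the final values $2^{n+2}$ and $9\cdot 2^n$ are nevertheless correct (since $|\mathrm{End}|=|M_1|^3|M_2|=2^{n+4}$ in both subcases); only the verbal description of the off-diagonals needs adjusting.
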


\pf Suppose first that $x$ is unipotent. As it is unbreakable, $x$ has Jordan form $J_n$, or 
$J_{n-2}+J_2$. The order of $\bfC_G(x)$ is given by \cite[7.1]{LSei}, and the maximum possible order is $2^{n+2}$, which occurs in the last case.

Now assume that $x = su$ where $s \ne 1$ is the semisimple part and $u$ the unipotent part of $x$.  Then 
\[
\bfC_G(s) = \prod_i \GL_{a_i}(2^{b_i}),
\]
where $\sum a_ib_i = n$. Moreover, since $x \in \bfC_G(s)$ is unbreakable, we may assume 
$a_1b_1 \in \{n,n-2\}$, and write $a=a_1,b=b_1$. If $ab=n$ then $b\geq 2$. A Jordan block 
$J_c$ of $u$ as an element of $\GL_a(2^b)$ lies in a natural subgroup $\GL_{cb}(2)$, so the unbreakability of $x$ forces the Jordan form of $u$ in $\GL_a(2^b)$ to be $J_a$ or $J_{a-1}+J_1$ (with $b=2$ in the latter case). By \cite[7.1]{LSei}, $|\bfC_G(x)| = |\bfC_{\GL_a(2^b)}(u)|$ is
$2^{b(a-1)}(2^b-1) < 2^n$ in the former case, and it is 
$2^{ab}\cdot |\GL_1(2^b)|^2 = 9 \cdot 2^n$ in the latter case, in which case also 
$2|n$ and $x \in \bfC_G(s) = \GL_{n/2}(4)$. If $ab = n-2$, then 
$\bfC_G(s) \leq \GL_a(2^b) \times \GL_2(2)$ and the Jordan form of $u$ in the first factor must be 
$J_a$, whence 
\[
|\bfC_G(x)| \le 2^{b(a-1)}|\GL_1(2^b)||\GL_2(2)| = (2^{n-2}-2^{n-2-b})\cdot 6 < 2^{n+2},
\]
giving the result in this case.  \hal

\begin{lemma}\label{LUL}
If $x \in G=\GU_n(2)$ is unbreakable, then $|\bfC_G(x)| \le 2^{n+4}\cdot 3^2$ if 
$n \geq 10$ and $|\bfC_G(x)| \leq 2^{48}$ if $n = 9$.
\end{lemma}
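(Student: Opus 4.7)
The proof parallels that of Lemma \ref{LUL-sl2}, the extra work coming from the definition of unbreakability for $\GU_n(2)$ excluding both $2$-dimensional and $3$-dimensional factors (reflecting the imperfectness of both $\GU_2(2)$ and $\GU_3(2)$). The plan is to write $x = su \in G = \GU_n(2)$ via its Jordan decomposition, with $s$ semisimple, $u$ unipotent, and $su = us$, and to treat the cases $s = 1$ and $s \ne 1$ in turn.

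For $x = u$ unipotent, the Jordan blocks of $u$ correspond to an orthogonal decomposition $V = \bigoplus_i V_i$ into non-degenerate $u$-invariant subspaces of dimensions $(d_1, \ldots, d_k)$, and unbreakability requires that no grouping of the $d_i$'s into two nonempty parts with sums $a \le b$ has both $a, b \notin \{2, 3\}$. An elementary combinatorial check forces the Jordan partition, for $n \ge 10$, to be one of $(n)$, $(n-2, 2)$, $(n-3, 3)$; for $n = 9$ one additionally finds $(3, 3, 3)$. I would then read off $|\bfC_G(u)|$ in each case from \cite[Lemma 7.1]{LSei}: the dominant contribution is a $2$-power of size $\le 2^{n+2}$ multiplied by a small $3$-part coming from $\GU_m(2)$ factors with $m \in \{1, 2, 3\}$, safely below $2^{n+4}\cdot 3^2$ for $n \ge 10$ (respectively below $2^{48}$ for $n = 9$).

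If $s \ne 1$, then
\[
\bfC_G(s) = \Big( \prod_i \GL_{a_i}^{\e_i}(2^{b_i}) \Big) \cap G, \qquad \sum_i a_i b_i = n,
\]
with each factor lying in a natural unitary subgroup of $G$ of dimension $a_i b_i$. Unbreakability of $x$ constrains the profile $(a_i b_i)$ so that, for $n \ge 10$, at most one part is ``large'' (say $a_1 b_1 \ge n - 5$) and the remaining parts lie in $\{2, 3\}$; it additionally forces the projection of $u$ to the large factor $\GL_{a_1}^{\e_1}(2^{b_1})$ to be unbreakable there, hence, by the argument of Lemma \ref{LUL-sl2}, of Jordan form $J_{a_1}$ or $(J_{a_1 - 1}, J_1)$. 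Plugging these into the centralizer formulae of \cite[Lemma 7.1]{LSei} and multiplying through the residual small factors gives the required bound $2^{n+4}\cdot 3^2$. The case $n = 9$ allows further profiles (such as $\bfC_G(s) \cong \GU_3(8)$ or $\GU_3(2) \times \GU_6(2)$), and one checks directly that each such profile yields $|\bfC_G(x)| \le 2^{48}$.

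The main obstacle is the bookkeeping. Because both $2$- and $3$-dimensional exceptional pieces are permitted in the definition, and because the factors $\GL^{\e}(2^b)$ of various $b$ embed into $\GU_n(2)$ in several ways (via restriction of scalars over odd-degree extensions), the list of unbreakable profiles is considerably richer than in the $\GL_n(2)$ case treated in Lemma \ref{LUL-sl2}. Verifying that every such profile produces a centralizer order below the stated bound, and identifying the worst case that pins down the constants $2^{n+4}\cdot 3^2$ and $2^{48}$, requires a patient case-by-case enumeration.
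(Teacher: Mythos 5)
Your overall strategy is the same as the paper's: split $x=su$ into its Jordan decomposition, use unbreakability to pin down the unipotent Jordan types (for $n\ge 10$ these are indeed $J_n$, $J_{n-2}+J_2$, $J_{n-3}+J_3$, with $J_3^3$ appearing additionally at $n=9$), and in the non-central semisimple case use the shape $\bfC_G(s)=\prod\GU_{a_i}(2^{b_i})\times\prod\GL_{c_i}(2^{2d_i})$ together with unbreakability to force one dominant factor and a restricted Jordan form of $u$ inside it, reading off orders from \cite[7.1]{LSei}. However, there is a concrete error in the one place where you do commit to numbers: in the unipotent case the centralizer is \emph{not} ``a $2$-power of size $\le 2^{n+2}$ times a small $3$-part, safely below $2^{n+4}\cdot 3^2$.'' The extremal type is $J_{n-3}+J_3$, whose centralizer in $\GU_n(2)$ has order exactly $2^{n+4}\cdot(2+1)^2=2^{n+4}\cdot 3^2$ (the type $J_{n-2}+J_2$ gives $2^{n+2}\cdot 3^2$, which is what you seem to have in mind). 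This is precisely the case that determines the constant in the statement, so asserting the bound is ``safely'' met shows the computation was not actually carried out; the semisimple case must then be checked against the same sharp constant (the worst configuration there, $\bfC_G(s)=\GU_{a}(2^{3})$ with $u$ of type $J_{a-1}+J_1$, gives $2^n\cdot 9^2$, which is why the bound survives).

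Second, your unbreakability constraint on the semisimple profile is stated too weakly: it is not enough that the remaining parts each lie in $\{2,3\}$ -- grouping the eigenspace blocks into two non-degenerate summands shows the complement of the large factor must have \emph{total} dimension $0$, $2$ or $3$ (a complement $2+2$ or $2+3$ already makes $x$ breakable). You also omit the observation, needed to bound the $\GL_{c}(2^{2d})$ factors, that $d=1$ cannot occur (a central order-$3$ element of $\GL_c(4)$ is central in a natural $\GU_{2c}(2)$, so that factor is really $\GU_{2c}(2)$). Finally, the ``patient case-by-case enumeration'' you defer is the actual content of the lemma; note that for $n=9$ the paper avoids enumerating profiles altogether by assuming $|\bfC_G(x)|\ge 2^{48}$ and showing the only possibility is $\bfC_G(s)=\GU_7(2)\times\GU_2(2)$ with $u$ projecting trivially to $\GU_7(2)$, which contradicts unbreakability -- a shortcut you may wish to adopt rather than checking profiles such as $\GU_3(8)$ or $\GU_3(2)\times\GU_6(2)$ one by one.
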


\pf 
(i) Consider the case $n \geq 10$.
Suppose first that $x$ is unipotent. As it is unbreakable, $x$ has Jordan form $J_n$, $J_{n-2}+J_2$ or $J_{n-3}+J_3$. The order of $\bfC_G(x)$ is given by \cite[7.1]{LSei}, and the maximum possible order is $2^{n+4}\cdot 3^2$, which occurs in the last case.

Suppose that $x = su$ where $s \ne 1$ is the semisimple part and $u$ the unipotent part of $x$. If $s \in \bfZ(G)$ then the argument of the previous paragraph applies. If $s \not \in \bfZ(G)$, then 
\[
\bfC_G(s) = \prod \GU_{a_i}(2^{b_i}) \times \prod \GL_{c_i}(2^{2d_i}) \le \prod \GU_{a_ib_i}(2) \times \prod \GU_{2c_id_i}(2),
\]
where $\sum a_ib_i + 2\sum c_id_i = n$, and all $b_i$ are odd. Moreover, since $x \in \bfC_G(s)$ is unbreakable, either $a_1b_1$ or $2c_1d_1$ lies in the set $\{n,n-2,n-3\}$.

Suppose $a_1b_1 \in \{n,n-2,n-3\}$, and write $a=a_1,b=b_1$. If $ab=n$ then $b > 1$
since $s \not \in \bfZ(G)$, so $b\ge 3$ (as $b$ is odd). A Jordan block $J_c$ of $u$ as an element of $\GU_a(2^b)$ lies in a natural subgroup $\GU_{cb}(2)$, so the unbreakability of $x$ forces the Jordan form of $u$ in $\GU_a(2^b)$ to be $J_a$ or $J_{a-1}+J_1$ (with $b=3$ in the latter case). By \cite[7.1]{LSei}, the largest possible value of $|\bfC_G(x)| = |\bfC_{\GU_a(2^b)}(u)|$ occurs in the latter case, 
and is 
$2^{ab}\cdot |\GU_1(2^b)|^2 = 2^n\cdot 9^2$, proving the result in this case. If $ab = n-2$, then $\bfC_G(s) = \GU_a(2^b) \times \GU_2(2)$ and the Jordan form of $u$ in the first factor must be $J_a$, whence 
\[
|\bfC_G(x)| \le 2^{b(a-1)}|\GU_1(2^b)||\GU_2(2)| = (2^{n-2}+2^{n-2-b})\cdot 18 < 2^n\cdot 3^2,
\]
giving the result in this case. Similarly, if $ab=n-3$ then 
\[
\begin{array}{ll}
|\bfC_G(x)| & \le |\bfC_{\GU_a(2^b)}(J_a)||\GU_3(2)| = 2^{b(a-1)}(2^b+1)\cdot 2^33^4 \\
               & =  (2^{n-3}+2^{n-3-b})\cdot 2^33^4 < 2^{n+4}\cdot 3^2.
\end{array}
\]

Now suppose $2c_1d_1 \in \{n,n-2,n-3\}$, and write $c=c_1,d=d_1$. If $d=1$ then the projection of $s$ in $\GL_c(2^{2d})$ is a central element of order 3 which is central in a natural subgroup $\GU_{2c}(2)$, so $\bfC_G(s)$ has a factor $\GU_{2c}(2)$ rather than $\GL_c(2^2)$. Hence $d>1$. As above, the unbreakability of $x$ forces $u$ to have Jordan form $J_c$ as an element of $\GL_c(2^{2d})$. Hence 
\[
|\bfC_G(x)| \le |\bfC_{\GL_c(2^{2d})}(J_c)|\cdot |\GU_{n-2cd}(2)|,
\]
which is a maximum when $cd = n-3$, in which case $|\bfC_G(x)| \le 2^{2d(c-1)}(2^{2d}-1)\cdot |\GU_3(2)|$ which is less than $2^n\cdot 3^4$. This completes the proof. 

\smallskip
(ii) Suppose now that $n = 9$. Assume first that $x = su$ where $s \in \bfZ(G)$ and $u$ is unipotent. As $x$ is unbreakable, $u$ has Jordan form $J_9$, $J_7+J_2$, $J_6+J_3$ or $J_3^3$. The largest centralizer is that of $J_3^3$, which has order $2^{18}|\GU_3(2)|$, less than $2^{48}$.

Now suppose $x=su$ with semisimple part $s \not \in \bfZ(G)$. Then $\bfC_G(s)$ is as described above. Assuming that $|\bfC_G(x)|\ge 2^{48}$, the only possibility is that $\bfC_G(s) = \GU_7(2)\times \GU_2(2)$ (note that 
$\GU_8(2)\times GU_1(2)$ is not possible as this would imply that $x$ is breakable). If 
$|\bfC_G(x)| = |\bfC_{C_G(s)}(u)|\ge 2^{48}$, then $u$ projects to the identity in $\GU_7(2)$; but then $x$ is breakable, a contradiction.
\hal

\begin{lemma}\label{LUL3}
If $n\ge 7$ and $x \in G=\GL^\e_n(3)$ is unbreakable, then $|\bfC_G(x)| \le 3^{n+2}\cdot 2^4$.
\end{lemma}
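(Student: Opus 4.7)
The plan is to mirror the proof of Lemma \ref{LUL}, adapted to $q=3$, noting the crucial difference that the unbreakability condition for $\GL^\e_n(3)$ from Definition \ref{br-slu}(iii) only forbids $(a,b)$-splits with both $a,b \ne 2$, so $(2, n-2)$-splits are permitted. I would write $x = su$ with semisimple part $s$ and unipotent part $u$, and split into two main cases.

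First, if $s \in \bfZ(G)$ (so $x$ is essentially unipotent), unbreakability forces the Jordan partition of $u$ to admit no $(a,b)$-split with $a,b \ne 2$, leaving only the possibilities $(n)$ and $(n-2,2)$. By \cite[7.1]{LSei}, the corresponding centralizer orders in $G = \GL^\e_n(3)$ are
\[
|\bfC_G(J_n)| = 3^{n-1}(3-\e),\qquad |\bfC_G(J_{n-2}+J_2)| = 3^{n+2}(3-\e)^2.
\]
The maximum is $3^{n+2}(3-\e)^2$, which for $\e = -$ equals $3^{n+2}\cdot 2^4$, achieving the claimed bound.

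Next, if $s \notin \bfZ(G)$, I would write
\[
\bfC_G(s) = \prod_i \GL^{\e_i}_{a_i}(3^{b_i}) \times \prod_j \GL_{c_j}(9^{d_j}),
\]
where the second product can appear only when $\e = -$ (with $b_i$ odd in the unitary factors), and $\sum a_ib_i + 2\sum c_jd_j = n$. Each factor $\GL^{\e_i}_{a_i}(3^{b_i})$ embeds naturally in $\GL^\e_{a_ib_i}(3)$ and each $\GL_{c_j}(9^{d_j})$ embeds in $\GU_{2c_jd_j}(3)$, so unbreakability of $x$ concentrates all but at most $2$ of the $n$ dimensions into a single factor. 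Thus either $a_1b_1 \in \{n, n-2\}$ or $2c_1d_1 \in \{n, n-2\}$, with a residual $\GL^\e_2(3)$ factor in the latter subcase.

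In each subcase, the projection of $u$ to the large factor must itself be unbreakable inside that factor, since a Jordan block $J_c$ of $\GL_a(3^b)$ sits inside $\GL_{bc}(3) \le \GL^\e_n(3)$. This restricts the Jordan form to be $J_a$, or $J_{a-1} + J_1$ with $b = 2$ (or with the $J_1$ part absorbed into the remainder $\GL^\e_2(3)$). I would then apply \cite[7.1]{LSei} subcase by subcase; each bound obtained is strictly less than $3^{n+2}\cdot 16$, and the worst of them matches the unipotent bound. The main obstacle is the careful bookkeeping in the $\GU$-case, where both $\GU_a(3^b)$ (with $b$ odd) and $\GL_c(9^d)$ factors appear with different embeddings into $\GU_n(3)$, and where one must verify that the permitted $(2,n-2)$-splits never inflate $|\bfC_G(x)|$ beyond the unipotent maximum.
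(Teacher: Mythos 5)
Your proposal is correct and follows the paper's own proof essentially verbatim: the unipotent/central case reduces to the Jordan types $(n)$ and $(n-2,2)$ with the maximum $3^{n+2}(3-\e)^2 \le 3^{n+2}\cdot 2^4$, and for $s \notin \bfZ(G)$ the same decomposition of $\bfC_G(s)$, the same concentration of all but at most two dimensions into one factor, and the same appeal to \cite[7.1]{LSei} case by case. The only (harmless) slip is the remark that the worst subcase matches the unipotent bound: for $\e=+$ the unbreakable element $x \in \GL_{n/2}(9)$ with Jordan form $(J_{n/2-1},J_1)$ there has centralizer of order $3^{n}\cdot 2^6$, which exceeds the $\e=+$ unipotent maximum $3^{n+2}\cdot 2^2$ though it still lies below the stated bound $3^{n+2}\cdot 2^4$.
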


\pf For $x$ unipotent the largest centralizer occurs when $x = (J_{n-2},J_2)$ and has order $ 3^{n+2}\cdot 2^4$ by \cite[7.1]{LSei}. 

Suppose $x = su$ is non-unipotent. If $s \in \bfZ(G)$ the bound of the previous paragraph applies, so assume $s \not \in \bfZ(G)$. The possibilities for $\bfC_G(s)$ are: 
\[
\begin{array}{l}
\e = +:\; \bfC_G(s) = \prod \GL_{a_i}(3^{b_i}) \\
\e=-:\; \bfC_G(s) =  \prod \GU_{a_i}(3^{b_i}) \times \prod \GL_{c_i}(3^{2d_i})
\end{array}
\]
where $\sum a_ib_i = n$ for $\e=+$, and 
$\sum a_ib_i + 2\sum c_id_i = n$ and all $b_i$ are odd for $\e=-$.
As in the previous proof, the unbreakability assumption implies that $a_1b_1 \in \{n-2,n\}$ for $\e=+$, and  either $a_1b_1$ or $2c_1d_1$ is in $\{n-2,n\}$ for $\e=-$. Now we argue as in the previous lemma that none of the possibilities for $u \in \bfC_G(s)$ give a larger centralizer order than $ 3^{n+2}\cdot 2^4$. 
\hal

\section{Theorem \ref{main1} for linear and unitary groups}
\subsection{General inductive argument}
Recall $\cR(S)$ from \S2, and the notion of unbreakability from Definition \ref{br-slu}. 

\begin{defn}\label{cond-slu}
{\em Given a prime power $q = p^f$, $\e = \pm$, and an integer $N = p^at^b$ with $t \nmid (q-\e)$ 
a prime. We say that $G = \GL^\e_n(q)$ satisfies 

{\rm (i)} the condition $\sP(N)$ if every 
$g \in G$ can be written as $g=x^Ny^N$ for some $x,y \in G$ with $x^N \in \SL^\e_n(q)$; and

{\rm (ii)} the condition $\sP_u(N)$ if every {\it unbreakable} 
$g \in G$ can be written as $g=x^Ny^N$ for some $x,y \in G$ with $x^N \in \SL^\e_n(q)$.}
\end{defn}

First we prove an extension of Theorem \ref{prime1} for $\GL^\e_n(q)$:

\begin{prop}\label{slu-generic}
Let $G = \GL^\e_n(q)$ with $n \geq 4$, $q = p^f$, and let $t \nmid p(q-\e)$ be a prime not contained in 
$\cR(\SL^\e_n(q))$. Then $\sP(N)$ holds for $G$ and for all $N = p^at^b$.   
\end{prop}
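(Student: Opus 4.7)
First, the hypothesis $t\nmid p(q-\e)$ forces $\gcd(N,q-\e)=1$. Setting $G:=\GL^\e_n(q)$, $G_0:=\SL^\e_n(q)$, and $C:=G/G_0$ (cyclic of order $q-\e$), the $N$-th power map on $C$ is a bijection; consequently the condition $x^N\in G_0$ is equivalent to $x\in G_0$, and every element of $\bfZ(G)$ or $\bfZ(G_0)$ is an $N$-th power in the respective center. Fix $u,v\in G_0$ from Theorem~\ref{prime1}. Since their orders lie in $\cR(G_0)$, which by hypothesis avoids $\{p,t\}$, both orders are coprime to $N$; hence $u,v$ and all their $G_0$-conjugates are $N$-th powers in $G_0$.

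For $g\in G_0$: if $g\notin\bfZ(G_0)$, Theorem~\ref{prime1} writes $g=u'v'$ with $u'\in u^{G_0}$, $v'\in v^{G_0}$, so $g=x^Ny^N$ with $x,y\in G_0$. If $g\in\bfZ(G_0)$, then $|g|$ divides $\gcd(n,q-\e)$ and is coprime to $N$, so $g$ is itself an $N$-th power in $\bfZ(G_0)$, and we write $g=x^N\cdot 1^N$. The main case is $g\in G\setminus G_0$. Let $d:=\det g\neq 1$ and let $\alpha\in C$ be the unique $N$-th root of $d$. Consider the maximal torus $T:=\bfC_G(v)$, isomorphic in the linear case to $\F_{q^{n-1}}^*\times\F_q^*$, with an analogous description for unitary; the restriction $\det\vert_T:T\to C$ is surjective via the field norm $N_{q^{n-1}/q}$. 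Choose $w\in T$ with $\det w=\alpha$; then $h:=gw^{-N}\in G_0$ and $w$ commutes with $v$. If $h\in\bfZ(G_0)$, the central case gives $h=x^N$ for $x\in\bfZ(G_0)$, so $g=x^Nw^N$ works directly. Otherwise, Theorem~\ref{prime1} yields $h=u'v'$ with $u'\in u^{G_0}$, $v'\in v^{G_0}$. If $v'$ can be arranged to equal $v$, then writing $v=\beta^N$ with $\beta\in\langle v\rangle\subseteq T$ and $u'=x^N$ with $x\in\langle u'\rangle\subseteq G_0$, the fact that $\beta,w\in T$ commute gives
\[
g=h\cdot w^N=u'\cdot v\cdot w^N=x^N\cdot\beta^N\cdot w^N=x^N\cdot(\beta w)^N,
\]
with $x\in G_0$ and $\beta w\in G$, as required.

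The hard part is arranging $v'=v$ in the factorization. After a simultaneous conjugation by $g_1\in G_0$ with $g_1 v' g_1^{-1}=v$, the element $w$ gets replaced by $g_1 w g_1^{-1}$; for this to lie in $T=\bfC_G(v)$ we need $g_1\in\bfN_G(T)$, which is possible precisely when $v'$ lies in the Weyl-group orbit $v^{G_0}\cap T$ (of size $|\bfN_{G_0}(T)/(T\cap G_0)|$, equal to $n-1$ in the linear case). Since a single choice of $w$ need not produce such $v'$, we exploit the freedom in picking $w\in T\cap\det^{-1}(\alpha)$, a set of size $q^{n-1}-1$ in the linear case. A Frobenius-formula character sum in $G_0$ over these $w$ and over $v'$ in the Weyl-orbit, bounded via Lemma~\ref{basic}(ii) and Lemma~\ref{orbit} (applied to $u$ in the Coxeter torus, whose support in $\Irr(G_0)$ is small), shows that for at least one choice the element $gw^{-N}v^{-1}$ lies in $u^{G_0}$, completing the proof.
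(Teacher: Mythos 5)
Your reduction has some sound parts: since $\gcd(N,q-\e)=1$, the determinant bookkeeping via an element $w$ of the torus $T=\bfC_G(v)$ with $\det w=\alpha$, $\alpha^N=\det g$, is legitimate, and the cases $g\in\SL^\e_n(q)$ are correctly disposed of (this is essentially Corollary \ref{prime2}(i)). The identity $g=u'\,v\,w^N=x^N(\beta w)^N$ is also valid once $v'=v$ is achieved, because $\beta,w$ lie in the abelian group $T$.

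However, the proof has a genuine gap exactly where the real work lies: the final claim that for some $w\in T$ with $\det w=\alpha$ one has $gw^{-N}v^{-1}\in u^{G_0}$ is asserted, not proved. No character identity is actually set up for this counting problem (a fixed element $v$, the class of $u$, and a variable $N$th power $w^N$ with $w$ ranging over a torus coset), and the tools you name do not suffice as stated: Lemma \ref{basic}(ii) only controls characters of large degree, while Lemma \ref{orbit} bounds the number of characters not vanishing at $u$ by $|\bfC_{G_0}(u)|\approx q^{n-1}$, which is not ``small'' in any sense that helps; the contribution of the low-degree characters (of degree about $q^{n-1}$, e.g.\ Weil characters) is not controlled at all. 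Note also that $\chi(gw^{-N}v^{-1})$ is a character value at a product of three non-commuting elements, so it does not factor, and the ``freedom'' of $q^{n-1}-1$ choices of $w$ is illusory when $t$ divides $q^{n-1}-\e^{n-1}$ non-primitively, since then many $w$ give the same $w^N$. In short, your last step is at least as hard as the original statement and is left unproven. The paper avoids this by a structural argument: it fixes two regular semisimple $N'$-elements $g_1$ (of order $r=\p(p,nf)$-type, $\det g_1=1$) and $g_2=\diag(h,d)$ (with $h$ of order $s$ dividing $q^{n-1}-\e^{n-1}$ primitively and $\det g_2=\det g$), and shows via centralizer orders, Lusztig's classification of $\Irr(G)$ and the analysis of \cite{MSW} that the only irreducible characters non-vanishing at both $g_1$ and $g_2$ are $\lambda$ and $\lambda\St$ with $\lambda$ linear; the Frobenius sum then equals $(q-\e)(1+\St(g)/\St(1))>0$ with no estimates needed. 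Some such mechanism (or a genuinely new estimate) is missing from your argument; in addition, the non-generic cases $(n,q,\e)=(6,2,+),(7,2,+),(4,2,-),(4,4,+),(6,4,-),(7,4,-)$, where the primes in $\cR(\SL^\e_n(q))$ must be chosen differently, would still need separate attention, as in part (ii) of the paper's proof.
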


\pf 
(i) First we consider the generic case: $\cR(\SL^\e_n(q)) = \{r,s_1=s_2\}$ and 
$r$ and $s=s_1=s_2$ are listed in Table \ref{primes}. In particular, $r = \p(q,n)$ and 
$s_1 = \p(q,n-1)$ when $\e = +$. When $\e = -$, interchanging $r$ and $s$ if necessary, we may assume that
$r$ divides $q^n-\e^n$ but not
$\prod^{n-1}_{i=1}(q^i-\e^i)$ (so $r$ is a primitive prime divisor of $(\e q)^n-1$), and similarly,
$s$ divides $q^{n-1}-\e^{n-1}$ but not $\prod_{1 \leq i \leq n,~i \neq n-1}(q^i-\e^i)$.

Since $N$ is coprime to $q-\e$, every central element of $G$ can be written as an $N$th power. 
So it suffices to prove $\sP(N)$ for every non-central $g \in G$.
Fix a regular semisimple $g_1 \in G$ of order $r$, in particular $\det(g_1) = 1$, 
and a regular semisimple $h \in \GL^\e_{n-1}(q)$ of order $s$. We can choose 
$d\in \GL^\e_1(q)$ such that $\det(g_2) = \det(g)$ for $g_2 := \diag(h,d)$.  Since both $g_1$ and 
$g_2$ have order coprime to $N$, it suffices to show that $g \in g_1^G \cdot g_2^G$. To this end
we apply Lemma \ref{basic}(i).

Consider a character $\chi \in \Irr(G)$ with $\chi(g_1)\chi(g_2) \neq 0$. It follows that 
$\chi(1)$ is neither of $r$-defect $0$ nor of $s$-defect $0$. On the other hand, the order
of the centralizer of every non-central semisimple element of $\GL^\e_n(q)$ is either coprime to
$r$ or coprime to $s$. Hence the Lusztig classification of 
irreducible characters of $G$ \cite{DL} implies that $\chi$ belongs to the rational series $\cE(G,(z))$
labeled by a central semisimple $z \in G^* \cong G$. It follows that $\chi = \lambda\psi$,
where $\lambda(1) = 1$ and $\psi$ is a unipotent character of $G$. Moreover, as shown in
the proof of \cite[Theorems 2.1--2.2]{MSW}, $\psi$ is either $1_G$ or $\St$, the Steinberg character of 
$G$. Since $\det(g_1) = 1$ and $\det(g_2) = \det(g)$ by our choice, $\lambda(g_1) = 1$ and
$\lambda(g_2)\overline\lambda(g) = 1$ for all linear $\lambda \in \Irr(G)$. Finally, since 
$g \notin \bfZ(G)$ and $|\St(g_i)| = 1$, 
$$\sum_{\chi \in \Irr(G)}\frac{\chi(g_1)\chi(g_2)\oc(g)}{\chi(1)}
   = (q-\e)\left(1+\frac{\St(g)}{\St(1)}\right) > 0,$$
so we are done.
 
\smallskip
(ii) The same arguments apply to the non-generic cases
$$(n,q,\e) = (4,4,+),~(6,4,-),(7,4,-),$$
if we choose $\cR(\SL^\e_n(q))$ to be $\{17,7\}$, $\{41,7\}$, or $\{113,7\}$, respectively.
In the remaining cases
$$(n,q,\e) = (6,2,+),~(7,2,+),~(4,2,-),$$
the statement follows from \cite[Lemma 2.12]{GT3} if we choose $\cR(\SL^\e_n(q))$ to be 
$\{31\}$, $\{127\}$, or $\{5\}$, respectively (note that $\GU_4(2) \cong C_3 \times \SU_4(2)$). 
\hal

Our proof of Theorem \ref{main1} for linear and unitary groups relies on the following inductive argument: 

\begin{prop}\label{ind-slu}
Fix a prime power $q = p^f$, an integer $n \geq 4$, and $\e = \pm$. Suppose that there is an integer 
$n_0 \geq 3$ such that the following statements hold:
\begin{enumerate}[\rm(i)]
\item Let $1 \leq k \leq n_0$ with $k \neq 2$ if $q = 2,3$, and $k \neq 3$ if $(q,\e) = (2,-)$. Then  
$\sP_u(N)$ holds for $\GL^\e_k(q)$ for every $N = p^at^b$ with $t$ prime and 
$t \nmid p(q-\e)$.
\item For each $k$ with $n_0 < k \leq n$, $\sP_u(N)$ holds for $\GL^\e_k(q)$ and for every 
$N = p^at^b$ with $t \in \cR(\SL^\e_k(q))$.
\end{enumerate}
If $N = s^at^b$ for some primes $s,t$, then the word map $(u,v) \mapsto u^Nv^N$ is surjective on 
$\PSL^\e_n(q)$.  
\end{prop}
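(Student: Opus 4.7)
My plan is to reduce to proving $\sP(N)$ on $G = \GL^\e_n(q)$, then to simplify the possibilities for $N$, and finally to handle each $g \in G$ by block-decomposing along its unbreakable constituents.

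I would first pass to $G = \GL^\e_n(q)$: given $\bar g \in \PSL^\e_n(q)$, lift to $g \in \SL^\e_n(q) \subseteq G$; if I establish $\sP(N)$ on $G$, I obtain $g = x^N y^N$ with $x^N \in \SL^\e_n(q)$, whence $y^N = (x^N)^{-1}g \in \SL^\e_n(q)$ as well, and reducing modulo $\bfZ(G)$ puts $\bar g$ in the product of two $N$th powers lying in $\PSL^\e_n(q)$. Next I reduce $N$: Corollary~\ref{prime2}(i), via \cite{EG}, disposes of the case $p \nmid N$, so I may assume $p \mid N$, and by symmetry in $\{s,t\}$ I take $s = p$; the case $t = p$ is handled identically. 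So $N = p^a t^b$ with $t \neq p$. If moreover $t \nmid p(q-\e)$ and $t \notin \cR(\SL^\e_n(q))$, then Proposition~\ref{slu-generic} already yields $\sP(N)$ on $G$. Hence I may further assume $t \in \cR(\SL^\e_n(q))$ or $t \mid q-\e$.

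Now I treat $g \in G$ according to its block structure. If $g$ is unbreakable in $G$ and $t \in \cR(\SL^\e_n(q))$, hypothesis (ii) with $k = n$ gives the conclusion at once. If $g$ is breakable, I iterate the decomposition to obtain $g = g_1 \cdots g_m$ with each $g_i \in \GL^\e_{k_i}(q)$ unbreakable in its own block and $\sum k_i = n$ (the definition of breakability for $q \in \{2,3\}$ already avoids the small-exception dimensions). For each block I invoke exactly one of three things: hypothesis (i) when $k_i \le n_0$; hypothesis (ii) when $n_0 < k_i \le n$ and $t \in \cR(\SL^\e_{k_i}(q))$; and Proposition~\ref{slu-generic} when $n_0 < k_i \le n$, $t \nmid p(q-\e)$, and $t \notin \cR(\SL^\e_{k_i}(q))$. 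Each gives $g_i = x_i^N y_i^N$ with $x_i^N \in \SL^\e_{k_i}(q)$; since the blocks commute, setting $x = \prod_i x_i$ and $y = \prod_i y_i$ yields $g = x^N y^N$ with $x^N \in \SL^\e_n(q)$.

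The principal obstacle I foresee is the case $t \mid q-\e$, which is not covered by (i) or (ii). My plan is to attack it through Corollary~\ref{prime2}(ii): one shows, via the explicit maximal torus structure of $\GL^\e_n(q)$, that the set $\cX_p \cup \cX_t$ of elements of order divisible by $p$ or $t$ has density less than $1/2$ in $G$, by the same counting strategy used in Proposition~\ref{ratio} for symplectic and orthogonal groups; then every $g$ is a product of two elements of $G \setminus (\cX_p \cup \cX_t)$, each of which is automatically an $N$th power. A secondary technical point is the determinant bookkeeping when assembling the block-wise factorizations: within each block the freedom afforded by $\sP_u(N)$ permits adjusting $x_i$ by scalars so that $\prod_i \det(x_i)^N = 1$, ensuring $x^N \in \SL^\e_n(q)$ globally.
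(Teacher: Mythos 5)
Your overall skeleton is the same as the paper's (reduce to the condition $\sP(N)$ for $\GL^\e_n(q)$, then split a breakable element into unbreakable diagonal blocks and invoke (i), (ii), or Proposition \ref{slu-generic} blockwise), but your reduction of $N$ leaves a residual case, $t \mid q-\e$, that your argument cannot close. The closure you propose --- Corollary \ref{prime2}(ii) after a torus-counting bound $|\cX_p \cup \cX_t| < |G|/2$ in the style of Proposition \ref{ratio} --- does not work. The counting in Proposition \ref{ratio} rests on the fact that there $t \in \cR(S)$, so every nontrivial $t$-element is regular semisimple and all $t$-singular elements lie in conjugates of one fixed maximal torus. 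When $t \mid q-\e$ this fails completely: $\GL^\e_n(q)$ contains scalars of order divisible by $t$, whose centralizer is the whole group, and since every maximal torus of $\GL^\e_n(q)$ has order divisible by $t$, the proportion of elements of order divisible by $t$ is at least roughly $1-1/t$ and in general well above $1/2$. So the hypothesis $|\cX| < |G|/2$ of Corollary \ref{prime2}(ii) is simply false in that case (and that corollary is in any case stated for the quasisimple group $\cG^F$ and yields surjectivity, not the determinant-constrained condition $\sP(N)$ you need for the descent).

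The repair is that this case should never arise: Corollary \ref{prime2}(i) reduces the whole proposition at once to $N = p^at^b$ with $t \in \cR(\SL^\e_n(q))$, and no prime of $\cR(\SL^\e_n(q))$ divides $q-\e$ (these primes are primitive prime divisors $\p(p,mf)$ with $m \geq 3$, or the listed exceptional primes), so $t \nmid (q-\e)$ automatically --- this is exactly how the paper dismisses it, and you should discard $t \mid q-\e$ at the outset instead of routing the reduction through Proposition \ref{slu-generic}. Two smaller points. First, your descent to $\PSL^\e_n(q)$ is imprecise: knowing $x^N, y^N \in \SL^\e_n(q)$ is not enough, since the word map on $\PSL^\e_n(q)$ requires the bases themselves to lie in (the image of) $\SL^\e_n(q)$; here one uses $\gcd(N,q-\e)=1$ (available exactly because $t \in \cR$) to deduce from $\det(x)^N=1$, with $\det(x)$ of order dividing $q-\e$, that $x,y \in \SL^\e_n(q)$ --- note this fix is unavailable precisely when $t \mid q-\e$, another sign that case must be eliminated rather than solved. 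Second, no ``scalar adjustment'' is needed when assembling the blocks: $\sP_u(N)$ already gives $\det(x_i^N)=1$ in each block, so $\det(x^N)=1$ holds automatically.
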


\pf
By Corollary \ref{prime2}, we need to consider only the case $N = p^at^b$ with 
$t \in \cR(\SL^\e_n(q))$; in particular, $t \nmid (q-\e)$. 
It suffices to show $\sP(N)$ holds for $G := \GL^\e_n(q)$ and this 
choice of $N$. Indeed, in this case every 
$g \in \SL^\e_n(q)$ can be written as $x^Ny^N$ with $\det(x^N) = \det(y^N) = 1$. Since 
$\gcd(N,q-\e) = 1$, it follows that $x, y \in \SL^\e_n(q)$.

By (ii), $\sP_u(N)$ holds for $G$. Consider a 
breakable $g \in G$  and write it as $\diag(g_1, \ldots ,g_m)$ lying in the 
natural subgroup
$$\GL^\e_{k_1}(q) \times \ldots \times \GL^\e_{k_m}(q).$$
Here, $1 \leq k_i < n$, and if $k_i \leq n_0$ then $k=k_i$ fulfills the conditions imposed on $k$ in (i).
Furthermore, each $g_i$ is unbreakable.
Hence, according to (i), $\sP_u(N)$ holds for $\GL^\e_{k_i}(q)$ if $k_i \leq n_0$. 
If $k_i > n_0$, then by (ii) and Proposition \ref{slu-generic}, $\sP_u(N)$ holds for $\GL^\e_{k_i}(q)$
as well. Thus we can write $g_i = x_i^Ny_i^N$ with $x_i,y_i \in \GL^\e_{k_i}(q)$ and $\det(x_i^N) = 1$. Letting
$$x := \diag(x_1, \ldots ,x_m),~~y := \diag(y_1, \ldots, y_m)$$
we deduce that $g = x^Ny^N$ and $\det(x^N) = 1$. Thus $\sP(N)$ holds for $G$, as desired. 
\hal

%In fact, the above proof also establishes the following criterion that we use to prove 
%$\sP(N)$ for some small rank groups:
%
%\begin{lem}\label{ind-slu-small}
%Given a prime power $q = p^f$, an integer $n$, $\e = \pm$, and an integer $N = p^at^b$ 
%with $t$ a prime and $t \nmid p(q-\e)$. Suppose that there is an integer 
%$n_0 \leq n$ such that the following statements hold:
%
%\begin{enumerate}[\rm(i)]
%\item Suppose $1 \leq k \leq n_0$, $k \neq 2$ if $q = 2,3$, and $k \neq 3$ if $(q,\e) = (2,-)$. Then  
%$\sP(N)$ holds for $\GL^\e_k(q)$; and
%\item For each $k$ with $n_0 < k \leq n$, $\sP_u(N)$ holds for $\GL^\e_k(q)$.
%\end{enumerate}
%Then, $\sP(N)$ holds for $\GL^\e_n(q)$.
%\end{lem}

\subsection{Induction base}
\begin{lem}\label{slu-small2}
Let $q = p^f\geq 2$, $\e = \pm$, and $N = r^as^b$ for some primes $r,s$. Suppose 
that $S = \PSL^\e_k(q)$ is simple and $k = 2$ or $3$. Then the map 
$(u,v) \mapsto u^Nv^N$ is surjective on $S$. 
\end{lem}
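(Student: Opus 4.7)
My plan is to reduce, via Corollary~\ref{prime2}(i), to the case $N = p^at^b$ with $t \in \cR(S)$ distinct from $p$, and then to produce non-identity elements $y_1, y_2 \in S$ of order coprime to $N$ (hence automatically $N$-th powers in $S$) such that $y_1^S \cdot y_2^S \supseteq S \setminus \{1\}$. Combined with the trivial decomposition $1 = 1^N \cdot 1^N$, this will give surjectivity of $(u,v)\mapsto u^Nv^N$ on $S$. The elements $y_1, y_2$ will be regular semisimple, taken inside maximal tori of $S$ whose orders are coprime to $N$.

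For $\PSL_2(q)$, the torus orders available are $(q-1)/d$ and $(q+1)/d$, with $d=\gcd(2,q-1)$. Since $\gcd(q-1,q+1)\le 2$, a short cyclotomic check rules out both orders being pure $t$-powers for $q\ge 4$, so at least one torus contains non-identity elements of order coprime to $N$. When neither order is a $t$-power, the clean choice is $y_1 \in T_+$ and $y_2 \in T_-$; then the sum in Lemma~\ref{basic}(i) collapses, by orthogonality of torus characters against principal and discrete series, to $1 - \chi_{\St}(g)/q$, which is nonzero for every $g\neq 1$ since $|\chi_{\St}(g)|<q$. When one torus order, say $(q-1)/d$, is itself a $t$-power, both $y_i$ are instead chosen regular semisimple in $T_-$; the character sum is now three-term (trivial, Steinberg, discrete series), and nonvanishing for $g\ne 1$ is checked either by a direct computation in the $\PSL_2(q)$ character table, or asymptotically via the bound of Lemma~\ref{basic}(ii) applied with $D=q-1$. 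The groups $\PSL_2(4)\cong\PSL_2(5)\cong\A_5$ and $\PSL_2(9)\cong\A_6$ are already covered by Lemma~\ref{alt-spor}; the remaining small $\PSL_2(q)$ not handled by the asymptotic estimate form a finite explicit list, and each is verified directly by character-table computation (GAP/Magma) in the spirit of Lemma~\ref{alt-spor}.

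For $\PSL^\e_3(q)$, the same strategy applies with the three standard torus types of orders $(q-\e)^2/d'$, $(q^2-1)/d'$, and $(q^2+\e q+1)/d'$, where $d'=\gcd(3,q-\e)$. These cyclotomic factors are pairwise coprime up to the factor $d'$, so at least one of them is coprime to both $p$ and $t$, providing a torus in which regular semisimple $y_1,y_2$ of order coprime to $N$ exist (possibly both inside one torus, or split between two). The character sum of Lemma~\ref{basic}(i) is again assembled from the Lusztig-parameterised characters of $\PSL^\e_3(q)$; the vanishing pattern of these on regular semisimple elements of specific tori collapses it to a short expression in the trivial, Steinberg, and a few additional Lusztig characters, shown to be nonzero for all $g\ne 1$ once $q$ exceeds an explicit bound. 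The remaining small $q$ (including $\PSL_3(2)\cong\PSL_2(7)$, $\PSL_3(3)$, $\PSL_3(4)$, $\PSU_3(3)$, $\PSU_3(4)$, $\PSU_3(5)$, and a few others) are verified individually by character-table computation.

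The hard part will be the degenerate situation in which every maximal torus with order coprime to $N$ happens to have $t$-power order, forcing $y_1=y_2=1$. For $\PSL_2(q)$ with $q\ge 4$ this is ruled out by the cyclotomic equation $d(t^b-t^a)=\pm 2$, which has no solutions producing a simple group. For $\PSL^\e_3(q)$ one must similarly verify that the three cyclotomic factors are not simultaneously $t$-powers for any simple $S$; this is a finite check organised by the prime $t\in\cR(S)$ and is the principal bookkeeping obstacle of the proof.
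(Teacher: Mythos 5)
Your reduction via Corollary \ref{prime2}(i) and your overall strategy (write every element of $S\setminus\{1\}$ as a product of two conjugate $N'$-elements) match the paper, and your two-torus argument for $\PSL_2(q)$ (only $1_S$ and $\St$ survive, giving $1\pm\overline{\St(g)}/q\neq 0$) is fine. The paper, however, does not redo any covering theorem: it simply quotes Guralnick--Malle \cite[Theorems 7.1 and 7.3]{GM}, which give a \emph{single} class $C=x^S$ or $y^S$ of regular semisimple elements with $C\cdot C\supseteq S\setminus\{1\}$, where the two available element orders ($(q^2+\e q+1)/d$ and $(q^2-1)/d$ for $\PSL^\e_3(q)$, $(q+1)/2$ and $(q-1)/2$ for $\PSL_2(q)$, $q$ odd) are coprime; hence one of them is automatically prime to $N$ and the lemma follows at once, with only the case $\SL_2(q)$, $q$ even, $s\mid q-1$ needing a character-table check for the pair $(y,y^2)$, $|y|=q+1$.

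The genuine gap in your proposal is exactly where this citation is replaced by character estimates: the unavoidable ``same-torus'' configurations ($t\mid q\mp1$ for $\PSL_2$, and $t\mid q^2-1$ for $\PSL^\e_3$, forcing both $y_1,y_2$ into the Coxeter torus) are infinite families, and your proposed verifications do not close them. First, Lemma \ref{basic}(ii) with $D=q-1$ gives a bound of order $(q+1)^{3/2}/(q-1)\sim q^{1/2}$, which is far larger than $1$, so the ``asymptotic'' route fails outright. Second, the sum is not ``three-term'' and does not ``collapse to a few Lusztig characters'': for two regular elements of the nonsplit torus of $\SL_2(q)$ all $\sim q/2$ discrete-series characters contribute, and for two regular elements of the Coxeter torus of $\SL^\e_3(q)$ the whole cuspidal family of $\sim q^2/3$ characters of degree $(q-1)(q^2-1)/d$ contributes; crude bounds on these families give totals of size about $3$, so one needs exact root-of-unity cancellations and a careful choice of the pair of classes (compare the paper's use of $y^S\cdot(y^2)^S$ for $\SL_2(q)$, $q$ even, and the mixed choice of classes in Lemma \ref{slu-small1} for $\GL_3^\e(q)$), none of which is carried out in your sketch. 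Without either importing the covering results of \cite{GM} or actually performing these exact generic character computations, the degenerate cases -- which are the heart of the lemma -- remain unproved.
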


\pf
By Corollary \ref{prime2}(i), we need to consider only the case $N = p^as^b$. Let
$S = \PSL_3(q)$. By \cite[Theorem 7.3]{GM}, $S \setminus \{1\} \subseteq CC$ where 
$C = x^S$ or $y^S$, $|x| = (q^2+q+1)/d$ and $|y| = (q^2-1)/d$, with $d = \gcd(3,q-1)$. 
In particular, $|x|$ and $|y|$ are coprime. Hence at least one of $x,y$ has order coprime 
to $N$, so it is an $N$-power in $S$, whence we are done. $\PSU_3(q)$ can be 
treated similarly using \cite[Theorem 7.1]{GM}. If $S = \PSL_2(q)$ with $q \geq 7$ odd, then 
by \cite[Theorem 7.1]{GM},  $S \setminus \{1\} \subseteq CC$ with $C = x^S$ or $y^S$,
$|x| = (q+1)/2$ and $|y| = (q-1)/2$, so we can argue similarly. Finally, assume that
$S = \SL_2(q)$ with $q \geq 4$ even.  If $s \nmid (q-1)$, then $S \setminus \{1\} \subseteq CC$ 
with $C = x^S$ and $|x| = q-1$ by \cite[Theorem 7.1]{GM}, so we are done. Assume 
$s|(q-1)$. Using the character table, we check that $S \setminus \{1\} \subseteq y^S \cdot (y^2)^S$ 
if $|y| = q+1$, so we are done again. 
\hal

\begin{lem}\label{slu-small1}
Let $q = p^f\geq 4$, $\e = \pm$, and $N = p^at^b$ for a prime $t \nmid p(q-\e)$. Then $\sP_u(N)$
holds for $G = \GL^\e_k(q)$ with $1 \leq k \leq 3$. 
\end{lem}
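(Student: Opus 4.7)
The case $k=1$ is immediate: $G = \GL^\e_1(q)$ is cyclic of order $q-\e$, and the hypothesis $t \nmid p(q-\e)$ gives $\gcd(N,q-\e)=1$, so $N$th powering is a bijection on $G$; taking $x=1$ and $y$ any $N$th root of $g$ gives $g = x^N y^N$ with $x^N \in \SL^\e_1(q)=\{1\}$.

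For $k \in \{2,3\}$, the plan is first to establish the analogue of $\sP_u(N)$ for $H:=\SL^\e_k(q)$, then to lift it to $G = \GL^\e_k(q)$. For the $H$-statement, Lemma \ref{slu-small2} (more precisely, its proof) exhibits a conjugacy class $\bar x^S \subseteq S:=\PSL^\e_k(q)$ with $|\bar x|$ coprime to $N$ and $S\setminus\{1\}\subseteq \bar x^S \cdot \bar x^S$. Any lift $x \in H$ of $\bar x$ still has order coprime to $N$, since $|Z(H)|$ divides $\gcd(k,q-\e)$, which divides $q-\e$, itself coprime to $N$. The central ambiguity from lifting a factorization $\bar h = \bar u^N \bar w^N$ yields $h = u^N w^N z$ for some $z \in Z(H)$; writing $z = \mu^N$ with $\mu \in Z(H)$ (possible since $\gcd(N,|Z(H)|)=1$) and absorbing $\mu$ centrally into $w$ gives $h = u^N (w\mu)^N$. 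Hence every $h \in H$ factors as $u^N w^N$ with $u,w \in H$.

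For an unbreakable $g \in G$, its structure is very restricted: either $g$ is irreducible semisimple and lies in a cyclic maximal torus $T$ of $G$, or $g = \alpha J_k$ is a scalar multiple of a single Jordan block. In either case one can find $y$ commuting with $g$ and having $\det(y) = \delta$, where $\delta$ is the unique element of $\det(G)$ satisfying $\delta^N = \det(g)$: for the torus case, using surjectivity of the norm $N_{\F_{q^k}/\F_q}$ (or its unitary analogue) on $T$; for the Jordan-block case, choosing $y$ inside the centralizing algebra $\F_q[J_k]^\times$ and using $\det(\alpha J_k) = \alpha^k$. Then $h := gy^{-N}$ lies in $\bfC_G(g)\cap H$, and applying the $H$-statement gives $h = u^N w^N$ with $u,w \in H$; the desired $g = u^N(wy)^N$ with $u \in \SL^\e_k(q)$ follows once $w$ commutes with $y$.

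\textbf{Main obstacle.} The delicate step is arranging the compatibility $w \in \bfC_H(y)$ so that $w^N y^N = (wy)^N$. I would handle this by splitting $h$ in the abelian group $\bfC_H(g)$ as $h = h_{t'}\cdot h_t$: the $t'$-part has order coprime to $N$, hence is already an $N$th power in the cyclic/abelian group $\bfC_H(g)$, and can be absorbed into $y$; the remaining $t$-part lies in a cyclic Sylow $t$-subgroup of $\bfC_H(g) \cap \bfC_H(y)$, where the needed factorization reduces to explicit arithmetic in a single cyclic group. The hypothesis $t \nmid (q-\e)$ is used crucially here, as it pins down exactly which factor of the torus order carries the $t$-part. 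The ``bad coset'' case, when $\det(g)$ lies outside the image of $\det|_{Z(G)} = (\F_q^\times)^k$ -- arising for $k=2$ with $q$ odd and for $k=3$ with $3 \mid q-\e$ -- forces $g$ to be irreducible semisimple, and is dispatched by the explicit parametrization of such elements by $\F_{q^k}^\times$ (resp.\ its unitary analogue), again using $\gcd(N,q-\e)=1$.
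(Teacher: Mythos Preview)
Your reduction to $H = \SL^\e_k(q)$ via a commuting element $y$ has a genuine gap in the ``Main obstacle'' step. Consider the irreducible semisimple case, so $\bfC_G(g) = T$ is a cyclic torus of order prime to $p$. After absorbing the $t'$-part of $h = gy^{-N}$ into $y$, you are left with a $t$-element $h_t \in T \cap H$ and you claim the factorization ``reduces to explicit arithmetic in a single cyclic group''. But in any abelian group the image of $(x,y)\mapsto x^Ny^N$ is exactly the set of $N$th powers, and a generator of the Sylow $t$-subgroup of $T$ is \emph{not} an $N$th power when $b\ge 1$. Concretely, take $k=2$, $\e=+$, $q=5$, $t=3$, $N=3$: an element $g\in T\cong C_{24}$ of order $3$ is unbreakable, yet cannot be written as $x^3y^3$ with $x,y\in T$. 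So you must leave $\bfC_G(g)$ to factor $h_t$; but then you lose the commutation $[w,y]=1$ that you need for $w^Ny^N = (wy)^N$, and your argument does not close. (A secondary issue: in the $g=\alpha J_k$ case, $\bfC_G(g)$ has order $q^{k-1}(q-\e)$, so $h_{t'}$ can have nontrivial $p$-part and your claim that it has order coprime to $N$ fails; this particular case is salvageable by taking $y=\alpha_0 I$ central with $\alpha_0^N=\alpha$, but the torus case is not.)

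The paper's proof avoids this trap entirely: rather than trying to stay in $\bfC_G(g)$, it works directly in $G=\GL^\e_k(q)$ and uses the explicit character tables of Steinberg (and their unitary analogues) to exhibit specific regular semisimple $N'$-elements $g_1,g_2$ with $\det(g_1)=1$, $\det(g_2)=\det(g)$, and to verify via the Frobenius formula that $g\in g_1^G\cdot g_2^G$. This is a global character-theoretic argument, not a centralizer argument, and it is precisely what is needed to handle elements with nontrivial $t$-part.
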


\pf
Clearly the statement holds for $k = 1$. Suppose $k > 1$ and 
let $g \in G$ be unbreakable. Let $\rho \in \F_q^\times$ and 
let $\ve \in \C^\times$ have order $q-1 \geq 3$. To establish $\sP_u(N)$ for $g$, we exhibit some $N'$-elements
$g_1, g_2$ of $G$ such that $g \in g_1^G \cdot g_2^G$ and at least one of $g_1,g_2$ has determinant $1$.
 
\smallskip
(i) Consider the case $G = \GL_2(q)$.
Since $g$ is unbreakable, it belongs to class $B_1$ or $A_2$, in the notation of \cite{St}.
In the first case, $g$ lies in
a torus of order $q^2-1$, and we define $g_1 = \diag(\rho,\rho^{-1})$, and 
$g_2 = \diag(1,\rho^i)$ if $\det(g) = \rho^i \neq 1$, or $g_2 = g_1$ if $\det(g) = 1$. 
Using \cite[Table II]{St}, it is easy to check that  
$$\sum_{\chi \in \Irr(G)}\frac{\chi(g_1)\chi(g_2)\oc(g)}{\chi(1)}
   = (q-1)\left(1+\frac{1}{q}\right) > 0.$$
Since $g_1$ and $g_2$ are $N'$-elements, we are done.   
Suppose now that $g \in A_2$, i.e.\ $g = zu$ with $z \in \bfZ(G)$ and $u$ a regular unipotent element.
Since $z$ is the $N$th power of some central element of $G$, it suffices to show that 
$u \in g_1^G \cdot g_2^G$ where we again choose $g_2 = g_1$. Using \cite[Table II]{St}, 
$$\sum_{\chi \in \Irr(G)}\frac{\chi(g_1)\chi(g_2)\oc(g)}{\chi(1)}
   = (q-1)\left(1-\frac{1}{2(q+1)}\sum_{0 \leq m \neq n \leq q-2}(\ve^{m-n}+\ve^{n-m})^2\right) = 
   \frac{4(q-1)}{q+1},$$
so we are done again.

The same arguments apply in the case $G = \GU_2(q)$, where we choose $g_2 = g_1^2$ if 
$g = zu$ and $u$ is a regular unipotent element.   

\smallskip
(ii) Consider the case $G = \GL_3(q)$, Since $g$ is unbreakable, $g$ belongs to class $C_1$
(so $g$ lies in a maximal torus of order $q^3-1$) or  
$A_3$ (i.e.\ $g$ is a scalar multiple of a regular unipotent element), 
in the notation of \cite{St}. First suppose that $t \neq \p(q,3)$. By Lemma \ref{slu-det} (below) we 
can find a regular semisimple $g_1 \in \GL_3(q)$ of order $\p(q,3)m$ 
such that $\det(g_1) = \det(g)$ and all prime divisors of $m$ divide $q-1$. Note that 
$g_1$ belongs to class $C_1$.
Also, define $g_2 = \diag(1,\rho,\rho^{-1}) \in \SL_3(q)$ belonging to class $A_6$. 
Using \cite[\S3]{St}, it is easy to check that  
$$\left|\sum_{\chi \in \Irr(G)}\frac{\chi(g_1)\chi(g_2)\oc(g)}{\chi(1)}\right|
   > (q-1)\left(1-\frac{2}{q(q+1)}-\frac{1}{q^3}\right) > 0.$$
Since $g_1$ and $g_2$ are $N'$-elements, we are done.   
Suppose now that $t = \p(q,3)$. We choose $h$ to be a regular semisimple element of 
order $q+1$ in $\SL_2(q)$ and define $g_1 := \diag(h,\det(g))$ so that it belongs to class $B_1$.
Using $g_2$ as in (i), we observe that 
$$\left|\sum_{\chi \in \Irr(G)}\frac{\chi(g_1)\chi(g_2)\oc(g)}{\chi(1)}\right|
   > (q-1)\left(1-\frac{1}{q^3}-\frac{3(q-2)}{2(q^2+q+1)} \right) > 0,$$
so we are done again.
 
The same arguments apply in the case $G = \GU_3(q)$.  
\hal

\subsection{Induction step: Generic case}

We need the following simple observation:

\begin{lem}\label{slu-det}
Let $G = \GL^\e_n(q)$ with $n \geq 3$ and let $T$ be a cyclic torus of order $q^n-\e^n$ of $G$.
Suppose there is a prime $s$ that divides $q^n-\e^n$ but not $\prod^{n-1}_{i=1}(q^i-\e^i)$. 
For every $g \in G$, 
%we can find 
there exists a regular semisimple $h \in T$ of order $sm$ for some $m \in \N$ such that 
$\det(h) = \det(g)$ and all prime divisors of $m$ divide $q-\e$. 
\end{lem}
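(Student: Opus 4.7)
The plan is to build $h$ explicitly as a product $h = h_1 h_2$ inside $T$, with $h_1$ of order $s$ and $h_2$ of some order $m$ all of whose prime divisors divide $q-\e$. The hypothesis that $s$ does not divide $q^1-\e^1 = q-\e$ gives $\gcd(s, q-\e) = 1$, so the cyclic group $T$ of order $q^n - \e^n$ decomposes as $T = T_1 \times T_2$, where $T_1$ is the Hall subgroup of $T$ corresponding to the primes dividing $q-\e$ and the Sylow $s$-subgroup $T_s$ is contained in $T_2$.

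First I would handle the determinant. By the standard description of Singer tori in $\GL^\e_n(q)$, the restriction $\det: T \to \det(G)$ is surjective (for $\e = +$ it is the norm $N_{\F_{q^n}/\F_q}: \F_{q^n}^\times \to \F_q^\times$, and for $\e = -$ it is the analogous surjection via the embedding $\GU_n(q) \leq \GL_n(q^2)$). Since $|\det(G)| = q-\e$ and $|T_2|$ is coprime to $q-\e$ by construction, the image $\det(T_2) \leq \det(G)$ has order simultaneously dividing $|T_2|$ and $q-\e$, hence is trivial. Thus $\det$ restricts to a surjection $T_1 \twoheadrightarrow \det(G)$, and I can pick $h_2 \in T_1$ with $\det(h_2) = \det(g)$; by construction $m := |h_2|$ has all its prime divisors among those of $q-\e$.

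Next I would take any $h_1 \in T_s$ of order exactly $s$ and verify that $\bfC_G(h_1) = T$, which means $h_1$ is regular semisimple. The containment $T \leq \bfC_G(h_1)$ is clear. For the reverse inclusion, the Weyl group $\bfN_G(T)/T$ is cyclic of order $n$ and is generated by a (possibly twisted) Frobenius; its $k$-th power stabilizes an element of order $s$ in $T$ if and only if $s \mid q^k - \e^k$, and the Zsigmondy-type hypothesis that $s \nmid \prod_{i=1}^{n-1}(q^i-\e^i)$ rules this out for $1 \leq k \leq n-1$. Hence no nontrivial coset of $T$ in $\bfN_G(T)$ fixes $h_1$, and $h_1$ lies in a unique maximal torus, namely $T$.

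Finally, set $h := h_1 h_2 \in T$. Then $\det(h) = \det(h_1)\det(h_2) = \det(g)$ since $h_1 \in T_2$ and $\det(T_2) = 1$. Because $\gcd(s, m) = 1$, the order of $h$ is $sm$, and $h_1$ appears as a power of $h$ (its $s$-part); consequently $\bfC_G(h) \leq \bfC_G(h_1) = T$, and combined with $T \leq \bfC_G(h)$ this shows $h$ is regular semisimple with the required order and determinant. The only mildly technical step is the surjectivity of $\det$ on $T$ and the regularity of $h_1$ in the unitary case, both of which are standard once one unwinds the Singer embedding $T \hookrightarrow \GL_n(q^2)$ and the action of its normalizer.
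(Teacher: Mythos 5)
Your argument is correct and is essentially the paper's own proof: both decompose the cyclic torus $T$ into its Hall subgroups relative to the primes dividing $q-\e$, note that $\det$ annihilates the part of order coprime to $q-\e$ and hence surjects from the other part onto $\det(G)$, and then multiply an order-$s$ element (regular semisimple by the Zsigmondy-type hypothesis, since its eigenvalues $\lambda^{(q\e)^i}$ are pairwise distinct) by an element of the $(q-\e)$-part realizing $\det(g)$. The only difference is cosmetic: you justify regularity via the action of $\bfN_G(T)/T$ and pass from $\bfC_G(h)\le \bfC_G(h_1)=T$, whereas the paper simply notes that every element of $T$ of order divisible by $s$ is regular semisimple.
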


\begin{proof}
Let $D \cong C_{q-\e}$ denote the image of $G$ under the determinant map $\det$. Note 
that $\det$ maps $T$ onto $D$. The condition on $s$ implies that every $x \in T$ of 
order divisible by $s$ is regular semisimple and $s \nmid (q-\e)$.  It follows that 
$\det$ maps $T_1 \geq \bfO_s(T)$ into $1$ and $T_2$ onto $D$, where $T = T_1 \times T_2$,
$|T_1|$ is coprime to $q-\e$, and all prime divisors of $|T_2|$ divide $q-\e$. Hence we can
choose $x \in \bfO_s(T)$ of order $s$ and $y \in T_2$ such that $\det(y) = \det(g)$ and 
set $h := xy$.
\end{proof}

\begin{prop}\label{sl-large}
Suppose $G = \GL_n(q)$ with $n \geq 4$, $q = p^f \geq 4$, and $t \in \cR(\SL_n(q))$. 
Then $\sP_u(N)$ holds for $G$ and for every $N = p^at^b$. 
\end{prop}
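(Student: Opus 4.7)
The plan is to use the Frobenius character formula (Lemma~\ref{basic}) to produce regular semisimple $N'$-elements $g_1, g_2 \in G$ with $\det(g_1)=1$, $\det(g_2)=\det(g)$, and $g \in g_1^G \cdot g_2^G$. Since $t \in \cR(\SL_n(q)) \subseteq \{r, s_1\}$ with $r = \p(p,nf)$ and $s_1 = \p(p,(n-1)f)$ (Table~\ref{primes}), both $p$ and $t$ are coprime to $q-1$; thus the $N$th power map is a bijection on $\F_q^\times$, every $N'$-element of $G$ is an $N$th power, and so $g = x^N y^N$ with $x^N = g_1 \in \SL_n(q)$ will follow, establishing $\sP_u(N)$.

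First I would handle the case $t = s_1$, so $r$ is coprime to $N$. Apply Lemma~\ref{slu-det} with $s=r$ to pick $g_1, g_2 \in G$ regular semisimple, of order $r \cdot m_i$ with the prime divisors of $m_i$ dividing $q-1$, and with the required determinants. The argument of Proposition~\ref{slu-generic} carries over: any $\chi \in \Irr(G)$ with $\chi(g_i) \neq 0$ lies in a Lusztig series $\cE(G,(z))$ with $r \mid |\bfC_{G^*}(z)|$, and since $r$ has multiplicative order exactly $n$ modulo $p$, the relation $\sum n_i d_i = n$ forces $z$ to be scalar. Then $\chi = \lambda \cdot \psi$ with $\psi$ unipotent; only $\psi \in \{1_G, \St\}$ do not vanish on the Coxeter-type torus element $g_i$. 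Summing over the $q-1$ linear characters yields
\[
\Sigma := \sum_{\chi \in \Irr(G)} \frac{\chi(g_1)\chi(g_2)\overline{\chi(g)}}{\chi(1)} = (q-1)\Bigl(1 + \frac{\St(g_1)\St(g_2)\overline{\St(g)}}{\St(1)}\Bigr),
\]
and using $|\St(g_i)| = |\bfC_G(g_i)|_p = 1$, $|\St(g)| \le |\bfC_G(g)|^{1/2} \le (q^n-1)^{1/2}$ from~\eqref{cent-slu}, and $\St(1) = q^{n(n-1)/2}$, the error is bounded by $(q^n-1)^{1/2}/q^{n(n-1)/2} < 1$ for $n \ge 4$, so $\Sigma > 0$.

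The case $t = r$ requires more work. Now $s_1$ is coprime to $N$; apply Lemma~\ref{slu-det} inside $\GL_{n-1}(q)$ (with $s = s_1$) to obtain regular semisimple $h_1, h_2 \in \GL_{n-1}(q)$ with $\det(h_1)=1$, $\det(h_2)=\det(g)^{-1}$, and order $s_1 \cdot m_i$. Set $g_1 = \diag(h_1, 1) \in \SL_n(q)$ and $g_2 = \diag(h_2, \det g) \in G$; both are regular semisimple $N'$-elements with $|\bfC_G(g_i)| = (q^{n-1}-1)(q-1)$. The condition $s_1 \mid |\bfC_{G^*}(z)|$, combined with $s_1$ having order $n-1$ modulo $p$, now allows one $n_i d_i$ to equal either $n$ (giving (A) $z$ scalar, which as before produces the main term $q-1$ plus negligible error) or $n-1$ (giving (B) $z$ $G^*$-conjugate to $\diag(\alpha, \beta I_{n-1})$ with distinct $\alpha, \beta \in \F_q^\times$). \emph{The main obstacle is Case~(B)}: there are $(q-1)(q-2)$ such series $\cE(G,(z))$, each containing a minimum-degree character $\chi_{z,(n-1)}$ of degree $(q^n-1)/(q-1)$ obtained by Harish-Chandra induction from the Levi $L = \GL_{n-1}(q) \times \GL_1(q)$, and Lemma~\ref{basic}(ii) applied with $D = (q^n-1)/(q-1)$ falls short of beating the main term $q-1$. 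The resolution is to evaluate $\chi_{z,(n-1)}(g_i)$ explicitly via the Mackey-type formula for Harish-Chandra induction — exploiting that $g_i \in L$ while most $G$-conjugates of $z$ fail to centralize any conjugate of $g_i$ — and then to apply Lemma~\ref{basic}(ii) with a substantially larger $D$ to the tail of higher-degree characters. Together with the unbreakable centralizer bound $|\bfC_G(g)| \le q^n-1$ from~\eqref{cent-slu}, this will show the total Case~(B) contribution is strictly less than $q-1$, so $\Sigma \neq 0$ and the proposition follows.
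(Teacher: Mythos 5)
There are genuine gaps in both branches of your case division. In the case $t=s_1$, where you take both $g_1$ and $g_2$ to be $r$-singular elements of the Coxeter-type torus, the claim that $r\mid |\bfC_{G^*}(z)|$ forces $z$ to be scalar is false: since $q$ has order $n$ modulo $r$, the condition only forces $\bfC_{G^*}(z)\cong \GL_{n/d}(q^d)$ for some $d\mid n$, i.e.\ the eigenvalues of $z$ form a single Galois orbit, and $d>1$ is perfectly possible. Moreover, even inside the series with $z$ central, it is not true that only $1_G$ and $\St$ survive: a unipotent character $\chi_{\uni,\alpha}$ is nonzero on a regular element of the Coxeter torus exactly when $\alpha$ is a hook, so all $n$ hook characters (in particular the Weil character $\tau_{0,0}$, labelled by $(n-1,1)$, and its $q-1$ linear twists) contribute. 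The exact identity you write for $\Sigma$ is therefore wrong; what the paper does instead is avoid Lusztig-series bookkeeping altogether and use the classification of all characters of degree below an explicit threshold (linear plus Weil, by \cite[Theorem 3.1]{TZ1}), bound the Weil values, and control the tail by Cauchy--Schwarz.

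In the case $t=r$ you correctly identify the dangerous family — the roughly $(q-1)^2$ characters of degree $(q^n-1)/(q-1)$, which are exactly the Weil characters $\tau_{i,j}$ — but your proposed resolution attacks the wrong quantity. These characters do \emph{not} vanish at your $g_1,g_2$: with $g_1=\diag(h_1,1)$ one has $e(g_1,\delta^{l_0})=1$ for exactly one $l_0$, so $\tau_{i,0}(g_1)=\td^{\,il_0}\neq 0$ for every $i$; their values at $g_1,g_2$ are already of modulus $1$, so no Mackey-type computation or larger cutoff $D$ can help, because these characters sit precisely in the middle range, not in the tail. The quantity that must be controlled is $|\tau_{i,j}(g)|$ at the \emph{unbreakable} element $g$, and this is where unbreakability enters the paper's proof: it forces every $\F_q$-eigenvalue of $g$ to have multiplicity at most $1$, giving $|\tau_{i,j}(g)|\le 1$ via the explicit eigenvalue formula, an ingredient absent from your proposal. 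Without it, the generic bound $|\chi(g)|\le (q^n-1)^{1/2}$ makes the Weil contribution of order $(q-1)^2 q^{n/2}/q^{n-1}$, which for $n=4$ (and marginally $n=5,6$) is not smaller than the main term $q-1$. Relatedly, your argument makes no special provision for $n=4$ and $n=5$, where the paper needs different devices: for $n=4$, unbreakable elements in classes $C_2,E_1$ are pushed into $\GL_2(q^2)$ and handled by Lemma \ref{slu-small1}, while for $g$ central times regular unipotent one uses $|\chi(g)|\le 1$; for $n=5$ an extra character of degree $q^2(q^5-1)/(q-1)$ must be identified and evaluated via permutation characters $\rho_2-\rho_1$. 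Your enumeration of surviving series in case (B) also omits the blocks $\GL_{(n-1)/d}(q^d)\times\GL_1(q)$ with $d>1$ (a minor, fixable point). As written, the proposal is a plan whose critical inequality is asserted, not proved, and in the stated form it fails for small $n$.
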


\pf
Consider an unbreakable $g \in G$ and a regular semisimple $g_1 \in \SL_n(q)$ of order $s \in \cR(G) \setminus \{t\}$. Denote
$$\Irr(G/[G,G]) = \{ \lambda_i \mid 0 \leq i \leq q-2\}.$$    

\smallskip
(i) First we consider the case $n \geq 6$. Choose 
$$D = \frac{(q^n-1)(q^{n-1}-q^2)}{(q-1)(q^2-1)}.$$
By \cite[Theorem 3.1]{TZ1}, every irreducible character of $\SL_n(q)$ of degree less than $D$ is either 
the principal character, or an irreducible Weil character, and it is well known that each of these characters 
extends to $G$. It follows that the characters in $\Irr(G)$ of degree less than $D$ are exactly the 
$q-1$ linear characters $\lambda_i$ and $(q-1)^2$ irreducible Weil characters 
$\tau_{i,j}$, $0 \leq i,j \leq q-2$, where 
$$\tau_{i,j} = \lambda_j\tau_{i,0},~\tau_{i,0}(1) = \frac{q^n-1}{q-1} - \delta_{i,0}.$$ 
Using Lemma \ref{slu-det}, we can choose  a regular semisimple $g_2 \in G$ of order $sm$ where all prime divisors of $m$ divide $q-1$ and $\det(g_2) = \det(g)$. In particular, 
$$|\bfC_G(g_i)| \leq q^n-1,$$ 
and
\begin{equation}\label{sl41}
  \sum^{q-2}_{i=0}\lambda_i(g_1)\lambda_i(g_2)\overline\lambda_i(g) = q-1.
\end{equation}  
By (\ref{cent-slu}) and Lemma \ref{basic},
\begin{equation}\label{sl42}
  \left|\sum_{\chi \in \Irr(G),~\chi(1) \geq D}\frac{\chi(g_1)\chi(g_2)\oc(g)}{\chi(1)}\right|
    \leq \frac{(q^n-1)^{3/2}}{D} 
    \leq (q-1)\left(1-\frac{1}{q^2+q+1} \right).
\end{equation}
Fix a primitive $(q-1)$th root of unity $\delta \in \F_q^\times$ and a primitive $(q-1)$th root of unity 
$\td \in \C^\times$. Relabeling $\tau_{i,j}$ if necessary, 
$$\tau_{i,0}(x) = \frac{1}{q-1}\sum^{q-2}_{l=0}\td^{il} q^{e(x,\delta^l)}-2\delta_{i,0}$$
for every $x \in G$, where $e(x,\alpha)$ denotes the dimension of the $\alpha$-eigenspace of 
$x$ on the natural module $\F_q^n$ for $G$. The choice of $g_i$ and the unbreakability of $g$ 
ensure that $e(y,\delta^l)$ is at most $1$ 
for $y \in \{g,g_1,g_2\}$ and $0 \leq l \leq q-2$, 
and in fact it can equal $1$ for at most one value $l_0$. In particular,
$$-1 = \frac{1}{q-1}(q-1)-2 \leq \tau_{0,0}(y) \leq \frac{1}{q-1}(q+q-2)-2 = 0.$$
Consider $i > 0$. If such $l_0$ exists, then
$$\tau_{i,0}(y) = \frac{1}{q-1}\left(\td^{il_0}(q-1)+\sum^{q-2}_{l=0}\td^{il}\right) = \td^{il_0}.$$
If no such $l_0$ exists, then   
$$\tau_{i,0}(y) = \frac{1}{q-1}\sum^{q-2}_{l=0}\td^{il} = 0.$$
We have shown that 
\begin{equation}\label{weil-sl1}
  |\tau_{i,j}(y)| \leq 1. 
\end{equation}  
It follows that if $n \geq 5$ then  
\begin{equation}\label{weil-sl2}
  \left|\sum_{0 \leq i,j \leq q-2}\frac{\tau_{i,j}(g_1)\tau_{i,j}(g_2)\overline\tau_{i,j}(g)}{\tau_{i,j}(1)}\right|
    \leq \frac{(q-1)^3}{q^n-q} < \frac{q-1}{q(q^2+q+1)}.
\end{equation}    
Together with (\ref{sl42}), this implies that
$$\left|\sum_{\chi \in \Irr(G),~\chi(1) > 1}\frac{\chi(g_1)\chi(g_2)\oc(g)}{\chi(1)}\right|
    \leq (q-1)\left(1-\frac{1}{q^2+q+1} + \frac{1}{q(q^2+q+1)}\right) < q-1.$$  
Hence $g \in g_1^G \cdot g_2^G$ by (\ref{sl41}) and Lemma \ref{basic}(i). 
Since both $g_1$ and $g_2$ have order coprime to $N$, we are done. 

\smallskip
(ii) Next we consider the case $n = 5$. Setting $s' := \p(q,3)$ and using Lemma \ref{slu-det},
 we can choose a regular semisimple 
$h \in \GL_3(q)$ of order $s'm$, where all prime divisors of $m$ divide $q-1$ and 
$\det(h) = \det(g)$. Also, let $h' \in \GL_2(q)$ be conjugate (over $\overline\F_q$) to 
$\diag(\beta,\beta^{-1})$, where $\beta \in \overline\F_q^\times$ has order $q+1$.  Setting
$g_2 = \diag(h,h')$, the orders of $g_1$ and $g_2$ are coprime to $N$, 
$\det(g_2) = \det(g)$, and 
$e(g_2,\delta^l) = 0$ for $0 \leq l \leq q-2$.   In particular, (\ref{weil-sl1}) and 
(\ref{weil-sl2}) hold. Next, we choose $D = q^4(q^5-1)/(q-1)$, yielding
\begin{equation}\label{sl43}
  \left|\sum_{\chi \in \Irr(G),~\chi(1) \geq D}\frac{\chi(g_1)\chi(g_2)\oc(g)}{\chi(1)}\right|
    \leq \frac{(q^5-1)^{3/2}}{D} 
    \leq \frac{q-1}{q^{3/2}} \leq \frac{q-1}{8}.
\end{equation}

Now, using \cite{Lu}, we check that if $\psi \in \Irr(\SL_5(q))$ has positive $s$-defect and 
positive $s'$-defect and $\psi(1) < D$, then either $\psi$ is the principal character or a Weil character, 
or $s = \p(q,4)$ and
$\psi$ is the unique character of degree $q^2(q^5-1)/(q-1)$. In either case, $\psi$ extends to 
$G$. In fact, in the latter case, an extension $\varphi$ of $\psi$ to $G$ is the unipotent character
labeled by the partition $(3,2)$ (see \cite[\S13.8]{C}). On the other hand, $\tau_{0,0}$ is the unipotent 
character of $G$ labeled by the partition $(4,1)$. It follows by \cite[Lemma 5.1]{GT1} that
$$\varphi = (1_G + \tau_{0,0} + \varphi) - (1_G + \tau_{0,0}) = \rho_2 - \rho_1,$$
where $\rho_i$ is the permutation character of the action of $G$ on the set of $i$-dimensional 
subspaces of the natural module $\F_q^5$ for $i = 1,2$. Therefore,
$$\varphi(g_1) = \rho_2(g_1) - \rho_1(g_1) = 0-1 = -1,~~
    \varphi(g_2) = \rho_2(g_2) - \rho_1(g_2) = 1-0 = 1.$$
Also, the extensions of $\psi$ to $G$ are $\varphi\lambda_i$, $0 \leq i \leq q-2$, and 
$|\varphi(g)| \leq (q^5-1)^{1/2}$ by (\ref{cent-slu}). Certainly, $\chi(g_1)\chi(g_2) = 0$ 
unless $\chi$ has positive $s$-defect and positive $s'$-defect. Hence, combining with 
(\ref{weil-sl2}), we deduce that 
$$\left|\sum_{\chi \in \Irr(G),~1 < \chi(1) < D}\frac{\chi(g_1)\chi(g_2)\oc(g)}{\chi(1)}\right|
    \leq \frac{q-1}{q(q^2+q+1)} + (q-1)\frac{(q^5-1)^{1/2}}{\psi(1)} 
    \leq \frac{q-1}{32}.$$
Together with (\ref{sl43}), this implies that
$$\left|\sum_{\chi \in \Irr(G),~\chi(1) > 1}\frac{\chi(g_1)\chi(g_2)\oc(g)}{\chi(1)}\right|
    \leq (q-1)\left(\frac{1}{8} + \frac{1}{32}\right) < q-1,$$
so we are done as before.      

\smallskip
(iii) Here we consider the case $n = 4$. Since $g$ is unbreakable, $g$ belongs to 
class $A_5$, $C_2$, or $E_1$, in the notation of \cite{St}. In the two latter cases, note that the 
$G$-conjugacy class of such an element $g$ is completely determined
by $|g|$ and the eigenvalues of $g$ acting on 
$\overline\F_q^4$. On the other hand, $G$ contains a natural subgroup 
$H \cong \GL_2(q^2)$, and $H$ contains an element $h$ with the same spectrum and 
order as $g$. Hence we may assume $g =h \in H$. As $N = p^at^b$ and $t \nmid (q^2-1)$, we 
can now apply Lemma \ref{slu-small1} (if $h$ is unbreakable) to get $g = x^Ny^N$ for some 
$x \in \SL_2(q^2) < \SL_4(q)$ and $y \in H$. Such a decomposition certainly exists if $h$ is 
breakable in $H$ (i.e.\ $h \in \GL_1(q^2) \times \GL_1(q^2)$).

It remains therefore to consider the case $g \in A_5$, i.e.\ $g = zu$, where $z \in \bfZ(G)$ and
$u$ is a regular unipotent element. By \cite[Corollary 8.3.6]{C}, $|\chi(g)| \leq 1$ for all
$\chi \in \Irr(G)$. 
Choosing $D = (q-1)(q^3-1)$ and $g_2$ of order $sm$ as in (i), 
by the Cauchy-Schwarz inequality, 
$$ \left|\sum_{\chi \in \Irr(G),~\chi(1) \geq D}\frac{\chi(g_1)\chi(g_2)\oc(g)}{\chi(1)}\right|
    \leq \frac{(q^4-1)}{(q-1)(q^3-1)} < 1.35.$$
Using \cite{St}, we check that all irreducible characters of $G$ of degree less than $D$ are
linear or Weil characters. Hence (\ref{weil-sl1}) implies that
$$\left|\sum_{\chi \in \Irr(G),~1 < \chi(1) < D}\frac{\chi(g_1)\chi(g_2)\oc(g)}{\chi(1)}\right|
    \leq \frac{(q-1)^{3}}{q^4-q} < 0.11.$$
It follows that        
$$\left|\sum_{\chi \in \Irr(G),~\chi(1) > 1}\frac{\chi(g_1)\chi(g_2)\oc(g)}{\chi(1)}\right|
    < 1.35 + 0.11 = 1.46 < q-1,$$     
so we are done.
\hal

\begin{prop}\label{su-large}
Suppose $G = \GU_n(q)$ with $n \geq 4$, $q = p^f \geq 4$, and $t \in \cR(\SU_n(q))$. 
Then $\sP_u(N)$ holds for $G$ and for every $N = p^at^b$. 
\end{prop}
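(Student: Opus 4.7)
The plan is to parallel Proposition \ref{sl-large}, replacing $q-1$ by $q+1$, the Weil characters of $\SL_n(q)$ by those of $\SU_n(q)$, and $(q-1)$th roots of unity by $(q+1)$th roots of unity throughout. Fix an unbreakable $g \in G$, choose $s \in \cR(\SU_n(q)) \setminus \{t\}$ from Table \ref{primes}, and take a regular semisimple $g_1 \in \SU_n(q)$ of order $s$. By Lemma \ref{slu-det}, there is a regular semisimple $g_2 \in G$ of order $sm$ with $\det(g_2) = \det(g)$ and every prime divisor of $m$ dividing $q+1$. Since $\gcd(N, q+1) = 1$, both $g_1, g_2$ are $N$th powers in $G$ and an $N$th root of $g_1$ may be taken in $\SU_n(q)$; it therefore suffices to show $g \in g_1^G \cdot g_2^G$ via Lemma \ref{basic}(i).

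For $n \geq 6$, fix a degree threshold $D$ below which, by \cite[Theorem 3.1]{TZ1}, every irreducible character of $\SU_n(q)$ is principal or an irreducible Weil character. Each of these extends to $G$, producing the $q+1$ linear characters $\lambda_i$ and the $(q+1)^2$ Weil characters $\tau_{i,j}$ of degree approximately $(q^n-(-1)^n)/(q+1)$. The linear contribution equals $q+1$. Unbreakability of $g$ and the construction of $g_1, g_2$ force the multiplicity on $\overline\F_q^n$ of any fixed $(q+1)$th root of unity as an eigenvalue of $y \in \{g, g_1, g_2\}$ to be at most $1$, yielding $|\tau_{i,j}(y)| \leq 1$ exactly as in (\ref{weil-sl1}). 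Combined with the Weil bound $(q+1)^3/\tau_{i,j}(1)$ and the tail bound from Lemma \ref{basic}(ii) and (\ref{cent-slu}), this produces the inequality required by Lemma \ref{basic}(i).

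For $n = 5$, a single additional irreducible character lies below $D$ with both $s$-defect and $s'$-defect positive, namely the extension to $G$ of the unipotent character of $\SU_5(q)$ of degree $q^2(q^5+1)/(q+1)$ labeled by the partition $(3,2)$ (where $s'$ is a primitive prime divisor of $q^6-1$ dividing $q^3+1$). As in part (ii) of Proposition \ref{sl-large}, this character is identified with $\rho_2 - \rho_1$ for permutation characters $\rho_i$ on totally isotropic $i$-subspaces of the natural module, and evaluated directly on $g_1$ and on $g_2 = \diag(h, h') \in \GU_3(q) \times \GU_2(q)$ built via Lemma \ref{slu-det}.

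The main obstacle is $n = 4$. An unbreakable $g \in \GU_4(q)$ is either a scalar multiple of a regular unipotent, in which case $|\chi(g)| \leq 1$ for all $\chi \in \Irr(G)$ by \cite[Corollary 8.3.6]{C} and the character-sum argument of part (iii) of Proposition \ref{sl-large} transfers directly; or it has conjugacy class determined by spectrum and order and lies in a Levi subgroup $H \cong \GL_2(q^2) \leq \GU_4(q)$ (the stabilizer of a pair of complementary totally isotropic $2$-subspaces). When $t \in \{\p(p,6f), \p(p,4f)\} \subseteq \cR(\SU_4(q))$, $t$ does not divide $q^2-1$ and Lemma \ref{slu-small1} applies to $H$. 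The remaining sub-case $t = \p(p,2f) \mid q+1$, where $t$ divides $|H|$, requires a separate argument, for instance a direct character-theoretic verification on $G$ in the spirit of Lemma \ref{slu-small2}.
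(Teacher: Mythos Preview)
Your plan matches the paper's strategy, but three of the direct transfers from the linear case need correction.

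For $n=5$, the identity $\varphi = \rho_2 - \rho_1$ from \cite[Lemma 5.1]{GT1} does not carry over to $\GU_5(q)$: the action on isotropic $1$-spaces is already rank $3$, with permutation character $\rho = 1_G + \varphi + \sigma$ where $\sigma$ is the unipotent character labeled $(3,1,1)$. The paper instead observes that $\sigma$ has both $s$-defect $0$ and $s'$-defect $0$, so $\sigma(g_i)=0$ and $\varphi(g_i) = \rho(g_i) - 1$ is read off directly from the number of isotropic $1$-spaces fixed by $g_i$. Note also that the paper takes $h' \in \GU_2(q)$ with eigenvalues of order $q-1$ (not $q+1$), to force $e(g_2,\xi^l)=0$ for all $l$.

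For $n=6$, your uniform use of Lemma \ref{slu-det} does not go through for every choice of $s \in \cR(\SU_6(q)) \setminus \{t\}$; the paper treats $n=6$ separately, building $g_2 = \diag(h,h')$ from $h \in \GU_3(q)$ of order $s'm$ with $s' = \p(q,6)$ (Lemma \ref{slu-det} applied to $\GU_3(q)$) and $h' = (h_{s'})^{-1}$. For $n=4$, your ``remaining sub-case'' $t \mid q+1$ does not occur: by Table \ref{primes} one has $\cR(\SU_4(q)) = \{\p(p,6f),\,\p(p,4f)\}$ for $q \geq 4$, and neither prime divides $q^2-1$, so Lemma \ref{slu-small1} applied to $H \cong \GL_2(q^2)$ already covers every $t \in \cR(\SU_4(q))$ and no further verification is needed. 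Finally, the low-degree classification you want for $\SU_n(q)$ is \cite[Theorem 4.1]{TZ1}, not Theorem 3.1.
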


\pf
Consider an unbreakable $g \in G$ and a regular semisimple $g_1 \in \SU_n(q)$ of order $s \in \cR(G) \setminus \{t\}$. Denote
$$\Irr(G/[G,G]) = \{ \lambda_i \mid 0 \leq i \leq q\}.$$    

\smallskip
(i) First we consider the case $n \geq 6$. If $n \geq 7$, then using Lemma \ref{slu-det}, we can choose  a regular semisimple $g_2 \in G$ of order $sm$ where all prime divisors of $m$ divide $q+1$ and 
$\det(g_2) = \det(g)$. If $n = 6$, then we set $s' := \p(q,6) \geq 7$ and use Lemma \ref{slu-det} to get a 
regular semisimple $h \in \GU_3(q)$ of order $s'm$, where $\det(h) = \det(g)$ and 
all prime divisors of $m$ divide $q+1$. We also set $h':= (h_{s'})^{-1}$ and $g_2 = \diag(h,h')$. 
Then $g_2 \in G$ is regular semisimple, and $\det(g_2) = \det(g)$. In either case 
$$|\bfC_G(g_i)| \leq (q^{n-1}+1)(q+1).$$ 
Choose 
$$D = \left\{ \begin{array}{ll}(q^n-(-1)^n)(q^{n-1}-q^2)/(q+1)(q^2-1), & n \geq 7,\\
                                            (q+1)(q^3+1)(q^5+1)/2, & n = 6. \end{array} \right.$$
If $n \geq 7$, then by \cite[Theorem 4.1]{TZ1}, every irreducible character of $\SU_n(q)$ of degree 
less than $D$ is either the principal character, or an irreducible Weil character, and each of these characters extends to $G$, cf.\ \cite[Lemma 4.7]{TZ2}. In this case, the characters in $\Irr(G)$ of degree less than $D$ are exactly the 
$q+1$ linear characters $\lambda_i$, and $(q+1)^2$ irreducible Weil characters 
$\zeta_{i,j}$, $0 \leq i,j \leq q$, where 
$$\zeta_{i,j} = \lambda_j\zeta_{i,0},~\zeta_{i,0}(1) = \frac{q^n-(-1)^n}{q+1} + (-1)^n\delta_{i,0}.$$ 
Suppose $n = 6$ and $\psi \in \Irr(\SU_6(q))$ has positive $s$-defect $0$ and positive 
$s'$-defect. Using \cite{Lu}, we check that either $\psi$ is the principal character of a Weil
character, or $\psi(1) \geq D$. Again, if $\chi \in \Irr(G)$, $\chi(1) < D$, and 
$\chi(g_1)\chi(g_2) \neq 0$, then $\chi$ is either a linear character, or a Weil character.  

The choice of $g_1$ and $g_2$ ensures that 
\begin{equation}\label{su41}
  \sum^{q}_{i=0}\lambda_i(g_1)\lambda_i(g_2)\overline\lambda_i(g) = q+1.
\end{equation}  
By (\ref{cent-slu}) and Lemma \ref{basic},
\begin{equation}\label{su42}
  \left|\sum_{\chi \in \Irr(G),~\chi(1) \geq D}\frac{\chi(g_1)\chi(g_2)\oc(g)}{\chi(1)}\right|
    < \frac{((q+1)(q^{n-1}+1))^{3/2}}{D} 
    < \frac{2(q+1)}{3}.
\end{equation}
Fix a primitive $(q+1)$th root $\xi \in \F_{q^2}^\times$ of unity and a primitive $(q+1)$th root 
$\tx \in \C^\times$ of unity. Relabeling $\zeta_{i,j}$ if necessary, 
$$\zeta_{i,0}(x) = \frac{(-1)^n}{q+1}\sum^{q}_{l=0}\tx^{il} (-q)^{e(x,\xi^l)}$$
for every $x \in G$, where $e(x,\alpha)$ denotes the dimension of the $\alpha$-eigenspace of 
$x$ on the natural module $\F_{q^2}^n$ for $G$. 
As before, the choice of $g_i$ and the unbreakability of $g$ ensure that 
$e(y,\xi^l)$ is at most $1$ 
for $y \in \{g,g_1,g_2\}$ and $0 \leq l \leq q$,
and in fact it can equal $1$ for at most one value $l_0$. If such $l_0$ exists, then
$$(-1)^n\zeta_{i,0}(y) = \frac{1}{q+1}\left(\tx^{il_0}(-q-1)+\sum^{q}_{l=0}\tx^{il}\right) = 
    \delta_{i,0}-\tx^{il_0}.$$
If no such $l_0$ exists, then   
$$\zeta_{i,0}(y) = \frac{(-1)^n}{q+1}\sum^{q}_{l=0}\tx^{il} = (-1)^n\delta_{i,0}.$$
We have shown that 
\begin{equation}\label{weil-su1}
  |\zeta_{i,j}(y)| \leq 1. 
\end{equation}  
It follows that if $n \geq 5$ then  
\begin{equation}\label{weil-su2}
  \left|\sum_{0 \leq i,j \leq q}\frac{\zeta_{i,j}(g_1)\zeta_{i,j}(g_2)\overline\zeta_{i,j}(g)}{\zeta_{i,j}(1)}\right|
    \leq \frac{(q+1)^3}{q^n-q} \leq \frac{(q+1)^2}{q(q-1)^2}.
\end{equation}    
Together with (\ref{su42}), this implies that
$$\left|\sum_{\chi \in \Irr(G),~\chi(1) > 1}\frac{\chi(g_1)\chi(g_2)\oc(g)}{\chi(1)}\right|
    \leq (q+1)\left(\frac{2}{3} + \frac{1}{7}\right) < q+1.$$  
Hence $g \in g_1^G \cdot g_2^G$ by (\ref{su41}) and Lemma \ref{basic}(i). 
Since both $g_1$ and $g_2$ have order coprime to $N$, we are done. 

\smallskip
(ii) Next we consider the case $n = 5$. Setting $s' := \p(q,6)$ and using Lemma \ref{slu-det} we can choose a regular semisimple $h \in \GU_3(q)$ of order $s'm$, where all prime divisors of $m$ divide $q+1$ and 
$\det(h) = \det(g)$. Also, let $h' \in \GU_2(q)$ be conjugate (over $\overline\F_q$) to 
$\diag(\alpha,\alpha^{-1})$, where $\alpha \in \F_q^\times$ has order $q-1$.  Setting
$g_2 = \diag(h,h')$, the orders of $g_1$ and $g_2$ are coprime to $N$, 
$\det(g_2) = \det(g)$, and 
$e(g_2,\xi^l) = 0$ for $0 \leq l \leq q$.   In particular, (\ref{weil-su1}) and 
(\ref{weil-su2}) hold. Next, we choose $D = q^4(q^5+1)/(q+1)$, yielding
\begin{equation}\label{su43}
  \left|\sum_{\chi \in \Irr(G),~\chi(1) \geq D}\frac{\chi(g_1)\chi(g_2)\oc(g)}{\chi(1)}\right|
    \leq \frac{(q+1)(q^4+1)(q^4(q+1))^{1/2}}{D} 
    < \frac{q+1}{5}.
\end{equation}

Now, using \cite{Lu}, we check that if $\psi \in \Irr(\SU_5(q))$ has positive $s$-defect and 
positive $s'$-defect and $\psi(1) < D$, then either $\psi$ is the principal character or a Weil character, 
or $s = \p(q,4)$ and $\psi$ is the unique character of degree $q^2(q^5+1)/(q+1)$. In either case, 
$\psi$ extends to $G$. In fact, in the latter case, an extension $\varphi$ of $\psi$ to $G$ is the unipotent character labeled by the partition $(3,2)$ (see \cite[\S13.8]{C}). Letting $\sigma$ be the unipotent 
character of $G$ labeled by the partition $(3,1,1)$, of degree $q^3(q^2+1)(q^2-q+1)$, we check that
$$\rho = 1_G + \varphi + \sigma$$
is the (rank 3) permutation character of the action of $G$ on the set of isotropic $1$-dimensional 
subspaces of the natural module $\F_{q^2}^5$, cf.\ \cite[Table 2]{ST}. Note that $\sigma$ has $s$-defect $0$ and 
$s'$-defect $0$. It follows that $\sigma(g_1) = \sigma(g_2) = 0$, so
$$\varphi(g_1) = \rho(g_1) - 1 = 0-1 = -1,~~
    \varphi(g_2) = \rho(g_2) - 1 = 2-1 = 1.$$
Also, the extensions of $\psi$ to $G$ are $\varphi\lambda_i$, $0 \leq i \leq q$, and 
$|\varphi(g)| \leq (q^4(q+1))^{1/2}$ by (\ref{cent-slu}). Certainly, $\chi(g_1)\chi(g_2) = 0$ 
unless $\chi$ has positive $s$-defect and positive $s'$-defect. Hence, combining with 
(\ref{weil-su2}), we deduce that 
$$\left|\sum_{\chi \in \Irr(G),~1 < \chi(1) < D}\frac{\chi(g_1)\chi(g_2)\oc(g)}{\chi(1)}\right|
    \leq \frac{(q+1)^2}{q(q-1)^2} + (q+1)\frac{(q^4(q+1))^{1/2}}{\psi(1)} 
    < \frac{q+1}{7}.$$
Together with (\ref{su43}), this implies that
$$\left|\sum_{\chi \in \Irr(G),~\chi(1) > 1}\frac{\chi(g_1)\chi(g_2)\oc(g)}{\chi(1)}\right|
    \leq (q+1)\left(\frac{1}{5} + \frac{1}{7}\right) < q+1,$$
so we are done as before.      

\smallskip
(iii) Here we consider the case $n = 4$. Since $g$ is unbreakable, $g$ belongs to 
class $A_5$, $C_2$, or $E_1$, in the notation of \cite{Noz1}. In the two latter cases, note that the 
$G$-conjugacy class of such an element $g$ is completely determined 
by $|g|$ and the eigenvalues of $g$ acting on 
$\overline\F_q^4$. On the other hand, $G$ contains a natural subgroup 
$H \cong \GL_2(q^2)$, and $H$ contains an element $h$ with the same spectrum and 
order as $g$. Hence we may assume $g =h \in H$. As $N = p^at^b$ and $t \nmid (q^2-1)$, we 
can now apply Lemma \ref{slu-small1} (if $h$ is unbreakable) to get $g = x^Ny^N$ for some 
$x \in \SL_2(q^2) < \SL_4(q)$ and $y \in H$. Such a decomposition certainly exists if $h$ is 
breakable in $H$ (i.e.\ $h \in \GL_1(q^2) \times \GL_1(q^2)$).

It remains therefore to consider the case $g \in A_5$, i.e.\ $g = zu$, where $z \in \bfZ(G)$ and
$u$ is a regular unipotent element. By \cite[Corollary 8.3.6]{C}, $|\chi(g)| \leq 1$ for all
$\chi \in \Irr(G)$. Choosing $D = (q+1)(q^3+1)$ and $g_2$ of order $sm$ as in (i) (when $n \geq 7$), by the Cauchy-Schwarz inequality, 
$$ \left|\sum_{\chi \in \Irr(G),~\chi(1) \geq D}\frac{\chi(g_1)\chi(g_2)\oc(g)}{\chi(1)}\right|
    \leq \frac{(q^3+1)(q+1)}{(q+1)(q^3+1)} = 1 \leq \frac{q+1}{5}.$$
Using \cite{Noz1}, we check that all irreducible characters of $G$ of degree less than $D$ are
linear or Weil characters. Hence (\ref{weil-su1}) implies that
$$\left|\sum_{\chi \in \Irr(G),~1 < \chi(1) < D}\frac{\chi(g_1)\chi(g_2)\oc(g)}{\chi(1)}\right|
    \leq \frac{(q+1)^{2}}{q(q-1)^2} < \frac{q+1}{7}.$$
It follows that        
$$\left|\sum_{\chi \in \Irr(G),~\chi(1) > 1}\frac{\chi(g_1)\chi(g_2)\oc(g)}{\chi(1)}\right|
    < (q+1)\left(\frac{1}{5} + \frac{1}{7}\right) < q+1,$$     
so we are done.
\hal

\begin{cor}\label{slu-large}
Theorem \ref{main1} holds for $G = \PSL^\e_n(q)$ with $q = p^f \geq 4$, $\e = \pm$, and 
$n \geq 2$.
\end{cor}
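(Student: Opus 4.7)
The plan is to assemble the corollary by applying Proposition \ref{ind-slu} with the small-rank and large-rank results already proved as inputs.

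For $n \in \{2, 3\}$, I would quote Lemma \ref{slu-small2} directly: it asserts surjectivity of $(u, v) \mapsto u^N v^N$ on the simple group $\PSL^\e_n(q)$ for any $N = r^a s^b$, so Theorem \ref{main1} holds in these cases.

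For $n \geq 4$, I would apply Proposition \ref{ind-slu} with $n_0 = 3$. Since $q \geq 4$, the small-characteristic exclusions in hypothesis (i) of that proposition -- namely ``$k \neq 2$ if $q = 2, 3$'' and ``$k \neq 3$ if $(q, \e) = (2, -)$'' -- are vacuous, so (i) reduces to verifying $\sP_u(N)$ for $\GL^\e_k(q)$ with $k \in \{1, 2, 3\}$ and every $N = p^a t^b$ with $t$ a prime satisfying $t \nmid p(q - \e)$. The case $k = 1$ is immediate, and the cases $k \in \{2, 3\}$ are precisely the content of Lemma \ref{slu-small1}. Hypothesis (ii) requires, for each $k$ with $3 < k \leq n$ and each $t \in \cR(\SL^\e_k(q))$, that $\sP_u(N)$ hold for $\GL^\e_k(q)$ with $N = p^a t^b$; this is supplied by Proposition \ref{sl-large} when $\e = +$ and by Proposition \ref{su-large} when $\e = -$. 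The conclusion of Proposition \ref{ind-slu} then delivers surjectivity of $(u, v) \mapsto u^N v^N$ on $\PSL^\e_n(q)$ for every $N$ that is a product of two prime powers, which is Theorem \ref{main1} for these groups.

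There is essentially no new obstacle in this corollary: the substantive work -- character-theoretic estimates for unbreakable elements in large rank, and the direct case analysis in small rank -- has already been carried out in Propositions \ref{sl-large} and \ref{su-large} and in Lemmas \ref{slu-small1} and \ref{slu-small2}, while Corollary \ref{prime2}(i) (and the fact that the primes in $\cR(\SL^\e_n(q))$ are primitive prime divisors of $p^{kf} - 1$ for $k \geq 3$, hence coprime to $q - \e$) reduces to the case $N = p^a t^b$ with $t \in \cR(\SL^\e_n(q))$ handled by these inputs. The corollary is pure assembly, and the hypothesis $q \geq 4$ is exactly what is needed to make all of the small-rank exclusions in Proposition \ref{ind-slu} disappear.
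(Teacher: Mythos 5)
Your proof is correct and matches the paper's argument exactly: the cases $n=2,3$ via Lemma \ref{slu-small2}, and for $n\geq 4$ the inductive Proposition \ref{ind-slu} with $n_0=3$, hypothesis (i) supplied by Lemma \ref{slu-small1} (which in fact already covers $k=1$ as well) and hypothesis (ii) by Propositions \ref{sl-large} and \ref{su-large}. Your additional remarks on why the exclusions in (i) are vacuous for $q\geq 4$ and on the reduction via Corollary \ref{prime2}(i) are accurate but already built into Proposition \ref{ind-slu}.
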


\pf
The case $n = 2,3$ follows from Lemma \ref{slu-small2}. If $n \geq 4$, then 
we choose $n_0 = 3$ and apply Proposition \ref{ind-slu}. Note that condition (i) of
that proposition is satisfied by Lemma \ref{slu-small1}, and (ii) holds by Propositions
\ref{sl-large} and \ref{su-large}. Hence we are done by Proposition \ref{ind-slu}.
\hal

\subsection{Induction step: Small fields}
\begin{prop}\label{sl2}
Suppose $G = \GL_n(2)$ with $n \geq 8$ and $t \in \cR(G)$. Then $\sP_u(N)$ holds for 
$G$ and for every $N = 2^at^b$. 
\end{prop}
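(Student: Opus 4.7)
The plan is to parallel Propositions \ref{sl-large} and \ref{su-large}, specialized to $q=2$. Since $G = \GL_n(2) = \SL_n(2)$ is perfect, the only linear character is trivial and contributes $+1$ to the Frobenius sum of Lemma \ref{basic}(i); the requirement $x^N \in \SL_n(2)$ in Definition \ref{cond-slu} is automatic. Thus to prove $\sP_u(N)$ for an unbreakable $g \in G$, it suffices to exhibit regular semisimple $g_1, g_2 \in G$ of order coprime to $2t$ (so that both $g_i$ are $N$th powers) such that the sum over non-trivial $\chi \in \Irr(G)$ of $\chi(g_1)\chi(g_2)\oc(g)/\chi(1)$ has modulus strictly less than $1$.

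By Table \ref{primes}, $\cR(G) = \{r,s\}$ with $r = \p(2,n)$ and $s = \p(2,n-1)$, and I would split into cases according to $t \in \{r, s\}$. A preliminary case analysis of the semisimple-unipotent decomposition (as in the proof of Lemma \ref{LUL-sl2}) shows that any unbreakable $g \in G$ has $\dim \Ker_{\bar V}(g - I) = e(g,1) \leq 2$; hence the unique irreducible Weil character $\tau$ of $G$, of degree $2^n - 2$, satisfies $\tau(g) = 2^{e(g,1)} - 2$ and in particular $|\tau(g)| \leq 2$. When $t = r$, I would take $g_1, g_2$ regular semisimple of order $s$ inside the Levi subgroup $\GL_{n-1}(2)\times \GL_1(2)$ of $G$; then $|\bfC_G(g_i)| = 2^{n-1}-1$ and the $1$-dimensional fixed summand gives $e(g_i,1) = 1$, so $\tau(g_i) = 0$ and the Weil contribution vanishes. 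When $t = s$, I would take $g_1, g_2$ regular semisimple of order $r$ in a Coxeter torus $C_{2^n-1}$ of $G$; here $|\bfC_G(g_i)| = 2^n - 1$, $\tau(g_i) = -1$, and the Weil contribution is bounded by $2/(2^n - 2)$.

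For the tail over characters of degree at least $D = (2^n-1)(2^{n-1}-4)/3$, I would apply \cite[Theorem 3.1]{TZ1} (which ensures that the trivial and Weil characters are the only irreducible characters of $\SL_n(2)$ of degree $< D$, and that both extend to $G$) together with Lemma \ref{basic}(ii) and the bound $|\bfC_G(g)| \leq 9 \cdot 2^n$ from Lemma \ref{LUL-sl2}. A direct computation then shows that $\sqrt{|\bfC_G(g_1)|\,|\bfC_G(g_2)|\,|\bfC_G(g)|}/D < 1$ in both cases for the relevant ranges of $n$, which combined with the Weil estimate finishes the proof.

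The main obstacle is the sharp corner $n = 8$ with $t = s = 127$: here $s$ is a Mersenne prime, the Coxeter-torus choice forces $|\bfC_G(g_i)| = 255$, and the resulting tail bound slightly exceeds $1$. I would handle this by replacing the Coxeter torus with the maximal torus $C_{31}\times C_3\times C_1$ attached to the partition $(5,2,1)$ of $8$, and taking $g_1, g_2$ regular semisimple of order $93 = 3\cdot 31$ (coprime to $2\cdot 127$); then $|\bfC_G(g_i)| = 93$, the distinguished $1$-dimensional summand again forces $e(g_i, 1) = 1$ and $\tau(g_i) = 0$, and the tail bound comfortably closes.
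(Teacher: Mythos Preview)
Your argument is correct and follows the same Frobenius--Weil--tail template as the paper, with the same threshold $D=(2^n-1)(2^{n-1}-4)/3$ from \cite[Theorem 3.1]{TZ1} and the same centralizer input from Lemma \ref{LUL-sl2}. There are two genuine differences worth noting. First, you bound the Weil contribution via the explicit formula $\tau(g)=2^{e(g,1)}-2$ together with $e(g,1)\le 2$, obtaining $|\tau(g)|\le 2$; the paper instead uses the crude $|\tau(g)|\le |\bfC_G(g)|^{1/2}\le 3\cdot 2^{n/2}$, which still suffices. Second, and more interestingly, for the corner case $n=8$ the paper does \emph{not} change tori: it keeps $g_1=g_2$ of order in $\cR(G)\setminus\{t\}$, uses the dichotomy of Lemma \ref{LUL-sl2} to separate $|\bfC_G(g)|\le 2^{n+2}$ (where the tail estimate closes) from the exceptional $|\bfC_G(g)|=9\cdot 2^n$, and in the latter case observes that $g$ lies in $H=\GL_4(4)=\bfZ(H)\times \SL_4(4)$ and invokes the already-established Corollary \ref{slu-large} for $\SL_4(4)$. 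Your alternative, replacing the Coxeter torus by $C_{31}\times C_3\times C_1$ to force $\tau(g_i)=0$ and shrink $|\bfC_G(g_i)|$ to $93$, is a clean self-contained fix that avoids the forward reference; the paper's route is shorter but leans on the $q\ge 4$ result.
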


\pf
Consider an unbreakable $g \in G$ and choose 
$$D = (2^n-1)(2^{n-1}-4)/3.$$
By \cite[Theorem 3.1]{TZ1}, $\Irr(G)$ contains exactly two characters of degree less than $D$:
namely, $1_G$ and $\tau$. In fact $\tau(1) = 2^n-2$ and $\rho = \tau+1_G$ is the permutation 
character of the action of $G$ on the set of nonzero vectors of the natural module 
$V = \F^n_2$. Choose regular semisimple elements 
$g_1=g_2$ of order $s \in \cR(G) \setminus \{t\}$; in particular, $|\bfC_G(g_i)| \leq 2^n-1$.  
Note that $\rho(g_i) \in \{0,1\}$, so $|\tau(g_i)| \leq 1$. Also, $|\bfC_G(g)| \leq 9 \cdot 2^{n}$
by Lemma \ref{LUL-sl2}. It follows that
$$\frac{|\tau(g_1)\tau(g_2)\tau(g)|}{\tau(1)} 
    \leq \frac{3 \cdot 2^{n/2}}{2^n-2} < 0.189.$$
If $n \geq 9$, then by Lemma \ref{basic}(ii) 
$$\left|\sum_{\chi \in \Irr(G),~\chi(1) \geq D}\frac{\chi(g_1)\chi(g_2)\oc(g)}{\chi(1)}\right|
    \leq \frac{(2^n-1) \cdot 3 \cdot 2^{n/2}}{D} = 
    \frac{9 \cdot 2^{n/2}}{2^{n-1}-4} < 0.809.$$
If $n = 8$ and $|\bfC_G(g)| \leq 2^{n+2}$, then     
$$\left|\sum_{\chi \in \Irr(G),~\chi(1) \geq D}\frac{\chi(g_1)\chi(g_2)\oc(g)}{\chi(1)}\right|
    \leq \frac{(2^n-1) \cdot 2 \cdot 2^{n/2}}{D} = 
    \frac{6 \cdot 2^{n/2}}{2^{n-1}-4} < 0.775.$$
Thus, in each of these cases,
$$\left|\sum_{\chi \in \Irr(G),~\chi(1) > 1}\frac{\chi(g_1)\chi(g_2)\oc(g)}{\chi(1)}\right|
    < 0.809 + 0.189 = 0.998,$$
whence $g \in g_1^G \cdot g_2^G$ by Lemma \ref{basic}(i). 
Since both $g_1$ and $g_2$ have order coprime to $N$, we
are done in these cases. In the remaining case, by Lemma \ref{LUL-sl2}, 
$G = \GL_8(2)$ and $g \in H := \GL_4(4) = \bfZ(H) \times S$ with $\bfZ(H) \cong C_3$ and
$S \cong \SL_4(4)$. Thus we can write $g = zh$ with $z \in \bfZ(H)$ and $h \in S$.
Applying Corollary \ref{slu-large} to $\SL_4(4)$, we deduce that $h = x^Ny^N$ for some $x,y \in S$. 
Certainly, $z = z_1^N$ for some $z_1 \in \bfZ(H)$. It follows that 
$g = (z_1x)^Ny^N$, and we are done again.
\hal

\begin{prop}\label{sl3}
Suppose $G = \GL_n(3)$ with $n \geq 8$ and $t \in \cR(\SL_n(3))$. Then $\sP_u(N)$ holds for 
$G$ and for every $N = 3^at^b$. 
\end{prop}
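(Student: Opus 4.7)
The plan is to follow the template of Proposition~\ref{sl2}, adapted to $q = 3$, with Lemmas~\ref{slu-det} and \ref{LUL3} playing the roles that Lemma~\ref{slu-det} and Lemma~\ref{LUL-sl2} played there. Let $g \in G = \GL_n(3)$ be unbreakable. Apply Lemma~\ref{slu-det} to a cyclic maximal torus $T$ of order $3^n - 1$ to pick $g_1 \in \SL_n(3) \cap T$ regular semisimple of order $s$, where $s \in \cR(\SL_n(3)) \setminus \{t\}$, together with $g_2 \in T$ regular semisimple of order $sm$ satisfying $\det(g_2) = \det(g)$, with $m$ supported only on the primes dividing $q - 1 = 2$. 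Both $g_1$ and $g_2$ have orders coprime to $N = 3^a t^b$, so they are $N$-th powers in $G$, and $\bfC_G(g_i) = T$ gives $|\bfC_G(g_i)| = 3^n - 1$. It then suffices to verify $g \in g_1^G \cdot g_2^G$ via Lemma~\ref{basic}(i).

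Set the threshold $D = (3^n - 1)(3^{n-1} - 9)/16$, as in Proposition~\ref{sl-large}. By \cite[Theorem~3.1]{TZ1}, the irreducible characters of $G$ of degree less than $D$ are exactly the two linear characters $\lambda_0, \lambda_1$ (coming from $G/[G, G] \cong C_2$) together with the four irreducible Weil characters $\tau_{i, j}$ ($i, j \in \{0, 1\}$) of degree $(3^n - 1)/2$ or $(3^n - 3)/2$. The linear characters contribute $\sum_i \lambda_i(g_1) \lambda_i(g_2) \overline{\lambda_i}(g) = 2$ by construction. The explicit eigenspace formula
$$\tau_{i, 0}(x) = \tfrac{1}{2}\bigl(3^{e(x, 1)} + (-1)^i \, 3^{e(x, -1)}\bigr) - 2 \delta_{i, 0},$$
combined with $e(g_k, \pm 1) = 0$ for $k = 1, 2$ (since $g_1, g_2$ sit in a regular cyclic torus and so have no $\pm 1$ eigenvalues on $\overline{\F}_3^n$), gives $|\tau_{i, j}(g_k)| \le 1$; and the unbreakability of $g$ constrains $e(g, \pm 1)$ to a small constant, so each $|\tau_{i, j}(g)|$ is $O(1)$ and the total Weil contribution to the Frobenius sum is $O(3^{-n})$.

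The remaining high-degree contribution is estimated via Lemma~\ref{basic}(ii) combined with the centralizer bound $|\bfC_G(g)| \le 3^{n+2} \cdot 2^4$ from Lemma~\ref{LUL3}, yielding
$$\left| \sum_{\chi(1) \ge D} \frac{\chi(g_1) \chi(g_2) \overline{\chi}(g)}{\chi(1)} \right| \le \frac{(3^n - 1) \cdot 4 \cdot 3^{(n + 2)/2}}{D}.$$
For $n$ sufficiently large this is strictly less than $2$, and then Lemma~\ref{basic}(i) gives $g \in g_1^G \cdot g_2^G$, hence $\sP_u(N)$.

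The main obstacle is that the centralizer bound in Lemma~\ref{LUL3} is larger (by roughly a factor of $3^{n/2}$) than the $\GL_n(2)$-analogue from Lemma~\ref{LUL-sl2}, so the bare Frobenius estimate in the previous step only handles $n$ above some moderate threshold (around $n \ge 12$). For the borderline small $n$ my fallback is to use the fact, extracted from the proof of Lemma~\ref{LUL3}, that the extremal centralizer bound is attained only on $g$ of the form $z \cdot (J_{n-2}, J_2)$ with $z \in \bfZ(G)$, and to replace the crude Cauchy--Schwarz bound on $|\chi(g)|$ for intermediate-degree $\chi$ either by Lusztig-type bounds for almost-regular unipotent classes, or by a direct computation using the unipotent character-value data from \cite{Lu}.
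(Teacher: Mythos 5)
Your strategy reproduces the generic argument of Proposition~\ref{sl-large}(i), but that argument is exactly what breaks down at $q=3$, and your proposal does not repair it for the small $n$ that this proposition must cover. With $D=(3^n-1)(3^{n-1}-9)/16$ the tail estimate from Lemma~\ref{basic}(ii) is
$\frac{(3^n-1)\cdot 4\cdot 3^{(n+2)/2}}{D}\approx 576\cdot 3^{-n/2}$, which is about $7.1$, $4.1$, $2.4$ for $n=8,9,10$, i.e.\ well above the available main term $q-1=2$; so your ``bare'' estimate only starts working around $n\geq 11$, as you concede. The fallback you sketch for the borderline cases is not a proof and, more seriously, is aimed at the wrong target: the extremal class $z\cdot(J_{n-2},J_2)$ is not the only obstruction. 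For example, an unbreakable $g=su$ with $\bfC_G(s)=\GL_{n/2}(9)$ and $u=(J_{n/2-1},J_1)$ has $|\bfC_G(g)|=64\cdot 3^{n}$, which at $n=8,9,10$ is still far larger than the roughly $3^{2n-2}/64$ that the crude bound can absorb, so handling one extremal class does not suffice. Moreover, \cite{Lu} supplies character \emph{degrees} and multiplicities, not character values, so ``direct computation using the unipotent character-value data from \cite{Lu}'' is not something you can cite; no actual improved bound on $|\chi(g)|$ is derived anywhere in the proposal.

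The paper closes this gap by changing the geometry of the problem rather than sharpening $|\chi(g)|$: it takes $g_1$ regular semisimple of order $sm$ (with $\det(g_1)=\det(g)$, $m$ a $2$-power) but takes $g_2=\diag(h,h')$ with $h\in\SL_{n-2}(3)$ of order $s'=\p(3,n-2)$ and $h'\in\SL_2(3)$ of order $4$, so that $g_1$ and $g_2$ are singular at two \emph{different} primitive primes. Then it raises the cutoff to $D=3^{3n-9}$, which makes the Lemma~\ref{basic}(ii) tail at most $4/9$ for all $n\geq 8$, and uses \cite[Theorem 3.4]{BK} to show that any $\chi$ of degree below $3^{3n-9}$ with positive $s$- and $s'$-defect is linear, Weil, or one of the two characters $\varphi\lambda_i$ with $\varphi$ the unipotent character labelled $(n-2,2)$; the values $\varphi(g_1),\varphi(g_2)$ are then pinned down (to absolute value at most $1$) via the permutation characters $\rho_1,\rho_2$ on $1$- and $2$-dimensional subspaces, exactly as in Proposition~\ref{sl-large}(ii). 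Some device of this kind — a second primitive prime built into $g_2$ together with a much larger threshold and a classification of the intermediate characters — is the missing ingredient; without it your argument does not establish $\sP_u(N)$ for $n=8,9,10$, which are needed both for the statement as given and for the induction in Corollary~\ref{slu-small}.
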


\pf
Consider an unbreakable $g \in G$, so 
$|\bfC_G(g)| \leq 3^{n+2} \cdot 2^4$ by Lemma \ref{LUL3}. 
First, we use Lemma \ref{slu-det} to get a regular semisimple element $g_1$ of order $sm$, where 
$s \in \cR(G) \setminus \{t\}$, $m$ is a $2$-power, and $\det(g_1) = \det(g)$. Next we fix a 
regular semisimple 
$h \in \SL_{n-2}(3)$ of order $s' = \p(3,n-2)$ and 
$h' \in \SL_2(3)$ of order $4$, and set $g_2 := \diag(h,h')$. In particular, 
$|\bfC_G(g_i)| \leq 3^n-1$. Also,  we choose 
$$D = 3^{3n-9}.$$
By Lemma \ref{basic}(ii), 
$$\left|\sum_{\chi \in \Irr(G),~\chi(1) \geq D}\frac{\chi(g_1)\chi(g_2)\oc(g)}{\chi(1)}\right|
    \leq \frac{(3^n-1) \cdot 4 \cdot 3^{(n+2)/2}}{3^{3n-9}} 
    < \frac{4}{3^{3n/2-10}} \leq \frac{4}{9}.$$
Now we estimate character ratios for $\chi \in \Irr(G)$ with $\chi(1) < D$ and $\chi(g_1)\chi(g_2) \neq 0$.
The latter condition implies that $\chi$ has positive $s$-defect and positive $s'$-defect.  Applying 
\cite[Theorem 3.4]{BK}, $\chi$ can be only one of the following: 

$\bullet$ two linear characters $\lambda_{0,1}$, 

$\bullet$ two of the four Weil characters $\tau_{i,j}$ with $0 \leq i \leq 1$ (see the proof of 
Proposition \ref{sl-large} for their definition), and, possibly, 

$\bullet$ two characters $\varphi_{0,1} = \varphi\lambda_{0,1}$. Here, $\varphi$ is the unipotent character of $G$ labeled by the partition $(n-2,2)$, of degree $(3^n-1)(3^{n-1}-9)/16$. 

The elements $g_{1,2}$ have the property that $e(g_i,\delta) \leq 1$ for all $\delta \in \F_3^\times$,
with equality attained at most once. Hence, the estimate (\ref{weil-sl1}) holds.  It follows
that 
$$\sum_{0 \leq i,j \leq 1}\frac{|\tau_{i,j}(g_1)\tau_{i,j}(g_2)\overline\tau_{i,j}(g)|}{\tau_{i,j}(1)} 
    \leq \frac{2 \cdot 4 \cdot 3^{n/2+1}}{(3^n-3)/2} \leq \frac{8}{13}.$$
On the other hand, $\tau_{0,0}$ is the unipotent 
character of $G$ labeled by the partition $(n-1,1)$. It follows by \cite[Lemma 5.1]{GT1} that
$$\varphi = (1_G + \tau_{0,0} + \varphi) - (1_G + \tau_{0,0}) = \rho_2 - \rho_1,$$
where $\rho_i$ is the permutation character of the action of $G$ on the set of $i$-dimensional 
subspaces of the natural module $\F_3^n$ for $i = 1,2$. Observe that $\rho_2(g_1) = 0$ and 
$\rho_1(g_1) = 0$ or $1$. Therefore, 
$$|\varphi(g_1)| = |\rho_2(g_1) - \rho_1(g_1)| = |\rho_1(g_1)| \leq 1,~~
    \varphi(g_2) = \rho_2(g_2) - \rho_1(g_2) = 1-0 = 1.$$
This implies that  
$$\left|\sum^1_{i=0}\frac{\varphi_i(g_1)\varphi_i(g_2)\overline\varphi_i(g)}{\varphi_i(1)}\right|
    \leq \frac{2 \cdot 4 \cdot 3^{n/2+1}}{(3^n-1)(3^{n-1}-9)/16} \leq
    \frac{128}{(3^{n/2-1}-1)(3^{n-1}-9)} < 0.003.$$
In summary, 
$$\left|\sum_{\chi \in \Irr(G),~\chi(1) > 1}\frac{\chi(g_1)\chi(g_2)\oc(g)}{\chi(1)}\right|
    < \frac{4}{9} + \frac{8}{13} + 0.003 <  1.07.$$
Our choice of $g_1$ and $g_2$ ensures that
$$\sum^{1}_{i=0}\lambda_i(g_1)\lambda_i(g_2)\overline\lambda_i(g) = 2.$$
Hence $g \in g_1^G \cdot g_2^G$ by Lemma \ref{basic}(i), so we are done since
$|g_1|$ and $|g_2|$ are both coprime to $N$.
\hal

\begin{prop}\label{su3}
Suppose $G = \GU_n(3)$ with $n \geq 7$ and $t \in \cR(\SU_n(3))$. Then $\sP_u(N)$ holds for 
$G$ and for every $N = 3^at^b$. 
\end{prop}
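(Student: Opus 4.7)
The plan is to adapt the character-theoretic argument of Proposition \ref{sl3} to the unitary case via Lemma \ref{basic}, fusing it with the Weil-character estimates and permutation-character identity developed in Proposition \ref{su-large}. Take an unbreakable $g \in G = \GU_n(3)$ with $n \geq 7$. By Lemma \ref{LUL3} we have $|\bfC_G(g)| \leq 3^{n+2}\cdot 2^4$. Since $N = 3^at^b$ and $t \in \cR(\SU_n(3))$, we fix a prime $s \in \cR(\SU_n(3)) \setminus \{t\}$ with $s \nmid 3(q+1)$, available from Table \ref{primes}.

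Following the construction of Proposition \ref{su-large}(i), we use Lemma \ref{slu-det} to choose $g_1 \in G$ regular semisimple of order $sm$, where every prime divisor of $m$ divides $q+1 = 4$ (so $m$ is a $2$-power) and $\det(g_1) = \det(g)$. Next we set $g_2 = \diag(h,h')$ with $h \in \SU_{n-2}(3)$ regular semisimple of order $s' = \ell(3, 2(n-2))$ (or the appropriate entry from Table \ref{primes} for $\SU_{n-2}$) and $h' \in \SU_2(3)$ a fixed element of $2$-power order; in particular $\det(g_2)=1$ and both $g_1$ and $g_2$ are of order coprime to $N$. Because $\det(g_1 g_2) = \det(g)$, the conclusion that $g \in g_1^G \cdot g_2^G$ together with $\gcd(|g_i|,N)=1$ yields $g = x^Ny^N$ with $x^N \in \SU^{}_n(3)$, establishing $\sP_u(N)$.

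To apply Lemma \ref{basic}(i), fix a threshold $D \approx 3^{3n-9}$. Characters of degree $\geq D$ contribute at most $\bigl((q+1)(q^{n-1}+1)\cdot 3^{n+2}\cdot 2^4\bigr)^{1/2}/D$ by Lemma \ref{basic}(ii) together with (\ref{cent-slu}), which is $o(1)$ for $n \geq 7$. For $\chi \in \Irr(G)$ with $\chi(1) < D$ and $\chi(g_1)\chi(g_2) \neq 0$, the character $\chi$ has positive $s$-defect and positive $s'$-defect, so by \cite[Theorem 3.4]{BK} (or the unitary analog thereof, via the Jordan decomposition of characters and Lusztig's classification) $\chi$ must be one of: the four linear characters $\lambda_{0,\ldots,3}$; a short family of Weil characters $\zeta_{i,j}$ among the $(q+1)^2 = 16$ total (those surviving the parity/defect conditions imposed by $s$ and $s'$); and possibly the $q+1$ extensions $\varphi_{i}= \varphi\lambda_i$ of the unipotent character $\varphi$ labeled by $(n-2,2)$, of degree $(3^n-(-1)^n)(3^{n-1}+(-1)^{n-1}9)/16$. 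The unbreakability of $g$ and our choice of $g_1, g_2$ yield $|\zeta_{i,j}(y)| \leq 1$ for $y \in \{g,g_1,g_2\}$ as in (\ref{weil-su1}), so the Weil contribution is bounded by $16\cdot(3^{n+2}\cdot 2^4)^{1/2}/(3^n-q)$. For $\varphi$, the identity $\varphi = \rho - 1_G - \sigma$ derived from the rank-$3$ permutation character on isotropic $1$-spaces, as used in Proposition \ref{su-large}(ii), gives $|\varphi(g_i)| \leq 2$, and combined with $|\varphi(g)| \leq (3^{n+2}\cdot 2^4)^{1/2}$ and $\varphi(1) \sim 3^{2n-1}/16$ the $\varphi$-contribution is negligible. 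Summing, the non-linear contribution is strictly less than the linear contribution $q+1 = 4$ furnished by $\sum_i \lambda_i(g_1)\lambda_i(g_2)\overline\lambda_i(g) = 4$, and Lemma \ref{basic}(i) closes the argument.

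The main obstacle is the bookkeeping for small-degree characters of $\GU_n(3)$ with doubly positive defect: the unitary setting has $16$ Weil characters rather than the $4$ of $\GL_n(3)$, and we must invoke the rank-$3$ permutation character on isotropic lines (as in the proof of Proposition \ref{su-large}(ii)) instead of the Grassmannian identity used for $\GL_n(3)$ in Proposition \ref{sl3}. A secondary concern is verifying that the centralizer and defect bounds are tight enough to make the non-linear contribution drop below $q+1=4$ already at $n = 7$ (rather than $n=8$), which should follow from the exponential growth of $D$ relative to the $3^{n/2}$-scale bounds on $|\chi(g)|$ for small-degree $\chi$.
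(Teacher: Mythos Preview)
Your overall strategy matches the paper's, but there is a genuine gap in the Weil-character estimate at $g$. The bound $|\zeta_{i,j}(g)|\le 1$ from (\ref{weil-su1}) is valid for $q\ge 4$ because unbreakability there forces $e(g,\xi^l)\le 1$. For $q=3$, however, Definition~\ref{br-slu}(iii) explicitly excludes $\GU_2(3)$-factors from the notion of breakability, so an unbreakable $g\in\GU_n(3)$ can have $e(g,\xi^l)$ much larger than $1$: the $\xi^l$-eigenspace need only be totally singular, giving merely $e(g,\xi^l)\le\lfloor n/2\rfloor$. The paper uses exactly this observation (see (\ref{weil-su3}) and (\ref{weil-su31})) to obtain $|\zeta_{i,j}(g)|\le 3^{n-4}$, and this is what makes the Weil contribution $\sum_{i,j}|\zeta_{i,j}(g_1)\zeta_{i,j}(g_2)\bar\zeta_{i,j}(g)|/\zeta_{i,j}(1)\le 16\cdot 3^{n-4}/((3^n-3)/4)<0.8$ fall below $q+1=4$. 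Your alternative bound $16\cdot(3^{n+2}\cdot 2^4)^{1/2}/(3^n-q)$, obtained from the centralizer estimate on $g$, exceeds $4$ already at $n=7$, so neither version of your Weil step works as written.

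Two smaller points: the reference \cite{BK} covers $\GL_n$, not $\GU_n$; for the classification of low-degree $\chi$ with positive $s$- and $s'$-defect in $\GU_n(3)$ one needs \cite[Proposition 6.6]{ore} (and a direct check via \cite{Lu} for $n=7$). Also, your threshold $D\approx 3^{3n-9}$ is lower than the paper's $D\approx 3^{3n-3}/896$ (and $3^{16}$ when $n=7$); with the smaller $D$ you must verify independently that no further characters beyond the linear, Weil, and $(n-2,2)$-unipotent families slip below $D$ while retaining positive $s$- and $s'$-defect.
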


\pf
Consider an unbreakable $g \in G$, so 
$|\bfC_G(g)| \leq 3^{n+2} \cdot 2^4$ by Lemma \ref{LUL3}. 
First, we use Lemma \ref{slu-det} to get a regular semisimple $g_1$ of order $sm$, where $s \in \cR(G) \setminus \{t\}$, $m$ is a 
$2$-power, and $\det(g_1) = \det(g)$. Then we choose 
$$s' := \left\{ \begin{array}{ll}\p(q,2n-4), & n \equiv 1 \bmod 2,\\
                                             \p(q,n-2),   & n \equiv 2 \bmod 4,\\
                                   \p(q,(n-2)/2), & n \equiv 0 \bmod 4, \end{array}\right.$$   
(with $q = 3$). Note that $s'|(q^{n-2}-(-1)^{n-2})$ but $s' \nmid \prod^{n}_{i=1,i \neq n-2}(q^i-(-1)^i)$.                                             
Next, we fix $\alpha \in \ovF_3^\times$ of order $q^{n-2}-(-1)^n$ and choose  
a regular semisimple $h \in \GU_{n-2}(3)$ that is conjugate over $\ovF_3$ to 
$$\diag\left(\alpha,\alpha^{-q},\alpha^{q^2}, \ldots ,\alpha^{(-q)^{n-3}}\right).$$
Note that $\det(h) \in \F_9^\times$ has order $4$. Hence there is some $\beta \in \F_9^\times$ of order $q^2-1$ so that $\det(h) = \beta^2$. We fix $h'  = \diag(\beta,\beta^{-q}) \in \GU_2(3)$,
and set $g_2 := \diag(h,h')$. In particular, $g_2 \in \SU_n(3)$ is $s'$-singular, $g_i$ is an 
$N'$-element and $|\bfC_G(g_i)| \leq 4(3^{n-1}+1)$ for $i = 1,2$.

Recall the Weil characters $\zeta_{i,j}$, $0 \leq i,j \leq q$ defined in the proof of Proposition
\ref{su-large}. Fix $\xi \in \F_{q^2}^\times$ of order $q+1$. 
The elements $g_{1,2}$ have the property that $e(g_i,\xi^l) \leq 1$ for all $0 \leq l \leq q$,
with equality attained at most once. Hence, the estimate (\ref{weil-su1}) holds for $y = g_i$.
Also, 
\begin{equation}\label{weil-su3}
  e(g,\xi^l) \leq n/2
\end{equation}  
whenever $n \geq 7$. (Indeed, otherwise 
$U = \Ker(g-\xi^l \cdot 1_V)$ has dimension $\geq (n+1)/2$ in 
the natural $G$-module $V := \F_{q^2}^n$. It follows that $U$ cannot be totally singular, 
so $U$ contains at least one anisotropic vector $u$. In this case, $g$ fixes the decomposition
$$V = \la u \ra_{\F_{q^2}} \oplus (\la u \ra_{\F_{q^2}})^\perp.$$
In other words,  $g \in \GU_1(q) \times \GU_{n-1}(q)$, so $g$ is breakable, a contradiction.)  
As $n \geq 7$, we deduce that $e(g,\xi^l) \leq n-4$, whence
\begin{equation}\label{weil-su31}
  |\zeta_{i,j}(g)| \leq \frac{(q+1)q^{n-4}}{q+1} =  q^{n-4},
\end{equation}  
so
\begin{equation}\label{weil-su4}
  \sum_{0 \leq i,j \leq q}\frac{|\zeta_{i,j}(g_1)\zeta_{i,j}(g_2)\overline\zeta_{i,j}(g)|}{\zeta_{i,j}(1)} 
    \leq \frac{16 \cdot 3^{n-4}}{(3^n-3)/4} < 0.8.
\end{equation}    
 
Choosing 
$$D = \left\{ \begin{array}{rl}(3^{n}-1)(3^{n-1}-1)(3^{n-2}-27)/896, & n \geq 8,\\
                                            3^{16}, & n = 7, \end{array} \right.$$
by Lemma \ref{basic}(ii) 
\begin{equation}\label{su31}
  \left|\sum_{\chi \in \Irr(G),~\chi(1) \geq D}\frac{\chi(g_1)\chi(g_2)\oc(g)}{\chi(1)}\right|
    \leq \frac{4(3^{n-1}+1) \cdot 4 \cdot 3^{(n+2)/2}}{D} 
    < 0.76.
\end{equation}    
Now we estimate character ratios for $\chi \in \Irr(G)$ with $\chi(1) < D$ and $\chi(g_1)\chi(g_2) \neq 0$.
The latter condition implies that $\chi$ has positive $s$-defect and positive $s'$-defect.  Applying 
\cite[Proposition 6.6]{ore} for $n \geq 8$, $\chi$ can be only one of 
the following:

$\bullet$ $4$ linear characters $\lambda_{i}$, $0 \leq i \leq 3$; 

$\bullet$ (at most $12$ of the) $16$ Weil characters $\zeta_{i,j}$ with $0 \leq i \leq 3$, and 

$\bullet$ $4$ characters $\varphi_{i} = \varphi\lambda_{i}$, $0 \leq i \leq 3$, 
if $s | (q^{n-1}+(-1)^n)$. Here, $\varphi$ is the unipotent character of $G$ labeled by the partition 
$(n-2,2)$, of degree 
$$\varphi(1) = (3^n-(-1)^n)(3^{n-1}+9(-1)^n)/32.$$ 
This conclusion also holds for $n = 7$. (Indeed, 
for $n = 7$, using \cite{Lu} we can check that if $\sigma \in \Irr(\SU_7(q))$ has 
positive $s$-defect and positive $s'$-defect and $\sigma(1) < D$, then $\sigma$ is the 
restriction to $\SU_7(q)$ of one of the above characters of $\GU_7(q)$.)

Let $\psi$ denote the unipotent  character of $G$ labeled by the partition $(n-2,1,1)$, of degree
$$\psi(1) = (3^n+3(-1)^n)(3^n-9(-1)^n)/32.$$
It is well known, see e.g. \cite[Table 2]{ST}, that $\rho := 1_G + \varphi + \psi$ is the permutation character of the action of $G$ on the set of isotropic $1$-dimensional 
subspaces of the natural module $V$.  Recall we need to consider $\varphi$ only when
$s|(q^{n-1}+(-1)^n)$, so $\psi$ has $s$-defect $0$ and $s'$-defect $0$. In particular, 
$\psi(g_1) = \psi(g_2) = 0$. Therefore, 
$$\varphi(g_1) = \rho(g_1) - 1 = 0-1 = -1,~~
    \varphi(g_2) = \rho(g_2) - 1 = 2-1 = 1.$$
Since $|\varphi(g)| \leq 4 \cdot 3^{n/2+1}$, 
$$\left|\sum^q_{i=0}\frac{\varphi_i(g_1)\varphi_i(g_2)\overline\varphi_i(g)}{\varphi_i(1)}\right|
    \leq \frac{4 \cdot 4 \cdot 3^{n/2+1}}{(3^n-1)(3^{n-1}-9)/32}  < 0.05.$$
Together with (\ref{weil-su3}) and (\ref{su31}), this implies that
$$\left|\sum_{\chi \in \Irr(G),~\chi(1) > 1}\frac{\chi(g_1)\chi(g_2)\oc(g)}{\chi(1)}\right|
    <  0.8+ 0.76 + 0.05 =  1.61.$$
Our choice of $g_1$ and $g_2$ ensures that 
$$\sum^{q}_{i=0}\lambda_i(g_1)\lambda_i(g_2)\overline\lambda_i(g) = 4.$$
Hence $g \in g_1^G \cdot g_2^G$ by Lemma \ref{basic}(i), so we are done since
$|g_1|$ and $|g_2|$ are both coprime to $N$.
\hal

\begin{prop}\label{su2}
Suppose $G = \GU_n(2)$ with $n \geq 9$ and $t \in \cR(\SU_n(2))$. Then $\sP_u(N)$ holds for 
$G$ and for every $N = 2^at^b$. 
\end{prop}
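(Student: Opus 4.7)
The argument will follow the template of Proposition \ref{su3}, with $q$ replaced by $2$. Fix an unbreakable $g \in G$; by Lemma \ref{LUL}, $|\bfC_G(g)| \leq 2^{n+4}\cdot 3^2$ when $n \geq 10$, and $|\bfC_G(g)| \leq 2^{48}$ when $n = 9$. Using Lemma \ref{slu-det}, choose a regular semisimple $N'$-element $g_1 \in G$ of order $sm$ with $s \in \cR(G) \setminus \{t\}$, where $m$ is a power of $3 = q+1$ and $\det(g_1) = \det(g)$. For $g_2$, set
\[
s' := \begin{cases} \p(2, 2n-4), & n \text{ odd}, \\ \p(2, n-2), & n \equiv 2 \pmod{4}, \\ \p(2, (n-2)/2), & n \equiv 0 \pmod{4}, \end{cases}
\]
pick $h \in \GU_{n-2}(2)$ regular semisimple whose spectrum is the Galois orbit of a primitive $(2^{n-2}-(-1)^n)$-th root of unity, and choose $h' \in \GU_2(2)$ so that $g_2 := \diag(h,h') \in \SU_n(2)$ is also an $N'$-element with $\det(h') = \det(h)^{-1}$. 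By construction, each norm-$1$ eigenspace of $g_1$ and of $g_2$ on the natural module has dimension at most $1$, so (\ref{weil-su1}) gives $|\zeta_{i,j}(g_1)|, |\zeta_{i,j}(g_2)| \leq 1$ for every Weil character $\zeta_{i,j}$.

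Now apply Lemma \ref{basic}(i). The sum over the $q+1 = 3$ linear characters equals $3$ by the determinant-matching construction. For a suitably chosen threshold $D$, the contribution from $\chi \in \Irr(G)$ with $\chi(1) \geq D$ is bounded by Lemma \ref{basic}(ii) using the centralizer bounds on $g, g_1, g_2$. For smaller-degree characters with $\chi(g_1)\chi(g_2) \neq 0$, the classification of low-degree irreducible characters of $\SU_n(2)$ with non-trivial $s$- and $s'$-defect (as in \cite[Proposition 6.6]{ore} together with \cite{Lu}) leaves only the Weil characters $\zeta_{i,j}$, $0 \leq i,j \leq 2$, and the twists $\varphi_i := \varphi\lambda_i$ of the unipotent character $\varphi$ labeled by the partition $(n-2,2)$. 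The Weil sum is controlled by observing that unbreakability forces $e(g,\xi^l) \in \{0,2,3,n-3,n-2\}$ for $l = 0,1,2$, giving $|\zeta_{i,j}(g)| \leq (2^{n-2}+5)/3$. The $\varphi$-contribution is handled via the rank-$3$ permutation decomposition $\rho = 1_G + \varphi + \psi$ on isotropic $1$-subspaces of the natural module, where $\psi$ is the unipotent character labeled $(n-2,1,1)$ and has $s$- and $s'$-defect zero; this yields $\varphi(g_1) = -1$, $\varphi(g_2) = 1$, while $|\varphi(g)|$ is bounded by Cauchy--Schwarz.

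The main obstacle is that $q = 2$ produces the narrowest margin in this section: the Weil contribution alone is asymptotically $9/4$, so the remaining contributions must collectively stay below $3/4$ in order to bring the total strictly under $q+1 = 3$. This forces each estimate to be performed with care, and the intermediate bound $|\zeta_{i,j}(g)| \leq (2^{n-2}+5)/3$ coming from the unbreakability-forced eigenspace restrictions is essential. The case $n = 9$ is extra delicate, since Lemma \ref{LUL} yields only the weaker bound $|\bfC_G(g)| \leq 2^{48}$; here we may need to revisit the proof of Lemma \ref{LUL}(ii) to extract the precise centralizer structure for unbreakable $g$, or else treat $n = 9$ by isolating a specific natural subgroup of $\GU_9(2)$ (such as $\GU_7(2) \times \GU_2(2)$) and reducing to smaller cases in the spirit of the $\GL_4(4) \leq \GL_8(2)$ argument at the end of Proposition \ref{sl2}. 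Once the character-sum inequality is established, $g \in g_1^G \cdot g_2^G$ follows by Lemma \ref{basic}(i), and the $N'$-property of $g_1$ and $g_2$ completes the proof.
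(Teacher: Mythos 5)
Your outline reproduces the general strategy (two ppd-type regular semisimple $N'$-elements, Frobenius' formula, Weil characters, the rank-$3$ permutation character trick), but the quantitative core has genuine gaps. First, the key Weil-value bound at $g$ is based on a faulty unbreakability analysis: you claim $e(g,\xi^l)\in\{0,2,3,n-3,n-2\}$, but if the $\xi^l$-eigenspace of $g$ contains an anisotropic vector $u$, then $g$ preserves $\la u\ra \oplus \la u\ra^{\perp}$ and so lies in $\GU_1(2)\times \GU_{n-1}(2)$, which \emph{is} breakable for $n\geq 9$ by Definition \ref{br-slu}(ii). Hence every such eigenspace of an unbreakable $g$ is totally singular, so $e(g,\xi^l)\leq n/2\leq n-5$ — this is exactly the argument of (\ref{weil-su3}), valid verbatim for $q=2$ — and the correct bound is $|\zeta_{i,j}(g)|\leq 2^{n-5}$, not $(2^{n-2}+5)/3$. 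This is not cosmetic: with your bound each Weil term costs about $1/4$, so the Weil block costs about $9/4$ (or $3/2$ even if only six survive the defect conditions), while the high-degree tail from Lemma \ref{basic}(ii) already costs about $0.37$ for $n\geq 10$ and about $1.05$ for $n=9$; since the linear characters contribute only $3$, your stated margin of $3/4$ for "everything else" is already violated at $n=9$ by the tail alone, and for $n\geq 10$ it cannot absorb the remaining characters. With $2^{n-5}$ the Weil block drops to about $0.57$, which is what makes the inequality closable.

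Second, for $q=2$ the characters of degree below the threshold are \emph{not} only linear, Weil and twists of the $(n-2,2)$ unipotent character: by \cite[Proposition 6.6]{ore} they also include the $27$ characters $D^\circ_\alpha\lambda_i$ with $\alpha\in\Irr(\GU_2(2))$, and your assertion that the $s$- and $s'$-defect conditions for your choice of $g_2$ eliminate all of them except $\alpha=1_S$ is not proved. The paper does not attempt this: for $n\geq 10$ it simply takes $g_2$ regular semisimple of order $s$ and bounds the whole family via \cite[Lemma 6.7, Proposition 6.9]{ore} ($|D^\circ_\alpha(g_i)|\leq 2,3,4$), which contributes about $1.06$ — again incompatible with your $3/4$ margin. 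Finally, $n=9$ is left in your proposal as a "may need to" item, but it genuinely requires a separate argument: Lemma \ref{LUL} only gives $|\bfC_G(g)|\leq 2^{48}$, and a reduction to $\GU_7(2)\times\GU_2(2)$ in the spirit of Proposition \ref{sl2} does not go through, since unbreakable elements of $\GU_9(2)$ may lie in that subgroup and the soluble factor $\GU_2(2)$ is not covered by the inductive base. The paper instead takes $g_2=\diag(h,h')$ with $h\in\SU_7(2)$ of order $43$ and $h'\in\SU_2(2)$ of order $3$, shows that only $\alpha=1_S$ survives the defect conditions (forcing $t=19$, $s=17$), identifies $D^\circ_{1_S}$ with the $(n-2,2)$ unipotent character via $\rho=1_G+\varphi+\psi$ to get $\varphi(g_i)=-1$, sharpens $|\varphi(g)|\leq 2^8+1$ using $e(g,\xi^l)\leq 4$, and picks the threshold $D=2^{22}\cdot 7\cdot(2^9+1)$ so that the tail estimate closes; none of this is supplied by your outline.
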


\pf
Consider an unbreakable $g \in G$, so 
$|\bfC_G(g)| \leq 2^{n+4} \cdot 3^2$ when $n \geq 10$ and $|\bfC_G(g)| \leq 2^{48}$ when $n = 9$
by Lemma \ref{LUL}.

\smallskip
(i) First, we use Lemma \ref{slu-det} to get a regular semisimple element $g_1$ of order $sm$, where 
$s \in \cR(G) \setminus \{t\}$, $m$ is a $3$-power, and $\det(g_1) = \det(g)$. If $n \geq 10$, we can
find a regular semisimple $g_2 \in \SU_n(2)$ of order $s$. In particular, $g_i$ is an 
$N'$-element and $|\bfC_G(g_i)| \leq 3(2^{n-1}+1)$ for $i = 1,2$. 
Fix $\xi \in \F_{q^2}^\times$ of order $q+1$. As in the proof of Proposition \ref{su3}, we
note that the elements $g_{1,2}$ have the property that $e(g_i,\xi^l) \leq 1$ for all $0 \leq l \leq q$,
with equality attained at most once. Hence, the estimate (\ref{weil-su1}) holds for $y = g_i$.

If $n = 9$, we choose $s':= 43$ and fix a regular semisimple $h \in \SU_7(2)$ of order
$43$. Also, we fix $h' \in \SU_2(2)$ of order $3$ and set $g_2:= \diag(h,h')$. 
In particular, $|\bfC_G(g_2)| = 9(2^7+1)$, and $e(g_2,\xi^l)$ equals $0$ for $l = 0$ and $1$ 
for $l = 1,2$. Direct computation shows that $|\zeta_{i,j}(g_2)| = 1$ for all $i,j$.
Thus, for $n \geq 9$ and $y \in \{g_1,g_2\}$,  
\begin{equation}\label{weil-su5}
  |\zeta_{i,j}(y)| \leq 1.
\end{equation}   

\smallskip
(ii) Choosing 
$$D = \left\{ \begin{array}{rl}(2^{n}+1)(2^{n-1}-1)(2^{n-2}-27)/81, & n \geq 10,\\
                                            2^{22} \cdot 7 \cdot (2^9+1), & n = 9, \end{array} \right.$$
by Lemma \ref{basic}(ii), for $n \geq 10$, 
\begin{equation}\label{su21}
  \left|\sum_{\chi \in \Irr(G),~\chi(1) \geq D}\frac{\chi(g_1)\chi(g_2)\oc(g)}{\chi(1)}\right|
    \leq \frac{3(2^{n-1}+1) \cdot 3 \cdot 2^{n/2+2}}{D} 
    < 0.37.
\end{equation}    
Now we estimate character ratios for $\chi \in \Irr(G)$ with $\chi(1) < D$ and $\chi(g_1)\chi(g_2) \neq 0$.
The latter condition implies that $\chi$ has positive $s$-defect and positive $s'$-defect. Applying 
\cite[Proposition 6.6]{ore} for $n \geq 10$, $\chi$ can be only one of the 
following:

$\bullet$ $3$ linear characters $\lambda_{i}$, $0 \leq i \leq 2$; 

$\bullet$ at most $6$ of the $9$ Weil characters $\zeta_{i,j}$ with $0 \leq i \leq 2$, and 

$\bullet$ (some of the) $27$ characters $D^\circ_\alpha\lambda_{i}$, $0 \leq i \leq 2$, 
$\alpha \in \Irr(S)$ with $S := \GU_2(2)$ (see \cite[Proposition 6.3]{ore} for the definition of 
$D^\circ_\alpha$).

This conclusion also holds for $n = 9$. (Indeed, 
for $n = 9$, using \cite{Lu} we can check that if $\sigma \in \Irr(\SU_9(2))$ has 
positive $s$-defect and positive $s'$-defect and $\sigma(1) < D$, then $\sigma$ is the 
restriction to $\SU_9(2)$ of one of the above characters of $\GU_9(2)$.)

Next, the inequality (\ref{weil-su3}) implies that $e(g,\xi^l) \leq n-5$ as $n \geq 9$, so
$$|\zeta_{i,j}(g)| \leq \frac{(q+1)q^{n-5}}{q+1} =  q^{n-5}.$$
It now follows from (\ref{weil-su5}) that
\begin{equation}\label{weil-su6}
  \sum_{0 \leq i,j \leq q}\frac{|\zeta_{i,j}(g_1)\zeta_{i,j}(g_2)\overline\zeta_{i,j}(g)|}{\zeta_{i,j}(1)} 
    \leq \frac{6 \cdot 2^{n-5}}{(2^n-2)/3} < 0.57.
\end{equation}    

\smallskip
(iii) Now we assume that $n \geq 10$. 
We already observed that $e(g_i,\xi^l) \leq 1$ for $0 \leq l \leq 2$, with equality attained at most once.
Thus $g_i$ satisfies the conclusion (i) of \cite[Lemma 6.7]{ore}. Hence it also satisfies the 
conclusion (ii) of \cite[Proposition 6.9]{ore}. Thus  
$$|D^\circ_\alpha(g_i)| \leq \left\{ \begin{array}{ll}
    2, & \alpha(1) = 1, \alpha \neq 1_S,\\
    3, & \alpha = 1_S,\\
    4, & \alpha(1) = 2. \end{array} \right.$$
Since $|\varphi(g)| \leq 3 \cdot 2^{n/2+2}$, 
$$\left|\sum_{\chi = D^\circ_\alpha \lambda_i}\frac{\chi(g_1)\chi(g_2)\oc(g)}{\chi(1)}\right|
    \leq 3^2 \cdot 2^{n/2+2} \cdot \left( \frac{5 \cdot 2^2 + 3^2}{(2^n-1)(2^{n-1}-4)/9}
    +  \frac{3 \cdot 4^2}{(2^n-2)(2^n-4)/9} \right)
      < 1.06.$$
Together with (\ref{weil-su6}) and (\ref{su21}), this implies that
$$\left|\sum_{\chi \in \Irr(G),~\chi(1) > 1}\frac{\chi(g_1)\chi(g_2)\oc(g)}{\chi(1)}\right|
    <  0.57+ 0.37 + 1.06 =  2.$$
Our choice of $g_1$ and $g_2$ ensures that 
$$\sum^{q}_{i=0}\lambda_i(g_1)\lambda_i(g_2)\overline\lambda_i(g) = 3.$$
Hence $g \in g_1^G \cdot g_2^G$ by Lemma \ref{basic}(i), so we are done since
$|g_1|$ and $|g_2|$ are both coprime to $N$.

\smallskip
(iv) Finally, we handle the case $n = 9$. Now $\chi = D^\circ_\alpha \lambda_i$ can have 
positive $s$-defect and positive $s'$-defect only when $t=19$, $s=17$, $\alpha = 1_S$. In this case,
$\varphi := D^\circ_{1_S}$ is the unipotent character of $G$ labeled by the partition 
$(n-2,2)$, of degree 
$$\varphi(1) = (2^9+1)(2^{8}-4)/9= 14364.$$ 
Let $\psi$ denote the unipotent  character of $G$ labeled by the partition $(n-2,1,1)$, of degree
$$\psi(1) = (2^9-2)(2^9+4)/9 = 29240.$$
Again, $\rho := 1_G + \varphi + \psi$ is the permutation character of the action of $G$ on the set of isotropic $1$-dimensional subspaces of the natural module $V$, see e.g. \cite[Table 2]{ST}.  
Recall we need to consider $\varphi$ only when $s=17$ (and $s' = 43$), so $\psi$ has $s$-defect $0$ and $s'$-defect $0$. In particular, 
$\psi(g_1) = \psi(g_2) = 0$. Therefore, 
$$\varphi(g_i) = \rho(g_i) - 1 = 0-1 = -1.$$
Recall that $e(g,\xi^l) \leq 4$ for all unbreakable $g \in \GU_9(q)$ and $0 \leq l \leq 2$. 
Arguing as in the proof of \cite[Proposition 6.9]{ore}, we obtain 
$$|\varphi(g)| = |D^\circ_{1_S}(g)| \leq 2^8+1 = 257.$$ 
It follows that
$$\left|\sum_{\chi = \varphi\lambda_i,~0 \leq i \leq 2}\frac{\chi(g_1)\chi(g_2)\oc(g)}{\chi(1)}\right|
    \leq \frac{3 \cdot 257}{14364}  < 0.06.$$
By Lemma \ref{basic}(ii), 
$$\left|\sum_{\chi \in \Irr(G),~\chi(1) \geq D}\frac{\chi(g_1)\chi(g_2)\oc(g)}{\chi(1)}\right|
    \leq \frac{(765 \cdot 1161)^{1/2}  \cdot 2^{24}}{2^{22} \cdot 7 \cdot 513} 
    < 1.05.$$       
In summary, 
$$\left|\sum_{\chi \in \Irr(G),~\chi(1) > 1}\frac{\chi(g_1)\chi(g_2)\oc(g)}{\chi(1)}\right|
    <  0.57+ 0.06 + 1.05 =  1.68,$$
so we are done again.
\hal

\begin{lem}\label{base-slu}
Let $q = p= 2,3$ and $\e =\pm$. 

{\rm (i)} Suppose that $3 \leq k \leq 4$ and $k \neq 3$ if $(q,\e) = (2,-)$. Then  
$\sP(N)$ holds for $\GL^\e_k(q)$ and for every $N = p^at^b$ with $t \neq p$ a prime and 
$t \nmid (q-\e)$. Also, Theorem \ref{main1} holds for $\PSL^\e_k(q)$.

{\rm (ii)} 
Let $\GL^\e_k(q)$ be one of the following groups:
\begin{enumerate}[\rm(a)]
\item $\GL_n(2)$ or $\GL_n(3)$, with $5 \leq n \leq 7$;
\item $\GU_n(2)$ with $5 \leq n \leq 8$;
\item $\GU_n(3)$ with $n = 5,6$.
\end{enumerate}
Then $\sP_u(N)$ holds for $\GL^\e_k(q)$ and for every $N = p^at^b$ with $t \in \cR(\SL^\e_k(q))$, 
%$\sP_u(N)$ holds for $\GL^\e_k(q)$ and for every $N = p^at^b$ with $t \in \cR(\SL^\e_k(q))$, 

\end{lem}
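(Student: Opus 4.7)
The plan is to verify both parts by direct computation with the character tables of the groups concerned, exactly as in the proof of Lemma~\ref{alt-spor}. All of the character tables needed are either available in the Character Table Library of \textsf{GAP} \cite{GAP} or can be constructed in {\sc Magma} \cite{Magma} via the implementation of Unger's algorithm \cite{Unger}, so class functions such as element order, centralizer order, and determinant are readily accessible.

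For part (i), fix $G = \GL^\e_k(q)$ with $k\in\{3,4\}$, $q\in\{2,3\}$, and the case $(q,\e)=(2,-)$ for $k=3$ excluded. By Corollary~\ref{prime2}(i), the only primes $t$ that need attention are those in $\cR(\SL^\e_k(q))$; by the primitive prime divisor definition of $\cR$, none of these divides $q-\e$, so the hypothesis $t\nmid (q-\e)$ is automatic. For each such $t$, each candidate $N=p^at^b$, and each conjugacy class representative $g\in G$, one searches the character table for a pair $(g_1,g_2)$ of $N'$-elements with $g_1\in \SL^\e_k(q)$ and $\det(g_2)=\det(g)$, and applies Lemma~\ref{basic}(i) to confirm that $g\in g_1^G\cdot g_2^G$. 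This yields $g=x^Ny^N$ with $x^N=g_1\in \SL^\e_k(q)$, which is exactly $\sP(N)$. The statement for $\PSL^\e_k(q)$ then follows: if $p\nmid N$ use Corollary~\ref{prime2}(i); otherwise $N=p^at^b$ with $t\in\cR$, and $\sP(N)$ on $\GL^\e_k(q)$ descends to surjectivity of $(u,v)\mapsto u^Nv^N$ on $\PSL^\e_k(q)$.

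For part (ii), the procedure is the same except that attention is restricted to unbreakable $g\in G$. Definition~\ref{br-slu} flags the breakable classes a priori as those meeting a natural subgroup $\GL^\e_a(q)\times\GL^\e_b(q)$ of the prescribed form, so the verification shrinks to a manageable set of classes. For each unbreakable $g$ and each $t\in\cR(\SL^\e_k(q))$ one again exhibits a pair $(g_1,g_2)$ of $N'$-elements with $g_1\in\SL^\e_k(q)$ and $\det(g_2)=\det(g)$, and verifies $g\in g_1^G\cdot g_2^G$ via Lemma~\ref{basic}(i). Natural candidates are regular semisimple elements whose orders lie in $\cR(\SL^\e_k(q))\setminus\{t\}$, with the determinant of $g_2$ adjusted inside the cyclic torus containing it using Lemma~\ref{slu-det}; this mirrors the constructions employed in the generic Propositions \ref{sl-large}--\ref{su2}.

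The main obstacle is purely one of enumeration: groups such as $\GU_8(2)$ and $\GL_7(3)$ have several hundred conjugacy classes, and since the character-sum estimates of Propositions~\ref{sl2}--\ref{su2} break down at these small ranks, no uniform bound is available and each unbreakable class has to be treated individually. Because every character table involved is fully known, the required case-check is a finite mechanical verification; we expect the default choice of $(g_1,g_2)$ as above to succeed on the bulk of unbreakable classes, leaving only a few exceptional classes (typically scalar multiples of regular unipotent elements) to be handled by a bespoke choice analogous to the one used in parts (ii) and (iii) of the proofs of Propositions~\ref{sl-large} and \ref{su-large}.
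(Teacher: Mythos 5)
Your proposal matches the paper's proof, which consists precisely of the one line ``Direct calculations similar to those of Lemma \ref{alt-spor}'': a finite, mechanical verification via Lemma \ref{basic} applied to character tables obtained from {\sf GAP} or {\sc Magma}, with the determinant bookkeeping needed for $\sP(N)$ and $\sP_u(N)$ handled exactly as you describe. One caveat on part (i): the lemma asserts $\sP(N)$ for \emph{every} prime $t \neq p$ with $t \nmid (q-\e)$, because Proposition \ref{ind-slu}(i) consumes it for all such $t$, and Corollary \ref{prime2}(i) does not justify restricting to $t \in \cR(\SL^\e_k(q))$ --- that corollary concerns the word map on the simple group, not the $\GL$-level condition with the determinant constraint; for $k=4$ the primes outside $\cR$ are covered by Proposition \ref{slu-generic}, but for $k=3$ no generic result is available, so you should simply run the same finite check for every prime $t$ dividing $|\GL^\e_k(q)|$ with $t \neq p$, $t \nmid (q-\e)$ (primes not dividing the group order being vacuous, since the $t$-power map is then bijective).
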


\pf
Direct calculations similar to those of Lemma \ref{alt-spor}.
\hal

\begin{cor}\label{slu-small}
Theorem \ref{main1} holds for $G = \PSL^\e_n(q)$ with $q = p^f = 2,3$, $\e = \pm$, 
$n \geq 3$, and $(n,q,\e) \neq (3,2,-)$.
\end{cor}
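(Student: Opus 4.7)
The plan is to imitate the strategy used for Corollary \ref{slu-large}, namely to reduce the assertion to a single application of the inductive Proposition \ref{ind-slu} with a suitable choice of threshold $n_0$, and then to verify its two hypotheses from the base results already established in this section.

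For the low-dimensional cases $n=3$ and $n=4$, the statement is immediate from the second sentence of Lemma \ref{base-slu}(i), which explicitly asserts that Theorem \ref{main1} holds for $\PSL^\e_k(q)$ when $k \in \{3,4\}$ and $(k,q,\e) \neq (3,2,-)$; this accounts for the exclusion in the corollary. (For $(k,\e)=(3,+)$ one could alternatively invoke Lemma \ref{slu-small2}.) So for the remainder I assume $n \geq 5$.

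For $n \geq 5$ I would apply Proposition \ref{ind-slu} with $n_0 := 4$. To check hypothesis (i), I need $\sP_u(N)$ for $\GL^\e_k(q)$ whenever $1 \leq k \leq 4$ and $k$ is permitted by the restrictions ($k \neq 2$ since $q \in \{2,3\}$, and $k \neq 3$ if $(q,\e)=(2,-)$), for all $N = p^at^b$ with $t \nmid p(q-\e)$. The case $k=1$ is trivial since $\GL^\e_1(q)$ is abelian of order $q - \e$, which is coprime to $N$, so every element is already an $N$th power. The remaining cases $k \in \{3,4\}$ follow from the stronger statement $\sP(N)$ supplied by Lemma \ref{base-slu}(i). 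To check hypothesis (ii), I need $\sP_u(N)$ for $\GL^\e_k(q)$ for $4 < k \leq n$ and $t \in \cR(\SL^\e_k(q))$. I split this range according to whether Lemma \ref{base-slu}(ii) or one of Propositions \ref{sl2}, \ref{su2}, \ref{sl3}, \ref{su3} applies: for $(q,\e) = (2,+)$, Lemma \ref{base-slu}(ii)(a) covers $5 \leq k \leq 7$ and Proposition \ref{sl2} handles $k \geq 8$; for $(q,\e) = (2,-)$, Lemma \ref{base-slu}(ii)(b) covers $5 \leq k \leq 8$ and Proposition \ref{su2} handles $k \geq 9$; for $(q,\e) = (3,+)$, Lemma \ref{base-slu}(ii)(a) covers $5 \leq k \leq 7$ and Proposition \ref{sl3} handles $k \geq 8$; and for $(q,\e) = (3,-)$, Lemma \ref{base-slu}(ii)(c) covers $k \in \{5,6\}$ and Proposition \ref{su3} handles $k \geq 7$.

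With both hypotheses of Proposition \ref{ind-slu} verified uniformly across the four choices of $(q,\e)$, that proposition directly yields surjectivity of the word map $(u,v) \mapsto u^Nv^N$ on $\PSL^\e_n(q)$ for every $N = s^at^b$ with $s,t$ prime, completing the proof. There is essentially no analytic obstacle remaining here; the work is in the bookkeeping to confirm that the four split-point propositions and the base lemma together cover every index $k$ between $5$ and $n$, and that the forbidden values $(k,q,\e) = (3,2,-)$ and $k=2$ for small $q$ built into Proposition \ref{ind-slu} are precisely the ones avoided by our hypothesis and by the base cases.
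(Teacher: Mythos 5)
Your proposal is correct and follows essentially the same route as the paper: cases $n=3,4$ via Lemma \ref{base-slu}(i), then Proposition \ref{ind-slu} with $n_0=4$, verifying hypothesis (i) from Lemma \ref{base-slu}(i) and hypothesis (ii) from Lemma \ref{base-slu}(ii) together with Propositions \ref{sl2}, \ref{sl3}, \ref{su3}, \ref{su2}. Your explicit coverage check of the ranges $5\leq k\leq n$ for each $(q,\e)$ and the trivial case $k=1$ is just a more detailed spelling-out of the paper's argument.
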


\pf
The case $n = 3,4$ follows from Lemma \ref{base-slu}(i). Suppose now that $n \geq 5$. 
Then we choose $n_0 = 4$ and apply Proposition \ref{ind-slu}. Note that condition (i) of
that proposition is verified by Lemma \ref{base-slu}(i), and (ii) holds by Propositions
\ref{sl2}, \ref{sl3}, \ref{su3}, \ref{su2}, and Lemma \ref{base-slu}(ii). Hence we are done by Proposition \ref{ind-slu}.
\hal

\section{Theorem \ref{main1} for symplectic and orthogonal groups}
\subsection{General inductive argument}
Recall $\cR(G)$ from \S2, and the notion of 
unbreakability for symplectic and orthogonal groups from Definition \ref{unbr-spo}. 

\begin{defn}\label{cond-spo}
{\em Given a prime power $q = p^f$, a finite symplectic or orthogonal group 
$G = \Cl(V) = \Cl_n(q)$, and an integer $N = p^at^b$ with $t > 2$ 
a prime. We say that $G$ satisfies 

{\rm (i)} the condition $\sP(N)$ if every 
$g \in G$ can be written as $g=x^Ny^N$ for some $x,y \in G$; and

{\rm (ii)} the condition $\sP_u(N)$ if every {\it unbreakable} 
$g \in G$ can be written as $g=x^Ny^N$ for some $x,y \in G$.}
\end{defn}

Our proof of Theorem \ref{main1} for symplectic and orthogonal groups relies on the following inductive argument: 

\begin{prop}\label{ind-clas}
Given a prime power $q = p^f$, an integer $n \geq 4$, let $V = \F_q^n$ be a 
finite symplectic or quadratic space, and let $G := \Cl(V) = \Cl_n(q)$ be perfect, with $\Cl = \Sp$ or 
$\Omega$. Suppose that there is an integer 
$n_0 \geq 4$ with the following properties:
\begin{enumerate}[\rm(i)]
\item If $1 \leq k \leq n_0$ and $\Cl_k(q)$ is perfect, then 
$\sP_u(N)$ holds for $\Cl_k(q)$ and for every 
$N = p^at^b$ with $t \neq 2, p$ any prime; and
\item For each $k$ with $n_0 < k \leq n$, $\sP_u(N)$ holds for $\Cl_k(q)$ and for every $N = p^at^b$ with $t \in \cR(\Cl_k(q))$.
\end{enumerate}
If $N = s^at^b$ for some primes $s,t$, then
the word map $(u,v) \mapsto u^Nv^N$ is surjective on 
$G/\bfZ(G)$.  
\end{prop}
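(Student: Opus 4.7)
The plan is to follow the template of Proposition~\ref{ind-slu}, replacing the decomposition into diagonal blocks by the orthogonal decomposition built into Definition~\ref{unbr-spo}. By Corollary~\ref{prime2}(i), the map $(u,v)\mapsto u^Nv^N$ is already surjective on $G/\bfZ(G)$ except possibly when $N=p^at^b$ with $t\in\cR(G)$, and inspection of Table~\ref{primes} shows that every prime in $\cR(G)$ for a symplectic or orthogonal group is odd and different from $p$, matching Definition~\ref{cond-spo}. It will therefore suffice to show $\sP(N)$ for $G$: every $g\in G$ can be written $g=x^Ny^N$.

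If $g$ is unbreakable, hypothesis~(ii) gives this directly. If $g$ is breakable, I would iterate Definition~\ref{unbr-spo} to obtain an orthogonal decomposition $V=V_1\perp\cdots\perp V_m$ into non-degenerate subspaces with $g=g_1\cdots g_m$ in the commuting natural subgroup $\prod_i\Cl(V_i)\le G$, where each $g_i\in\Cl(V_i)=\Cl_{k_i}(q)$ is unbreakable in $\Cl(V_i)$. Because the only allowed ways to split a piece further are the two clauses of Definition~\ref{unbr-spo}, at the end of the recursion each piece falls into one of two types: either (a) $\Cl(V_i)$ is perfect, or (b) $\Cl(V_i)$ is not perfect and $g_i=\pm 1_{V_i}$.

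For a piece of type~(a) with $k_i\le n_0$, hypothesis~(i) gives $g_i=x_i^Ny_i^N$ (since $t\ne 2,p$); for $k_i>n_0$ with $t\in\cR(\Cl_{k_i}(q))$, hypothesis~(ii) does the same. In the residual case $k_i>n_0$ but $t\notin\cR(\Cl_{k_i}(q))$, I expect to invoke a ``generic prime'' statement analogous to Proposition~\ref{slu-generic} (lifted to $\Cl_{k_i}(q)$ from its simple quotient via Corollary~\ref{prime2}), producing $g_i=x_i^Ny_i^N\cdot z_i$ with $z_i\in\bfZ(\Cl(V_i))\subseteq\{\pm 1_{V_i}\}$. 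For a piece of type~(b), just take $x_i=y_i=1$ and $z_i=g_i$. In every case $z_i\in\{\pm 1_{V_i}\}$; since $N$ is odd when $p$ is odd and $-1=1$ when $p=2$, each $z_i$ equals $w_i^N$ for some $w_i\in\bfZ(\Cl(V_i))$, and since $w_i$ commutes with $y_i$ we may rewrite $g_i=x_i^N(y_iw_i)^N$. Setting $x:=\prod_i x_i$ and $y:=\prod_i y_iw_i$ in $G$ and using that factors from distinct $V_j$ commute yields $x^Ny^N=g$, as required.

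The structural bookkeeping (termination of the recursion, the (a)/(b) dichotomy, and central absorption of $\pm 1$'s) is a direct consequence of Definition~\ref{unbr-spo} and the fact that all relevant centers lie in $\{\pm 1\}$. The hard part will be the ``generic prime'' step at intermediate ranks $n_0<k_i<n$ when $t\notin\cR(\Cl_{k_i}(q))$: hypothesis~(ii) does not cover this, and one must supply a symplectic/orthogonal analogue of Proposition~\ref{slu-generic}, presumably via Deligne--Lusztig theory, bounds on character values of unbreakable elements coming from Lemmas~\ref{spunb} and~\ref{orthogunb}, and the inequality in Lemma~\ref{basic}, in the same spirit as Propositions~\ref{sl-large} and~\ref{su-large}.
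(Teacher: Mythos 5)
Your skeleton is the same as the paper's: reduce via Corollary \ref{prime2}(i) to $N=p^at^b$ with $t\in\cR(G)$ (so $t$ is odd and $t\neq p$), split a breakable $g$ along the orthogonal decomposition coming from Definition \ref{unbr-spo} into unbreakable pieces lying in perfect factors together with $\pm 1$ pieces, apply hypotheses (i) and (ii) to the former, and dispose of the $\pm 1$ pieces using the oddness of $N$ in odd characteristic. The paper does exactly this, the only cosmetic difference being that it collects all the $\pm 1_{U_i}$ pieces into a single element $h$ with $h=h^N$ rather than absorbing a central defect factor by factor; your bookkeeping (including the remark that all the factors lie in $\Cl(V_i)\le G$) is equivalent.

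The one place you go astray is in declaring the case $k_i>n_0$, $t\notin\cR(\Cl_{k_i}(q))$ to be ``the hard part'' that requires a new symplectic/orthogonal analogue of Proposition \ref{slu-generic} proved by Deligne--Lusztig theory and character estimates. No such analogue is needed: by Theorem \ref{prime1} (applied to the quasisimple group of simply connected type covering $\Cl_{k_i}(q)$, i.e.\ to $\Sp_{k_i}(q)$ or to the $\Spin$ cover of $\Omega^{\pm}_{k_i}(q)$), there are elements $x,y$ whose orders involve only the primes $r,s_1,s_2\in\cR(\Cl_{k_i}(q))$, all different from $p$, hence coprime to $N$ since $t\notin\cR(\Cl_{k_i}(q))$; as $x^G\cdot y^G\supseteq G\setminus \bfZ(G)$, every non-central element of $\Cl_{k_i}(q)$ is a product of two $N$th powers, and the central elements (of order at most $2$) are themselves $N$th powers because $N$ is odd when $p$ is odd and the center is trivial when $p=2$. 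So $\sP(N)$, in particular $\sP_u(N)$, holds for $\Cl_{k_i}(q)$ in this case with no character-theoretic input at all -- this is precisely the two-line argument in the paper. The reason Proposition \ref{slu-generic} was genuinely needed in the linear/unitary induction is the determinant side condition built into Definition \ref{cond-slu} (one must arrange $x^N\in\SL^\e_n(q)$), which has no counterpart in Definition \ref{cond-spo}. Note also that your own parenthetical suggestion -- lift Corollary \ref{prime2}(i) from the simple quotient and absorb the resulting central defect, exactly as you do elsewhere -- already closes this case; so the step you flag as requiring substantial new work is in fact immediate from results you are already quoting.
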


\pf
By Corollary \ref{prime2}, we need to consider only the case $N = p^at^b$ with 
$t \in \cR(\Cl_n(q))$; in particular, $t > 2$. It suffices to show $\sP(N)$ holds for $G$. 
According to (ii), $\sP_u(N)$ holds for $G$. Consider a
breakable $g \in G$  and write it as $\diag(g_1, \ldots ,g_m)$ lying in the 
natural subgroup
$$\Cl(U_1) \times \ldots \times \Cl(U_i) \cong \Cl_{k_1}(q) \times \ldots \times \Cl_{k_m}(q)$$
that corresponds to an orthogonal decomposition
$V = U_1 \oplus \ldots \oplus U_m$. 
Here, $1 \leq k_i < n$, and for each $i$ either $\Cl_{k_i}(q)$ is perfect or $g_i = \pm 1_{U_i}$.
Relabeling the elements $g_i$ suitably, we may assume that there is some 
$m' \leq m$ such that $g_i$ is unbreakable if 
$1 \leq i \leq m'$ and $g_i = \pm 1_{U_i}$ if $i > m'$. 
Hence, according to (i), $\sP_u(N)$ holds for $\Cl_{k_i}(q)$ if $k_i \leq n_0$ and $i \leq m'$. 
Suppose $k_i > n_0$. Then $\sP_u(N)$ holds for $\Cl_{k_i}(q)$ if $t \in \cR(\Cl_{k_i}(q))$ by 
(ii). If $t \notin \cR(\Cl_{k_i}(q))$, then by Theorem \ref{prime1} every non-central element
of $G$ is a product of two $N'$-elements, so it is a product of two $N$th powers.
Furthermore, all central elements of $\Cl_{k_i}(q)$ are $N$th powers. Hence 
$\sP_u(N)$ holds for $\Cl_{k_i}(q)$ in this case as well. Thus 
for $i \leq m'$ we can write $g_i = x_i^Ny_i^N$ with $x_i,y_i \in \Cl(U_i)$. Setting
$$U := U_1 \oplus \ldots \oplus U_{m'}, ~W = U_{m'+1} \oplus \ldots \oplus U_{m},~
   h := \diag(g_{m'+1}, \ldots, g_m) \in \Iso(W),$$
(where $\Iso(W) = \Sp(W)$ if $\Cl = \Sp$ and $\Iso(W) = \GO(W)$ if $\Cl = \Omega$),   
we see that either $|h| = 1$, or $p$ and $N$ are odd and  $|h| = 2$. In particular,
$h = h^N$ in either case. Letting
$$x := \diag(x_1, \ldots ,x_{m'}) \in \Cl(U),~~y := \diag(y_1, \ldots, y_{m'}) \in \Cl(U)$$
we deduce that $g = x^Ny^Nh^N = x^N(yh)^N$. Also, $x,y \in G$, 
$g=\diag(g',h) \in G$ with $g':= \diag(g_1, \ldots ,g_{m'}) \in \Cl(U) \leq G$. 
It follows that $h \in G$, so $\sP(N)$ holds for $G$, as desired. 
\hal

\subsection{Induction base}
\begin{lem}\label{sp24}
Let $q = p^f$ and let $N = p^at^b$ with $t \neq 2,p$ any prime. Then $\sP(N)$ holds for 
$G = \SL_2(q)$ with $q \geq 4$, and for $\Sp_4(q)$ with $q \geq 3$.
% $\Sp_6(2)$, $\Omega_7(3)$, $\Omega^\pm_8(2)$, and $\Omega^\pm_8(3)$.
\end{lem}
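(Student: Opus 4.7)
The plan is to treat $G = \SL_2(q)$ and $G = \Sp_4(q)$ separately, and in each case reduce via Corollary \ref{prime2} to the situation $t \in \cR(G)$ before applying a character-theoretic estimate along the lines of Lemma \ref{basic}.

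For $G = \SL_2(q)$ with $q$ even, we have $G = \PSL_2(q)$ and the conclusion is immediate from Lemma \ref{slu-small2}. For $q$ odd, $G$ is quasisimple with centre $\{\pm I\}$. Since $t \neq 2, p$, the prime $t$ cannot divide both $q-1$ and $q+1$, so at least one of the two maximal tori of orders $q-1$ and $q+1$ contains an element whose order is coprime to $N$; in the other torus we can still select the $t'$-part to obtain an $N'$-element of the desired order. For each non-central $g \in G$ I would then choose regular semisimple $N'$-elements $g_1, g_2$ from these two tori, and apply Lemma \ref{basic}(i) with the explicit Jordan character table of $\SL_2(q)$ to verify $g \in g_1^G \cdot g_2^G$. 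The two centre elements $\pm I$ are handled separately: $I = 1^N \cdot 1^N$, and $-I$ is dealt with by writing it as an explicit product of two $N$th powers inside a torus.

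For $G = \Sp_4(q)$, the group is not in Table \ref{primes}, so the primes of $\cR(G)$ must be computed directly; they are primitive prime divisors of $q^4-1$ and $q^2-1$, attached to the cyclic maximal tori of orders $q^2+1$ and $q^2-1$. Again Corollary \ref{prime2}(i) restricts the bad case to $t \in \cR(G)$, and for each non-central $g \in G$ I would pick regular semisimple $N'$-elements $g_1, g_2$ from appropriate tori. Lemma \ref{basic}(ii), applied with $D$ chosen just above the degrees of the low-degree characters (linear, Weil-type, and a short list of unipotent characters obtained from Srinivasan's character table of $\Sp_4(q)$), controls the tail, while the low-degree terms are dominated by the linear-character sum. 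Central elements of $G$ are treated exactly as in the $\SL_2$ argument.

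The main obstacle is the analysis for small $q$, especially $q = 3$ (and possibly $q = 4, 5$), where the character-degree gaps are narrow, the estimates of Lemma \ref{basic} become tight, and the quasisimple-versus-simple distinction matters. For these values of $q$ the cleanest route is a direct inspection of the character table of $\Sp_4(q)$ (and of $\SL_2(q)$ when needed), analogous to Lemma \ref{alt-spor}, which reduces the verification to a finite check.
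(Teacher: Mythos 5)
Your handling of $\SL_2(q)$ for even $q$ via Lemma \ref{slu-small2} is fine, but for odd $q$ the structural claim underpinning your plan fails: it is not true that both maximal tori contain regular semisimple $N'$-elements. Whenever $q-\e = 2t^c$ for some $\e = \pm 1$ (e.g.\ $q=7,\ t=3$, or $q=5,\ t=3$), the $N'$-elements of the torus of order $q-\e$ all have order dividing $2$, i.e.\ they are $\pm I$ and are not regular, so the ``one class from each torus'' choice is unavailable for infinitely many pairs $(q,t)$ and your application of Lemma \ref{basic}(i) never gets started there. The paper avoids this by always working inside the single torus whose order is coprime to $t$: when $t \nmid q-1$ it checks $X^G\cdot X^G = G$ for $X = x\bfZ(G)$ with $|x|=q-1$, and when $t \nmid q+1$ it checks $Y_1^G\cdot Y_2^G \supseteq G\setminus\{1\}$ for $Y_i = y^i\bfZ(G)$ with $|y|=q+1$; some such same-torus variant (with two \emph{different} classes from that torus) is what you would have to fall back on, and the covering then has to be verified from the character table rather than from a two-character estimate.

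For $\Sp_4(q)$ the more serious gap is the tail estimate. In rank $2$ the bound of Lemma \ref{basic}(ii) cannot be made less than $1$ for all non-central $g$: a transvection $g$ has $|\bfC_G(g)|$ of order about $q^6$, so $\bigl(|\bfC_G(g_1)||\bfC_G(g_2)||\bfC_G(g)|\bigr)^{1/2}$ is about $q^5$ (with $|\bfC_G(g_i)|\approx q^2$ for regular semisimple $g_i$), while every character degree is at most $\St(1)=q^4$; hence for any admissible cutoff $D$ the tail bound is at least about $q$, and choosing $D$ ``just above the low-degree characters'' only worsens it. The low-degree terms have the same problem: a character of degree about $q^3$ contributes up to $|\bfC_G(g)|^{1/2}/\chi(1)\sim q^{1/2}$ times $|\chi(g_1)\chi(g_2)|$ unless one knows that $\chi(g_1)\chi(g_2)$ vanishes or is tiny. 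So any correct argument at this rank must identify, from the Enomoto (even $q$) and Srinivasan (odd $q$) tables or by defect/Deligne--Lusztig reasoning, that only two or three nontrivial irreducible characters are simultaneously nonzero at the two chosen semisimple classes, and then bound those few terms individually with their explicit values; that is exactly the structure of the paper's proof and it is the ingredient missing from your outline. Two smaller points: Corollary \ref{prime2}(i) is a statement about the simple quotient, so your reduction to $t\in\cR(G)$ should instead invoke Theorem \ref{prime1} at the level of the quasisimple group $G$ together with the trivial observation that the central involution is an $N$th power ($N$ is odd when $p$ is odd); and note the paper does not make this reduction at all, since Lemma \ref{sp24} serves as the induction base of Proposition \ref{ind-clas} where every prime $t\neq 2,p$ must be treated. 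Your proposal to settle $q=3,4,5$ by a finite character-table computation is reasonable and consistent with Lemma \ref{base-clas}.
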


\pf
(i) Consider the case $G = \SL_2(q)$. If $t \nmid (q-1)$, then we check 
that $X^G \cdot X^G = G$ for $X = x\bfZ(G)$ and $x \in G$ of order $q-1$. 
On the other hand, if $t \nmid (q+1)$, then $Y_1^G \cdot Y_2^G \supseteq G \setminus \{1\}$
for $Y_i = y^i\bfZ(G)$, $i = 1,2$, and $y \in G$ of order $q+1$. Since $N$ is odd,
we are done in both cases.  

\smallskip
(ii) Consider the case $G = \Sp_4(q)$ with $2|q$. The character table of $G$ is given in \cite{E}. Suppose 
that $t|(q^2+1)$. We fix a regular semisimple $x_1 \in G$ of order $q-1$ belonging to 
the class $B_1(1,2)$ and a regular semisimple $x_2 \in G$ of order $q+1$ belonging to 
the class $B_4(1,2)$, in the notation of \cite[Table IV-1]{E}. There are $3$ non-principal characters of $G$ that are nonzero at both $x_1$ and $x_2$: 
namely, 
$\theta_{1,2}$ of degree $q(q^2+1)/2$, and 
$\St$ of degree $q^4$. For every $1 \neq g \in G$, 
$$\sum_{1_G \neq \chi \in \Irr(G)}\frac{|\chi(x_1)\chi(x_2)\chi(g)|}{\chi(1)} 
    \leq 2 \cdot \frac{q(q+1)/2}{q(q^2+1)/2} + \frac{q}{q^4} < 1,$$
so we are done by Lemma \ref{basic}(i).      

Suppose now that $t \nmid (q^2+1)$. Then at least one of $x_1$ and $x_2$ has order coprime to $N$; denote it by $x$.  We also fix a regular semisimple $y \in G$ of order $q^2+1$ belonging to the 
class $B_5(1)$. 
There are at most $2$ non-principal characters of $G$ that are nonzero at both $x$ and $y$: namely, $\St$ and possibly 
a character $\theta$ of degree $\geq q(q-1)^2/2$. For every $1 \neq g \in G$, 
$$\sum_{1_G \neq \chi \in \Irr(G)}\frac{|\chi(x)\chi(y)\chi(g)|}{\chi(1)} 
    \leq \frac{q(q-1)/2}{q(q-1)^2/2} + \frac{q}{q^4} < 1,$$
so we are done by Lemma \ref{basic}(i).      

\smallskip
(iii) Assume that $G = \Sp_4(q)$ with $q \geq 7$ odd. 
The character table of $G$ is given in \cite{Sri}. 
If $t \nmid (q^2+1)$, then the statement follows from \cite[Theorem 7.3]{GM}. So we assume 
that $t|(q^2+1)$. Fix a regular semisimple $x_1 \in G$ of order $q^2-1$ belonging to 
the class $B_2(1)$ and a regular semisimple $x_2 \in G$ of order $(q^2-1)/2$ belonging to 
the class $B_5(1,1)$, in the notation of \cite{Sri}. There are $3$ non-principal characters of $G$ that are nonzero at both $x_1$ and $x_2$: namely, 
$\theta_{1,2}$ of degree $q(q^2+1)/2$, and 
$\St$ of degree $q^4$. For every $1 \neq g \in G$,
$$\sum_{1_G \neq \chi \in \Irr(G)}\frac{|\chi(x_1)\chi(x_2)\chi(g)|}{\chi(1)} 
    \leq 2 \cdot \frac{q(q+1)/2}{q(q^2+1)/2} + \frac{q}{q^4} < 1,$$
so we are done by Lemma \ref{basic}(i).      
%Finally, the cases of $\Sp_4(q)$ with $q = 3,4,5$, and of $\Sp_6(2)$, $\Omega_7(3)$,
%$\Omega^\pm_8(2)$, and $\Omega^\pm_8(3)$,
%can be checked directly using \cite{Atlas} or \cite{GAP}.
\hal

\begin{lem}\label{base-clas}
Let $G$ be one of the following groups: 
\begin{enumerate}[\rm(i)]
\item $\Sp_{2n}(2)$ with  $3 \leq n \leq 6$, $\Sp_{2n}(3)$ with $2 \leq n \leq 5$, and $\Sp_{2n}(4)$
with $n = 2,3$;
\item $\Omega_{2n+1}(3)$ with $3 \leq n \leq 5$;
\item $\Omega^\pm_{2n}(2)$ with  $4 \leq n \leq 6$,  $\Omega^\pm_{2n}(3)$ with $4 \leq n \leq 6$,
and $\Omega^\pm_8(4)$.
\end{enumerate}
Let $N = p^at^b$ where $p$ is the defining characteristic
of $G$ and $t \in \cR(G)$. Then $\sP(N)$ holds. 
\end{lem}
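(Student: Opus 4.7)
The plan is to verify the lemma by direct character-theoretic calculations, in the same spirit as Lemmas \ref{alt-spor} and \ref{base-slu}. Since the list is a finite collection of specific groups of manageable size, and the prime $t$ is constrained to lie in $\cR(G)$ (so only finitely many choices of $N$ up to the exponent of the relevant $p$- and $t$-parts matter), the verification reduces to a bounded computation for each $(G,t)$ pair. By Corollary \ref{prime2}(i), we may assume throughout that $N = p^a t^b$ with $t \in \cR(G)$.

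For each group $G$ in (i)--(iii) and each prime $t \in \cR(G)$ as listed in Table \ref{primes}, I would proceed as follows. First, load or construct the ordinary character table: the tables for $\Sp_{2n}(2)$ with $n\le 5$, $\Omega^{\pm}_{2n}(2)$ with $n\le 5$, $\Omega^{\pm}_8(3)$, and other small cases are available in the {\sf GAP} Character Table Library, and the remaining cases ($\Sp_{12}(2)$, $\Sp_{10}(3)$, $\Omega^\pm_{12}(2)$, $\Omega^\pm_{12}(3)$, $\Omega_{11}(3)$, $\Omega^\pm_8(4)$, $\Sp_6(4)$) can be obtained via the {\sc Magma} implementation of Unger's algorithm, exactly as in Lemma \ref{alt-spor}. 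Second, for each conjugacy class representative $g\in G$, identify two conjugacy classes of elements $x_1,x_2$ whose orders are coprime to $N$ (so that $x_1,x_2$ are $N$th powers in $G$), and apply Lemma \ref{basic}(i): verify that
\[
\sum_{\chi\in\Irr(G)} \frac{\chi(x_1)\chi(x_2)\overline{\chi(g)}}{\chi(1)} \neq 0,
\]
which guarantees $g \in x_1^G\cdot x_2^G$ and hence $g = u^N v^N$ for suitable $u,v\in G$. For each $t \in \cR(G)$, one natural choice is to take $x_1$ and $x_2$ to be regular semisimple elements lying in maximal tori whose orders avoid both $p$ and $t$; Table \ref{primes} ensures such tori exist and their orders are known.

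The main potential obstacle is handling the elements $g\in G$ whose centralizer is large, since for these the character sum in Lemma \ref{basic} is hardest to control; these are the unipotent and quasi-unipotent classes with many Jordan blocks. Here one may need to choose $x_1,x_2$ more carefully --- for instance, with $x_1=x_2$ regular of order a primitive prime divisor $\p(p,nf)$ not equal to $t$ --- and possibly invoke the refined bound of Lemma \ref{basic}(ii) together with explicit evaluation of the few low-degree characters (linear characters, Weil characters for $\Sp$ and $\Omega^\pm$, and the Steinberg character) on the relevant classes, as was done in Propositions \ref{sl-large}--\ref{su2}. A second, minor obstacle is ensuring that the conclusion $\sP(N)$ (not merely $\sP_u(N)$) is established: since this is the base case, we must handle \emph{all} elements rather than only unbreakable ones, but for the small ranks in question this adds only a bounded amount of bookkeeping and the same character-table verification applies uniformly.

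Because the entire argument amounts to a finite check and the character tables are explicit, no new theoretical input beyond Lemma \ref{basic} is required; the role of this lemma is merely to establish the base cases needed to start the inductive argument of Proposition \ref{ind-clas} for symplectic and orthogonal groups over $\F_2$, $\F_3$, and $\F_4$.
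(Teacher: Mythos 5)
Your proposal matches the paper's proof, which simply states that these cases are handled by direct calculations similar to those of Lemma \ref{alt-spor}: a finite character-table verification (via the {\sf GAP} library or {\sc Magma}/Unger) using Lemma \ref{basic} to write each element as a product of two $N'$-elements. The extra detail you give about choosing regular semisimple $N'$-elements and handling large-centralizer classes is consistent with how those computations are carried out, so no further comment is needed.
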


\pf
Direct calculations similar to those of Lemma \ref{alt-spor}.
\hal

\subsection{Induction step: Symplectic groups}
\begin{prop}\label{sp-odd-large}
Suppose $G = \Sp_{2n}(q)$ with $n \geq 3$, $q = p^f \geq 7$ odd, and $t \in \cR(G)$. 
Then $\sP_u(N)$ holds for $G$ and for every $N = p^at^b$. 
\end{prop}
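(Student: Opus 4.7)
The plan is to follow the same character-theoretic scheme used for the linear and unitary groups in Propositions \ref{sl-large} and \ref{su-large}. Given an unbreakable $g \in G$, I would choose regular semisimple elements $g_1,g_2 \in G$ each of order coprime to $N$ (so each is itself an $N$th power in $G$), and then verify via Frobenius' formula, as reformulated in Lemma \ref{basic}(i), that $g \in g_1^G \cdot g_2^G$. Concretely, I would pick $g_1$ of order $r = \ell(p,2nf)$ and $g_2$ of order $s = \ell(p,nf)$ (if $n$ is odd) or $s = \ell(p,nf/2)$ (if $n$ is even), choosing whichever two of the primes in $\cR(G)$ differ from the given $t$; the assumption $q \geq 7$ guarantees these primitive prime divisors exist and that the corresponding tori are large enough to supply regular semisimple elements of these orders. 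Then $|\bfC_G(g_i)|$ is bounded by the order of a cyclic maximal torus, i.e.\ something of size roughly $q^n+1$ or $q^n-1$.

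Next I would split $\Irr(G)$ at a suitable threshold $D$ (roughly the smallest degree beyond the Weil characters, i.e.\ $D$ of order $q^{2n-2}$ or thereabouts, using the classification of low-degree characters in \cite{TZ1}). For $\chi \in \Irr(G)$ with $\chi(1) \geq D$, I apply Lemma \ref{basic}(ii) together with the centralizer bound $|\bfC_G(g)| \leq q^{2n-1}(q^2-1)$ from Lemma \ref{spunb}, and the bounds on $|\bfC_G(g_i)|$ above, to get
\[
\left|\sum_{\chi(1) \geq D}\frac{\chi(g_1)\chi(g_2)\overline{\chi}(g)}{\chi(1)}\right|
   \leq \frac{(|\bfC_G(g_1)||\bfC_G(g_2)||\bfC_G(g)|)^{1/2}}{D},
\]
and the choice of $D$ will make this strictly less than $1$ with room to spare when $q \geq 7$.

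For characters of degree $<D$, by \cite[Theorem 5.2]{TZ1} (or its companion for $\Sp_{2n}$) the only such irreducible characters are $1_G$ and the four Weil characters of $\Sp_{2n}(q)$ of degrees $(q^n\pm 1)/2$. For these Weil characters, the character value is controlled by dimensions of eigenspaces of the element on the natural module $\overline{V} = V \otimes_{\F_q}\overline{\F}_q$ for eigenvalues of order dividing $q\pm 1$: specifically, $|\omega(x)| \leq q^{e/2}$ where $e = \max_\alpha \dim \Ker_{\overline V}(x-\alpha I)$ with $\alpha$ ranging over the relevant roots of unity. By our choice of $g_1,g_2$ as regular semisimple with prescribed tori, these eigenspace dimensions are at most $1$, and for unbreakable $g$ Lemma \ref{blocks}(i) (or the proof of Lemma \ref{spunb}) bounds these eigenspaces by a small constant. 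Plugging in these bounds gives a uniformly small contribution from the Weil characters, and combining with the trivial character (which contributes $1$) and the tail estimate, Lemma \ref{basic}(i) finishes the proof.

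The main obstacle I expect is bookkeeping at the boundary: for small $n$ (namely $n=3$) the thresholds are tight and the Weil-character contribution needs to be estimated carefully (possibly treating $n=3$ as a separate case, analogous to the $n=5$ treatment in Proposition \ref{sl-large}), and one must track whether $r=s$ or $s_1 \neq s_2$ to ensure an $s$-element of the desired order actually exists in $G$. A secondary annoyance is that when $t$ happens to coincide with both primes in $\cR(G)$ other than itself (i.e., when $\cR(G)$ has fewer than three elements we can avoid), one may have to pick $g_2$ lying in a smaller Levi subgroup such as $\Sp_{2n-2}(q) \times \Sp_2(q)$, using one primitive prime divisor together with a $(q\pm 1)$-part to evade $N$; this slightly worsens the centralizer bound but, given $q \geq 7$, still fits within the margin supplied by $D$.
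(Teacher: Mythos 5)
Your overall scheme (two semisimple $N'$-elements $g_1,g_2$, Frobenius' formula via Lemma \ref{basic}, a degree threshold $D$ with \cite{TZ1} controlling the characters below it) is the right one, and it does work for even $n$, where it essentially reproduces the paper's argument. But for odd $n$ there is a genuine quantitative gap that your plan does not survive. By Lemma \ref{spunb}, an unbreakable element of $\Sp_{2n}(q)$ with $n$ odd can have centralizer of order up to $q^{2n-1}(q^2-1)\approx q^{2n+1}$ (e.g.\ a unipotent element of type $W(n)=J_n^2$), while the first degree gap beyond the Weil characters is only about $(q^n-1)(q^n-q)/2(q+1)\approx q^{2n-1}/2$. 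With your proposed $D$ of this size, the tail estimate from Lemma \ref{basic}(ii) is of order
\[
\frac{\bigl(|\bfC_G(g_1)|\,|\bfC_G(g_2)|\,|\bfC_G(g)|\bigr)^{1/2}}{D}
\;\approx\; \frac{q^{n}\cdot q^{\,n+\frac12}}{q^{2n-1}/2}\;\approx\; 2q^{3/2},
\]
which is far greater than $1$ for every $q$, not just in the boundary case $n=3$ that you flag. Repairing this forces $D$ up to roughly $q^{3n-3}$, and then one must control all the additional characters of degree below that threshold: the four unipotent characters $\alpha_\pm,\beta_\pm$ of degree $\approx q^{2n-1}/2$ and a family of about $q$ further characters of degree $\approx q^{2n-1}$. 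This is the real content of the odd-$n$ case in the paper: one kills the extra family by arranging that these characters have $s$-defect zero (hence vanish at $g_1$), evaluates $\alpha_\pm,\beta_\pm$ at $g_1,g_2$ exactly (values $0,\pm1$) using the rank-$3$ permutation character on $1$-spaces and the restriction of the unitary Weil character of $\GU_{2n}(q)$ to $\Sp_{2n}(q)$, and bounds the Weil values at $g$ by $q^{n/2}$ via an eigenspace argument for unbreakable elements. None of these ingredients appears in your proposal, so the odd-$n$ case is missing, not merely delicate.

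Two smaller points. First, for odd $n$ the set $\cR(G)$ has only two elements, so your primary plan of taking $g_1,g_2$ of orders equal to the two members of $\cR(G)\setminus\{t\}$ is never available there; your fallback of building $g_2$ inside $\Sp_{2n-2}(q)\times\Sp_2(q)$ from $\p(q,2n-2)$ and a $(q\pm1)$-element is indeed the correct device (and is what the paper uses for both $g_1$ and $g_2$ when $n$ is even), but the sign of the $\Sp_2$-factor must be chosen according to whether $t\mid q^n-1$ or $t\mid q^n+1$ for the defect/vanishing bookkeeping to come out. Second, Lemma \ref{blocks}(i) applies only to $q=2,3$, so it cannot be cited for the eigenspace bound when $q\ge7$; the needed bound must instead be extracted from the proof of Lemma \ref{spunb} (or one uses the weaker bound $\dim\Ker(g\mp1)\le n$, which suffices).
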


\pf
Consider an unbreakable $g \in G$; in particular, 
$$|\bfC_G(g)| \leq \left\{ \begin{array}{ll} 2q^n, & 2|n,\\
   q^{2n-1}(q^2-1), & 2 \nmid n \end{array} \right.$$
by Lemma \ref{spunb}. Let $V = \F_q^{2n}$ denote the natural module for $G$.
Inside $\Sp_{2n-2}(q)$ we can find a regular semisimple element $x_{-}$ of order $s_{-}=\p(q,2n-2)$, and, if $2|n$, a regular semisimple element $x_+$ of order $s_{+}=\p(q,n-1)$. For 
$\nu = \pm$, we fix $y_\nu \in \Sp_2(q)$ of order $q-\nu$.

\smallskip
(a) Here we consider the case $2|n$, and set 
$$g_1 := \diag(x_{+},y_+),~~g_2 := \diag(x_{-},y_-)$$
so each $g_i$ is an $N'$-element and $|\bfC_G(g_i)| \leq (q^{n-1}+1)(q+1)$. 
We also choose 
$$D = \frac{(q^{n}-1)(q^{n}-q)}{2(q+1)}.$$
It follows that
\begin{equation}\label{sp-sum1}
  \sum_{\chi \in \Irr(G),~\chi(1) \geq D}\frac{|\chi(g_1)\chi(g_2)\oc(g)|}{\chi(1)}
    \leq \frac{(q^{n-1}+1)(q+1)(2q^n)^{1/2}}{D} 
    < 0.54.
\end{equation}    
By \cite[Theorem 5.2]{TZ1} the only non-principal irreducible character of $G$ of degree less than $D$ 
are the four irreducible Weil characters: $\eta_{1,2}$ of degree $(q^n-1)/2$ and
$\xi_{1,2}$ of degree $(q^n+1)/2$. The choice of $g_i$ implies that $\Ker(g_i \pm 1_V) = 0$.
Hence,  by \cite[Lemma 2.4]{GT2}, 
$$|\omega(g_i)|,~ |\omega(zg_i)| \leq 1,$$
where $\omega = \eta_1 + \xi_1$ is a reducible Weil character of $G$ and $z \in G$ is the central
involution. Note that
$$|\omega(g_i)| = |\eta_1(g_i)+\xi_1(g_i)|,~~|\omega(zg_i)| = |\eta_1(g_i)-\xi_1(g_i)|.$$
It follows that 
$$|\eta_1(g_i)| = \frac{|(\eta_1(g_i)+\xi_1(g_i))+(\eta_1(g_i)-\xi_1(g_i))|}{2} 
   \leq \frac{|\omega(g_i)|+|\omega(zg_i)|}{2} \leq 1.$$
Similarly, 
\begin{equation}\label{sp-weil1}
  |\eta_j(g_i)| \leq 1, ~~|\xi_j(g_i)| \leq 1, ~~ \forall i,j = 1,2.
\end{equation} 
It follows that 
$$\sum_{\chi \in \Irr(G),~1 < \chi(1) < D}\frac{|\chi(g_1)\chi(g_2)\oc(g)|}{\chi(1)}
    \leq \frac{4\cdot (2q^n)^{1/2}}{(q^n-1)/2} 
    < 0.24.$$
Together with (\ref{sp-sum1}), this implies that    
$$\sum_{\chi \in \Irr(G),~\chi(1) > 1}\frac{|\chi(g_1)\chi(g_2)\oc(g)|}{\chi(1)}
    < 0.54 + 0.24 = 0.78,$$  
whence $g \in g_1^G \cdot g_2^G$. Since both $g_1$ and $g_2$ are $N'$-elements, we are done. 

\smallskip
(b) Next we consider the case $n \geq 3$ odd. Here we choose 
$$D = \left\{ \begin{array}{ll}(q^{2n}-1)(q^{n-1}-q)/2(q^2-1),& n \geq 5,\\
          q^4(q^3-1)(q-1)/2, & n = 3, \end{array} \right.$$
so 
\begin{equation}\label{sp-sum2}
\sum_{\chi \in \Irr(G),~\chi(1) \geq D}\frac{|\chi(g_1)\chi(g_2)\oc(g)|}{\chi(1)}
    \leq \frac{(q^{n-1}+1)(q+1)(q^{2n-1}(q^2-1))^{1/2}}{D} 
    < 0.15.
\end{equation}    
Using \cite[Theorem 1.1]{Ng} for $n \geq 7$ and \cite{Lu} for $n=5$, we show that every 
non-principal irreducible character of $G$ of degree less than $D$ is one of the following: 

(b1) four irreducible Weil characters $\eta_{1,2}$, $\xi_{1,2}$ as above; 

(b2) four unipotent characters $\alpha_\nu$, $\beta_\nu$, $\nu = \pm$, of degree
$$\alpha_\nu(1) = \frac{(q^n-\nu)(q^n+\nu q)}{2(q-1)},~~
    \beta_\nu(1) = \frac{(q^n+\nu)(q^n+\nu q)}{2(q+1)};$$

(b3) two characters of degree $(q^{2n}-1)/2(q+1)$, two of degree 
$(q^{2n}-1)/2(q-1)$, $(q-1)/2$ of degree $(q^{2n}-1)/(q+1)$, and
$(q-3)/2$ of degree $(q^{2n}-1)/2(q-1)$.

If $n = 3$, then we check using \cite{Lu} that the characters $\chi \in \Irr(G)$ with $1 < \chi(1) < D$ 
and such that $\chi$ has positive $s$-defect and positive $s_{-}$-defect are described 
in (b1) and (b2). Thus, in all cases, in considering characters of $G$ of degree less than $D$ we can
restrict to the ones in (b1)--(b3).

\smallskip
Since $t \in \cR(G)$, there is an $\e = \pm$ such that $t|(q^n-\e)$. Now, we choose a regular 
semisimple element $g_1$ of order $s \in \cR(G) \setminus \{t\}$ and 
take $g_2 := \diag(x_{-},h_\e)$. In particular, $|\bfC_G(g_i)| \leq (q^{n-1}+1)(q+1)$.
Note that all characters in (b3) have $s$-defect $0$, so vanish 
at $g_1$. Next, $\beta_\e$ and $\alpha_{-\e}$ have $s$-defect $0$, whence
$$\beta_\e(g_1) = \alpha_{-\e}(g_1) = 0.$$
Likewise, $\beta_+$ and $\alpha_+$ have $s_-$-defect $0$, whence
$$\beta_+(g_2) = \alpha_+(g_2) = 0.$$

Consider the case $\e = -$. We have shown that $\chi(g_1)\chi(g_2) = 0$ for
$\chi = \alpha_+$, $\beta_+$, $\beta_-$, and 
$$\alpha_+(g_1)=\alpha_+(g_2) = 0.$$ 
On the other hand, $\rho := 1_G + \alpha_+ + \alpha_-$ is just the permutation character of the action
of $G$ on the set of $1$-spaces of $V$, cf.\ \cite[Table 2]{ST}. 
The choice of $g_i$ ensures that $\rho(g_i) = 0$, 
whence
$$\alpha_-(g_1) = \alpha_-(g_2) = -1.$$

Assume now that $\e = +$. We have shown that $\chi(g_1)\chi(g_2) = 0$ for
$\chi = \alpha_+$, $\beta_+$, $\alpha_-$, and 
$$\beta_+(g_1)=\beta_+(g_2) = 0.$$ 
On the other hand, as shown in \cite{T}, $\zeta := \beta_+ + \beta_-$ is just the restriction
to $G$ of the unipotent Weil character $\zeta_{0,0}$ of $\GU_{2n}(q)$ (as defined in the 
proof of Proposition \ref{su-large}) when we embed 
$$G = \Sp_{2n}(q) \hookrightarrow \SU_{2n}(q) \lhd \GU_{2n}(q).$$
The choice of $g_i$ ensures that $\zeta(g_i) = 0$, 
whence
$$\beta_-(g_1) = \beta_-(g_2) = -1.$$
The same arguments as in (a) show that (\ref{sp-weil1}) holds in this case as well. Observe 
that, for $\mu =\pm 1$, $U_{\mu} := \Ker(g-\mu \cdot 1_V)$ has dimension at most $n$, as 
otherwise it cannot be totally isotropic, so $g$ acts as the multiplication by $\mu$ on 
a $2$-dimensional non-degenerate subspace of $U$, contrary to the assumption that 
$g$ is unbreakable. Using \cite[Lemma 2.4]{GT2}, we see that 
$$|\omega(g)|,~|\omega(zg)| \leq q^{n/2},$$
so, arguing as in the above proof of (\ref{sp-weil1}), we obtain
$$|\eta_i(g)|, ~|\xi_i(g)| \leq q^{n/2}.$$
Certainly, $|\gamma(g)| \leq |\bfC_G(g)|^{1/2} \leq (q^{2n-1}(q^2-1))^{1/2}$ for 
$\gamma = \alpha_-$, $\beta_-$. 
In summary, 
$$\sum_{\chi \in \Irr(G),~1 < \chi(1) < D}\frac{|\chi(g_1)\chi(g_2)\oc(g)|}{\chi(1)}
    \leq \frac{4\cdot q^{n/2}}{(q^n-1)/2} + \frac{(q^{2n-1}(q^2-1))^{1/2}}{(q^n-1)(q^n-q)/2(q-1)}  
    < 0.53.$$
Together with (\ref{sp-sum2}), this implies that    
$$\sum_{\chi \in \Irr(G),~\chi(1) > 1}\frac{|\chi(g_1)\chi(g_2)\oc(g)|}{\chi(1)}
    < 0.15 + 0.53 = 0.68,$$  
whence $g \in g_1^G \cdot g_2^G$. Since both $g_1$ and $g_2$ are $N'$-elements, we are done. 
\hal

To handle the symplectic groups over $\F_3$, we need an explicit description of 
low-degree complex characters of $\Sp_{2n}(3)$.

\begin{lem}\label{sp3-lowdim}
Let $G = \Sp_{2n}(3)$ with $n \geq 6$ and let $D := (3^{2n}-1)(3^{n-1}-3)/16$. Then
$$\{ \chi \in \Irr(G) \mid 1 < \chi(1) < D\}$$
consists of the following $13$ characters:
\begin{enumerate}[\rm(i)]

\item four irreducible Weil characters $\eta$, $\bar\eta$ of degree $(3^n-1)/2$, $\xi$, $\bar\xi$ 
of degree $(3^n+1)/2$; 

\item four characters $\SQ(\xi)$, $\wedge^2(\eta)$, $\xi\bar\xi-1_G$, and 
$\eta\bar\eta-1_G$, of respective degree
$$\frac{(3^n+1)(3^n+3)}{8},~~\frac{(3^n-1)(3^n-3)}{8},~~\frac{(3^n-1)(3^n+3)}{4},~~
   \frac{(3^n+1)(3^n-3)}{4};$$

\item two characters $\SQ(\eta)$, $\wedge^2(\xi)$ of degree $(3^{2n}-1)/8$, and three 
characters $\xi\bar\eta$, $\bar\xi\eta$, $\xi\eta = \bar\xi\bar\eta$ of degree 
$(3^{2n}-1)/4$.
\end{enumerate}
Also, $\SQ(\eta) = \bar\wedge^2(\xi)$.
\end{lem}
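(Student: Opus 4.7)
The plan is threefold: identify the thirteen characters, verify the claimed degrees and identities, and rule out all other irreducibles of $G$ of degree below $D$.

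First I would recognize the four characters in (i) as the constituents of the Weil (oscillator) representation $\omega$ of $G = \Sp_{2n}(3)$. Since $\omega$ has degree $3^n$ and splits as $\omega = \xi + \eta$ with $\xi(1) = (3^n+1)/2$ and $\eta(1) = (3^n-1)/2$, and since $3 \equiv 3 \bmod 4$ forces $\omega$ to take values in $\Q(\sqrt{-3}) \setminus \Q$, the complex conjugates $\bar\xi \neq \xi$ and $\bar\eta \neq \eta$ are two further distinct irreducible Weil characters, matching (i).

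Next, to produce the nine characters in (ii) and (iii), I would decompose the tensor products of Weil characters via Howe duality for the dual pairs $(G, \O_2^\pm(3))$ inside $\Sp_{4n}(3)$. The multiplicity-free decomposition of the $G \times \O_2^\pm(3)$-Weil representation yields simultaneously the irreducibility of $\SQ(\xi), \wedge^2(\xi), \SQ(\eta), \wedge^2(\eta), \xi\bar\xi - 1_G, \eta\bar\eta - 1_G, \xi\bar\eta, \eta\bar\xi, \xi\eta$; the stated degree formulas by elementary dimension counting (e.g.\ $\SQ(\chi)(1) = \chi(1)(\chi(1)+1)/2$); and the identities $\SQ(\eta) = \bar\wedge^2(\xi)$ and $\xi\eta = \bar\xi\bar\eta$, which reflect the $\Gal(\Q(\sqrt{-3})/\Q)$-action on Weil components. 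This parallels the analyses in \cite{GT1} and \cite{T}.

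Finally, and this is the main obstacle, I would show that no other $\chi \in \Irr(G)$ satisfies $1 < \chi(1) < D$. The plan is to start from \cite[Theorem 5.2]{TZ1} (which handles characters of degree roughly below $q^{2n}/(q+1)$) and push the bound past our $D \approx 3^{3n-2}/16$. Concretely, for $n\ge 6$ one combines \cite{Lu} with a Lusztig-theoretic inspection: any $\chi$ with $\chi(1) < D$ must carry a Lusztig label whose dual-side centralizer type is extremely restrictive, forcing $\chi$ either to be unipotent with a partition having at most two non-trivial parts, or to lie in a Lusztig series of a semisimple element with a large $\Sp$-like centralizer factor; enumerating these cases and comparing degrees against $D$ yields exactly the thirteen characters listed. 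The delicate part is that at $q = 3$ the Weil characters are small relative to $|G|^{1/2}$, so the gap between the largest listed degree $(3^{2n}-1)/4$ and $D$ is narrow and must be verified to contain no further constituents of $\omega^{\otimes 3}$ or other tensor products beyond those already enumerated.
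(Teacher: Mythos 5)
Your identification of the thirteen characters (parts (i)--(iii) and the relation $\SQ(\eta)=\bar\wedge^2(\xi)$) follows a plausible route: the paper instead quotes the proof of \cite[Proposition 5.4]{MT} for the six squares/products in (ii)--(iii) and pins down the three characters of degree $(3^{2n}-1)/4$ by a reality argument plus the inner product computation $[\eta^2,\bar\xi^2]=[\xi\eta,\bar\xi\bar\eta]=1$, rather than invoking a finite-field Howe duality for the pairs $(G,\O^\pm_2(3))$; your version would need the multiplicity-freeness and irreducibility statements for these dual pairs to be proved or properly sourced, since irreducibility of $\SQ(\xi)$, $\wedge^2(\eta)$, etc.\ is exactly the nontrivial content being used.

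The genuine gap is in your third step, the completeness of the list below $D=(3^{2n}-1)(3^{n-1}-3)/16\approx 3^{3n-3}$. The paper obtains this at a stroke from \cite[Theorem 1.1]{Ng}, a classification of all irreducible characters of $\Sp_{2n}(q)$ ($q$ odd) of degree below a bound exceeding $D$, which delivers the degrees \emph{and} the multiplicity of each degree; everything else in the proof is identification. Your plan to ``push past'' \cite[Theorem 5.2]{TZ1} does not work as stated: that theorem only covers degrees up to roughly $(3^n-1)(3^n-3)/2(3+1)\sim 3^{2n-1}$, which is far short of $D$, and the gap between $3^{2n}$ and $3^{3n-3}$ is precisely where new series of characters could a priori appear. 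Moreover \cite{Lu} consists of explicit degree data only for groups of rank $<9$, so it cannot be cited ``for $n\ge 6$'' in general; and the unspecified ``Lusztig-theoretic inspection'' you invoke is essentially a request to re-prove Nguyen's theorem, which is a substantial piece of work and not something your sketch carries out. As written, the exclusion of further characters in the range $(3^{2n}-1)/4<\chi(1)<D$ is asserted rather than proved, so the lemma is not established by your argument.
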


\pf
Applying \cite[Theorem 1.1]{Ng}, we deduce that the degrees, and the multiplicity for each degree of 
non-principal irreducible character of $G$ of degree less than $D$ are as listed above. 
%Among them, the four Weil characters are listed in (i). 
The proof of \cite[Proposition 5.4]{MT} shows that 
the six characters $\SQ(\xi)$, $\wedge^2(\eta)$, $\xi\bar\xi-1_G$, $\eta\bar\eta-1_G$, $\SQ(\eta)$, 
and $\wedge^2(\xi)$ have the degrees listed in (ii) and (iii). It also shows that 
$\xi\bar\eta$ and $\bar\xi\eta$ are two distinct irreducible constituents (of a certain real character
$\tau$) of degree $(3^{2n}-1)/4$, so they are non-real. On the other hand, $\xi\eta$ is the 
unique irreducible constituent of degree $(3^{2n}-1)/4$ of a certain real character
$\sigma$, whence it must be real. We have therefore identified the three characters of 
degree $(3^{2n}-1)/4$. Finally,
$$[\SQ(\eta)+\wedge^2(\eta),\bar\SQ(\xi)+\bar\wedge^2(\xi)] = [\eta^2,\bar\xi^2]
    = [\xi\eta,\bar\xi\bar\eta]= 1,$$
so $\SQ(\eta) = \bar\wedge^2(\xi)$, since the involved characters are all irreducible, and only
$\SQ(\eta)$ and $\bar\wedge^2(\xi)$ have equal degree.   
\hal

\begin{prop}\label{sp3-large}
Suppose $G = \Sp_{2n}(3)$ with $n \geq 6$, and $t \in \cR(G)$. 
Then $\sP_u(N)$ holds for $G$ and for every $N = 3^at^b$. 
\end{prop}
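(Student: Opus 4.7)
The plan is to mimic the proof of Proposition \ref{sp-odd-large}, with the main new complication being that Lemma \ref{sp3-lowdim} lists nine product characters of degree below the threshold $D := (3^{2n}-1)(3^{n-1}-3)/16$ in addition to the four irreducible Weil characters handled in the case $q \geq 7$. Given $t \in \cR(G)$, choose the sign $\e \in \{+,-\}$ so that $t \mid (3^n - \e)$, and pick any prime $s \in \cR(G) \setminus \{t\}$. Let $g_1 \in G$ be a regular semisimple element of order $s$ and let $g_2 \in G$ be a regular semisimple element of order $3^n + \e$ lying in a cyclic maximal torus of that order. Since $\gcd(3^n - \e, 3^n + \e)$ divides $2$ and $t$ is odd, $g_2$ is an $N'$-element; both $g_1$ and $g_2$ satisfy $\Ker(g_i - \mu \cdot 1_V) = 0$ for $\mu = \pm 1$, whence $|\bfC_G(g_i)| \leq 3^n + 1$. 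By Lemma \ref{spunb}, the unbreakable element $g$ satisfies $|\bfC_G(g)| \leq 48 \cdot 3^{2n+1}$.

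By Lemma \ref{basic}(ii), the contribution from characters of degree at least $D$ is bounded by $(|\bfC_G(g_1)| \cdot |\bfC_G(g_2)| \cdot |\bfC_G(g)|)^{1/2}/D$, which is strictly less than one for $n \geq 6$. For the middle contribution from the thirteen characters listed in Lemma \ref{sp3-lowdim}, all built from the Weil characters $\eta, \bar\eta, \xi, \bar\xi$, one bounds the values as follows. At $g_i$, every eigenvalue on $\bar V$ has order $s$ or $3^n + \e$, each at least $5$ under our assumption $n \geq 6$; hence $\Ker(g_i^k - \mu \cdot 1_V) = 0$ for $k = 1, 2$ and $\mu = \pm 1$, and \cite[Lemma 2.4]{GT2} yields $|\eta(g_i^k)|, |\xi(g_i^k)| \leq 1$, whence $|\chi(g_i)| \leq 1$ for each of the thirteen characters (via the explicit expansions of the nine product characters). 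At the unbreakable element $g$, Lemma \ref{blocks}(i) bounds each $\alpha$-eigenspace of $g$ on $\bar V$ (for $\alpha$ with $\alpha^{q \pm 1} = 1$) by $4$, so \cite[Lemma 2.4]{GT2} gives $|\eta(g)|, |\xi(g)| \leq 3^2$; and a parallel analysis of $g^2$ (whose $1$-eigenspace is the union of the $\pm 1$-eigenspaces of $g$, and whose $-1$-eigenspace is the union of the fourth-root-of-unity eigenspaces of $g$) yields $|\eta(g^2)|, |\xi(g^2)| \leq 3^4$. Expanding each of the nine product characters as a degree-at-most-two polynomial in $\eta(x^k)$, $\xi(x^k)$ contributes only a small total, since all thirteen characters have degree at least $(3^n - 1)/2$.

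Combining the two estimates, the total non-principal character sum in Lemma \ref{basic}(i) is strictly less than one, so $g \in g_1^G \cdot g_2^G$; since $g_1$ and $g_2$ are $N'$-elements, both are $N$th powers in $G$, establishing $\sP_u(N)$. The main obstacle compared with the $q \geq 7$ case in Proposition \ref{sp-odd-large} is the nine extra characters from Lemma \ref{sp3-lowdim}(ii)--(iii): these force control of Weil character values not only at $g$ and $g_i$ but also at their squares, which requires the full strength of Lemma \ref{blocks}(i). A secondary point is that the numerical estimates in the smallest case $n = 6$ are tight, and they depend on selecting $g_1$ and $g_2$ in Coxeter-type cyclic tori of order $3^n \pm 1$ rather than, say, inside a non-degenerate decomposition $\Sp_{2n-2}(3) \times \Sp_2(3)$, so as to minimize the centralizer bound in the tail estimate.
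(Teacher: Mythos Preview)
Your approach is close in spirit to the paper's, and the idea of taking both $g_1$ and $g_2$ in Coxeter-type cyclic tori of order $3^n\pm 1$ is attractive: it would yield $\Ker(g_i^2\pm 1_V)=0$ and hence $|\chi(g_i)|\le 1$ for all thirteen characters of Lemma~\ref{sp3-lowdim}, slightly sharper than the paper's bound $|\chi(g_i)|\le 3/2$ for $\chi\in\cX$. Your treatment of $|\chi(g)|$ for the nine product characters via $\dim\Ker(g^2\pm 1_V)\le 8$ is exactly what the paper does in part~(iii) of its proof. If it worked, your argument would also avoid the paper's rather delicate ad~hoc analysis of the case $n=6$ (which invokes $|\Irr_{61'}(G)|$ and a McKay-type count).

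However, there is a genuine gap in the choice of $g_1$. The implication ``$\Ker(g_i\pm 1_V)=0$, whence $|\bfC_G(g_i)|\le 3^n+1$'' is a non sequitur: the centralizer of a regular semisimple element is the maximal torus it lies in, and that torus need not be cyclic of order $3^n\pm 1$. Concretely, suppose $n$ is even and $t=r=\p(3,2n)$, so that $s\in\{s_1,s_2\}=\{\p(3,n),\p(3,n/2)\}$. For $n=6$ one has $s_1=7$, and there is \emph{no} regular semisimple element of order~$7$ in $\Sp_{12}(3)$ (only six nontrivial $7$th roots of unity are available for twelve eigenvalues). For $n=8$ one has $s_2=5$, again too small; and for $s_1=41$ a regular element of order~$41$ exists only in a torus of type $(3^4+1)^2=6724>3^8+1$, so your centralizer bound fails. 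The paper sidesteps this by taking, for even~$n$, elements $g_i=\diag(x_\pm,y)$ with $x_\pm\in\Sp_{2n-2}(3)$ of order $\p(3,n-1)$ or $\p(3,2n-2)$ and $y\in\Sp_2(3)$ of order~$4$; this guarantees existence and gives the uniform bound $|\bfC_G(g_i)|\le 4(3^{n-1}+1)$, at the cost of $\dim\Ker(g_i^2+1_V)=2$ and hence the weaker $|\chi(g_i)|\le 3/2$. To repair your argument you would need, for even $n$ with $t=r$, either to specify a workable prime $s$ and torus for $g_1$ case by case, or to fall back on the paper's construction.
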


\pf
(i) Consider an unbreakable $g \in G$; in particular, 
\begin{equation}\label{sp3-cent}
  |\bfC_G(g)| \leq 16 \cdot 3^{2n+2}
\end{equation}
by Lemma \ref{spunb}. Let $V = \F_3^{2n}$ denote the natural module for $G$.
Inside $\Sp_{2n-2}(3)$ we can find a regular semisimple element $x_{-}$ of order $s_{-}=\p(3,2n-2)$ and a regular semisimple element $x_+$ of order $s_{+}=\p(3,n-1)$. We fix 
$y \in \Sp_2(3)$ of order $4$. If $n$ is even, we set
$$g_1 := \diag(x_{+},y),~~g_2 := \diag(x_{-},y),$$
whereas for odd $n$, we choose a regular semisimple $g_1 \in G$ of order
$s \in \cR(G) \setminus \{t\}$ and set $g_2:= \diag(x_-,y)$. In particular,
$g_i$ is an $N'$-element and $|\bfC_G(g_i)| \leq 4 \cdot (3^{n-1}+1)$ for $i = 1,2$.
We also choose 
$$D = \frac{(3^{2n}-1)(3^{n-1}-3)}{16}.$$
Then the characters $\chi \in \Irr(G)$ with $1 < \chi(1)< D$ are described in Lemma \ref{sp3-lowdim}.

The choice of $g_i$ implies that $\Ker(g_i \pm 1_V) = 0$. Hence, as in the proof of Proposition
\ref{sp-odd-large}, 
\begin{equation}\label{sp3-weil1}
  |\chi(g_i)| \leq 1, ~~ \forall \chi \in \{\xi,\eta\}.
\end{equation}   
On the other hand, $\dim_{\F_3}\Ker(g \pm 1_V) \leq 4$ by Lemma \ref{blocks}. Arguing as in 
part (a) of the proof of Proposition \ref{sp-odd-large}, we obtain 
\begin{equation}\label{sp3-weil2}
|\chi(g)| \leq 3^2, ~~ \forall \chi \in \{\xi,\eta\}.
\end{equation}
It follows that 
\begin{equation}\label{sp3-sum1}
  \sum_{\chi \in \{\xi,\bar\xi,\eta,\bar\eta\}}\frac{|\chi(g_1)\chi(g_2)\oc(g)|}{\chi(1)}
    \leq \frac{4\cdot 3^2}{(3^n-1)/2} 
    < 0.099.
\end{equation}  
Let $\cX$ denote the set of nine characters listed in Lemma \ref{sp3-lowdim}(ii), (iii).  
Observe that $x_\nu$ has prime order $s_\nu$ for $\nu = \pm$. Hence 
$$\Ker(g_i^2 -1_V) = 0,~~\dim_{\F_3}\Ker(g_i^2+1_V) \leq 2.$$ 
This in turn implies that 
$$|\omega(g_i^2)| \leq 1,~~|\omega(zg_i^2)| \leq 3$$
for the reducible Weil character $\omega = \xi+\eta$ and the central involution $z \in G$. Arguing
as in part (a) of the proof of Proposition \ref{sp-odd-large}, we obtain
$$|\chi(g_i^2)| \leq (1+3)/2 = 2, ~~ \forall \chi \in \{\xi,\eta\}.$$
Together with (\ref{sp3-weil1}), this implies that 
\begin{equation}\label{sp3-weil3}
  |\chi(g_i)| \leq 3/2, ~~ \forall \chi \in \cX.
\end{equation}

\smallskip
(ii) Here we assume that $n \geq 7$.
If $2|n$, then the four characters listed in Lemma \ref{sp3-lowdim} have either $s_+$-defect $0$,
or $s_-$-defect $0$. If $2\nmid n$, then the five characters listed in Lemma \ref{sp3-lowdim} have 
$s$-defect $0$. Thus, at most five characters from $\cX$ can be nonzero at both $g_1$ and $g_2$. 
Also, $|\chi(g)| \leq 4 \cdot 3^{n+1}$ for all $\chi \in \Irr(G)$ by (\ref{sp3-cent}). Using 
(\ref{sp3-weil3}), we see that
$$\sum_{\chi \in \cX}\frac{|\chi(g_1)\chi(g_2)\oc(g)|}{\chi(1)}
    \leq \frac{5 \cdot (3/2)^2 \cdot 4 \cdot 3^{n+1}}{(3^n-1)(3^n-3)/8} 
    < 0.495.$$
On the other hand, 
$$\sum_{\chi \in \Irr(G),~\chi(1) \geq D}\frac{|\chi(g_1)\chi(g_2)\oc(g)|}{\chi(1)}
    \leq \frac{4(3^{n-1}+1) \cdot 4 \cdot 3^{n+1})}{D} 
    < 0.354.$$ 
Together with (\ref{sp3-sum1}), these estimates imply that    
$$\sum_{\chi \in \Irr(G),~\chi(1) > 1}\frac{|\chi(g_1)\chi(g_2)\oc(g)|}{\chi(1)}
    < 0.099 + 0.495 + 0.354 = 0.948,$$  
whence $g \in g_1^G \cdot g_2^G$. Since both $g_1$ and $g_2$ are $N'$-elements, we are done. 

\smallskip
(iii) We may now assume that $n=6$. In this case $\cR(G) = \{7, 13,73\}$ and 
$|g_1| = 44$, $|g_2| = 244$. Using \cite{Lu}, we check that $G$ has exactly $30$ irreducible 
characters $\chi$ that have both positive $11$-defect and positive $41$-defect: namely, $1_G$, 
four Weil characters, five characters from $\cX$ and listed in Lemma \ref{sp3-lowdim}(iii), four 
characters $\psi_{1,2,3,4}$ with two of each of the degrees
$$D = 15 \cdot (3^{12}-1), ~~D_1 := 15 \cdot (3^4+1) \cdot (3^8+3^4+1),$$
and $16$ more, of degree larger than $D_2 := 3^{19}$.  In particular,
\begin{equation}\label{sp3-sum2}
\sum_{\chi \in \Irr(G),~\chi(1) \geq D_2}\frac{|\chi(g_1)\chi(g_2)\oc(g)|}{\chi(1)}
    \leq \frac{4 \cdot (3^{n-1}+1) \cdot 4 \cdot 3^{n+1}}{D_2} 
    < 0.0074.
\end{equation}    

Next we strengthen the bound on $|\chi(g)|$ for $\chi \in \cX$. Consider 
$\lambda = \pm 1$ and write $g = uv=vu$, with $u$ unipotent and $v$ semisimple.
Let $\tilde V := V \otimes_{\F_3}\overline\F_3$. Note that if 
$w \in U_\lambda := \Ker(g^2-\lambda \cdot 1_{\tilde V})$, then $w$ belongs to 
$W_\mu := \Ker(v-\mu \cdot 1_{\tilde V})$ for some $\mu$ with $\mu^2 = \lambda$. Now $g^2$ 
acts on $W_\mu$ as $\lambda u'^2$, where $u' := u_{W_\mu}$ is unipotent. Next, observe that
$$\dim_{\overline\F_3}\Ker(u'^2-1_{W_\mu}) = \dim_{\overline\F_3}\Ker(u'-1_{W_\mu}).$$
It then follows from Lemma \ref{blocks} that 
$$\dim_{\overline\F_3}U_\lambda = \dim_{\overline\F_3}\Ker(g-\mu_0\cdot 1_{\tilde V}) + 
    \dim_{\overline\F_3}\Ker(g+\mu_0 \cdot 1_{\tilde V}) \leq 8,$$
where $\mu_0$ is a fixed square root $\mu_0$ of $\lambda$. In turn, this implies by
\cite[Lemma 2.4]{GT2} that
$$|\omega(g^2)|,~|\omega(zg^2)| \leq 3^4.$$
Arguing as in part (a) of the proof of Proposition \ref{sp-odd-large}, we obtain 
$$|\chi(g^2)| \leq 3^4, ~~ \forall \chi \in \{\xi,\eta\}.$$ 
Using this bound and (\ref{sp3-weil2}), we see that 
$$|\chi(g)| \leq 3^4, ~~ \forall \chi \in \cX.$$
Since only five characters from $\cX$ can be nonzero at both $g_1$ and $g_2$, this last estimate
together with (\ref{sp3-weil3}) yields 
\begin{equation}\label{sp3-sum3}
  \sum_{\chi \in \cX}\frac{|\chi(g_1)\chi(g_2)\oc(g)|}{\chi(1)}
    \leq \frac{5 \cdot (3/2)^2 \cdot 3^4}{(3^{2n}-1)/8} 
    < 0.0138.
\end{equation}

Finally, we estimate character ratios for the four characters $\psi_{1,2,3,4}$ of degree $D$ and $D_1$. 
Since $|g_2| = 4 \cdot 61$,
$\chi(g) = 0$ if and only if $\chi \in \Irr(G)$ has degree divisible by $61$. 
Using \cite{Lu}, we check that 
$$\Irr_{61'}(G) := \{ \chi \in \Irr(G) \mid 61 \nmid \chi(1) \}$$
consists of exactly $343$ characters. (Another way to check it is to observe that 
since $P \in \Syl_{61}(G)$ is cyclic, the {\it McKay conjecture} holds for $G$, i.e. 
$$|\Irr_{61'}(G)| = |\Irr_{61'}(\bfN_G(P))|.$$
Direct computation shows that 
$$\bfN_G(P) = (C_{244} \rtimes C_{10}) \times \Sp_2(3)$$
has exactly $343$ irreducible characters of degree coprime to $61$.) Certainly, 
$$\Irr_{61'}(G) \setminus \{\psi_{1,2,3,4}\}$$
is a union of some $\Gal(\overline\Q/\Q)$-orbits. Hence, by Lemma \ref{orbit}, 
$$\sum_{\chi \in \Irr(G) \setminus \{\psi_{1,2,3,4}\}}|\chi(g_2)|^2 
   =  \sum_{\chi \in \Irr_{61'}(G) \setminus \{\psi_{1,2,3,4}\}}|\chi(g_2)|^2 \geq 
   |\Irr_{61'}(G) \setminus \{\psi_{1,2,3,4}\}| = 343-4 = 339.$$
Since $\sum_{\chi \in \Irr(G)}|\chi(g_2)|^2 = |\bfC_G(g_2)| = 4 \cdot (3^5+1) = 976$, 
$$\sum^4_{j=1}|\psi_j(g_2)|^2 \leq 976-339 = 637.$$
Recall that 
$|\psi_j(g)| \leq 4 \cdot 3^7$ by (\ref{sp3-cent}) and 
$|\psi_j(g_1)|^2 \leq |\bfC_G(g_1)| = 4 \cdot (3^5-1) = 968$.
By the Cauchy-Schwarz inequality, 
$$\sum^4_{j=1}\frac{|\psi_j(g_1)\psi_j(g_2)\overline\psi_j(g)|}{\psi_j(1)} 
    \leq \frac{4 \cdot 3^7 \cdot 968^{1/2}}{D} \cdot (\sum^4_{j=1}|\psi_j(g_2)|^2)^{1/2}
    = \frac{4 \cdot 3^7 \cdot (968 \cdot 637)^{1/2}}{15 \cdot (3^{12}-1)} < 0.8618.
$$
Together with (\ref{sp3-sum1}), (\ref{sp3-sum2}), (\ref{sp3-sum3}), this implies that    
$$\sum_{\chi \in \Irr(G),~\chi(1) > 1}\frac{|\chi(g_1)\chi(g_2)\oc(g)|}{\chi(1)}
    < 0.099 + 0.0138 + 0.8618 +0.0074 = 0.982,$$  
whence $g \in g_1^G \cdot g_2^G$. Since both $g_1$ and $g_2$ are $N'$-elements, we are again
done. 
\hal

%\subsection{Induction step: Symplectic groups in even characteristic}
\begin{prop}\label{sp-even}
Suppose $G = \Sp_{2n}(q)$ with $n \geq 3$, $2 | q$, and $t \in \cR(G)$. 
Assume that $n \geq 4$ if $q = 4$, and $n \geq 7$ if $q=2$.
Then $\sP_u(N)$ holds for $G$ and for every $N = 2^at^b$. 
\end{prop}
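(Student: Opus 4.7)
The plan is to follow the pattern of Propositions \ref{sp-odd-large} and \ref{sp3-large}, with modifications appropriate to even characteristic. Fix an unbreakable $g \in G$. By Lemma \ref{spunb},
\[
|\bfC_G(g)| \leq \begin{cases} 2q^{2n}(q+1), & 2 \nmid n, \\ q^{2n}(q^2-1), & 2 \mid n,\ q \geq 4, \\ 9 \cdot 2^{2n+9}, & q = 2. \end{cases}
\]
Inside the natural subgroup $\Sp_{2n-2}(q) \leq G$ I pick a regular semisimple $x_-$ of order $s_- := \p(q, 2n-2)$, and (when $n$ is even) also a regular semisimple $x_+$ of order $s_+ := \p(q, n-1)$. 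For $\nu = \pm$, let $y_\nu \in \Sp_2(q)$ have order $q - \nu$; both $q-1$ and $q+1$ are odd, so coprime to $t \in \cR(G)$ (a primitive prime divisor of $q^{2n}-1$ or of $q^n\pm 1$). For $n$ even I set $g_1 := \diag(x_+, y_+)$ and $g_2 := \diag(x_-, y_-)$; for $n$ odd I choose a regular semisimple $g_1 \in G$ of order $s \in \cR(G) \setminus \{t\}$ and set $g_2 := \diag(x_-, y_\nu)$ with $\nu$ chosen so that $|g_2|$ is coprime to $t$. In all cases $g_1$ and $g_2$ are $N'$-elements with $|\bfC_G(g_i)| \leq (q^{n-1}+1)(q+1)$.

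The argument now proceeds via Lemma \ref{basic}(i). I pick a threshold $D$ of order $q^{2n-2}$ (if $n$ is even) or $q^{2n-1}$ (if $n$ is odd), so that Lemma \ref{basic}(ii) yields
\[
\left|\sum_{\chi(1) \geq D} \frac{\chi(g_1)\chi(g_2)\oc(g)}{\chi(1)}\right| \leq \frac{(|\bfC_G(g_1)| \cdot |\bfC_G(g_2)| \cdot |\bfC_G(g)|)^{1/2}}{D},
\]
well below $1$ when $q \geq 4$. The non-principal irreducible characters of $G$ of degree below $D$ are enumerated by \cite[Theorem 5.2]{TZ1} (and \cite{Lu} for very small cases). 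Since in even characteristic $\Sp_{2n}(q)$ has no Weil characters of degree $(q^n \pm 1)/2$, this list consists only of the four unipotent characters $\alpha_\nu, \beta_\nu$ ($\nu = \pm$) of respective degrees
\[
\alpha_\nu(1) = \frac{(q^n - \nu)(q^n + \nu q)}{2(q-1)}, \qquad \beta_\nu(1) = \frac{(q^n + \nu)(q^n + \nu q)}{2(q+1)}.
\]

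The key technical step is to evaluate $\chi(g_i)$ and bound $\chi(g)$ for $\chi \in \{\alpha_\pm, \beta_\pm\}$. Primitive-prime-divisor defect considerations imply that all but one of $\alpha_\pm$ and one of $\beta_\pm$ has either $s$-defect zero or $s_\pm$-defect zero, and hence vanishes at $g_1$ or $g_2$. For the survivors I use the identities
\[
1_G + \alpha_+ + \alpha_- = \rho_1, \qquad \beta_+ + \beta_- = \zeta_{0,0}|_G,
\]
where $\rho_1$ is the permutation character of $G$ on isotropic $1$-spaces of the natural module $V$ (cf.\ \cite[Table 2]{ST}) and $\zeta_{0,0}$ is the unipotent Weil character of $\GU_{2n}(q) \geq G$ (cf.\ \cite{T}), reading off $\alpha_\nu(g_i), \beta_\nu(g_i) \in \{-1, 0, 1\}$ from $\rho_1(g_i) = 0 = \zeta_{0,0}(g_i)$ as in part (b) of the proof of Proposition \ref{sp-odd-large}. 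At $g$ the generic Cauchy--Schwarz bound $|\chi(g)| \leq |\bfC_G(g)|^{1/2}$ suffices when $q \geq 4$. Summing everything, the right-hand side of Lemma \ref{basic}(i) falls strictly below the linear-character contribution (which equals $1$ since $G$ is perfect with trivial abelianization), so $g \in g_1^G \cdot g_2^G$; as $g_1, g_2$ are $N'$-elements they are $N$th powers, giving $\sP_u(N)$.

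The main obstacle will be the case $q = 2$, $n \geq 7$, where $|\bfC_G(g)|$ may be as large as $9 \cdot 2^{2n+9}$ and the naive Cauchy--Schwarz bound becomes too weak. Here a refined analysis is required: I would sharpen the bound on $|\chi(g)|$ for the low-degree unipotent characters via Lemma \ref{blocks} (which controls eigenspace dimensions of an unbreakable $g$ on $V \otimes \ovF_q$), mirroring the special treatment of the $n = 6$ case at the end of Proposition \ref{sp3-large}, and exploit Lemma \ref{orbit} together with Lübeck's tables \cite{Lu} for a handful of intermediate-degree characters of $\Sp_{2n}(2)$. A secondary check is needed at the small endpoint $(q, n) = (4, 4)$, which can be dealt with either by adjusting the threshold $D$ or by direct computation.
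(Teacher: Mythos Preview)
Your character-theoretic input is wrong for even $q$, and this breaks the argument. You cite \cite[Theorem 5.2]{TZ1} and list four characters $\alpha_\nu,\beta_\nu$ with degrees
\[
\frac{(q^n-\nu)(q^n+\nu q)}{2(q-1)},\qquad \frac{(q^n+\nu)(q^n+\nu q)}{2(q+1)}.
\]
But \cite[Theorem 5.2]{TZ1} applies only when $q$ is odd; for even $q$ both $q^n-\nu$ and $q^n+\nu q$ are odd, so these numbers are not even integers. Likewise the embedding $\Sp_{2n}(q)\hookrightarrow\GU_{2n}(q)$ you invoke to write $\beta_++\beta_-=\zeta_{0,0}|_G$ requires $q$ odd.

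The correct replacement, used in the paper, is \cite[Corollary 6.2]{GT2}: for $2\mid q$ and $n\geq 4$ the characters of $\Sp_{2n}(q)$ with $1<\chi(1)<D$ (for a threshold of order roughly $q^{3n-5}$, not $q^{2n}$) form a set $\cW$ of $q+3$ ``linear-Weil'' characters
\[
\alpha_n,\ \beta_n,\ \rho^1_n,\ \rho^2_n,\ \zeta^i_n\ (1\le i\le q/2),\ \tau^j_n\ (1\le j\le q/2-1),
\]
and their values are governed by explicit formulae (1)--(6) of \cite{GT2} in terms of eigenspace dimensions $\dim\Ker(g-\gamma\cdot 1_V)$ for $\gamma^{q\pm1}=1$. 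The paper takes $y\in\Sp_2(q)$ of order $q+1$ (not both $q\pm 1$), so that the only nonzero eigenspaces of $g_i$ in $\overline\F_q^\times$ with $\gamma^{q\pm1}=1$ occur for two primitive $(q+1)$th roots of unity, each of dimension $1$; this kills the $\tau^j_n$ and bounds $|\zeta^j_n(g_i)|\le 2$. For odd $n$ the $\zeta^j_n,\tau^j_n$ all have $s$-defect $0$ and vanish at $g_1$, while defect-$0$ considerations plus the identities for $\alpha_n+\beta_n$ and $\rho^1_n+\rho^2_n$ leave a single surviving character $\psi\in\cW$, to which the paper applies Gluck's bound $|\psi(g)|/\psi(1)\le 0.95$ from \cite{Glu}. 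The case $(n,q)=(7,2)$ is then handled separately with a higher threshold $D_1$ and \cite{Lu}. Your outline would need to be rebuilt on this foundation.
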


\pf
Consider an unbreakable $g \in G$; in particular, 
$$|\bfC_G(g)| \leq B := \left\{ \begin{array}{ll} q^{2n}(q^2-1), & 2|n, q \geq 4\\
   2q^{2n}(q+1), & 2 \nmid n, q \geq 4 \\
   9 \cdot q^{2n+9}, & q = 2\end{array} \right.$$
by Lemma \ref{spunb}. Let $V = \overline\F_q^{2n}$ denote the natural module for $G$.
Inside $\Sp_{2n-2}(q)$ we can find a regular semisimple element $x_{-}$ of order $s_{-}=\p(q,2n-2)$, and, if $2|n$, a regular semisimple element $x_+$ of order $s_{+}=\p(q,n-1)$. 
We fix $y \in \Sp_2(q)$ of order $q+1$. Let $\cW$ denote the set of $q+3$ Weil characters 
$$\alpha_n, ~\beta_n, ~~\rho^1_n, ~\rho^2_n,~~ \zeta^i_n,~1 \leq i \leq q/2,~~\tau^j_n,~
    1 \leq j \leq q/2-1$$ 
(as described in \cite[Table 1]{GT2}). Assuming $n \geq 4$ and choosing
$$D := \frac{(q^{2n}-1)(q^{n-1}-1)(q^{n-1}-q^2)}{2(q^4-1)},$$
we see by \cite[Corollary 6.2]{GT2} that $\cW$ is precisely the set 
$\{ \chi \in \Irr(G) \mid 1 < \chi(1) < D\}$.

\smallskip
(i) Here we consider the case $2|n$, and set 
$$g_1 := \diag(x_{+},y),~~g_2 := \diag(x_{-},y)$$
so that each $g_i$ is an $N'$-element and $|\bfC_G(g_i)| \leq (q^{n-1}+1)(q+1)$. In particular,
\begin{equation}\label{sp2-sum1}
\sum_{\tiny{\begin{array}{l}\chi \in \Irr(G),\\ \chi(1) \geq D\end{array}}}
    \frac{|\chi(g_1)\chi(g_2)\oc(g)|}{\chi(1)}
    \leq \frac{(q^{n-1}+1)(q+1) \cdot B^{1/2}}{D} 
    < \left\{ \begin{array}{ll}0.8293, & n = 4\\
    0.1956, & n \geq 6. \end{array} \right.      
\end{equation}
If $\chi \in \{\alpha_n,\beta_n,\rho^1_n,\rho^2_n\}$ then $\chi$ has $s_\nu$-defect
$0$ for some $\nu = \pm$, so $\chi(g_1)\chi(g_2) = 0$.  
For $\gamma \in \overline\F_q^\times$, the choice of $g_i$ implies that 
$\dim_{\overline\F_q}\Ker(g_i - \gamma \cdot 1_V)$ equals $0$ if $\gamma^{q-1} = 1$, and 
is at most $1$ if $\gamma^{q+1} = 1$; 
in fact, it equals $1$ for exactly two primitive $(q+1)$th roots of 
unity in $\overline\F_q^\times$.
Hence, by formulae (1) and (4) of \cite{GT2}, 
$$|\tau^j_n(g_i)| = 0,~~|\zeta^j_n(g_i)| \leq b,$$
where $b := 2$ if $q \geq 4$ and $b := 1$ if $q = 2$.
For $n \geq 6$, it follows that 
$$\sum_{\chi \in \Irr(G),~1 < \chi(1) < D}\frac{|\chi(g_1)\chi(g_2)\oc(g)|}{\chi(1)}
    \leq \frac{q}{2} \cdot \frac{b^2 \cdot (q^{2n}(q^2-1))^{1/2}}{(q^{2n}-1)/(q+1)} 
    < 0.7956.$$
Suppose that $n = 4$ and $q \geq 4$. Observe that 
$\dim_{\overline\F_q}\Ker(g_i - \gamma \cdot 1_V) \leq 4$ 
for $\gamma \in \overline\F_q^\times$ with $\gamma^{q + 1} = 1$. (Indeed, this bound is
obvious if $\gamma \neq 1$. If $\gamma = 1$, it follows from the condition that
$g$ is unbreakable.) Hence, formula (4) of \cite{GT2} implies that  
$|\zeta^j_n(g)| \leq q^4$, so 
$$\sum_{\chi \in \Irr(G),~1 < \chi(1) < D}\frac{|\chi(g_1)\chi(g_2)\oc(g)|}{\chi(1)}
    \leq \frac{q}{2} \cdot \frac{b^2 \cdot q^{4}}{(q^{8}-1)/(q+1)} 
    < 0.1564.$$
Together with (\ref{sp2-sum1}), this implies that    
$$\sum_{\chi \in \Irr(G),~\chi(1) > 1}\frac{|\chi(g_1)\chi(g_2)\oc(g)|}{\chi(1)}
    < \left\{ \begin{array}{ll}0.8293 + 0.1564 = 0.9857, & n = 4\\
    0.1956 + 0.7956 = 0.9912, & n \geq 6 \end{array} \right.$$  
whence $g \in g_1^G \cdot g_2^G$. Since both $g_1$ and $g_2$ are $N'$-elements, we are done. 

\smallskip
(ii) From now on we assume $2 \nmid n$. By Proposition \ref{ratio} we may assume
that $n \geq 5$. We choose a regular 
semisimple element $g_1$ of order $s \in \cR(G) \setminus \{t\}$ and 
take $g_2 := \diag(x_{-},y)$. In particular, again $|\bfC_G(g_i)| \leq (q^{n-1}+1)(q+1)$.
Note that all characters $\zeta^j_n$ and $\tau^j_n$ have $s$-defect $0$, so vanish 
at $g_1$. Next, the choice of $g_i$ implies that, for $\gamma \in \overline\F_q^\times$, 
$\dim_{\overline\F_q}\Ker(g_i - \gamma \cdot 1_V)$ equals $0$ if $\gamma^{q-1} = 1$, and 
is at most $1$ if $\gamma^{q+1} = 1$; in fact, it equals $1$ for exactly two 
primitive $(q+1)$th roots of 
unity in $\overline\F_q^\times$. Using formulae (1), (3), (4), and (6)  of \cite{GT2}, we obtain  
$$(\rho^1_n+\rho^2_n)(g_i) = -1,~~(\alpha_n+\beta_n)(g_1) = 1,~~(\alpha_n+\beta_n)(g_2) = -1.$$
Furthermore, exactly one character among $\alpha_n$, $\beta_n$, and exactly one character 
among $\rho^1_n$, $\rho^2_n$, have $s$-defect zero. It follows that 
$$|\chi(g_1)| \leq 1,~\forall \chi \in \{\alpha_n,\beta_n,\rho^1_n,\rho^2_n\}.$$
Likewise, $\beta_n$ and $\rho^2_n$ have $s_-$-defect $0$, so
$$\beta_n(g_2) = \rho^2_n(g_2) = 0,~~|\alpha_n(g_2)| = |\rho^1_n(g_2)| = 1.$$
We also observe that $\alpha_n(g_1) = 0$ if $s|(q^n-1)$ and $\rho^1_n(g_1) = 0$ if $s|(q^n+1)$.
We have shown that, among the characters in $\cW$, exactly one character can be nonzero at both $g_1$ and $g_2$. Denoting this character by $\psi$, 
\begin{equation}\label{sp2-psi}
  |\psi(g)|/\psi(1) \leq 0.95, ~|\psi(g)| \leq B^{1/2},~~|\psi(g_i)| \leq 1.
\end{equation}
Here, the first bound follows from the main result of \cite{Glu}.

\smallskip
(iii) Assume in addition that $n \geq 9$ if $q = 2$. Now 
$$\sum_{\chi \in \Irr(G),~1 < \chi(1) < D}\frac{|\chi(g_1)\chi(g_2)\oc(g)|}{\chi(1)}
    \leq \frac{B^{1/2}}{(q^n-1)(q^n-q)/2(q+1)} 
    < 0.8003.$$
On the other hand,
$$\sum_{\chi \in \Irr(G),~\chi(1) \geq D}
    \frac{|\chi(g_1)\chi(g_2)\oc(g)|}{\chi(1)}
    \leq \frac{(q^{n-1}+1)(q+1) \cdot B^{1/2}}{D} 
    < 0.0478.$$  
It follows that      
$$\sum_{\chi \in \Irr(G),~\chi(1) > 1}\frac{|\chi(g_1)\chi(g_2)\oc(g)|}{\chi(1)}
    < 0.8003 + 0.0478 = 0.8481,$$  
whence $g \in g_1^G \cdot g_2^G$. Since both $g_1$ and $g_2$ are $N'$-elements, we are done. 

\smallskip
(iv) Now we consider the case $(n,q)= (7,2)$ and choose 
$$D_1 := \frac{q^{35}(q^7-1)(q^7-q)}{2(q+1)}.$$
Using \cite{Lu}, we check that there is only one character $\chi \in \Irr(G)$ with 
$1 < \chi(1) < D_1$ that has both positive $s$-defect and $s_-$-defect, namely the character 
$\psi$ described in (ii). Now using (\ref{sp2-psi}) 
$$\sum_{\chi \in \Irr(G),~1 < \chi(1) < D_1}\frac{|\chi(g_1)\chi(g_2)\oc(g)|}{\chi(1)}
    \leq \frac{|\psi(g)|}{\psi(1)} 
    < 0.95.$$
On the other hand,
$$\sum_{\chi \in \Irr(G),~\chi(1) \geq D_1}
    \frac{|\chi(g_1)\chi(g_2)\oc(g)|}{\chi(1)}
    \leq \frac{(q^{6}+1)(q+1) \cdot B^{1/2}}{D_1} 
    < 0.01.$$  
It follows that      
$$\sum_{\chi \in \Irr(G),~\chi(1) > 1}\frac{|\chi(g_1)\chi(g_2)\oc(g)|}{\chi(1)}
    < 0.95 + 0.01 = 0.96,$$
and we are done again.        
\hal

\subsection{Induction step: Orthogonal groups}
\begin{prop}\label{so-odd}
Suppose $G = \Omega_{2n+1}(q)$ with $n \geq 3$, $q = p^f$ odd, and $t \in \cR(G)$. 
Assume that $n \geq 6$ if $q=3$.
Then $\sP_u(N)$ holds for $G$ and for every $N = p^at^b$. 
\end{prop}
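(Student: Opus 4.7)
\pf
The strategy parallels Propositions \ref{sp-odd-large} and \ref{sp3-large}, exploiting the fact that the special primes in $\cR(\Omega_{2n+1}(q))$ coincide with those for $\Sp_{2n}(q)$ (see Table \ref{primes}) and that the low-degree character theories of the two families run in parallel. Let $g \in G$ be unbreakable, so $|\bfC_G(g)| \leq M$ with $M = q^{2n-2}(q+1)^2$ when $q \geq 5$, and $M = 2^4 \cdot 3^{2n+3}$ when $q = 3$, by Lemma \ref{orthogunb}.

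First I construct the test elements. Inside $\Omega_{2n-1}(q)$ fix a regular semisimple $x_-$ of order $s_- = \p(q,2n-2)$, and inside $\Omega_2^\nu(q)$ fix $h_\nu$ of order $q-\nu$. Since $t \in \cR(G)$, we have $t|(q^n-\e)$ for some $\e = \pm$. Pick $s \in \cR(G) \setminus \{t\}$ and let $g_1 \in G$ be regular semisimple of order $s$ (arranged as a diagonal element of $\Omega_{2n-1}(q) \times \Omega_2^{\pm}(q)$ matching the parity of $n$, as in the symplectic case), and take $g_2 := \diag(x_-, h_\e)$. Then $g_1, g_2$ are $N'$-elements with $|\bfC_G(g_i)| \leq (q^{n-1}+1)(q+1)$ and, crucially, no $\pm 1$ eigenvector on the natural module.

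Now I apply Lemma \ref{basic}. Choose a cutoff $D$ of order roughly $(q^{2n}-1)(q^{n-1}-q)/2(q^2-1)$ (refined as in the symplectic proofs to absorb the borderline cases $n=3,4$). By \cite[Theorem 5.2]{TZ1} and \cite{Lu} for small rank, the characters of $G$ of degree $<D$ form a short explicit list: the trivial character, a few irreducible Weil-type characters arising from the spin module (for $G = \Spin_{2n+1}(q)$; they pass to $\Omega_{2n+1}(q)$ as appropriate), and the four unipotent characters $\alpha_\nu, \beta_\nu$ of degrees
\[
 \alpha_\nu(1) = \frac{(q^n-\nu)(q^n+\nu q)}{2(q-1)},\quad \beta_\nu(1) = \frac{(q^n+\nu)(q^n-\nu q)}{2(q+1)},
\]
together with possibly a few more in Lusztig series of $s$- or $s_-$-defect zero. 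The sum over $\chi(1) \geq D$ is bounded via Lemma \ref{basic}(ii) by $(q^{n-1}+1)(q+1) M^{1/2}/D$, which is small. For the $\alpha_\nu, \beta_\nu$, one or the other factor vanishes by defect-zero at $g_1$ or $g_2$; using the permutation character $\rho = 1_G + \alpha_+ + \alpha_-$ of $G$ on singular $1$-spaces (cf.\ \cite[Table 2]{ST}), and its $\GU$-Weil analogue supplying $\zeta = \beta_+ + \beta_-$, one extracts $\alpha_{-\e}(g_i) = -1$ or $\beta_{-\e}(g_i) = -1$ exactly as in part (b) of the proof of Proposition \ref{sp-odd-large}. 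The Weil-character values at $g_i$ are bounded by $1$ (since $g_i$ has no $\pm 1$ eigenvector), and at $g$ by $q^{n/2}$ via Lemma \ref{blocks} together with \cite[Lemma 2.4]{GT2}. Collecting these bounds and plugging into Lemma \ref{basic}(i) gives $g \in g_1^G \cdot g_2^G$.

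The main obstacle is the case $q = 3$, which is why the hypothesis $n \geq 6$ is imposed: here $|\bfC_G(g)|$ can be as large as $2^4 \cdot 3^{2n+3}$ and the centralizer bound is too weak to absorb the character ratios in a single pass. I will follow the two-stage strategy of Proposition \ref{sp3-large}: (1) establish the sharper bound $|\chi(g)| \leq 3^c$ on the low-dimensional Weil-type and unipotent characters using the inequality $\dim \Ker_{\bar V}(g - \a I) \leq 4$ from Lemma \ref{blocks}(i); and (2) for the borderline value of $n$ at $q = 3$, if the direct summation fails, replace the Cauchy--Schwarz estimate by Lemma \ref{orbit} applied to the Galois orbits of the handful of mid-range characters, combined with a McKay-type count of $\ell'$-characters where $\ell$ is a cyclic Sylow prime in $|g_2|$ (analogous to the prime $61$ in the proof of Proposition \ref{sp3-large}). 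This reduces the estimate to the manageable form seen there, at which point the same bookkeeping delivers $\sum_{\chi(1)>1} |\chi(g_1)\chi(g_2)\chi(g)|/\chi(1) < 1$ and completes the proof.
\hal
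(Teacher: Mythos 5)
The central step of your argument --- the claim that the low-degree character theory of $\Omega_{2n+1}(q)$ ``runs in parallel'' with that of $\Sp_{2n}(q)$ --- is false, and the proof collapses exactly there. For $q$ odd, $\Omega_{2n+1}(q)$ has no analogue of the Weil characters of degree $(q^n\pm1)/2$: its smallest nontrivial character degree has order $q^{2n-2}$ (cf.\ \cite[Theorem 1.1]{TZ1}, quoted in the paper as $\chi(1)\ge q^4+q^2+1$ already for $\O_7(q)$), and the characters below any useful cutoff are the $q+4$ dual-pair characters $D^\circ_\alpha$, $\alpha\in\Irr(\Sp_2(q))$, of \cite[Proposition 5.7, Corollary 5.8]{ore} --- not ``trivial $+$ spin-module Weil characters $+$ the four unipotent characters $\alpha_\nu,\beta_\nu$''. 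Consequently the bounds you import from Propositions \ref{sp-odd-large} and \ref{sp3-large} have no meaning here: \cite[Lemma 2.4]{GT2} is a statement about the Weil representation of $\Sp_{2n}(q)$ and gives nothing in type $B_n$; the identity $\zeta=\beta_++\beta_-$ rests on the embedding $\Sp_{2n}(q)\hookrightarrow\SU_{2n}(q)$ and has no orthogonal counterpart; and your key estimates (values $\le 1$ at $g_i$ and $\le q^{n/2}$ at $g$ for the ``Weil-type'' characters) cannot even be formulated. Since those were precisely the characters meant to dominate the sum in Lemma \ref{basic}(i), the proposal contains no valid bound on $\sum_{\chi(1)>1}|\chi(g_1)\chi(g_2)\overline{\chi}(g)|/\chi(1)$ for any odd $q$.

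The paper proceeds differently on both counts. First, it does no character computation at all for $q\ge 5$: Corollary \ref{main-real} (reality) and Proposition \ref{ratio} (proportions of $p$- and $t$-singular elements) already give the stronger conclusion $\sP(N)$ there, so one may assume $q=3$ and $n\ge 6$. Then, for $q=3$, it takes $g_1=g_2$ regular semisimple of order $s\in\cR(G)\setminus\{t\}$, cuts off at $D=q^{4n-8}$, and controls the $q+4$ characters $D^\circ_\alpha$ by feeding the eigenspace bounds of Lemma \ref{blocks} ($\dim\Ker(g-\gamma)\le 4$, $\dim\Ker(g_i-\gamma)\le 1$) into the argument of \cite[Proposition 5.11]{ore}, obtaining $|D_\alpha(g)|\le q^4\alpha(1)$ and $|D_\alpha(g_i)|\le q\,\alpha(1)$; no Galois-orbit or McKay-type count (your step (2)) is needed. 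To salvage a direct proof along your lines you would have to replace your character list and value bounds by these.
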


\pf
By Corollary \ref{main-real} and Proposition \ref{ratio}, we may assume that $q = 3$ and $n \geq 6$.
Let $V = \overline\F_q^{2n+1}$ denote the natural module for $G$, and let 
$$F_0 := \{ \gamma \in \overline\F_q^\times \mid \gamma^{q \pm 1} = 1\}.$$
Consider an unbreakable $g \in G$; in particular, 
$|\bfC_G(g)| \leq B := 2^4 \cdot q^{2n+3}$
by Lemma \ref{orthogunb}. Let $\cX$ denote the set of $q+4$ characters 
described in \cite[Proposition 5.7]{ore}: each is of the form 
$D^\circ_\alpha$ for $\alpha \in \Irr(S)$ and $S := \Sp_2(q)$. Choosing
$$D := q^{4n-8},$$
we see by \cite[Corollary 5.8]{ore} that $\cX$ is precisely the set 
$\{ \chi \in \Irr(G) \mid 1 < \chi(1) < D\}$. If $\gamma \in F_0$, then 
$$\dim_{\overline\F_q}\Ker(g-\gamma \cdot 1_V) \leq 4$$
by Lemma \ref{blocks}. Following the proof of \cite[Proposition 5.11]{ore}, one can show that 
\begin{equation}\label{so1-weil1}
   |D_\alpha(g)| \leq  q^4 \cdot \alpha(1).
\end{equation}
Now we choose $g_1 = g_2$ to be a regular semisimple
element of order $s \in \cR(G) \setminus \{t\}$, 
so that $g_i$ is an $N'$-element and $|\bfC_G(g_i)| \leq (q^{n-1}+1)(q+1)$. In particular,
\begin{equation}\label{so1-sum1}
\sum_{\tiny{\begin{array}{l}\chi \in \Irr(G),\\ \chi(1) \geq D\end{array}}}
    \frac{|\chi(g_1)\chi(g_2)\oc(g)|}{\chi(1)}
    \leq \frac{(q^{n-1}+1)(q+1) \cdot B^{1/2}}{D} 
    < 0.35. 
\end{equation}
The choice of $g_i$ implies that 
$$\dim_{\overline\F_q}\Ker(g-\gamma \cdot 1_V) \leq 1$$
for all $\gamma \in F_0$. Following the proof of \cite[Proposition 5.11]{ore}, one can show that 
\begin{equation}\label{so1-weil2}
   |D_\alpha(g)| \leq  q \cdot \alpha(1).
\end{equation}
In the notation of \cite[Table I]{ore}, if $\alpha \neq \xi_{1,2}$, then $D^\circ_\alpha = D_{\alpha}$.
In this case, it follows from (\ref{so1-weil1}) and (\ref{so1-weil2}) for $\chi = D^\circ_\alpha$ that
$$\frac{|\chi(g_1)\chi(g_2)\oc(g)|}{\chi(1)} \leq \frac{q^6 \cdot \alpha(1)^3}{\chi(1)} 
    < (1.1)\frac{q^6 \cdot \alpha(1)^2}{(q^{2n}-1)/(q^2-1)}.$$ 
In the case $\alpha = \xi_{1,2}$ (of degree $(q+1)/2$), for 
$\chi = D^\circ_\alpha = D_\alpha-1_G$, 
$$\frac{|\chi(g_1)\chi(g_2)\oc(g)|}{\chi(1)} \leq \frac{(q^4\alpha(1)+1)(q\alpha(1)+1)^2}{\chi(1)} 
    < (1.4)\frac{q^6 \cdot \alpha(1)^2}{(q^{2n}-1)/(q^2-1)}.$$ 
It follows that 
$$\begin{array}{ll}\sum_{\chi \in \Irr(G),~1 < \chi(1) < D}\dfrac{|\chi(g_1)\chi(g_2)\oc(g)|}{\chi(1)} &
    \leq (1.4) \dfrac{q^6}{(q^{2n}-1)/(q^2-1)} \cdot \sum_{\alpha \in \Irr(S)}\alpha(1)^2 \\ \\
    & = (1.4)\dfrac{q^6 \cdot q(q^2-1)}{(q^{2n}-1)/(q^2-1)}
    < 0.26.\end{array}$$
Together with (\ref{so1-sum1}), this implies that    
$$\sum_{\chi \in \Irr(G),~\chi(1) > 1}\frac{|\chi(g_1)\chi(g_2)\oc(g)|}{\chi(1)}
   < 0.35 + 0.26 = 0.61.$$  
whence $g \in g_1^G \cdot g_2^G$. Since both $g_1$ and $g_2$ are $N'$-elements, we are done. 
\hal

\begin{prop}\label{so2-even}
Suppose $G = \O^\e_{2n}(q)$ and $q = 2,4$, $\e = \pm$, and $t \in \cR(G)$. 
Assume that $n \geq 5$ if $q = 4$, and $n \geq 7$ if $q=2$.
Then $\sP_u(N)$ holds for $G$ and for every $N = 2^at^b$. 
\end{prop}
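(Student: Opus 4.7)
The plan is to imitate Proposition \ref{sp-even}, taking advantage of the embedding $G = \Omega^\e_{2n}(q) \hookrightarrow \Sp_{2n}(q)$ available in characteristic $2$. Fix an unbreakable $g \in G$ and obtain from Lemma \ref{orthogunb} the bound $|\bfC_G(g)| \leq B$ with $B = q^{2n-2}(q+1)^2$ if $q=4$ and $B = 3 \cdot 2^{2n+6}$ if $q=2$. Let $V = \overline\F_q^{2n}$ be the natural module. Inside suitable natural subgroups $\Omega^{\e_1}_{2n-2}(q) \times \Omega^{\e_2}_2(q) \leq G$, I would pick a regular semisimple element $x_- \in \Omega^-_{2n-2}(q)$ of order $s_- = \p(q,2n-2)$, and (when $n$ is even) also a regular semisimple $x_+ \in \Omega^+_{2n-2}(q)$ of order $s_+ = \p(q,n-1)$, together with a suitable element $y \in \Omega^{\pm}_2(q)$ of order $q \pm 1$. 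Assemble these into block-diagonal elements $g_1,g_2 \in G$ so that (a) their orders are coprime to $N$, (b) $\dim_{\overline\F_q}\Ker(g_i - \gamma \cdot 1_V) \leq 1$ for every $\gamma \in \overline\F_q^\times$ with $\gamma^{q\pm 1}=1$, and (c) few low-degree characters of $G$ are nonzero on both $g_1$ and $g_2$. When $n$ is odd, take $g_1$ to be a regular semisimple element of prime order $s \in \cR(G) \setminus \{t\}$ (so that the defect-zero argument kills off most low-degree irreducibles on $g_1$), and take $g_2 := \mathrm{diag}(x_-,y)$.

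Next, I would apply Lemma \ref{basic}(i) with a threshold $D$ of the shape $(q^{2n}-1)(q^{n-1}-1)(q^{n-1}-q^2)/(2(q^4-1))$. The contribution from characters with $\chi(1) \geq D$ is handled by Lemma \ref{basic}(ii) using $|\bfC_G(g_i)| \leq (q^{n-1}+1)(q+1)$ and the $B$ bound on $g$, yielding a small number. For the characters with $1 < \chi(1) < D$, I would invoke the low-degree classification (along the lines of \cite[Corollary 6.2]{GT2}, restricted from $\Sp_{2n}(q)$ via the embedding above) to reduce to a short list of Weil-type characters and a few defect-zero unipotents. Value bounds $|\chi(g_i)|$ on this list follow from (b) via the standard Weil-character formulas (e.g., \cite[Table 1]{GT2}); while Lemma \ref{blocks}(i) gives $\dim_{\overline\F_q}\Ker(g-\gamma \cdot 1_V) \leq 4$ for $\gamma^{q\pm 1}=1$, which via the same formulas yields $|\chi(g)| \leq q^4 \cdot c$ for a small constant $c$. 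Putting these together, the low-degree contribution is bounded by a constant well below $1$, and Lemma \ref{basic}(i) delivers $g \in g_1^G \cdot g_2^G$, hence $g = u^N v^N$ since $g_1,g_2$ are $N'$-elements.

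The principal obstacle will be the smallest admissible case, namely $(n,q)=(7,2)$ (and possibly $(n,q)=(5,4)$ or $(n,q)=(8,2)$), where the generic degree gap $D$ is not wide enough for the Cauchy--Schwarz estimate to close. For these cases, following the model of Proposition \ref{sp-even}(iv), I would enlarge the threshold to $D_1$ of order $q^{4n-3}/(q+1)$ or similar and use \cite{Lu} to verify that only a single irreducible $\psi$ has both positive $s$-defect and positive $s_-$-defect with $1 < \psi(1) < D_1$. The contribution of $\psi$ is then controlled by Gluck's bound \cite{Glu} $|\psi(g)|/\psi(1) \leq 0.95$, while the residual high-degree sum $\chi(1) \geq D_1$ is already negligible via Lemma \ref{basic}(ii); the combined estimate again drops below $1$. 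A secondary difficulty is the $\pm$ type bookkeeping needed to ensure that the pair $(\alpha_n,\beta_n)$ and $(\rho^1_n,\rho^2_n)$ of Weil characters have the correct defect pattern on $g_1,g_2$, which is managed exactly as in part (ii) of the proof of Proposition \ref{sp-even}.
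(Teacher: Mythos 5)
Your overall skeleton (two $N'$-elements, Frobenius' formula, a degree threshold $D$, Cauchy--Schwarz for the high-degree tail, special treatment of the smallest cases) is the same as the paper's, but the central step is not supplied by your argument. You propose to obtain both the classification of $\{\chi\in\Irr(G)\mid 1<\chi(1)<D\}$ and the value bounds $|\chi(g_i)|$ by ``restricting'' the symplectic results of \cite[Corollary 6.2, Table 1]{GT2} through the embedding $\Omega^{\e}_{2n}(q)\hookrightarrow \Sp_{2n}(q)$. This does not work: restriction from an overgroup gives no lower bound on the degrees of the irreducible characters of the subgroup, so the symplectic gap result (and your symplectic-shaped threshold $(q^{2n}-1)(q^{n-1}-1)(q^{n-1}-q^2)/2(q^4-1)$) says nothing about which irreducible characters of $\O^{\e}_{2n}(q)$ lie below $D$; likewise the explicit Weil-character formulas of \cite{GT2} are formulas for characters of $\Sp_{2n}(q)$, and their restrictions to $\O^\e_{2n}(q)$ are reducible, so they do not directly bound the values of the irreducible constituents at $g_1,g_2$ or at $g$. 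The paper instead uses the orthogonal low-degree classification \cite[Theorem 1.3]{Ng} (with thresholds like $q^{4n-10}$), and then pins down the values of the resulting $q+1$ characters $\varphi,\psi,\zeta_i,\sigma$ at $g_1,g_2$ by combining \emph{two} permutation-character identities (on singular and on non-singular $1$-spaces, via \cite{ST} and \cite{GMT}), the identification of the $\zeta_i$ as semisimple characters attached to classes in $\O^-_2(q)$, defect-zero vanishing for both relevant primes, and for $q=4$ a Galois-rationality argument giving $\zeta_1(g_j)=\zeta_2(g_j)$. None of these inputs is replaced by anything in your sketch, so the assertions ``few low-degree characters are nonzero on both $g_1$ and $g_2$'' and ``$|\chi(g_i)|$ is small'' are unsupported as written.

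Two smaller points. First, your choice of test elements mimics the symplectic proof (blocks $x_{\pm}\in\Omega^{\pm}_{2n-2}(q)$ padded by $y\in\Omega^{\pm}_2(q)$, with a parity split on $n$); the paper instead takes $g_1=g_2$ a regular semisimple element of order $s\in\cR(G)\setminus\{t\}$ in almost all cases, padding only in the exceptional configurations ($s\mid q^{n-1}+1$ with $q=2$, $\e=+$, and a genuinely different $g_2$ when $(n,q,\e)=(7,2,+)$); this choice is what makes the defect/permutation-character bookkeeping close, and your version would need that analysis redone from scratch. Second, for $(n,q)=(7,2)$ your fall-back via Gluck's $0.95$ bound could in principle work numerically, but it again rests on an unverified claim (a single character of positive $s$- and $s_-$-defect below $D_1$), whereas the paper resolves this case by choosing $g_2$ so that no low-degree character survives at both elements, needing no appeal to \cite{Glu} in this proposition.
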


\pf
(i) Consider an unbreakable $g \in G$; in particular, 
$$|\bfC_G(g)| \leq B := \left\{ \begin{array}{rl} 3 \cdot q^{2n+6}, & q = 2\\
   25 \cdot q^{2n-2}, & q = 4\end{array} \right.$$
by Lemma \ref{orthogunb}. We also choose 
$$D  :=  \left\{ \begin{array}{ll}q^{4n-10}, & n \geq 6, (n,q) \neq (7,2),\\
     q^{4n-8}, & (n,q) = (7,2),\\
    q^3(q^3-1)(q^5-1)(q-1)^2/2, & n = 5, q=4.\end{array} \right.$$ 
Consider the prime $s \in \cR(G) \setminus \{t\}$. 
If $s|(q^{n-1}+1)$ with $q = 2$ and $\e = +$, then we choose $g_1 = \diag(x_1,y_1)$,
where $x_1 \in \O^-_{2n-2}(q)$ is regular semisimple of order $s$ and 
$y_1 \in \O^-_2(q)$ has order $q+1$. In all other cases, we choose a regular semisimple
$g_1 \in G$ of order $s$. 

 If $s|(q^{n-1}+1)$ and $(n,q,\e) = (7,2,+)$, then choose 
$g_2 := \diag(x_2,y_2)$,
where $x_2 \in \O^{-}_{2n-4}(q)$ is regular semisimple of order $s_{\e}=\p(q,2n-4)=11$, and 
$y_2 \in \O^-_4(q)$ of order $\p(q,4) = 5$. In all other cases,
let $g_2 := g_1$.

Our choices of $g_i$ imply that each
$g_i$ is an $N'$-element, and $|\bfC_G(g_i)| \leq (q+1)(q^{n-1}+1)$. It follows that
\begin{equation}\label{so2-sum1}
\sum_{\tiny{\begin{array}{l}\chi \in \Irr(G),\\ \chi(1) \geq D\end{array}}}
    \frac{|\chi(g_1)\chi(g_2)\oc(g)|}{\chi(1)}
    \leq \frac{(q^{n-1}+1)(q+1) \cdot B^{1/2}}{D} 
    < \left\{ \begin{array}{ll}0.10, & n \geq 5, q =4\\
    0.33, & n \geq 7, q =2 . \end{array} \right.      
\end{equation}
 
\smallskip    
(ii) Now we estimate character values for the characters in     
$$\cX:= \{ \chi \in \Irr(G) \mid 1 < \chi(1) < D\}.$$
By \cite[Theorem 1.3]{Ng}, when $n \geq 6$ and $(n,q) \neq (7,2)$ the set
$\cX$ consists of $q+1$ characters: 

$\bullet$ $\varphi$ of degree $(q^n-\e)(q^{n-1}+\e q)/(q^2-1)$, 

$\bullet$ $\psi$ of degree $(q^{2n}-q^2)/(q^2-1)$,

$\bullet$ $\zeta_i$ of degree $(q^n-\e)(q^{n-1}-\e)/(q+1)$ for $1 \leq i \leq q/2$,  and 

$\bullet$ $\sigma$ of degree $(q^n-\e)(q^{n-1}+\e)/(q-1)$ if $q=4$.\\
If $(n,q) = (7,2)$ and $\chi \in \cX$ has positive $s$-defect, then using \cite{Lu} we show that
$\chi$ must be one of these $q+1$ characters. Likewise, if 
$(n,q) = (5,4)$ and $\chi \in \cX$ has positive $s$-defect and positive $s_\e$-defect, then using \cite{Lu} we check that $\chi$ is again one of these characters. 

Let $V = \F_q^{2n}$ denote the natural module for $G$. Then 
\begin{equation}\label{so2-weil1}
  \rho_0 = 1_G + \varphi +\psi
\end{equation}
is the rank $3$ permutation character of the action of $G$ on singular $1$-spaces of $V$,
see \cite[Table 1]{ST}. It is shown in \cite{GMT} that 
\begin{equation}\label{so2-weil2}  
  \rho_0 = 1_G + \psi + \sigma + \sum^{q/2}_{i=1}\zeta_i
\end{equation}   
is the permutation character of the action of $G$ on non-singular $1$-spaces of $V$ 
(we use the convention that $\sigma = 0$ for $q=2$).
We can identify $G$ with its dual group $G^*$, cf.\ \cite{C}. Then the non-identity 
elements of the natural subgroup $\O^-_2(q)$ of $G$ break into $q/2$ conjugacy classes
with representatives $t_i$, $1 \leq i \leq q/2$, and 
$$\bfC_G(t_i) = \O^{-\e}_{2n-2}(q) \times \O^-_2(q).$$
All these semisimple elements have connected centralizer in the underlying algebraic group.
Hence, these classes yield $q/2$ semisimple characters in $\Irr(G)$, which can then be identified 
with $\zeta_i$, $1 \leq i \leq q/2$. If $q = 4$ then $\zeta_1$ and 
$\zeta_2$ are Galois conjugate and $\Q(\zeta_i) = \Q(\sqrt{5})$.
(Indeed, let $\omega$ denote a primitive $5$th root of unity in $\C$, so that 
$\Q(\omega+\omega^{-1}) = \Q(\sqrt{5})$. Let $\gamma:\omega \mapsto \omega^2$ be a generator of
$\Gal(\Q(\omega)/\Q$. Following the proof of \cite[Lemma 9.1]{NT}, one can show that
$\Q(\zeta_i) \subseteq \Q(\omega)$, and $\gamma$ sends $\zeta_1$ to $\zeta_2$. Moreover, since 
$s_i$ is real, $\Q(\zeta_i)$ is fixed by $\gamma^2:\omega \mapsto \omega^{-1}$. It follows that 
$\Q(\zeta_i) \subseteq \Q(\omega)^{\gamma^2} = \Q(\sqrt{5})$. As $\zeta_1$ and $\zeta_2$ are distinct
Galois conjugates, we conclude that $\Q(\zeta_i) = \Q(\sqrt{5}$.)
In particular, since the $g_j$ are chosen to be $5'$-elements, $\zeta_i(g_j) \in \Q$, so
\begin{equation}\label{so2-weil3}
  \zeta_1(g_i) = \zeta_2(g_i)
\end{equation}
when $q=4$.

\smallskip
(iii) Here we determine character values for the element $g_1$ of order $s$. 

Suppose that $s=\p(q,2n-2)$. Then $\psi$ has $s$-defect $0$, so $\psi(g_1) = 0$. Similarly,
$\sigma(g_1) = 0$ if $\e = +$ and $\zeta_i(g_1) = 0$ if $\e = -$. Next,
$$(\rho_0(g_1),\rho_1(g_1)) = \left\{\begin{array}{ll} (0,q+1), & \e = +,q=4,\\
     (0,0), & \e = +,q=2,\\ (2,q-1), & \e = -.
     \end{array}\right.$$
It follows by (\ref{so2-weil1})--(\ref{so2-weil3}) that 
$$\varphi(g_1) = \pm 1,\mbox{ and }\left\{
    \begin{array}{ll} \zeta_i(g_1) = 2, & \mbox{ if } \e = +, q= 4,\\ 
                             \zeta_i(g_1) = -1, & \mbox{ if } \e = +, q= 2,\\
                             \sigma(g_1) = q-2, & \mbox{ if }\e = -.\end{array} \right.$$

Suppose that either $s=\p(q,2n)$ and $\e = -$, or $s = \p(q,n)$ with $2 \nmid n$ and $\e = +$. Then 
$\varphi$, $\zeta_i$, and $\sigma$ all have $s$-defect $0$, so they all vanish at $g_1$. Also, 
$\rho_0(g_1) = 0$, so (\ref{so2-weil1}) implies that $\psi(g_1) = -1$. 

The remaining case is that $s=\p(q,n-1)$, $\e = +$, and $2|n$. Then $\psi$ and $\zeta_i$ have 
$s$-defect $0$, so they vanish at $g_1$. Also, $\rho_0(g_1) = 2$, so (\ref{so2-weil1}) implies that 
$\varphi(g_1) = 1$. Similarly, $\rho_1(g_1) = q-1$, so (\ref{so2-weil1}) implies that 
$\sigma(g_1) = q-2$. 

\smallskip
(iv) Suppose $n \geq 5$ and $q = 4$. The analysis in (iii) shows that there are at most 
$3$ characters $\chi \in \cX$ that can be nonzero at $g_1 = g_2$, in which case 
$|\chi(g_1)\chi(g_2)| \leq 4$. Also, one character in $\cX$ has degree 
$\geq d:= (q^n-1)(q^{n-1}-q)/(q^2-1)$ and all others have degree $\geq 3d$.
It follows that 
$$\sum_{\chi \in \Irr(G),~1 < \chi(1) < D}\frac{|\chi(g_1)\chi(g_2)\oc(g)|}{\chi(1)}
    \leq \frac{4 \cdot 5 \cdot q^{n-1}}{(q^{n}-1)(q^{n-1}-q)/(q^2-1)} \cdot \left(1+ \frac{2}{3}\right) 
    < 0.497.$$
Suppose $n \geq 7$ and $q = 2$. The analysis in (iii) shows that there are at most 
$2$ characters $\chi \in \cX$ that can be nonzero at $g_1$, in which case 
$|\chi(g_1)| \leq 1$. If $n \geq 8$, then 
$$\sum_{\chi \in \Irr(G),~1 < \chi(1) < D}\frac{|\chi(g_1)\chi(g_2)\oc(g)|}{\chi(1)}
    \leq \frac{2 \cdot 3^{1/2} \cdot q^{n+3}}{(q^{n}-1)(q^{n-1}-q)/(q^2-1)} 
    < 0.658.$$
If $(n,q) = (7,2)$, then the analysis in (iii) shows that the only case where
two characters $\chi \in \cX$ are nonzero at $g_1$ is when $\e = +$,
$s=\p(q,2n-2)$ and $\chi = \varphi$, $\zeta_1$. In this case,
$\varphi$ has $s_\e$-defect $0$, so it vanishes at $g_2$. Furthermore,
$$\rho_0(g_2) = \rho_1(g_2) = 0,$$
so (\ref{so2-weil1}) and (\ref{so2-weil2}) imply that 
$$\psi(g_2) = -1,~~\zeta_1(g_2) = 0,$$
so no character $\chi \in \cX$ can be nonzero at both $g_1$ and $g_2$. In all other cases,
only one $\chi \in \cX$ can be nonzero at $g_1=g_2$ and  
$|\chi(g_1)\chi(g_2)| \leq 1$. It follows that 
$$\sum_{\chi \in \Irr(G),~1 < \chi(1) < D}\frac{|\chi(g_1)\chi(g_2)\oc(g)|}{\chi(1)}
    \leq \frac{3^{1/2} \cdot q^{n+3}}{(q^{n}+1)(q^{n-1}-q)/(q^2-1)} 
    < 0.666.$$
Combining with (\ref{so2-sum1}), we are done in all cases.    
\hal

\begin{prop}\label{so3-even}
Suppose that $G = \O^\e_{2n}(q)$, where $n \geq 5$, $q = 3,5$, and $\e = \pm$. Assume 
that $t \in \cR(G)$, $2 \nmid n$ if $q=5$, and $n \geq 7$ if $q=3$.  Then $\sP_u(N)$ holds for $G$ and for every $N = q^at^b$. 
\end{prop}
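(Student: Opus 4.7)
The plan follows the template of Propositions \ref{sp3-large} and \ref{so2-even}: combine the centralizer bound from Lemma \ref{orthogunb} with sharp character estimates for a carefully chosen pair of $N'$-elements $g_1,g_2$. For an unbreakable $g \in G$, Lemma \ref{orthogunb} gives
$$|\bfC_G(g)| \leq M := \left\{ \begin{array}{ll} 2^6 \cdot 3^{2n+4}, & q=3,\\ 36 \cdot 5^{2n-2}, & q=5, \end{array} \right.$$
while Lemma \ref{blocks} yields $\dim_{\overline\F_q} \Ker(g - \alpha \cdot 1_V) \leq 4$ if $q=3$, and $\leq 2$ if $q=5$, for every $\alpha \in \overline\F_q^\times$ with $\alpha^{q \pm 1} = 1$. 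I would choose $g_1 \in G$ regular semisimple of order $s \in \cR(G) \setminus \{t\}$, and $g_2 = \diag(x,y)$ in a natural subgroup of the form $\Omega_{2n-2}^{\d}(q) \times \Omega_2^{\mu}(q)$ of $G$, with $x$ regular semisimple of order $\p(q,2n-2)$ (or $\p(q,n-1)$ when $\d=+$ and $n$ is even) and $y$ of small order, arranged so that $|g_i|$ is coprime to $N$ and $|\bfC_G(g_i)| \leq (q+1)(q^{n-1}+1)$.

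Next, with $D = q^{4n-10}$ (possibly adjusted by a constant factor in a few small-rank subcases), by \cite[Theorem 1.3]{Ng} supplemented by \cite{Lu} for the finite list of residual cases, the set $\{\chi \in \Irr(G) \mid 1 < \chi(1) < D\}$ consists of two unipotent characters $\varphi,\psi$ occurring in the rank-$3$ permutation character $\rho_0 = 1_G + \varphi + \psi$ on singular $1$-spaces of $V$, together with semisimple characters $\zeta_i$ ($1 \leq i \leq (q-1)/2$) and $\sigma$ occurring in the permutation character $\rho_1 = 1_G + \psi + \sigma + \sum_i \zeta_i$ on non-singular $1$-spaces. Using the $s$- and $s_\pm$-defect vanishing criteria exactly as in step (iii) of the proof of Proposition \ref{so2-even}, most of these characters vanish on $\{g_1,g_2\}$, and the surviving values $\chi(g_i)$ can be read off from $\rho_0(g_i)$ and $\rho_1(g_i)$. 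For $|\chi(g)|$ I would sharpen the trivial bound $|\chi(g)| \leq M^{1/2}$ by invoking the eigenspace dimension bound of Lemma \ref{blocks}, along the lines of the estimate $|D_\alpha(g)| \leq q^4 \alpha(1)$ proved in Proposition \ref{so-odd}. Combining the resulting contributions with the tail bound
$$\sum_{\chi \in \Irr(G),\, \chi(1) \geq D} \frac{|\chi(g_1)\chi(g_2)\oc(g)|}{\chi(1)} \leq \frac{(q+1)(q^{n-1}+1)\,M^{1/2}}{D}$$
from Lemma \ref{basic}(ii), the aim is to show that the total sum over $\chi(1) > 1$ is strictly less than $1$, whereupon Lemma \ref{basic}(i) gives $g \in g_1^G \cdot g_2^G$; since $g_1,g_2$ are $N'$-elements they are $N$th powers in $G$, which establishes $\sP_u(N)$.

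The main obstacle is the $q=3$ case near the lower bound $n=7$: the centralizer bound $2^6 \cdot 3^{2n+4}$ is comparatively large, and the eigenspace bound $\leq 4$ only barely forces the character-sum inequality to close. I expect to handle this by adapting the ``squaring trick'' from part (iii) of the proof of Proposition \ref{sp3-large}, passing from $g$ to $g^2$ to halve the effective eigenspace dimension and invoking Lemma \ref{orbit} to bound the number of irreducible characters with a prescribed nonzero value. For the few small residual cases --- in particular $q=5$ with $n \in \{5,7\}$ and $q=3$ with $n \in \{7,8\}$ --- direct {\sf GAP} or {\sc Magma} character-table computations in the spirit of Lemma \ref{alt-spor} should finish the proof.
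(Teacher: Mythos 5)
Your overall template (centralizer bound from Lemma \ref{orthogunb}, eigenspace bound from Lemma \ref{blocks}, a pair of $N'$-elements, a degree threshold $D$, and Lemma \ref{basic}) is the same as the paper's, but the key step --- identifying the characters with $1<\chi(1)<D$ --- is wrong for odd $q$. The list you describe ($\varphi,\psi$ inside the rank-$3$ permutation character $\rho_0$, plus semisimple characters $\zeta_i$ and $\sigma$ inside the permutation character on non-singular $1$-spaces, via \cite[Theorem 1.3]{Ng}) is the \emph{even-characteristic} structure used in Proposition \ref{so2-even}; it is not available here, and the decomposition of the non-singular-point permutation character from \cite{GMT} that you lean on is the even-$q$ statement. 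For $q=3,5$ the relevant result is \cite[Theorem 1.4]{Ng}: there are $q+4$ low-degree characters, all of the form $D_\alpha$ (or $D_\alpha-1_G$) for $\alpha\in\Irr(\Sp_2(q))$ as in \cite[Proposition 5.7]{ore}, so the count and the characters themselves differ from your list (e.g.\ $7$ rather than $4$ characters for $q=3$), and the ``exact values via $\rho_0,\rho_1$ and defect vanishing as in step (iii) of Proposition \ref{so2-even}'' computation you plan has no analogue. What the paper actually does is bound \emph{all} $q+4$ characters uniformly by the dual-pair estimates $|D_\alpha(g)|\le q^c\alpha(1)$ and $|D_\alpha(g_i)|\le q^{e_i}\alpha(1)$ (the bound you mention only in passing), with defect-zero vanishing used just to shorten the list in the boundary cases.

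The second genuine gap is the treatment of the extremal cases. For $(n,q)=(5,5)$ your threshold $D=q^{4n-10}=5^{10}$ makes even the tail estimate fail: $(q+1)(q^{n-1}+1)B^{1/2}/D\approx 1.4>1$ with $B=36\cdot 5^{8}$, so this is not a matter of ``adjusting by a constant factor''. The paper instead takes a much larger $D=q^3(q^3-1)(q^5-1)(q-1)^2/2$, and to control the characters below it needs $g_1$ and $g_2$ to be singular at two \emph{different} large primes ($\ell\in\{7,31\}$ and $s$), which forces a bespoke construction of $g_1=\diag(u_1,u_2)$ in $\O^{-\e}_6(5)\times\O^-_4(5)$ and, when $t\mid(q^5-\e)$, a careful $g_2=\diag(zv_1,v_2)$ respecting the spinor-norm/$\Omega$ membership; only then does the \cite{Lu} check cut the surviving characters down to the $D_\alpha$'s. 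Deferring $(5,5)$, $(7,3)$ (and your extra cases $(7,5)$, $(8,3)$, which the paper handles generically) to full character-table computations \`a la Lemma \ref{alt-spor} is not realistic for groups of this order, and it is not needed: the paper closes $(7,3)$ and $(5,5)$ by the two-prime defect argument just described, with no squaring trick. Your proposed adaptation of the squaring trick from Proposition \ref{sp3-large} is plausible but unsubstantiated, and as written the numerics of your main estimate are not shown to close in exactly the cases where the result is delicate.
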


\pf
(i) Consider an unbreakable $g \in G$; in particular, 
$$|\bfC_G(g)| \leq B = \left\{ \begin{array}{ll}2^6 \cdot q^{2n+4}, & q=3\\
    6^2 \cdot  q^{2n-2}, & q = 5 \end{array} \right.$$ 
by Lemma \ref{orthogunb}. We also choose 
$$D  :=  \left\{ \begin{array}{ll}q^{4n-10}, & (n,q) \neq (7,3), (5,5),\\
     q^{19}, & (n,q) = (7,3),
     \\ q^3(q^3-1)(q^5-1)(q-1)^2/2, & (n,q) = (5,5).
    \end{array} \right.$$ 
For $(n,q) \neq (5,5)$, we fix regular semisimple $g_1=g_2 \in G$ of order 
$s \in \cR(G) \setminus \{t\}$. 

Suppose now that $(n,q) = (5,5)$. First, we fix a regular semisimple 
$u_1 \in \O^{-\e}_6(5)$ of order $\ell := 7$ if $\e = +$ and $\ell:= 31$ if $\e = -$, 
and a regular semisimple $u_2 \in \O^-_4(5)$ of order $13$, and set
$g_1 = \diag(u_1,u_2)$.
If $t \nmid (q^5-\e)$, we fix a regular semisimple $g_2 \in G$ of order 
$s \in \cR(G) \setminus \{t\}$. 
Note that the central involution $z$ of $\SO^-_8(5)$ does {\it not} belong to
$\O^-_8(5)$. Also, a generator $v_2$ of $\SO^{-\e}_2(5)$ does not 
belong to $\O^{-\e}_2(5)$ and has two distinct eigenvalues $\nu$, $\nu^{-1}$ of 
order $q-\e$. Choosing a regular semisimple $v_1 \in \O^-_8(5)$ of order $s$,
we can now set $g_2 := \diag(zv_1,v_2)$ in the case $t|(q^5-\e)$.  

Our choice of $g_i$ implies that each
$g_i$ is an $N'$-element, and $|\bfC_G(g_i)| \leq (q+1)(q^{n-1}+1)$. It follows that
\begin{equation}\label{so3-sum1}
\sum_{\tiny{\begin{array}{l}\chi \in \Irr(G),\\ \chi(1) \geq D\end{array}}}
    \frac{|\chi(g_1)\chi(g_2)\oc(g)|}{\chi(1)}
    \leq \frac{(q+1)(q^{n-1}+1)\cdot B^{1/2}}{D} 
    < \left\{ \begin{array}{ll}0.14, & (n,q) \neq (7,3) \\ 0.40, & (n,q) = (7,3). \end{array} \right.   
\end{equation}
Also, $g_2$ is always $s$-singular. Furthermore, $g_1$ is $\ell$-singular 
when $(n,q) = (5,5)$.
 
\smallskip    
(ii) Now we estimate character values for the characters in     
$$\cX:= \{ \chi \in \Irr(G) \mid 1 < \chi(1) < D\}.$$
By \cite[Theorem 1.4]{Ng}, when $(n,q) \neq (7,3)$, $(5,5)$, the set $\cX$ consists of $q+4$ characters: 

$\bullet$ $\varphi = D_{1_S}-1_G$ of degree $(q^n-\e)(q^{n-1}+q\e)/(q^2-1)$, 

$\bullet$ $\psi = D_{\St}-1_G$ of degree $(q^{2n}-q^2)/(q^2-1)$,

$\bullet$ $D_{\xi_i}$ of degree $(q^n-\e)(q^{n-1}+\e)/2(q-1)$ for $1 \leq i \leq 2$, 

$\bullet$ $D_{\eta_i}$ of degree $(q^n-\e)(q^{n-1}-\e)/2(q+1)$ for $1 \leq i \leq 2$, 

$\bullet$ $D_{\theta_j}$ of degree $(q^n-\e)(q^{n-1}-\e)/(q+1)$ for $1 \leq j \leq (q-1)/2$, and 

$\bullet$ $D_{\chi_j}$ of degree $(q^n-\e)(q^{n-1}+\e)/(q-1)$ for $1 \leq j \leq (q-3)/2.$\\
The characters $D_\alpha$ of $G$ with $\alpha \in \Irr(S)$ and $S := \Sp_2(q)$ are constructed in \cite[Proposition 5.7]{ore}. If $(n,q) = (7,3)$ and $\chi \in \cX$ has positive $s$-defect, then using \cite{Lu} we show that $\chi$ must be one of these $q+4$ characters. If 
$(n,q) = (5,5)$ and $\chi \in \cX$ has positive $s$-defect and positive $\ell$-defect, 
then using \cite{Lu} we again show that $\chi$ must be one of these characters. 

Let $V = \overline\F_q^{2n}$ denote the natural module for $G$ and let 
$F_0 := \{ \lambda \in \overline\F_q^\times \mid \lambda^{q \pm 1}=1\}$. By Lemma \ref{blocks}, 
$$\dim_{\overline\F_q}\Ker(g - \lambda \cdot 1_V) \leq c$$
for all $\lambda \in F_0$, where $c := 4$ for $q = 3$ and
$c = 2$ for $n= 5$. Hence, arguing as in the proof
of \cite[Proposition 5.11]{ore}, we show that  
\begin{equation}\label{so3-weil1}
  |D_\alpha(g)| \leq q^c\cdot \alpha(1)
\end{equation}
for every $\alpha \in \Irr(S)$. On the other hand, by our choice of $g_i$,
$$\dim_{\overline\F_q}\Ker(g_i - \lambda \cdot 1_V) \leq e_i$$
for all $\lambda \in F_0$ and $i =1,2$, where $e_i := 2$ if $(n,q) \neq (5,5)$,
$e_1 := 0$ and $e_2 \leq 1$ if $(n,q) = (5,5)$.
Arguing as in the proof of \cite[Proposition 5.11]{ore}, we obtain 
\begin{equation}\label{so3-weil2}
  |D_\alpha(g_i)| \leq q^{e_i} \cdot \alpha(1)
\end{equation}
for every $\alpha \in \Irr(S)$. 

\smallskip
(iii) Recall that $D^\circ_\alpha = D_\alpha - k_\alpha \cdot 1_G$ where
$k_\alpha = 1$ if $\alpha = 1_S$ or $\St$ and $k_\alpha = 0$ otherwise cf.\ \cite[Table II]{ore}.
Suppose that $q=3$ and $n \geq 8$. Then $\alpha(1) \leq 3$ for all $\alpha \in \Irr(S)$. 
It now follows from (\ref{so3-weil1}) and (\ref{so3-weil2}) that
$$\sum_{\chi \in \Irr(G),~1 < \chi(1) < D}\frac{|\chi(g_1)\chi(g_2)\oc(g)|}{\chi(1)}
    \leq \frac{7 \cdot (3^3+1)^2 \cdot (3^5+1)}{(3^{n}-1)(3^{n-1}-3)/8} < 0.75.$$
Together with (\ref{so3-sum1}), this implies that $g \in g_1^G\cdot g_2^G$, so we are done in
this case.

Assume now that either $q = 5$ or $(n,q) =(7,3)$; in particular, either 
$s|(q^n-\e)$ or $s|(q^{n-1}+1)$. In the former case, 
all $\chi \in \cX$ but $\psi = D_\St-1_G$ have $s$-defect $0$, so vanish at $g_i$. Also, 
$\St(1) = q$, whence by (\ref{so3-weil1}) and (\ref{so3-weil2}) 
$$\sum_{\chi \in \cX}\frac{|\chi(g_1)\chi(g_2)\oc(g)|}{\chi(1)}
    \leq \frac{(q^{e_1+1}+1)(q^{e_2+1}+1)(q^{c+1}+1)}{(q^{2n}-q^2)/(q^2-1)} < 0.33.$$
In the latter case, the only $\chi \in \cX$ that have positive $s$-defect are 
$\varphi = D_{1_S}-1_G$, and $k = (q+1)/2$ or $(q+3)/2$ characters $D_{\alpha_i}$, $1 \leq i \leq k$
with 
$$\sum^k_{i=1}\alpha_i(1)^2 \leq (q+1)^2(q-2)/2.$$
Moreover, $\varphi(1) \geq d_1 := (q^n-1)(q^{n-1}-q)/(q^2-1)$ and 
$D_{\alpha_i}(1) \geq \alpha_i(1)d$. In this case, using (\ref{so3-weil1}) and (\ref{so3-weil2}),
we obtain 
$$\sum_{\chi \in \cX}\frac{|\chi(g_1)\chi(g_2)\oc(g)|}{\chi(1)}
    \leq \frac{(q^{e_1}+1)(q^{e_2}+1)(q^c+1)}{d} + \sum^k_{i=1} 
    \frac{q^{e_1+e_2}\alpha_i(1)^2 \cdot q^c\alpha_i(1)}{\alpha_i(1)d}< 0.44.$$    
In either case, together with (\ref{so3-sum1}), this implies that 
$g \in g_1^G \cdot g_2^G$, so we are again done.   
\hal

\subsection{Completion of the proof of Theorem \ref{main1} for classical groups}
\begin{prop}\label{clas-main}
Theorem \ref{main1} holds for all finite non-abelian simple symplectic or orthogonal groups. 
\end{prop}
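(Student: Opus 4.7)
The plan is to deduce the proposition from the inductive framework in Proposition \ref{ind-clas}, after peeling off small-rank and exceptional cases using the direct computations of Lemmas \ref{sp24} and \ref{base-clas}, the realness results of Corollary \ref{main-real}, and the density estimate of Proposition \ref{ratio}. Let $S = G/\bfZ(G)$ be a simple symplectic or orthogonal group with $G = \Sp_{2n}(q)$, $\Omega_{2n+1}(q)$, or $\Omega^\e_{2n}(q)$ over $\F_q$ with $q = p^f$. By Corollary \ref{prime2}(i), it suffices to treat $N = p^at^b$ with $t \in \cR(S)$; in particular, $t$ is an odd prime different from $p$.

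First I would dispose of small cases directly. For $\PSp_4(q)$, apply Lemma \ref{sp24}. The explicit list in Lemma \ref{base-clas} covers $\Sp_{2n}(q)$ and $\Omega_{2n+1}(q)$ for very small $n$ and $q \in \{2,3,4\}$, as well as the orthogonal groups $\Omega^\pm_{2n}(q)$ listed there. Next, whenever $G$ lies in the list of Corollary \ref{main-real} (namely, $\PSp_{2n}(q)$ and $\Omega_{2n+1}(q)$ with $q \equiv 1 \bmod 4$; $\PO^+_{4n}(q)$ with $q \equiv 1 \bmod 4$; $\PO^-_{4n}(q)$; $\PO^+_8(q)$; $\Omega_9(q)$; $\tw3 D_4(q)$), the group $G$ is real and the conclusion follows immediately from Corollary \ref{real2} combined with Corollary \ref{prime2}(i). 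This already handles all orthogonal groups over $\F_q$ with $q \geq 7$ in many cases, and a similar observation via Proposition \ref{ratio} handles several further mid-sized-field cases.

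For the remaining cases, I would apply Proposition \ref{ind-clas} with a small threshold $n_0$ (around $4$, $5$, or $6$, chosen according to the family and characteristic) compatible with the hypotheses of Lemma \ref{base-clas}. Condition (i) of Proposition \ref{ind-clas} then follows from Lemmas \ref{sp24} and \ref{base-clas} together with Corollary \ref{prime2} and Corollary \ref{main-real}. Condition (ii) is provided by the induction-step propositions already established in \S5.3--5.4: namely, Proposition \ref{sp-odd-large} for $\Sp_{2k}(q)$ with $q \geq 7$ odd, Proposition \ref{sp3-large} for $\Sp_{2k}(3)$ with $k \geq 6$, Proposition \ref{sp-even} for $\Sp_{2k}(q)$ with $q$ even, Proposition \ref{so-odd} for $\Omega_{2k+1}(q)$ in odd characteristic, Proposition \ref{so2-even} for $\Omega^\e_{2k}(q)$ with $q \in \{2,4\}$, and Proposition \ref{so3-even} for $\Omega^\e_{2k}(q)$ with $q \in \{3,5\}$. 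For field sizes and ranks outside these propositions but still perfect and simple, either Corollary \ref{main-real} or Proposition \ref{ratio} applies, filling the remaining gaps. Once both conditions are verified, Proposition \ref{ind-clas} delivers $\sP(N)$ for $G$, which yields the desired surjectivity on $S$.

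The main obstacle is not a single hard step but the careful bookkeeping needed to ensure that every pair $(S, N)$ with $N = p^at^b$ and $t \in \cR(S)$ is covered by some subcase, especially along the seams where the generic character-theoretic estimates of \S5.3--5.4 degrade: small fields $q \in \{2,3,4,5\}$, ranks just above those in Lemma \ref{base-clas}, and the borderline situations where $\bfN_G(T)/T$ is barely large enough for Proposition \ref{ratio} to give $|\cX|/|G| < 1/2$. Handling these seams requires matching the explicit constants from Lemmas \ref{spunb}--\ref{orthogunb} to the particular induction step invoked, and verifying that the breakable-piece decomposition at the end of the proof of Proposition \ref{ind-clas} always lands inside a subgroup to which a previously established case applies.
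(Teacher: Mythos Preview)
Your plan matches the paper's proof almost exactly: reduce via Corollary~\ref{prime2}(i), handle small cases with Lemmas~\ref{sp24} and~\ref{base-clas}, invoke Corollary~\ref{main-real} and Proposition~\ref{ratio} where they apply, and then run Proposition~\ref{ind-clas} using the induction-step Propositions~\ref{sp-odd-large}--\ref{so3-even} for condition~(ii).

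There is one concrete gap. For the orthogonal groups, condition~(i) of Proposition~\ref{ind-clas} requires $\sP_u(N)$ for every perfect $\Omega_k(q)$ with $k\le n_0$, and in particular for $k\le 6$. None of the results you cite covers these directly: Lemma~\ref{base-clas} starts at $\Omega^\pm_8$, Lemma~\ref{sp24} is about $\SL_2$ and $\Sp_4$, and Corollary~\ref{main-real} does not list $\Omega_3$, $\Omega^\pm_4$, $\Omega_5$, or $\Omega^\pm_6$. The paper closes this gap with the exceptional isomorphisms
\[
\Omega_3(q)\cong \PSL_2(q),\quad \Omega^+_4(q)\cong \SL_2(q)\circ \SL_2(q),\quad \Omega^-_4(q)\cong \PSL_2(q^2),\quad \Omega_5(q)\cong \PSp_4(q),\quad \Omega^\pm_6(q)\cong \SL^\pm_4(q)/Z,
\]
which reduce these cases to the already-established linear, unitary, and symplectic results of \S4 and Lemma~\ref{sp24}. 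You need to invoke these isomorphisms explicitly; otherwise the breakable-piece decomposition in Proposition~\ref{ind-clas} can land in a small orthogonal factor for which you have proved nothing.
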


\pf
Let $G = \Cl_n(q)$ be such that $G/\bfZ(G)$ is simple non-abelian and 
$q = p^f$. By Corollary \ref{prime2}(i), we need to prove the surjectivity of the 
word map $(x,y) \mapsto x^Ny^N$  only in the case $N = p^at^b$ with $t \in \cR(G)$. In particular,
$t \neq 2,p$. 

First we consider the case $G = \Sp_{2m}(q)$. By Lemma \ref{sp24}, we may assume that
$m \geq 3$.  We are also done by Corollary \ref{main-real} if $q \equiv 1 \bmod 4$.  For the remaining cases, we take $n_0 = 4$ if $q \geq 7$, $n_0=6$ if $q = 4$, and 
$n_0 = 6$ if $q=2$, and set $n = 2m$. Note that 
condition (i) of Proposition \ref{ind-clas} holds by Lemmas \ref{sp24} and \ref{base-clas}. 
Next, condition (ii) of Proposition \ref{ind-clas} holds by Propositions \ref{sp-odd-large},
\ref{sp3-large}, and \ref{sp-even}. Hence we are done by Proposition \ref{ind-clas}.

Next assume that $G = \Omega^\pm_{2m}(q)$ with $m \geq 3$ and $2|q$.  Then we are done by Proposition \ref{ratio} if $q \geq 8$ and $m \geq 4$. Since 
\begin{equation}\label{iso1}
  \begin{array}{l} 
     \O_3(q) \cong \PSL_2(q),~\Omega^+_4(q) \cong \SL_2(q) \circ \SL_2(q), 
          ~\Omega^-_4(q) \cong \PSL_2(q^2),\\
     \O_5(q) \cong \PSp_4(q),~     
   ~\Omega^+_6(q) \cong \SL_4(q)/Z,~\Omega^-_6(q) \cong \SU_4(q)/Z \end{array}
\end{equation}    
(for all $q$ and for a suitable central $2$-subgroup $Z$), cf.\ \cite[Proposition 2.9.1]{KL}, we are done in 
the case $m = 2,3$ by the results of \S4.  In the remaining cases of $q=2,4$ and $n=2m \geq 8$, 
we take $n_0 = 8$ for $q=4$ and $n_0=12$ for $q=2$. Note that condition (i) of Proposition \ref{ind-clas} holds by Lemmas \ref{sp24} and \ref{base-clas} 
for $8 \leq k \leq n_0$, and by the isomorphisms in (\ref{iso1}) for $k = 4,6$.
Next, condition (ii) of Proposition \ref{ind-clas} holds by Proposition 
\ref{so2-even}. Hence we are done by Proposition \ref{ind-clas}.

Finally, let $G = \Omega^\pm_{n}(q)$ with $n \geq 7$ and $q$ odd.  Then we take $n_0 = 6$
if $q > 3$ and $n_0=12$ if $q = 3$. Note that 
condition (i) of Proposition \ref{ind-clas} holds for $1 < k \leq 6$ by the isomorphisms in 
(\ref{iso1}) and Lemma \ref{sp24}, and for $7 \leq k \leq n_0$ by Lemma \ref{base-clas}. 
Next, condition (ii) of Proposition \ref{ind-clas} holds by Proposition \ref{so-odd} when $2\nmid k$, 
by Proposition \ref{ratio} if $2|k$, $q \geq 5$, and $(k,q) \neq (10,5)$, $(14,5)$, 
and by Proposition \ref{so3-even} if $2|k$, and $q=3$ or $(k,q) = (10,5)$, $(14,5)$. 
Hence we are done by Proposition \ref{ind-clas}.
\hal

\section{Theorem \ref{main1} for exceptional groups}

\begin{lem}\label{sz}
Theorem \ref{main1} holds for the Suzuki groups $\tw2 B_2(q^2)$ with $q^2 \geq 8$ and 
the Ree groups $\tw2 G_2(q^2)$ with $q^2 \geq 27$.
\end{lem}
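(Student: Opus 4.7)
The plan is to invoke Corollary \ref{prime2}(ii) for both families. By Corollary \ref{prime2}(i) we may reduce to the case $N = p^at^b$ with $t \in \cR(G)$, where $p = 2$ for $G = \tw2 B_2(q^2)$ and $p = 3$ for $G = \tw2 G_2(q^2)$. Table \ref{primes} (extended to the small cases $q^2 \in \{8,27\}$ by direct inspection) shows that $\cR(G)$ consists of a single prime $t$ dividing $q^4 + 1$ in the Suzuki case or dividing $q^4 - q^2 + 1$ in the Ree case. Writing
$$q^4 + 1 = (q^2 - r + 1)(q^2 + r + 1),\ \ r^2 = 2q^2 \ \mbox{(Suzuki)},$$
$$q^4 - q^2 + 1 = (q^2 - r + 1)(q^2 + r + 1),\ \ r^2 = 3q^2 \ \mbox{(Ree)},$$
one checks that in each case the two quadratic factors are coprime (their gcd divides $2r$ but both factors are coprime to $r$), so $t$ divides exactly one of them. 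Let $T$ denote the corresponding cyclic maximal torus of $G$, of order $|T| = q^2 \pm r + 1$.

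The next step is to bound $|\cX|/|G|$, where $\cX = \cX_p \cup \cX_t$ in the notation of Corollary \ref{prime2}(ii). Since the Sylow $p$-subgroups of both Suzuki and Ree groups are trivially intersecting, a direct count yields
$$\frac{|\cX_p|}{|G|} = \frac{|P|-1}{|P|(q^2-1)},$$
which equals $(q^2+1)/q^4$ for Suzuki (with $|P| = q^4$) and $(q^4+q^2+1)/q^6$ for Ree (with $|P| = q^6$). Every nontrivial $t$-element of $G$ is regular semisimple with centralizer a $G$-conjugate of $T$, so
$$\frac{|\cX_t|}{|G|} \leq \frac{|T|}{|\bfN_G(T)|} = \frac{1}{w},$$
where $w = |\bfN_G(T)/T|$. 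The standard description of the anisotropic maximal tori in these groups gives $w = 4$ for Suzuki and $w = 6$ for Ree.

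Summing the two contributions yields $|\cX|/|G| < (q^2+1)/q^4 + 1/4 < 1/2$ for all $q^2 \geq 8$ in the Suzuki case, and $|\cX|/|G| < (q^4+q^2+1)/q^6 + 1/6 < 1/2$ for all $q^2 \geq 27$ in the Ree case; both inequalities are elementary verifications that are monotonically strengthened as $q^2$ grows. Corollary \ref{prime2}(ii) then completes the proof. The main technical inputs, namely the TI property of Sylow $p$-subgroups and the normalizer quotients $|\bfN_G(T)/T|$ for the anisotropic tori, are classical facts about Suzuki and Ree groups, so I do not anticipate a substantive obstacle; the only mildly delicate point is to verify that the single prime $t$ lies in $\cR(G)$ even in the two boundary cases $\tw2 B_2(8)$ and $\tw2 G_2(27)$ not covered by Table \ref{primes}, which is a direct computation with the prime factorizations of $|G|$.
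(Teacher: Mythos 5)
Your strategy is genuinely different from the paper's. The paper proves Lemma \ref{sz} in two lines: since $N$ has at most two prime divisors while $|S|$ has at least three odd prime divisors different from the defining characteristic, one picks a prime $\ell>2$ coprime to both $q^2$ and $N$ and a semisimple $x\in S$ of order $\ell$, and quotes \cite[Theorem 7.1]{GM} to get $x^S\cdot x^S\supseteq S\setminus\{1\}$; every element of $S$ is then a product of two $N$th powers, with no reference to $\cR(S)$, Sylow structure, or torus normalizers. Your plan --- reduce via Corollary \ref{prime2}(i) to $N=p^at^b$ with $t\in\cR(S)$ and then verify $|\cX|<|G|/2$ so that Corollary \ref{prime2}(ii) applies --- is the strategy the paper itself uses for the other exceptional groups in Proposition \ref{exc}, and it can be pushed through here; note, though, that the normalizer quotients $|\bfN_G(T):T|=4$ (Suzuki) and $6$ (Ree) are not covered by Lemma \ref{exc-norm}, so they need citations to Suzuki's and Ward's \cite{ward} descriptions of these groups.

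Two steps are not right as written. First, in Corollary \ref{prime2}(ii) the set $\cX_p$ consists of \emph{all} elements of order divisible by $p$, not only the nontrivial $p$-elements, and your claimed equality $|\cX_p|/|G|=(|P|-1)/(|P|(q^2-1))$ fails for the Ree groups: $\tw2 G_2(q^2)$ contains elements of order $6$ (the centralizer of an involution is $C_2\times \PSL_2(q^2)$, which has elements of order $3$), and these lie in no Sylow $3$-subgroup, so the TI count misses them; they alone add an extra $1/q^2$ to $|\cX_3|/|G|$. (For the Suzuki groups your equality is fine, since involution centralizers there are $2$-groups.) The inequality $|\cX|<|G|/2$ survives after correcting the count, or more cleanly by quoting the $p$-singular proportion bound of \cite[Theorem 3.1]{GL} as in Proposition \ref{exc}, but the ``direct count'' step as stated is false. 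Second, your case analysis depends on knowing that $\cR(G)$ consists of primes dividing $q^4+1$ (respectively $q^4-q^2+1$) also for $\tw2 B_2(8)$ and $\tw2 G_2(27)$ --- precisely the cases excluded from Table \ref{primes}. Since $\cR(G)$ is defined by the choices made in the proof of \cite[Theorem 1.1]{GT3}, this has to be checked against that proof; if a prime dividing $q^2-1$ occurred in $\cR(G)$, your bound $|\cX_t|/|G|\le 1/w$ with $w=4$ or $6$ would not apply to it (the torus of order $q^2-1$ has normalizer quotient of order only $2$) and the estimate would not close. You flag this yourself, but it is a genuine dependency of your argument, and one that the paper's proof avoids altogether.
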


\pf
Let $S$ be of these groups. Note that $|S|$ is divisible
by at least four different odd primes. Hence we can find a prime  divisor $\ell > 2$ of $|S|$ that is coprime
to both $q^2$ and $N$, and a semisimple $x \in S$ of order $\ell$. By \cite[Theorem 7.1]{GM}, 
$x^S \cdot x^S \supseteq S \setminus \{1\}$, whence the claim follows.
\hal

\begin{lem}\label{exc-base}
Theorem \ref{main1} holds for the following: 
$\tw2 F_4(2)'$; $G_2(q)$ with $q=3,4$; $\tw3 D_4(q)$ with $q = 2,4$; $F_4(2)$; $E_6(2)$; $\tw2 E_6(2)$.
\end{lem}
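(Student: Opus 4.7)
The plan is to verify Theorem \ref{main1} for each of the listed groups $S$ by a direct character-theoretic computation with its complex character table, following the template of Lemmas \ref{alt-spor} and \ref{base-clas}. By Corollary \ref{prime2}(i), it suffices to treat integers $N$ of the form $N = p^a t^b$, where $p$ is the defining characteristic of $S$ and $t$ is a prime in $\cR(S)$, read off from Table \ref{primes}; for instance, $\cR(G_2(q)) = \{\p(p,3f)\}$, $\cR(\tw3 D_4(q)) = \{\p(p,12f)\}$, $\cR(F_4(q)) = \{\p(p,12f),\p(p,8f)\}$, and so on.

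For each such pair $(S,t)$ and each $g \in S$, we seek $x,y \in S$ with $g = x^N y^N$. The key observation is that any element of $S$ whose order is coprime to $pt$ is automatically an $N$-th power, since $\gcd(|x|,N)=1$ implies that $x$ lies in the cyclic group $\langle x \rangle = \langle x^N \rangle$. It therefore suffices to exhibit two conjugacy classes $c_1, c_2$ of $\{p,t\}'$-elements such that $g \in c_1 \cdot c_2$. By Lemma \ref{basic}(i), this is equivalent to the non-vanishing of the Frobenius character sum
\[
\sum_{\chi \in \Irr(S)}\frac{\chi(g_1)\chi(g_2)\oc(g)}{\chi(1)},
\]
which can be evaluated directly once the character table of $S$ is available.

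The verification thus reduces, for each group $S$, to a finite search: loop over conjugacy class representatives $g$ of $S$, and for every $t \in \cR(S)$, scan pairs $(c_1,c_2)$ of $\{p,t\}'$-classes for one with positive structure constant into $[g]$. The character tables of all the groups on the list --- $\tw2 F_4(2)'$, $G_2(3)$, $G_2(4)$, $\tw3 D_4(2)$, $\tw3 D_4(4)$, $F_4(2)$, $E_6(2)$, and $\tw2 E_6(2)$ --- are contained in the Character Table Library of {\sf GAP} \cite{GAP}, so the computation is feasible in principle in every case. The main practical obstacle is the size of the character tables of $E_6(2)$ and $\tw2 E_6(2)$, which have several hundred conjugacy classes each; however, for each fixed $t \in \cR(S)$ the list of $\{p,t\}'$-classes is short, and in each case one verifies that a suitable pair $(c_1, c_2)$ of classes exists so that every $g \in S$ lies in $c_1 \cdot c_2$.
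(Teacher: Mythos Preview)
Your approach is essentially the same as the paper's: reduce via Corollary~\ref{prime2}(i) to the case $N=p^at^b$ with $t\in\cR(S)$, then verify by a character-table structure-constant computation that every $g$ lies in a product of two $\{p,t\}'$-classes. The paper does exactly this for $\tw3 D_4(2)$, $\tw3 D_4(4)$, $F_4(2)$, $E_6(2)$, $\tw2 E_6(2)$, listing the relevant sets $\cR(S)$ explicitly as $\{13\}$, $\{241\}$, $\{13,17\}$, $\{73,17\}$, $\{19,17\}$.

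There is, however, a small oversight in your treatment of the remaining three groups. Table~\ref{primes} explicitly excludes $G_2(q)$ for $q=4$ (because $\p(2,6)$ does not exist --- this is the Zsigmondy exception), and it does not cover $\tw2 F_4(2)'$ at all, since the Tits group is not of the form $\cG^F$ with $\cG^F$ quasisimple, so Corollary~\ref{prime2}(i) does not directly apply to it. The paper handles $\tw2 F_4(2)'$, $G_2(3)$, $G_2(4)$ by the cruder method of Lemma~\ref{alt-spor}: for \emph{every} pair of primes $p,q$ dividing $|S|$, check that each $g\in S$ is a product of two $\{p,q\}'$-elements. For groups this small that is an easy extra computation, and your proposal is readily repaired by adopting the same fallback for these three cases.
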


\pf
The cases $\tw2F_4(2)'$, $G_2(3)$, $G_2(4)$ were checked directly 
using their character tables. 
For the remainder, by Corollary  \ref{prime2}(i), it suffices to prove Theorem \ref{main1} for $N = p^at^b$, where $p$ is the defining characteristic 
and $t \in \cR(G) = \{r,s\}$, which is $\{13\}$, $\{241\}$, $\{13,17\}$, $\{73,17\}$, $\{19,17\}$, respectively. This was done by direct calculations similar to those of Lemma \ref{alt-spor}.
\hal

In what follows, let $\Phi'_{24} := q^4 +q^3\sqrt{2}+q^2+q\sqrt{2}+1$.

\begin{lemma}\label{exc-norm} 
Let $S$ be one of $G_2(q)$, $\tw3 D_4(q)$, $\tw2 F_4(q)$, $F_4(q)$, $E_6^\e(q)$, $E_7(q)$, $E_8(q)$
where $q=p^f$. Define the primes $r,s$ as follows: 
\[
\begin{array}{ccccc}
\hline 
S & r & s  & |\bfN_S(T_r):T_r| &  |\bfN_S(T_s):T_s| \\
\hline
G_2(q) & \p(p,3f) & & 6 & \\
\tw3 D_4(q) & \p(p,12f) & & 4 & \\
\tw2 F_4(q^2) & \p(2,24f)|\Phi'_{24} & & 12 & \\
F_4(q) & \p(p,12f) & \p(p,8f)  & 12 & 8 \\
E_6(q) & \p(p,9f) & \p(p,8f)  & 9 & 8 \\
\tw2 E_6(q) & \p(p,18f) & \p(p,8f)  & 9 & 8 \\
E_7(q) & \p(p,18f) & \p(p,7f) &   18 & 14 \\
E_8(q) & \p(p,24f) & \p(p,20f) & 24 & 20 \\
\hline
\end{array}
\]
For $t \in \{r,s\}$ let $x_t \in \cX_t$, where $\cX_t$ is the set of $t$-singular elements in $S$. 

{\rm (i)} $\bfC_S(x_t) = T_t$, where $T_t$ is a uniquely determined maximal torus of $S$.

{\rm (ii)} $|\bfN_S(T_t):T_t|$ is as in the table.

{\rm (iii)} $|\cX_t| < |S|/|N_S(T_t):T_t|$.

{\rm (iv)} If $S \not\cong G_2(q)$, $\tw3 D_4(q)$, then $|\cX_t| <|S|/8$.
\end{lemma}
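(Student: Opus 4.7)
The plan proceeds from (i) to (iv), with the bulk of the work lying in (i) and (ii). The primes $r,s$ in the table are primitive prime divisors $\ell(p,df)$ of cyclotomic values $\Phi_d(q)$ (or their Suzuki/Ree twists $\Phi'_{24}$ etc.), with $d$ chosen large enough that the relevant cyclotomic factor divides the order of exactly one $S$-class of maximal tori.

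For (i), I would first check that every nontrivial semisimple $t$-element $y$ of $G = \cG^F$ has connected centralizer $\bfC_\cG(y)^\circ$ equal to a maximal torus of $\cG$. The key point is that any proper connected reductive subgroup of $\cG$ has root system whose Weyl group exponents are drawn from a strictly smaller set, and a case-by-case inspection of subsystems of the exceptional types shows that the value of $d$ associated with each $t \in \{r,s\}$ does not appear among those exponents. Consequently $\bfC_\cG(y)^\circ$ has full rank and is a torus. Up to $S$-conjugacy the resulting torus $T_t$ is unique, since exactly one $F$-conjugacy class $[w]$ of the Weyl group gives a maximal torus whose order is divisible by $\Phi_d(q)$ (or its twisted analogue). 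For a general $x_t \in \cX_t$, write $x_t = yz$ with commuting $t$-part $y \neq 1$ and $t'$-part $z$; then $z \in \bfC_S(y) = T_t$, so $\bfC_S(x_t) = \bfC_S(y) \cap \bfC_S(z) = T_t$, and $T_t$ is uniquely determined by $x_t$.

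For (ii), the quotient $\bfN_S(T_t)/T_t$ is the relative Weyl group of the $d$-regular torus, which by Springer's theory of regular elements in complex reflection groups is cyclic of order equal to the listed index; the corresponding values are tabulated in Springer's original paper, or equivalently in standard references on maximal tori in finite exceptional groups of Lie type (e.g.\ Deriziotis, Lawther, or Geck--Pfeiffer). Granting (i), assertion (iii) follows since every element of $\cX_t$ lies in a unique $S$-conjugate of $T_t$; the number of such conjugates equals $|S|/|\bfN_S(T_t)|$, and each conjugate contributes at most $|T_t|-1$ elements to $\cX_t$, giving
\[
|\cX_t| \le \frac{|S|}{|\bfN_S(T_t)|}(|T_t|-1) < \frac{|S|}{|\bfN_S(T_t):T_t|}.
\]
Finally, (iv) is immediate by inspection of the table: every entry in the last two columns is at least $8$ except for $S = G_2(q)$ and $S = \tw3 D_4(q)$.

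The main obstacle is the case-by-case verification in (i) that $\bfC_\cG(y)^\circ$ is a torus for each exceptional type and each chosen $t$; although individually routine, this enumeration (with the appropriate Frobenius-twisted adjustments in the cases $\tw2 E_6$, $\tw3 D_4$, and $\tw2 F_4$) is what requires the most care, and underlies both the rigidity of the centralizer and the uniqueness statement of $T_t$.
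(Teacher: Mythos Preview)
Your proposal is correct and follows the same overall strategy as the paper: pin down the centralizer of a $t$-singular element as a uniquely determined maximal torus, read off the normalizer index, and count. The only substantive difference is in how (i) is argued. The paper works in $\cG$ adjoint, first checks from Carter's tables that exactly one maximal-torus order is divisible by $t$, then invokes Springer--Steinberg to get $\bfC_\cG(x_t)$ connected, writes $\bfC_\cG(x_t) = \cD\cZ$ with $\cD$ semisimple and $\cZ$ a torus, and rules out $\cD \neq 1$ concretely: if $\cD \neq 1$ then $\cD^F$ contains a subsystem $\SL_2(q)$ or $\SU_3(q)$ subgroup $D$, forcing $x_t \in \bfC_S(D)$, whose order is seen not to be divisible by $t$. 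Your version---checking that the relevant cyclotomic factor does not occur in any proper subsystem---is the same idea phrased more abstractly, and your explicit Jordan decomposition $x_t = yz$ (reduce to the $t$-part $y$, then absorb the $t'$-part $z$ into $T_t$) makes clean a passage the paper leaves implicit. For (ii) the paper simply cites Carter's tables rather than Springer's theory of regular elements; parts (iii) and (iv) match the paper's proof verbatim.

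One small point to tighten: in (i) you argue about $\bfC_\cG(y)^\circ$, but to conclude $\bfC_S(y) = T_t$ you need the full centralizer $\bfC_\cG(y)$ to be connected. This holds because $t$, being a primitive prime divisor of the indicated shape, is coprime to the torsion primes of $\cG$; the paper handles this by a direct citation of Springer--Steinberg.
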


\pf We know that $x_t$ lies in some maximal torus $T_t$ of $S$. The orders of maximal tori are given by \cite{car}. Inspection shows that for each $t$ there is a unique possible order $|T_t|$ divisible by $t$, as follows, where in most cases we give also the label of $T_t$ in \cite{car} (and  $d = (3,q-\e)$ and $e = (2,q-1)$):

\[
\begin{array}{lll}
\hline
S & |T_r|, \hbox{ label} & |T_s|, \hbox{ label} \\
\hline
G_2(q) & q^2+q+1 & \\
\tw3 D_4(q) & q^4-q^2+1 & \\
\tw2 F_4(q^2) & \Phi'_{24} & \\
F_4(q) & q^4-q^2+1, \;F_4 & q^4+1,\;B_4 \\
E_6^\e(q) & (q^6+\e q^3+1)/d,\;E_6(a_1)& (q^4+1)(q^2-1)/d,\; D_5 \\
E_7(q) & (q^6-q^3+1)(q+1)/e,\; E_7 & (q^7-1)/e,\;A_6 \\
E_8(q) &  q^8-q^4+1,\;E_8(a_1) & q^8-q^6+q^4-q^2+1,\;E_8(a_2) \\
\hline
\end{array}
\]
Write $S = (\cG^F)'$, where $\cG$ is the corresponding adjoint algebraic group and $F$ a Frobenius endomorphism of $\cG$. By \cite[II,4.4]{SS}, $\bfC_{\cG}(x_t)$ is connected. Then 
$\bfC_{\cG}(x_t) = \cD\cZ$ where $\cD$ is semisimple and $\cZ$ is a torus. If $\cD\ne 1$ then $\cD^F$ contains a subsystem $\SL_2(q)$ or $\SU_3(q)$ subgroup $D$, so $x_t \in \bfC_S(D)$. However $\bfC_S(D)$ does not have order divisible by $t$. Hence $\cD=1$ and $\bfC_{\cG}(x_t)$ is a maximal torus, whence $\bfC_S(x_t) = T_t$, proving (i).

Part (ii) follows from the tables in \cite[pp.\ 312--315]{car}.

By (i), every element of $\cX_t$ lies in a unique conjugate of $T_t$, and the number of these conjugates is 
$|S:\bfN_S(T_t)|$; also, $1 \notin \cX_t$. This gives (iii), and (iv) follows immediately. 
\hal

\begin{prop}\label{exc}
Theorem \ref{main1} holds for the simple exceptional group $S = G/\bfZ(G)$, where $G$ is one of the following groups:

\begin{enumerate}[\rm(i)]
\item $G_2(q)$, $q \geq 5$;
\item $\tw3 D_4(q)$, $q \neq 2,4$; 
\item $\tw2 F_4(q^2)$, $q^2 \geq 8$;
\item $F_4(q)$, $q \geq 5$;
\item $E_6(q)_\SC$ or $\tw2 E_6(q)_\SC$, $q \geq 3$; 
\item $E_7(q)_\SC$ or $E_8(q)$.
\end{enumerate}

\end{prop}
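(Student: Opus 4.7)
The plan is to apply Corollary \ref{prime2}(ii) group by group. By Corollary \ref{prime2}(i), we need only consider the case $N=p^at^b$ with $t\in\cR(G)$; in particular $t\in\{r,s\}$ is one of the primes recorded in Lemma \ref{exc-norm}. So it suffices to show that
$$
\frac{|\cX_p|+|\cX_t|}{|G|}<\frac{1}{2},
$$
where $\cX_p$ (resp.\ $\cX_t$) denotes the set of elements of $G$ of order divisible by $p$ (resp.\ $t$). The contribution of $t$-singular elements is already controlled: by Lemma \ref{exc-norm}(iii)--(iv),
$$
\frac{|\cX_t|}{|G|}<\frac{1}{|\bfN_S(T_t):T_t|},
$$
which for the groups of types $F_4,E_6,\tw2 E_6,E_7,E_8$ in (iv)--(vi) is bounded by $1/8$, and for $G_2(q),\tw3 D_4(q),\tw2 F_4(q^2)$ in (i)--(iii) by $1/6$, $1/4$, $1/12$ respectively. (Note that $\bfZ(G)$ is a $p'$-group of order coprime to $t$, so these bounds transfer from $S$ to $G$.)

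The remaining task is to bound $|\cX_p|/|G|$. For each exceptional $G$ of Lie type there is a standard estimate of the form $|\cX_p|/|G|<\alpha(q)$ obtained by summing over the conjugacy classes of unipotent elements (or, equivalently, by summing $1/|\bfC_G(u)|$ over unipotent class representatives, using Lübeck's tables). For sufficiently large $q$ this quantity is $O(1/q)$, and in every case one verifies
$$
\frac{|\cX_p|}{|G|}+\frac{1}{|\bfN_S(T_t):T_t|}<\frac{1}{2}
$$
for the ranges of $q$ stated in (i)--(vi). For instance, for $G_2(q)$ we need $|\cX_p|/|G|<1/3$, which holds once $q\ge 5$; for $\tw3 D_4(q)$ we need $|\cX_p|/|G|<1/4$, which holds for $q\ge 3$; and so on. The groups left uncovered by these generic bounds, namely the small-$q$ cases $G_2(5)$, $\tw3 D_4(3)$, possibly $F_4(5)$, $E_6^\e(3)$, and likewise for $\tw2F_4$, will be settled either by sharpening the unipotent count using explicit centralizer orders, or, if that still fails, by a direct verification from the character table in the style of Lemmas \ref{alt-spor} and \ref{exc-base}, applying Lemma \ref{basic} to the pair of $t'$- and $p'$-semisimple classes furnished by Theorem \ref{prime1}.

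The main obstacle is controlling $|\cX_p|/|G|$ tightly enough in the $G_2$ and $\tw3 D_4$ cases, where the threshold from Lemma \ref{exc-norm} is only $1/6$ or $1/4$. For the remaining groups, the $1/8$ bound from Lemma \ref{exc-norm}(iv) gives ample room, so the proof reduces almost entirely to bookkeeping; the delicacy is confined to finitely many small groups, which is why one expects a direct character-theoretic check to handle the leftover cases.
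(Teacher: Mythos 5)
Your overall strategy --- reduce via Corollary \ref{prime2}(i) to $N=p^at^b$ with $t\in\cR(G)$, then verify $|\cX_p\cup\cX_t|<|G|/2$ using Lemma \ref{exc-norm} for the $t$-part and the Guralnick--L\"ubeck bound together with L\"ubeck's class data \cite{GL,Lu2} for the $p$-part --- is exactly how the paper treats cases (i)--(v) and the $q\ge 3$ part of (vi) (the small cases $G_2(5)$, $F_4(5)$, $E_6^\e(3)$ are settled there by \cite{Lu2} rather than by character tables, and $\tw3 D_4(q)$ with $q$ odd is dispatched by reality via Corollary \ref{main-real}). But your proposal has a genuine gap: case (vi) carries no lower bound on $q$, so it includes $E_7(2)_\SC$ and $E_8(2)$. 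There the counting argument collapses --- the cited bounds on $|\cX_p|/|G|$ are vacuous at $q=2$ and the proportion of $2$-singular elements is far too large to leave room under $1/2-1/8$ --- while a character-table verification in the style of Lemmas \ref{alt-spor} and \ref{exc-base} is not feasible for groups of this size; your list of leftover cases omits these groups entirely. The paper handles them by a different device: for $E_8(q)$ it takes regular semisimple $s_1,s_2$ in maximal tori of orders $q^8-1$ and $\Phi_{15}$ (both coprime to $t$) such that, by \cite[Theorem 10.1]{LuM}, only $1_G$ and $\St$ are nonzero on both classes, whence $s_1^G\cdot s_2^G\supseteq G\setminus\{1\}$; for $E_7(2)$ it uses elements of orders $73$ and $43$ together with L\"ubeck's degree data \cite{Lu} to reach the same conclusion.

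A second, smaller gap concerns $\tw2 F_4(q^2)$: here $\cR$ contains two primes, $\p(2,24f)\mid\Phi'_{24}$ and $\p(2,12f)\mid q^4-q^2+1$, but Lemma \ref{exc-norm} only supplies the torus for $r\mid\Phi'_{24}$, so your claimed bound $|\cX_t|/|G|<1/12$ is unsupported when $t=\p(2,12f)$. The paper sidesteps this by quoting \cite[Theorem 7.3]{GM}: $S\setminus\{1\}\subseteq x^S\cdot x^S$ for a regular semisimple $x$ of order $\Phi'_{24}$, which is coprime to $N$ whenever $t\nmid\Phi'_{24}$, so the counting is needed only for $t\mid\Phi'_{24}$, where Lemma \ref{exc-norm} does apply. (Your assertion that $|\cX_p|/|G|<1/4$ for $\tw3 D_4(3)$ is likewise not available from the quoted sources, but that case is genuinely rescuable --- e.g.\ by Corollary \ref{main-real} --- so it is not a structural problem, unlike the $E_7(2)$, $E_8(2)$ cases.)
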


\pf 
By Corollary  \ref{prime2}(i), it suffices to prove Theorem \ref{main1} in the case $N = p^at^b$
with $p|q$ and $t \in \cR(G) = \{r,s\}$. 

%(a) 
First we consider the case $S = G_2(q)$ with $q \geq 5$; in particular, 
$t = \p(p,3f)$ (with $q = p^f$ as usual). Note that  
$|\cX_p|/|S| \leq 2/(q-1)-1/(q-1)^2 < 0.31$ for $q \geq 7$ by \cite[Theorem 3.1]{GL}, and 
$|\cX_p|/|S| \leq 1-0.68 = 0.32$ for $q = 5$ by \cite{Lu2}. Lemma \ref{exc-norm} implies that
$$\frac{|\cX_t|}{|S|}+\frac{|\cX_p|}{|S|} < \frac{1}{6} + 0.32 
    < \frac{1}{2},$$
so we are done by Corollary \ref{prime2}(ii).

We can argue similarly in other cases. In the case $S = \tw3 D_4(q)$ with $q \geq 5$, by 
Lemma \ref{exc-norm} and \cite[Theorem 3.1]{GL}, 
$$\frac{|\cX_t|}{|S|}+\frac{|\cX_p|}{|S|} < \frac{1}{4} + \frac{1}{q-1}  \leq \frac{1}{2},$$
so we are done. Also, note that the odd $q$ case is covered by Corollary \ref{main-real}.

Suppose $S = \tw2 F_4(q^2)$ with $q^2 \geq 8$. By \cite[Theorem 7.3]{GM}, 
$S \setminus \{1\} \subseteq x^S \cdot x^S$ for a regular semisimple 
$x \in S$ of order $\Phi'_{24}$. It remains therefore to consider the case $t|\Phi'_{24}$. By Lemma 
\ref{exc-norm} and \cite[Theorem 3.1]{GL}, 
$$\frac{|\cX_t|}{|S|}+\frac{|\cX_p|}{|S|} < \frac{1}{12} + \frac{2}{q^2-1} - \frac{1}{(q^2-1)^2}  
    < \frac{1}{2}.$$

Next we consider the case $S = F_4(q)$ with $q \geq 5$. Note that  
$|\cX_p|/|S| \leq 2/(q-1)-1/(q-1)^2 < 0.3056$ for $q \geq 7$ by \cite[Theorem 3.1]{GL}, and $|\cX_p|/|S| \leq 1-0.6619 = 0.3381$ for $q = 5$ by \cite{Lu2}. It follows by Lemma \ref{exc-norm} that 
$$\frac{|\cX_t|}{|S|}+\frac{|\cX_p|}{|S|} \leq \frac{1}{8} + 0.3381 
    < \frac{1}{2}.$$
%(Note that we are done in the case of $F_4(4)$ if $|\bfN_S(T)|/|T| \geq 1/11$.)    
    
For cases (v) and (vi), we note that  
$|\cX_p|/|G| \leq 1/(q-1) \leq 1/3$ for $q \geq 4$ by \cite[Theorem 3.1]{GL},
and $|\cX_p|/|G| \leq 1-0.6627 = 0.3373$ for $q = 3$ by \cite{Lu2}. It follows by Lemma \ref{exc-norm}
that 
$$\frac{|\cX_t|}{|G|}+\frac{|\cX_p|}{|G|} \leq \frac{1}{8} + 0.3373
    < \frac{1}{2},$$
so we are done.    

%\smallskip
If $G = E_8(q)$, then $G$ has two maximal tori $T_{1,2}$ of
order $q^8-1$ and $\Phi_{15}$, and $t$ is coprime to both $|T_{1,2}|$. According to 
\cite[Theorem 10.1]{LuM}, $T_i$ contains a regular semisimple element $s_i$ for $i =1,2$, such that 
$\Irr(G)$ contains exactly two irreducible characters $\chi$ with $\chi(s_1)\chi(s_2) \neq 0$, namely $1_G$ and $\St$. Since $|\St(s_i)| = 1$, it follows that 
$$\sum_{\chi \in \Irr(G)}\frac{|\chi(s_1)\chi(s_2)\chi(g)|}{\chi(1)} = 1 + \frac{|\St(g)|}{\St(1)} > 0,$$
so $g \in s_1^G \cdot s_2^G$ for all $1 \neq g \in G$, and we are done.

Finally, let $G = E_7(2)$, so that $t \in \{19, 127\}$. Consider $s_1 \in G$ of order $73$ and 
$s_2 \in G$ of order $43$. Using \cite{Lu}, we check that the only $\chi \in \Irr(G)$ that has
positive $73$-defect and positive $43$-defect are $1_G$ and $\St$. Hence 
$s_1^G \cdot s_2^G = G \setminus \{1\}$ and we are done as above.    
\hal

\begin{lem}\label{f4a}
{\rm (i)} Let $G = F_4(4)$ and let $x$ be a non-semisimple element of $G$ such that 
$|\bfC_G(x)|> 3\cdot 4^{19}$. Then there is a quasisimple classical subgroup $S$
in characteristic $2$ of $G$ such that 
$|\bfZ(S)|$ is a $3$-power and $x \in S$.

{\rm (ii)} Let $G = F_4(3)$ and let $x$ be a non-semisimple element of $G$ such that 
$|\bfC_G(x)|> 3^{19}$. Then there is a quasisimple classical subgroup $S$ 
in characteristic $3$ of $G$ such that 
$|\bfZ(S)|$ is a $2$-power and $x \in S$.
\end{lem}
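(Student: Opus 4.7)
The plan is to apply Jordan decomposition: write $x = su = us$ with $s$ its semisimple part and $u$ its unipotent part. Since $x$ is non-semisimple, $u \neq 1$, and $\bfC_G(x) = \bfC_{\bfC_G(s)}(u) \leq \bfC_G(s)$, so the hypothesis forces $|\bfC_G(s)|$ to be substantially larger than the stated bound. The overall strategy is to split into the subcases $s = 1$ (so that $x = u$ is unipotent) and $s \neq 1$, and in each to exhibit the required quasisimple classical subgroup $S$ of $G$ containing $x$ as the derived group of a suitable subsystem subgroup of $F_4(q)$.

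For the unipotent subcase I would consult the tables of unipotent conjugacy classes in $F_4(q)$ and their centralizer orders (see for instance \cite[Chapter 7]{LSei}). Only the classes of small dimension satisfy $|\bfC_G(u)|$ exceeding the bound, and each of these has a representative lying in a natural classical subsystem subgroup: in characteristic $2$ the $C_4$-subsystem yields $S = \Sp_8(q)$ with trivial center (the $3$-power $3^0$); in characteristic $3$ the $B_4$-subsystem yields $S = \Spin_9(q)$ with center $C_2$, a $2$-power. For unipotent classes with even larger centralizers one can alternatively use the $A_2 \tilde A_2$ or $B_3$/$C_3$ subsystem subgroups, each of which supplies a quasisimple classical $S$ of the required form.

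For the subcase $s \neq 1$, $\bfC_\cG(s)^\circ$ is a proper connected reductive subgroup of $\cG$, and the possibilities for its root datum are given by Deriziotis's classification of centralizers in $F_4$: the maximal cases are $B_4$, $C_3 A_1$, $D_4$, $A_2 \tilde A_2$, $B_3 T_1$, $C_3 T_1$, $B_2 B_2$, and various smaller subsystem or Levi subgroups. For each possibility one reads off $|\bfC_G(s)|$ and the unipotent classes in $\bfC_G(s)$ together with their centralizer orders; the hypothesis on $|\bfC_G(x)|$ then restricts the allowed pairs $(s,u)$ to a very short list. In each surviving case, $s$ visibly lies in a subsystem subgroup of $F_4(q)$ whose derived group is a quasisimple classical group $S$ of the prescribed type, and then $x = su \in S$ as required.

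The main obstacle is the bookkeeping, especially for the subsystems $A_2 \tilde A_2$ and $C_3 A_1$ that mix long and short roots: the isogeny type of the resulting classical subgroup is sensitive to the exceptional isogeny in characteristic $2$ and to the central isogenies in characteristic $3$, so in each such case one must verify that the natural quasisimple classical $S$ containing $x$ has centre of the prescribed prime-power order, and not some spurious extra prime factor. Because $q \in \{3,4\}$ is small, a direct computer verification in Magma or GAP---enumerating the finitely many conjugacy classes of elements in $F_4(q)$ whose centralizers exceed the bound, and locating each in an explicit classical subgroup---is entirely feasible and provides a useful independent check of the case analysis.
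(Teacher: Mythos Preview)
Your plan is essentially the paper's own approach: Jordan decomposition $x=su$, then a case analysis on unipotent classes (via the centralizer tables in \cite{LSei}) and on the possible subsystem types of $\bfC_G(s)$ when $s\ne 1$. So the strategy is correct.

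Two points where the paper's execution differs from your sketch are worth noting. First, rather than pushing every surviving unipotent class into a single large subsystem group $\Sp_8(4)$ or $\Spin_9(3)$, the paper treats the classes individually and finds for each a small tailored quasisimple classical subgroup (e.g.\ $\SL_2(4)$, $\SL_3^\pm(4)$, $\Sp_4(4)$, $\Sp_6(4)$ in part~(i); $\SL_3^\pm(3)$ or $\Spin_9(3)$ in part~(ii)). This sidesteps the question you flag about whether every relevant class actually meets the big $B_4$/$C_4$ subsystem subgroup --- a question that is not entirely trivial for classes such as $(\tilde A_1)_2$, where the paper writes down an explicit root-element representative to locate it. Second, in part~(i) the non-unipotent case is in fact vacuous: the only $\bfC_G(s)$ of order exceeding $3\cdot 4^{19}$ is $C_3\times \Sp_6(4)$, and then no nontrivial unipotent $u\in \Sp_6(4)$ has $|\bfC_G(su)|>3\cdot 4^{19}$, so there is nothing to embed. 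In part~(ii) the surviving non-unipotent cases are $\bfC_G(s)$ of type $B_4$ or $A_1C_3$, and these are handled directly. Your list of candidate centralizer types (including $D_4$, $A_2\tilde A_2$, $B_2B_2$, etc.) is thus longer than what actually survives the order bound for $q=3,4$.
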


\pf (i) Suppose first that $x$ is unipotent. Following \cite[Table 22.2.4]{LSei}, 
the bound on $|\bfC_G(x)|$ forces $x$ to be in one of the following unipotent classes:
\[
A_1,\,\tilde A_1,\, (\tilde A_1)_2,\, A_1\tilde A_1,\, A_2\,(2 \hbox{ classes}),\,\tilde A_2\,(2 \hbox{ classes}),\,B_2\,(2 \hbox{ classes}).
\]
In the first two cases $x$ lies in a subgroup $\SL_2(4)$. The third class $ (\tilde A_1)_2$ has representative 
$x = u_{1232}(1)u_{2342}(1)$ (see \cite[Table 16.2 and (18.1)]{LSei}). This is centralized by the long root groups $U_{\pm 0100}$, and these generate $A \cong \SL_2(4)$. Then $x \in \bfC_G(A) = \Sp_6(4)$. 
The class $A_1\tilde A_1$ has a representative in a subgroup $A_1(4)\tilde A_1(4)$, which is contained in a subgroup $\Sp_8(4)$.
Representatives of the four classes with labels $A_2,\tilde A_2$ lie in subgroups $\SL_3(4)$ or 
$\SU_3(4)$. Finally, representatives of the classes with label $B_2$ lie in a subgroup $\Sp_4(4)$.

Now suppose $x$ is non-unipotent, with Jordan decomposition $x = su$, where $s\ne 1$ is semisimple and $u$ unipotent. 
As $x$ is assumed non-semisimple, $u\ne 1$. Then $\bfC_G(s)$ is a subsystem subgroup of order greater than 
$3\cdot 4^{19}$, and the only possibility is that $\bfC_G(s) = C_3 \times \Sp_6(4)$.
But then $u \in \Sp_6(4)$ has centralizer of order greater than $4^{19}$, which is impossible for a nontrivial unipotent element of $\Sp_6(4)$.

\vspace{2mm}
(ii) This is similar to (i). Suppose $x$ is unipotent. Then $x$ lies in one of the classes
\[
A_1,\,\tilde A_1\,(2 \hbox{ classes}),\, A_1\tilde A_1,\, A_2\,(2 \hbox{ classes}),\,\tilde A_2.
\]
For the $A_1\tilde A_1$ class, as above $x$ lies in a subgroup $\Spin_9(3)$. Each of the other class representatives lies in a subgroup $\SL_3(3)$ or $\SU_3(3)$. 

Now suppose $x$ is non-unipotent, so $x = su$ with semisimple and unipotent parts $s,u \ne 1$. Then $\bfC_G(s)$ is a subsystem subgroup of type $B_4$, $A_1C_3$, $T_1C_3$ or $T_1B_3$, where $T_1$ denotes a 1-dimensional torus. The last two cases are not possible, as in (i). In the first case, $x \in \bfC_G(s) = \Spin_9(3)$. So assume finally that $\bfC_G(s)$ is of type $A_1C_3$, and let $u = u_1u_2$ with $u_1 \in \SL_2(3)$, $u_2\in \Sp_6(3)$. If $u_2 \ne 1$ then 
$$|\bfC_G(x)| \le |\SL_2(3)|\cdot |\bfC_{\Sp_6(3)}(u_2)| < 3^{19}.$$ 
Hence $u_2=1$ and $x = su \in \SL_2(3) < \SL_3(3)$. This completes the proof. \hal

\begin{lem}\label{f4b}
Theorem \ref{main1} holds for the simple exceptional groups $G = F_4(q)$ with $q = 3$, $4$.
\end{lem}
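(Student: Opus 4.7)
The plan is as follows. By Corollary \ref{prime2}(i), it suffices to treat $N = p^a t^b$ with $t \in \cR(G) = \{r, s\}$; explicitly $t \in \{73, 41\}$ if $q = 3$ and $t \in \{241, 257\}$ if $q = 4$. Fix $g \in G$. Lemma \ref{exc-norm}(i) shows that every $t$-singular element of $G$ is regular semisimple with centralizer a conjugate of the maximal torus $T_t$, so $\cX_t \cap \cX_p = \emptyset$ and $|\bfC_G(g)| \leq |T_t|$ whenever $g \in \cX_t$. If $g$ is semisimple with $t \nmid |g|$, then $|g|$ is coprime to $N$, hence $g$ is already an $N$-th power and there is nothing to prove.

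Suppose next that $g$ is non-semisimple with $|\bfC_G(g)|$ exceeding the bound of Lemma \ref{f4a}, namely $3 \cdot 4^{19}$ for $q = 4$ and $3^{19}$ for $q = 3$. That lemma places $g$ inside a quasisimple classical subgroup $S \leq G$ in characteristic $p$, whose center has order a power of $3$ (if $q = 4$) or of $2$ (if $q = 3$); either way, $|\bfZ(S)|$ is coprime to $N = p^a t^b$. By Proposition \ref{clas-main}, the word map $(x,y) \mapsto x^N y^N$ is surjective on the simple classical group $S/\bfZ(S)$, and bijectivity of the $N$-th power map on $\bfZ(S)$ lifts the decomposition to $S$ itself, yielding $g = x^N y^N$ with $x, y \in S \leq G$.

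The remaining elements are the $t$-singular semisimple ones and the non-semisimple elements with $|\bfC_G(g)| \leq M$, where $M$ is the bound from Lemma \ref{f4a}. For all such $g$, the centralizer is small, and I invoke Lemma \ref{basic}: choose regular semisimple $N'$-elements $g_1, g_2 \in G$ in maximal tori whose orders are divisible by primes in $\cR(G) \setminus \{t\}$ (together with auxiliary small primes to secure additional defect-zero vanishing when needed). The tail sum over $\chi \in \Irr(G)$ with $\chi(1) \geq D$ is controlled via Lemma \ref{basic}(ii) together with the centralizer bounds, while the short list of characters of degree below $D$ that are nonzero on both $g_1$ and $g_2$ is extracted from the tables of \cite{Lu}. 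Defect-zero vanishing for the primes in $\cR(G)$ eliminates most of the relevant characters, reducing the verification of
$$\left|\sum_{1_G \neq \chi \in \Irr(G)} \frac{\chi(g_1)\chi(g_2)\overline{\chi(g)}}{\chi(1)}\right| < 1$$
to a finite case check, subdivided by $t = r$ versus $t = s$. The main obstacle is precisely this character-theoretic estimate: one must sharpen $|\chi(g)|$ using the explicit structure of the low-degree characters and the centralizer type of $g$, in the spirit of Propositions \ref{sp3-large}, \ref{so2-even}, and \ref{so3-even}.
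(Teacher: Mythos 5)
Your skeleton agrees with the paper's for the harder case: reduce via Corollary \ref{prime2}(i) to $N=p^at^b$ with $t\in\cR(G)$ (your explicit primes $\{73,41\}$, $\{241,257\}$ are correct), use Lemma \ref{f4a} together with Theorem \ref{main1} for classical groups (plus the coprime-centre lifting) for non-semisimple elements with large centralizer, and finish the small-centralizer elements with a character sum. One genuine structural difference is welcome: by noting that semisimple elements of order prime to $t$ are already $N$th powers, you avoid the paper's appeal to Gow \cite{Gow}, and the $t$-singular semisimple elements (centralizer $T_t$ of order about $q^4$, by Lemma \ref{exc-norm}) can indeed be folded into the small-centralizer case.

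The gap is that the decisive step is left as a plan, and as stated it would not close. For $t=s\mid\Phi_8$ the paper takes $s_1=s_2$ regular semisimple of prime order $\Phi_{12}(q)$, checks with \cite{Lu} that besides $1_G$ and $\St$ only two characters $\chi_{1,2}$ of degrees $q\Phi_1^2\Phi_3^2\Phi_8$ and $q\Phi_2^2\Phi_6^2\Phi_8$ have $\chi(s_1)\chi(s_2)\neq 0$ below degree $q^{18}$, and then proves the sharp bound $\chi_i(s_1)=\pm1$ by a rationality argument: $\chi_i$ is rational-valued, $\chi_i(1)\equiv\pm1\bmod\Phi_{12}$, hence $\chi_i(s_1)\in\Z$ with $\chi_i(s_1)\equiv\pm1\bmod\Phi_{12}$, while $|\chi_i(s_1)|\le|\bfC_G(s_1)|^{1/2}=\Phi_{12}^{1/2}$. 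Without this, the generic bounds you invoke give only $|\chi_i(s_1)\chi_i(s_2)|\le\Phi_{12}\approx q^4$, $|\chi_i(g)|\le B^{1/2}\approx q^{9.5}$ against $\chi_i(1)\approx q^9$, so the corresponding terms are of size roughly $q^{4.5}$ and the inequality in Lemma \ref{basic} cannot be verified; "sharpening $|\chi(g)|$" does not help, since it is the values at $s_1,s_2$ that must be pinned down. For $t=r$ the paper needs no estimate at all: it takes the two maximal tori of orders $(q^2-1)(q^2+q+1)$ and $q^4+1$, both of order prime to $N$, and quotes \cite[Theorem 10.1]{LuM} to conclude that only $1_G$ and $\St$ are nonzero on both classes, whence $s_1^G\cdot s_2^G=G\setminus\{1\}$; your vague "auxiliary small primes to secure additional defect-zero vanishing" does not supply either this pair of tori or the required classification of surviving characters. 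So the proposal identifies the right architecture but omits the two ingredients ($\chi_i(s_1)=\pm1$ and the L\"ubeck--Malle two-torus result) that actually make the character-theoretic step work.
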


\pf 
By Corollary  \ref{prime2}(i), it suffices to prove Theorem \ref{main1} in the case $N = q^at^b$
with $t \in \cR(G) = \{r,s\}$.

Suppose that $t \nmid \Phi_8$. Then $G$ has two maximal tori $T_{1,2}$ of orders 
$(q^2-1)(q^2+q+1)$ and $q^4+1$, which are coprime to $N$. It is shown in \cite[Theorem 10.1]{LuM}
that $T_i$ contains a regular semisimple element $s_i$ for $i =1,2$, such that $\Irr(G)$ contains
exactly two irreducible characters $\chi$ with $\chi(s_1)\chi(s_2) \neq 0$, namely $1_G$ and 
$\St$. It follows that $G \setminus \{1\} = s_1^G \cdot s_2^G$, so we are done as in the proof
of Proposition \ref{exc}.

Now consider the case where $t|\Phi_8$. Choose regular semisimple 
$s_1 = s_2 \in G$ of (prime) order $s=\Phi_{12}$. Using \cite{Lu}, we check that if 
$\chi \in \Irr(G)$, $1 < \chi(1) < q^{18}$, and $\chi(s_1)\chi(s_2) \neq 0$ (in particular,
$\chi$ has positive $s$-defect), then $\chi = \chi_{1,2}$ with
$$\chi_1(1) = q\Phi_1^2\Phi_3^2\Phi_8,~~\chi_2(1) = q\Phi_2^2\Phi_6^2\Phi_8.$$ 
It suffices to show that every nontrivial $g \in G$ belongs to $s_1^G \cdot s_2^G$. This is indeed
the case if $g$ is semisimple by \cite{Gow}, so we assume $g$ is non-semisimple. Moreover, if $|\bfC_G(g)| > B$, where $B := 3^{19}$ for $q=3$ and 
$B := 3 \cdot 4^{19}$ for $q=4$, then by Lemma \ref{f4a} we can embed 
$G$ in a quasisimple classical subgroup $S$ in characteristic $q$ with
$|\bfZ(S)|$ coprime to $N$, in which case we are done by applying Theorem
\ref{main1} to $S/\bfZ(S)$. So we may assume that $|\bfC_G(g)| \leq B$. 
Next observe that $\chi_i$ is rational-valued (as it is the unique character in $\Irr(G)$ of its degree),
and $\chi_i(1) \equiv \pm 1 \bmod s$. It follows that $\chi_i(s_1) \in \Z$ and 
$\chi_i(s_1) \equiv \pm 1 \bmod s$. Since $|\chi_i(s_1)| \leq |\bfC_G(s_1)|^{1/2} = s^{1/2}$, 
we conclude that $\chi_i(s_1) = \pm 1$. It follows that
$$\sum^2_{i=1}\frac{|\chi_i(s_1)\chi_i(s_2)\chi_i(g)|}{\chi_i(1)} \leq 
  \frac{B^{1/2}}{\chi_1(1)} + \frac{B^{1/2}}{\chi_2(1)} < 0.87.$$
On the other hand, since $|\bfC_G(s_i)| = \Phi_{12}$, 
$$\sum_{\chi \in \Irr(G),~\chi(1) \geq q^{18}}
  \frac{|\chi(s_1)\chi(s_2)\chi(g)|}{\chi(1)} \leq 
  \frac{B^{1/2}\Phi_{12}}{q^{18}} < \frac{1}{q^4} \leq \frac{1}{81}.$$
It follows that $g \in s_1^G \cdot s_2^G$, as stated.
\hal

In summary, we have proved the following.

\begin{cor}\label{exc-main}
Theorem \ref{main1} holds for all finite non-abelian simple exceptional groups of Lie type. 
\end{cor}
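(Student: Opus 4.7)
The corollary is the culmination of the preceding work in this section, so the plan is essentially bookkeeping: check that every finite simple exceptional group of Lie type falls under one of the results already proved in Lemmas \ref{sz}, \ref{exc-base}, \ref{f4b}, or Proposition \ref{exc}. First I would enumerate the families of simple exceptional groups: the Suzuki groups $\tw2 B_2(q^2)$ with $q^2\ge 8$, the large Ree groups $\tw2 G_2(q^2)$ with $q^2\ge 27$, the Tits group $\tw2 F_4(2)'$, the Ree groups $\tw2 F_4(q^2)$ with $q^2\ge 8$, the groups $G_2(q)$ with $q\ge 3$, $\tw3 D_4(q)$ for all $q$, $F_4(q)$ for all $q$, $E_6^\e(q)_{\SC}$ for all $q$, $E_7(q)_{\SC}$ and $E_8(q)$. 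The degenerate small cases $\tw2 G_2(3)'\cong \PSL_2(8)$ and $G_2(2)'\cong \PSU_3(3)$ are not exceptional groups in this list but classical ones, already covered by Corollaries \ref{slu-large} and \ref{slu-small}.

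Next I would dispatch the cases one by one. The Suzuki groups and large Ree groups are handled by Lemma \ref{sz}. The Tits group $\tw2 F_4(2)'$, together with $G_2(3)$, $G_2(4)$, $\tw3 D_4(2)$, $\tw3 D_4(4)$, $F_4(2)$, $E_6(2)$ and $\tw2 E_6(2)$, are handled by the direct computational verification in Lemma \ref{exc-base}. The remaining generic cases in each family, namely $G_2(q)$ with $q\ge 5$, $\tw3 D_4(q)$ with $q\ne 2,4$, $\tw2 F_4(q^2)$ with $q^2\ge 8$, $F_4(q)$ with $q\ge 5$, $E_6^\e(q)_{\SC}$ with $q\ge 3$, $E_7(q)_{\SC}$, and $E_8(q)$, are handled by Proposition \ref{exc}. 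Finally, the two remaining cases $F_4(3)$ and $F_4(4)$ are handled by Lemma \ref{f4b}.

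Checking coverage is the only real step. The partition is: the three primary tools (Propositions and small-case lemmas) cover disjoint ranges for each family, with Lemma \ref{f4b} filling the gap between $F_4(2)$ (Lemma \ref{exc-base}) and $F_4(q)$ for $q\ge 5$ (Proposition \ref{exc}). Since these exhaust all isomorphism types of finite simple exceptional groups of Lie type, the corollary follows.

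The whole argument is essentially an assembly, with no serious obstacle: the technical difficulty has already been carried by the earlier results. The only point to be careful about is to ensure, in tandem with Proposition \ref{clas-main} and the alternating/sporadic/classical-linear results of earlier sections, that together with this corollary we have covered every finite non-abelian simple group and therefore completed the proof of Theorem \ref{main1}.
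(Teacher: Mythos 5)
Your proposal is correct and is essentially the paper's own argument: the corollary is stated there as a summary ("In summary, we have proved the following"), obtained by exactly the same assembly of Lemma \ref{sz}, Lemma \ref{exc-base}, Proposition \ref{exc}, and Lemma \ref{f4b}, and your case-by-case coverage check (including the degenerate groups $G_2(2)'$ and $\tw2 G_2(3)'$ being classical) matches the intended partition. Nothing is missing.
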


\noindent
{\bf Proof of Theorem \ref{main1}.}
The case of simple groups of Lie type is completed by Proposition \ref{clas-main} for classical groups
and Corollary \ref{exc-main} for exceptional groups. Alternating and sporadic groups are handled 
by Lemma \ref{alt-spor} and Proposition \ref{alt1}.
\hal

\section{Odd power word maps}
\subsection{Preliminaries}
\begin{lem}\label{schur}
Let $S$ be a finite non-abelian simple group. To prove Theorem \ref{main2}
for all quasisimple groups $G$ with $G/\bfZ(G) \cong S$, it suffices to prove
it for the $2'$-universal cover $H$ of $S$, that is, $H/\bfZ(H) \cong S$ and
$|\bfZ(H)|$ is the $2'$-part of the order of the Schur multiplier of $S$.   
\end{lem}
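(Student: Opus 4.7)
The plan is to reduce from an arbitrary quasisimple $G$ with $G/\bfZ(G) \cong S$ to $H$ via two moves: first descend from $G$ to the quotient $G_0 := G/\bfZ(G)_2$ by the Sylow $2$-subgroup of the center, then realize $G_0$ as a central quotient of $H$, so that the assumed decomposition in $H$ descends automatically to $G_0$. The final move is to lift the decomposition from $G_0$ back up to $G$, using a $2$-element correction to absorb the central error.

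First I would set up the covering picture. Let $\tilde S$ be the universal central extension of $S$, with $\bfZ(\tilde S) = M(S)$ the Schur multiplier, and decompose $M(S) = Z_2 \times Z_{2'}$ into its Sylow $2$-part and its $2$-complement, so $H = \tilde S/Z_2$. Any quasisimple $G$ with $G/\bfZ(G)\cong S$ is of the form $\tilde S/K$ for some $K \le M(S)$, and $K = K_2 \times K_{2'}$ with $K_\ast := K \cap Z_\ast$. A short verification shows $\bfZ(G)_2$, the Sylow $2$-subgroup of $\bfZ(G)$, equals the image of $Z_2$ in $G$, so
\[
G_0 := G/\bfZ(G)_2 \;=\; \tilde S/(KZ_2) \;=\; \tilde S/(K_{2'}Z_2) \;\cong\; H/J,
\]
where $J \cong K_{2'}$ is a central subgroup of $H$ of odd order. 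Thus $G_0$ is a central quotient of $H$.

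Assuming Theorem \ref{main2} for $H$, every element of $H$ is a product of three $2$-elements, hence the same holds for the quotient $G_0$ (images of $2$-elements are $2$-elements). Now I would run the lifting step: for $g \in G$, look at its image $\bar g \in G_0$ and write $\bar g = \bar a\,\bar b\,\bar c$ with each $\bar{\cdot}$ a $2$-element of $G_0$. Pick any lifts $a,b,c \in G$. Because the kernel $\bfZ(G)_2$ of $G \twoheadrightarrow G_0$ is itself a $2$-group, the order of each lift divides a power of $2$, so $a,b,c$ are $2$-elements of $G$. Then $abc = g z$ for some $z \in \bfZ(G)_2$, and since $z$ is central and a $2$-element, $cz^{-1}$ is again a $2$-element, giving $g = a\cdot b\cdot(cz^{-1})$ as a product of three $2$-elements. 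Finally, for any odd $N$, a $2$-element $w$ is automatically an $N$-th power of a $2$-element (raising to the $N$-th power is a bijection on $\la w\ra$), so the word map $(x,y,z)\mapsto x^N y^N z^N$ is surjective on $G$.

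The only step that requires any care is the claim that $\bfZ(G)_2$ is exactly the image of $Z_2$ in $G$, so that both the identification of $G_0$ as a quotient of $H$ and the ``lifts of $2$-elements are $2$-elements'' step are valid; but this is a direct consequence of the coprime decomposition $M(S) = Z_2 \times Z_{2'}$ combined with $K = K_2 \times K_{2'}$. There is no real obstacle beyond keeping the two-step reduction $G \leftarrow G_0 \rightarrow H$ straight.
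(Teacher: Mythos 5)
Your proof is correct and rests on the same key observation as the paper's: a decomposition into three $2$-elements lifts through a central $2$-group kernel because the central error term can be absorbed into one of the $2$-element factors, and it passes to central quotients for free. The paper merely traverses the same lattice in the opposite order --- it lifts from $H$ to the universal cover $L$ of $S$ and then descends to an arbitrary quasisimple $G$, whereas you descend from $H$ to $G/\bfZ(G)_2$ and lift to $G$ --- a purely cosmetic difference.
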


\pf
It suffices to prove Theorem \ref{main2} for the universal cover $L$ of 
$S$. By assumption, Theorem \ref{main2} holds for $H = L/Z$, where 
$Z \leq \bfZ(L)$ is a $2$-group. Thus every $g \in L$ can be written in the form
$g = xyzt$, where $x,y,z$ are $2$-elements of $L$ and $t \in Z$. It follows
that $g = xy(zt)$ is a product of three $2$-elements in $L$.
\hal

\begin{lem}\label{alt-odd}
Theorem \ref{main2} holds for all quasisimple covers of alternating 
groups $S = \A_n$ with $n \geq 5$. Moreover, every element of 
$S$ is a product of two $2$-elements.
\end{lem}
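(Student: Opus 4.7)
By Lemma \ref{schur}, it suffices to prove Theorem \ref{main2} for the $2'$-universal cover of $\A_n$, which is $\A_n$ itself when $n \notin \{6,7\}$ (Schur multiplier $\Z/2\Z$, with trivial $2'$-part) and the triple cover $3.\A_n$ when $n \in \{6,7\}$ (Schur multiplier $\Z/6\Z$). The second sentence of the lemma --- every $g \in \A_n$ is a product of two $2$-elements --- is stronger than Theorem \ref{main2} applied to $\A_n$, since the identity is a $2$-element and supplies the third factor. So the plan has two parts: (I) establish the ``moreover'' assertion for all $n \geq 5$, and (II) handle Theorem \ref{main2} directly for $\tilde G \in \{3.\A_6, 3.\A_7\}$.

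For part (I), I separate by reality. If $g \in \A_n$ is real, Lemma \ref{real1} produces the desired decomposition. Otherwise, a standard centralizer computation in $S_n$ shows that the cycle type of $g$ consists of distinct odd lengths $c_1 > \cdots > c_k$ with $\sum_i (c_i-1)/2$ odd (the minimal example being a single $7$-cycle in $\A_7$). For such $g$, since every element of $S_n$ is strongly real, I write $g = \sigma\tau$ in $S_n$ via the classical ``reverse-pairing'' factorization of each cycle into two involutions; both $\sigma,\tau$ necessarily have parity $\sum_i(c_i-1)/2 \equiv 1 \pmod 2$, hence are odd. I then correct by inserting a transposition $\rho \in S_n$ and rewrite $g = (\sigma\rho)(\rho\tau)$, so that both factors lie in $\A_n$. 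The construction of $\rho$ aims to convert the cycle type of each modified factor into one consisting only of $2$-power parts: for example, a transposition $\rho$ connecting two disjoint $2$-cycles of $\sigma$ fuses them into a $4$-cycle, making $\sigma\rho$ a $2$-element of $\A_n$. An illustrative decomposition in $\A_7$ is
\[ (1,2,3,4,5,6,7) = (1,6,2,7)(3,5)\cdot(1,6,2,5)(3,4), \]
where both factors are $2$-elements of order $4$. The number of $2$-cycles available in $\sigma$ is $\sum_i(c_i-1)/2$, which is at least $1$ and in fact odd, so except in extremal configurations there is enough room to choose $\rho$. Small $n$ are handled by direct character-theoretic verification via Lemma \ref{basic} and the {\sf GAP} Character Table Library.

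For part (II), the groups $3.\A_6$ and $3.\A_7$ have orders $1080$ and $7560$, small enough to be handled in the style of Lemma \ref{alt-spor}. Using an iterated version of Frobenius' formula (that is, two applications of Lemma \ref{basic} in sequence), for each conjugacy class of $\tilde G$ I verify, via the character table available in the {\sf GAP} Character Table Library, that there exist conjugacy classes $C_1, C_2, C_3$ of $2$-elements of $\tilde G$ whose product contains the given class; this reduces to checking the nonvanishing of the appropriate character sum, a finite computation.

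The main obstacle is the uniform construction of $\rho$ in part (I) for non-real elements. While the strategy is transparent when $g$ has several odd cycles of length $\geq 3$ (so many disjoint $2$-cycles of $\sigma$ are available for fusion), extremal configurations --- especially those with a single long odd cycle of length $c \equiv 3 \pmod 4$ occupying nearly all of $\{1,\ldots,n\}$, where the room for $\rho$ lies inside the cycle itself rather than using fixed points --- require more delicate adjustments and a case analysis on the $2$-adic structure of $c$. Part (II) is essentially mechanical once the character table is in hand.
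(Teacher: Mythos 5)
Your overall architecture coincides with the paper's: reduce to the $2'$-cover via Lemma \ref{schur}, dispose of real elements by Lemma \ref{real1}, observe that a non-real element of $\A_n$ has pairwise distinct odd cycle lengths, and treat $3.\A_6$, $3.\A_7$ by a character-table computation. The genuine gap is in the one step carrying the combinatorial content: your correction $g=(\sigma\rho)(\rho\tau)$ needs a single transposition $\rho$ that \emph{simultaneously} makes $\sigma\rho$ and $\rho\tau$ into $2$-elements, and you never establish that such a $\rho$ exists. You explicitly defer the ``extremal configurations'' to unspecified ``delicate adjustments and a case analysis on the $2$-adic structure of $c$'', and leave ``small $n$'' to an unspecified machine check; as written, the heart of part (I) is asserted rather than proved. (Incidentally, the case you flag as hardest --- a single cycle with $c\equiv 3\bmod 4$ --- is the easiest one for your method, and no $2$-adic analysis is needed anywhere.)

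The gap is fillable in a few lines, so the route is viable. For an involution $\epsilon$ and a transposition $\rho=(a,b)$, the product $\epsilon\rho$ fails to be a $2$-element only when exactly one of $a,b$ is fixed by $\epsilon$ (this creates a $3$-cycle); if both lie in $2$-cycles of $\epsilon$ the product is an involution or acquires a single $4$-cycle, and if both are fixed it remains an involution. Hence it suffices to choose $a,b$ outside ${\rm Fix}(\sigma)\cup{\rm Fix}(\tau)$, and parity is automatic since $\sigma,\tau,\rho$ are all odd. Each of $\sigma,\tau$ has exactly $k$ fixed points, one per odd cycle of $g$, so you need $n\ge 2k+2$: for $k=1$ this holds since $n\ge 5$; for $k=2$ the only failing type is $(3,1)$, excluded by $n\ge 5$; and for $k\ge 3$ one has $n\ge 1+3+\cdots+(2k-1)=k^2\ge 2k+2$. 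With this observation your argument closes uniformly, with no separate small-$n$ computation in part (I). For comparison, the paper sidesteps the simultaneity issue entirely by arguing cycle-by-cycle: for each odd cycle of length $m\ge 5$ it constructs factorizations into two $2$-elements of \emph{either} parity type (the inverting involution gives one; dropping one transposition from it gives an involution times an element of order $4$, of the other parity), and then chooses parities across the cycles, writing a $3$-cycle as a product of two transpositions and compensating on the adjacent longer cycle. Your approach is global and needs the counting argument above; the paper's is local and needs none; but your proposal as submitted does not yet contain the decisive argument.
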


\pf
The cases $S = \A_6, \A_7$ are checked directly using \cite{Atlas}.
%Hence, by Lemma \ref{lie2} we may assume $n \neq 6,7$. 
By Lemma \ref{schur}, it suffices to prove Theorem \ref{main2} for $G = \A_n$.

\smallskip
(i) First we show that if $g = (1,2,\ldots,m)$ is an $m$-cycle with 
$m = 2k+1 \geq 5$, then $g = x_1y_1 = x_2y_2$, where $x_i,y_i \in \SSS_m$
have order $2$ or $4$, and moreover $x_1,y_1 \in \A_m$, 
$x_2,y_2 \in \SSS_m \smallsetminus \A_m$. Indeed, $g$ is inverted
by the involution
$$x := (1,2k+1)(2,2k) \ldots (k-1,k+3)(k,k+2).$$
Setting $y := xg$, we get $y^2 = xgxg = g^{-1}g = 1$, so $g = xy$. 
Next, we set 
$$x' := (1,2k+1)(2,2k) \ldots (k-1,k+3),~~y' := x' g.$$ 
A computation establishes
that $|x'| = 2$, $|y'| = 4$, and $g = x' y'$.
%$$\begin{array}{ll}(y')^2 = (x'gx')g & 
%   = (2k+1,2k, \ldots,k+3,k,k+1,k+2,k-1,k-2, \ldots,1)(1,2,\ldots ,2k+1)\\
%  & = (k,k+2)(k + 1,k+3), \end{array}$$  
%whence $|y'| = 4$. 
Since exactly one of $x,x'$ belongs to $\A_m$ and $g \in \A_m$, the claims follow.

\smallskip
(ii) Now we show that every $g \in \A_n$ is a product of two $2$-elements. 
Indeed, if $g$ is real in $\A_n$ 
then the statement follows from Lemma \ref{real1}. 
Since $g$ is always real in $\SSS_n$, we may assume that $g$ is not real in 
$\A_n$, so it is not centralized by any {\it odd} permutation in $\SSS_n$. Thus 
$g = g_1g_2 \ldots g_s$ is a product of $s \geq 1$ disjoint cycles,
where $g_i$ is an $n_i$-cycle, $3 \leq n_1 < n_2 < \ldots < n_s$, and
$n_i$ is odd for all $i$. We may assume that 
$$\SSS_n \geq X_1 \times X_2 \times \ldots \times X_s,$$
where $X_i \cong \SSS_{n_i}$ and $g_i \in X_i$. 

Suppose $n_1 \geq 5$. Then, according to (i) we can write $g_i = x_iy_i$
where $x_i,y_i \in [X_i,X_i] \cong \A_{n_i}$ are $2$-elements. Hence
$g = xy$ with $x := x_1x_2 \ldots x_s$ and $y := y_1y_2 \ldots y_s$, as desired.

Assume now that $n_1 = 3$.  Since $g$ is not real in $\A_n$ and $n \geq 5$, 
we observe that $s \geq 2$. Again by (i), for $i \geq 2$ we can write 
$g_i = x_iy_i$, where $x_i,y_i \in X_i$ are $2$-elements; moreover,
$x_i,y_i \in [X_i,X_i]$ if $i \geq 3$ and $x_{2},y_{2} \in X_{2} \smallsetminus [X_{2},X_{2}]$.
We may assume that $g_1 = (1,2,3)$ and write $g_1 = x_1y_1$ with
$x_1 = (1,3)$, $y_1 = (1,2)$. Now setting $x:= x_1x_2 \ldots x_s$ and 
$y := y_1y_2 \ldots y_s$, again $g = xy$ is a product of 
two $2$-elements in $\A_n$.
\hal

\begin{lem}\label{lie2}
Let $S$ be a non-abelian simple group of Lie type in characteristic $2$.
Theorem \ref{main2} holds for all quasisimple covers of $S$. 
\end{lem}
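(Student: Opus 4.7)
The plan is to reduce to the $2'$-universal cover of $S$ via Lemma \ref{schur} and then invoke the Ellers-Gordeev theorem \cite[Corollary, p.\ 3661]{EG}, which states that every non-central element of the simply connected group $G = \cG_{\SC}^F$ is a product of two $p$-elements. The crucial point is that in characteristic $2$ the $p$-elements are exactly the $2$-elements, so Ellers-Gordeev already delivers products of two $2$-elements for non-central elements; handling the center costs at most one additional factor.

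First I would identify the $2'$-universal cover $H$ of $S$ with the simply connected group $G = \cG_{\SC}^F$. The center $\bfZ(G)$ is a $p'$-group and hence has odd order when $p = 2$, so it suffices to check that the \emph{exceptional} part of the Schur multiplier of $S$ is a $2$-group in every case relevant here. Inspecting the short standard list of finite simple groups of Lie type in characteristic $2$ with exceptional multipliers (such as $\PSL_3(4)$, $\PSU_4(2)$, $\PSU_6(2)$, $\Omega_8^+(2)$, $\tw2 B_2(8)$, $G_2(4)$, $F_4(2)$, $\tw2 E_6(2)$) confirms that in each case the exceptional multiplier is a $2$-group. Consequently $H = G$, and $\bfZ(H)$ has odd order.

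Next, apply \cite[Corollary, p.\ 3661]{EG} to $H$: every non-central element of $H$ is a product of two $2$-elements, which trivially exhibits it as a product of three $2$-elements. To dispose of a central element $z \in \bfZ(H)$, choose any nontrivial $2$-element $x \in H$ (for instance a long-root unipotent element); since $|\bfZ(H)|$ is odd, $x \notin \bfZ(H)$, and therefore $zx$ is non-central. Ellers-Gordeev then gives $zx = ab$ for $2$-elements $a,b \in H$, whence
\[
z = a \, b \, x^{-1}
\]
is a product of three $2$-elements. Together with the non-central case, this proves the second assertion of Theorem \ref{main2} for $H$, and hence for every quasisimple cover of $S$ by Lemma \ref{schur}.

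The surjectivity of $(x,y,z) \mapsto x^N y^N z^N$ for odd $N$ then follows at once: for any $2$-element $u$, the integer $N$ is coprime to $|u|$, so $u = (u^M)^N$ where $NM \equiv 1 \pmod{|u|}$, and $u^M$ is itself a $2$-element. The main potential obstacle is the identification $H = \cG_{\SC}^F$ in characteristic $2$; this is a finite verification against the known list of exceptional Schur multipliers rather than a substantive theoretical difficulty. Once that identification is in hand, the rest is the Ellers-Gordeev theorem plus the one-line central-element trick above.
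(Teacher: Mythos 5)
Your argument is, in substance, the paper's own: reduce to the $2'$-universal cover via Lemma \ref{schur}, apply the Ellers--Gordeev result to non-central elements, and absorb the (odd-order) centre by multiplying by a non-central $2$-element. For every $S$ that literally arises as $\cG^F/\bfZ(\cG^F)$ with $\cG^F$ quasisimple and simply connected in characteristic $2$, your verification that the exceptional part of the Schur multiplier is a $2$-group is correct, so the identification of the $2'$-universal cover with $\cG_{\SC}^F$ goes through and the proof is fine.

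The gap is exactly at the borderline groups that the paper counts (and later needs) as Lie type in characteristic $2$ but that your ``finite verification'' omits: $S = \A_6 \cong \Sp_4(2)'$ and $S = \tw2 F_4(2)'$. For $\A_6$ the Schur multiplier is $C_6$, and since $\Sp_4(2)$ has trivial centre the exceptional part contains $C_3$; thus the $2'$-universal cover is $3\cdot\A_6$, which is not of the form $\cG_{\SC}^F$, and Ellers--Gordeev does not apply to it. For the Tits group the multiplier is trivial, but the group itself is not of the form $\cG^F$ (it has index $2$ in $\tw2 F_4(2)$, which is not quasisimple), so again the cited corollary does not apply as stated. The paper disposes of these two cases separately -- $\A_6$ via Lemma \ref{alt-odd} (which covers all its quasisimple covers, including $3\cdot\A_6$ and $6\cdot\A_6$), and $\tw2 F_4(2)'$ by a direct character-table computation from the Atlas -- and these cases are genuinely used later (e.g.\ in the proof of Proposition \ref{main2-sp}). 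So you should either restrict your claim to the strict definition of Lie type in characteristic $2$ and say so explicitly, or add these two cases by the same devices as the paper; as written, the blanket assertion that the exceptional multiplier is always a $2$-group is what fails.
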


\pf
The case $S = \tw2 F_4(2)'$ is checked directly using \cite{Atlas};
and $S = \A_6$ follows from Lemma \ref{alt-odd}.
Suppose now that $S \not\cong \A_6$, $\tw2 F_4(2)'$.
Then there is a quasisimple Lie-type group $H$ of simply connected
type such that $H$ is a $2'$-universal cover of $S$. According to 
\cite[Corollary, p.\ 3661]{EG}, every non-central element of $H$ is a 
product of two $2$-elements. For $g \in \bfZ(H)$, consider a non-central 
$2$-element $t$ of $H$. Again $gt^{-1} = xy$ for some $2$-elements
$x,y$ of $H$, so $g = xyt$ is a product of three $2$-elements. Hence
we are done by Lemma \ref{schur}.  
\hal

\begin{lem}\label{odd-base}
{\rm (i)} Theorem \ref{main2} holds for the quasisimple group $G$ if $G/\bfZ(G)$ is one of the following 
simple groups: a sporadic group,  
$\PSU_4(3)$, $\PSp_6(3)$, $\O_7(3)$, $\PSp_8(3)$.

{\rm (ii)} Suppose that $G = \GU_n(3)$ with $3 \leq n \leq 6$. Each $g \in G$ can be written
as $g = xyz$, where $x, y, z$ are $2$-elements of $G$ and $\det(x) = \det(y) = 1$.
\end{lem}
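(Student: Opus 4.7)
My plan for both parts is to perform a direct character-theoretic verification in the spirit of Lemma \ref{alt-spor}. The fundamental tool is the triple-product analogue of the formula in Lemma \ref{basic}(i): for $g, c_1, c_2, c_3 \in G$,
\[
g \in c_1^G \cdot c_2^G \cdot c_3^G \Longleftrightarrow \sum_{\chi \in \Irr(G)} \frac{\chi(c_1)\chi(c_2)\chi(c_3)\overline{\chi(g)}}{\chi(1)^2} \neq 0.
\]
Combined with Lemma \ref{real1}, which says that any real element is already a product of two $2$-elements, the verification for each $g$ reduces to exhibiting a single $2$-element $t$ such that $gt^{-1}$ is real: then $gt^{-1}=xy$ with $x,y$ $2$-elements, and $g=xyt$. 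This shortcut is typically decisive, so in practice one rarely needs the full triple sum.

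For part (i), for each quasisimple cover $G$ of a sporadic simple group, of $\PSU_4(3)$, $\PSp_6(3)$, $\O_7(3)$, or $\PSp_8(3)$, I would retrieve the character table from the GAP Character Table Library \cite{GAP}, and where it is unavailable construct it directly using the {\sc Magma} \cite{Magma} implementation of Unger's algorithm \cite{Unger}, exactly as in Lemma \ref{alt-spor}. Then for each conjugacy class representative $g \in G$, I would loop over the $2$-element classes of $G$, applying the real-class shortcut first, and falling back on the triple-product formula above only when necessary, until a valid decomposition is found.

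For part (ii), for $G = \GU_n(3)$ with $3 \leq n \leq 6$, I would similarly obtain the character table. The task for each $g \in G$ is to exhibit a $2$-element $z \in G$ with $\det(z)=\det(g)$ such that $gz^{-1} \in \SU_n(3)$ is a product of two $2$-elements of $G$ (which then necessarily have determinant $1$). The determinant takes values in the norm-one subgroup of $\F_9^\times$, a cyclic group of order $q+1 = 4$ which is a $2$-group, so there is no arithmetic obstruction to finding $2$-elements of each prescribed determinant; concretely, one may fix a small family of $2$-element representatives $z_\delta$ for each determinant value $\delta$ (for instance diagonal involutions), and for each $g$ check that $gz_{\det(g)}^{-1}$ is real in $\SU_n(3)$, or more generally a product of two $2$-elements of $\SU_n(3)$ via the paired character-sum criterion of Lemma \ref{basic}(i) applied inside $\SU_n(3)$.

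The main obstacle is the computational scale: $\PSp_8(3)$ (order $\approx 4.7 \cdot 10^{15}$) and $\GU_6(3)$ (order $\approx 1.5 \cdot 10^{13}$) possess sizeable character tables with many classes, and a naive triple-search over all $2$-element classes would be costly. The two key optimizations that keep the verification tractable are the reduction via Lemma \ref{real1} to the real-class problem, and the restriction to a small canonical set of $2$-element classes (typically an involution class together with its products). Once these are in place, the computation is routine and parallels that of Lemma \ref{alt-spor}.
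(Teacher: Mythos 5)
Your proposal is correct and is essentially the paper's own proof: the paper establishes Lemma \ref{odd-base} by direct calculations with the character tables of the relevant groups (from the {\sf GAP} library or constructed via the {\sc Magma} implementation of Unger's algorithm), exactly as in Lemma \ref{alt-spor}, which is what you describe, and your added shortcuts (Lemma \ref{real1} and restricting to a few canonical $2$-classes) are only optimizations of that same computation. The one point to keep straight in (ii) is that the two factors $x,y$ must be produced inside $\SU_n(3)$ -- as you in fact do when invoking Lemma \ref{real1} or Lemma \ref{basic}(i) within $\SU_n(3)$ -- since two $2$-elements of $\GU_n(3)$ whose product has determinant $1$ need not each have determinant $1$.
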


\pf
These statements were established using direct calculations similar to 
those of Lemma \ref{alt-spor}.
\hal

\subsection{Regular $2$-elements in classical groups in odd characteristic}
We show that finite classical groups in odd characteristic 
admit regular $2$-elements with prescribed determinant or spinor norm.

We begin with the general linear and unitary groups.

\begin{lem}\label{reg-slu}
Let $G = \GL^\e_n(q)$ with $n \geq 1$, $\e = \pm 1$,  $q$ an odd prime power and let
$\mu_{q-\e} := \{ \lambda \in \overline{\F}_q^\times \mid \lambda^{q-\e} = 1\}$. 
For every 
$2$-element $\delta$ of $\mu_{q-\e}$, there exists a regular $2$-element 
$s = s_n(\delta) $ of $ G$, 
such that $\det(s) = \delta$ and $s$ has at most two eigenvalues
$\beta$ that belong to $\mu_{q-\e}$ (and each such eigenvalue appears with
multiplicity one).
\end{lem}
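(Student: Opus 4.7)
I would proceed by strong induction on $n$. Set $2^a := (q-\e)_2 \geq 2$, and let $\Omega \subset \ovF_q^\times$ denote the group of all $2$-power roots of unity. The Frobenius acts on $\Omega$ by $\phi(\lambda) = \lambda^q$ if $\e = +$ and by $\phi(\lambda) = \lambda^{-q}$ if $\e = -$; its fixed subgroup is $\Omega^\phi = (\mu_{q-\e})_2$, cyclic of order $2^a$. A regular $2$-element $s \in \GL^\e_n(q)$ is determined up to conjugacy by its eigenvalue set $S \subset \Omega$, which is a $\phi$-stable subset of size $n$, and one has $\det(s) = \prod_{\lambda \in S}\lambda$ while the eigenvalues of $s$ lying in $\mu_{q-\e}$ are exactly those in $S \cap \Omega^\phi$. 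Because $\phi$ acts with $2$-power order on each finite subgroup of $\Omega$, every $\phi$-orbit in $\Omega$ has size a power of $2$, and the size-$1$ orbits are precisely the elements of $\Omega^\phi$.

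For the base cases: when $n = 1$ take $s = (\delta)$. When $n = 2$, pick two distinct elements of $\Omega^\phi$ with product $\delta$ whenever $a \geq 2$ or $\delta \neq 1$; otherwise ($a = 1$, $\delta = 1$) take the $\phi$-orbit of a primitive fourth root of unity, which lies in $\Omega \setminus \Omega^\phi$ and has product $1$. For the inductive step with $n \geq 3$, I set $s_n(\delta) := s_{n-2}(\delta \delta_0^{-1}) \oplus t_2$ as a block-diagonal element of $\GL^\e_{n-2}(q) \times \GL^\e_2(q) \leq \GL^\e_n(q)$, where $t_2$ is a regular $2$-element of $\GL^\e_2(q)$ with $\det(t_2) = \delta_0$ for some prescribed $\delta_0 \in \Omega^\phi$, and whose two eigenvalues form a single $\phi$-orbit of size $2$ in $\Omega \setminus \Omega^\phi$ (a \emph{Coxeter $2$-block}). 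Since $t_2$ contributes no eigenvalues in $\mu_{q-\e}$, the count of eigenvalues of $s_n(\delta)$ in $\mu_{q-\e}$ equals that of $s_{n-2}(\delta \delta_0^{-1})$, at most $2$ by induction. Regularity of $s_n(\delta)$ is secured by picking the eigenvalue $\lambda$ of $t_2$ of $2$-power order exceeding every order appearing in $s_{n-2}$, which is possible because $\Omega$ contains Coxeter $\phi$-orbits of arbitrarily large $2$-power order.

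The key technical task is determining which values $\delta_0 \in \Omega^\phi$ arise as $\det(t_2)$ for a Coxeter block. On the Coxeter torus $T \cong C_{q^2-1}$ of $\GL^\e_2(q)$, the determinant restricts to the norm-type map $\lambda \mapsto \lambda\phi(\lambda)$; a direct computation on the $2$-part $T \cap \Omega = C_{(q^2-1)_2}$ yields: when $a = 1$, every $\delta_0 \in \Omega^\phi = \{\pm 1\}$ is realized (so one takes $\delta_0 = 1$); when $a \geq 2$, the admissible $\delta_0$ are precisely the generators of $\Omega^\phi$ (so one fixes any generator $\zeta$, takes $\delta_0 = \zeta$, and applies induction to the determinant $\delta \zeta^{-1}$). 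The main obstacle is verifying this case split, together with the analogous computation for $\e = -$ (where the norm becomes $\lambda \mapsto \lambda^{1-q}$ rather than $\lambda^{q+1}$); the two cases are parallel, and the remaining arguments amount to routine bookkeeping.
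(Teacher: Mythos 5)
There is a genuine gap, and it sits exactly at the point your induction leans on: the claim that ``$\Omega$ contains Coxeter $\phi$-orbits of arbitrarily large $2$-power order'' is false. If $\lambda \in \overline\F_q^\times$ is a $2$-power root of unity whose $\phi$-orbit has size $2$, then $\phi^2(\lambda)=\lambda$, i.e.\ $\lambda^{q^2}=\lambda$ in either case $\e=\pm$, so $\lambda \in \F_{q^2}^\times$ and its order divides $(q^2-1)_2$. Thus the eigenvalues available to your $2\times 2$ blocks $t_2$ (and to the $1\times 1$ base blocks) all lie in the fixed finite set of $2$-power roots of unity in $\F_{q^2}^\times$, of size $(q^2-1)_2$. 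Since $q$ is odd, a $2$-element of $\GL^\e_n(q)$ is semisimple, and regularity forces all $n$ eigenvalues to be distinct; so as soon as $n > (q^2-1)_2$ (e.g.\ $n>8$ when $q=3$) no element built entirely from blocks of dimension $1$ and $2$ can be regular, and your inductive step cannot be carried out: there is no eigenvalue of $t_2$ of $2$-power order exceeding the orders already used in $s_{n-2}$, nor even an unused one in general. (Your determinant computation for the Coxeter torus of $\GL^\e_2(q)$ is fine; it is the supply of large-order size-$2$ orbits that does not exist.)

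The fix — and this is what the paper does — is to let the block size grow with the required eigenvalue order. Taking $\gamma$ of order $(q^{2^m}-1)_2 \ge 8$, the full $\phi$-orbit of $\gamma$ has size $2^m$, and via $\GL_1(q^{2^m}) \hookrightarrow \GL_{2^{m-1}}(q^2) \hookrightarrow \GL^\e_{2^m}(q)$ one gets a regular $2$-element $s_m$ of $\GL^\e_{2^m}(q)$ whose $2^m$ eigenvalues are distinct, of order $(q^{2^m}-1)_2$, and lie outside $\mu_{q-\e}$. One then assembles $s_n(\delta)$ according to the binary expansion of $n$, using blocks $s_{m_1},s_{m_2},\ldots$ of pairwise distinct sizes (so pairwise distinct eigenvalue orders, hence no collisions) plus one small block of dimension $1$ or $2$ to adjust the determinant; only that last block contributes eigenvalues in $\mu_{q-\e}$, and at most two of them, each with multiplicity one. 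If you want to salvage an inductive formulation, the induction must strip off blocks of size $2^m$ with $m$ growing, not blocks of bounded size.
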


\pf
(i) First we consider the special case $n = 2^m \geq 2$ and construct a regular $2$-element
$s_m$ of $ G$. Fix $\g \in  \overline{\F}_q^\times$ with $|\g| = (q^{2^m}-1)_2 \geq 8$. Using 
the embeddings
$$\GL_1(q^{2^m}) \hookrightarrow \GL_{2^{m-1}}(q^2) \hookrightarrow \GL^\e_{2^m}(q) = G,$$
we can find $s_m \in G$ which is conjugate over $\overline\F_q$ to 
$$\diag(\g,\g^{q\e},\g^{(q\e)^2}, \ldots,\g^{(q\e)^{n-1}}).$$
It is straightforward to check that all eigenvalues of $s_m$ appear with multiplicity one and 
have order $(q^{2^m}-1)_2$; in particular, $s_m$ is regular. 

\smallskip
(ii) If $n = 1$, then we set $s_1(\delta) = \delta$. Suppose $n = 2$. If $\delta \neq 1$, then we 
choose $s_2(\delta) := \diag(1,\delta)$. If $\delta = 1$, then we can choose $\a = \pm 1$ such that
$q \equiv \a \bmod 4$ and take 
$$s_2(1) \in C_{q-\a} \hookrightarrow \SL^\e_2(q) < G$$
with $|s_2(1)| = 4$.    
Note that $|s_n(\delta)| < (q^2-1)_2$ for all $\delta \in \mu_{q-\e}$ and $n = 1,2$.

Consider the case $n \geq 3$ odd and write 
$$n = 2^{m_1} + 2^{m_2} + \ldots + 2^{m_t} + 1$$
with $m_1 > m_2 > \ldots > m_t \geq 1$. Setting 
$$s := \diag(s_{m_1},s_{m_2}, \ldots,s_{m_t},\a) \in \GL^\e_{2^{m_1}}(q) \times \ldots \times \GL^\e_{2^{m_t}}(q) 
    \times \GL^\e_1(q) < G,$$
with $\a := \delta/\prod^t_{i=1}\det(s_{m_i})$, we deduce that 
$\det(s) = \delta$ and all eigenvalues of $s$ appear with multiplicity one, as required.

\smallskip
(iii) We may now assume that 
$$n = 2^{m_1} + 2^{m_2} + \ldots + 2^{m_t}$$
with $m_1 > m_2 > \ldots > m_t \geq 1$. 

Suppose first that $m_t = 1$. We choose 
$$s := \diag(s_{m_1},s_{m_2}, \ldots,s_{m_{t-1}},s_2(\a)) \in 
    \GL^\e_{2^{m_1}}(q) \times \ldots \times \GL^\e_{2^{m_{t-1}}}(q) \times \GL^\e_2(q) < G,$$
with $\a := \delta/\prod^t_{i=1}\det(s_{m_i})$, so that $\det(s) = \delta$. The construction of
$s$ ensures that all eigenvalues of $s$ appear with multiplicity one, so $s$ is regular.

If $a := m_t \geq 2$, then we rewrite 
$$n = 2^{a_1} + 2^{a_2} + \ldots + 2^{a_{t-1}} + 2^{a_t} + 2^{a_{t+1}} + \ldots + 2^{a_k},$$
where $a_i = m_i$ for $1 \leq i \leq t-1$, $k = t+a-1$, and 
$(a_{t},a_{t+1}, \ldots ,a_k) = (a-1,a-2, \ldots,2,1,1)$. Now we can choose 
$$s := \diag(s_{a_1},s_{a_2}, \ldots,s_{a_{k-1}},s_2(\a)) \in 
   \GL^\e_{2^{a_1}}(q) \times \ldots \times \GL^\e_{2^{a_{k-1}}}(q) \times \GL^\e_2(q) < G,$$
with $\a := \delta/\prod^{k-1}_{i=1}\det(s_{a_i})$. Again $\det(s) = \delta$, and all eigenvalues of $s$ 
appear with multiplicity one, as desired.

The last condition on $s$ can be checked easily in all cases.
\hal

\begin{lem}\label{reg-sp}
Let $G = \Sp_{2n}(q)$ with $n \geq 1$ and $q$ an odd prime power. 
There exists a regular $2$-element 
$s$  of $G$ (and neither $1$ nor $-1$ is an eigenvalue of $s$).
\end{lem}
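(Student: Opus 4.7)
The strategy parallels that of Lemma \ref{reg-slu}: I write $n$ in binary and build $s$ as an orthogonal product of regular $2$-elements, one per symplectic block. Expand $n = 2^{a_1} + 2^{a_2} + \ldots + 2^{a_k}$ with $a_1 > a_2 > \ldots > a_k \geq 0$, and use the natural orthogonal decomposition of the underlying $2n$-dimensional symplectic space to embed $\prod_{j=1}^k \Sp_{2 \cdot 2^{a_j}}(q) \hookrightarrow \Sp_{2n}(q)$. Within each factor $\Sp_{2m}(q)$ with $m = 2^a$ I construct a regular $2$-element $t_a$ whose $2m$ eigenvalues on the natural module over $\overline{\F}_q$ are pairwise distinct and all share a common $2$-power order $d_a$; choosing the $d_a$ to be distinct across blocks then forces the eigenvalues of $s := t_{a_1} \times \ldots \times t_{a_k}$ to be distinct overall.

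For the block $m = 1$, pick $\alpha \in \{\pm 1\}$ with $(q - \alpha)_2 \geq 4$ (possible since $q$ is odd), and let $t_0 \in \SL_2(q) = \Sp_2(q)$ generate the $2$-part of the cyclic torus of order $q - \alpha$, so $|t_0| = d_0 := (q-\alpha)_2 = (q^2-1)_2/2 \geq 4$. Its two eigenvalues $\lambda, \lambda^{-1}$ both have order $d_0$, hence are distinct and neither is $\pm 1$. For $m = 2^a$ with $a \geq 1$, use the chain $\GL_1(q^m) \hookrightarrow \GL_m(q) \hookrightarrow \Sp_{2m}(q)$, the second embedding sending $A \mapsto \diag(A, (A^T)^{-1})$ relative to a Lagrangian splitting $V = V_1 \oplus V_1^*$. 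Take $t_a$ of order $d_a := (q^m - 1)_2$; its $\overline{\F}_q$-eigenvalues are $\{\gamma^{q^i}, \gamma^{-q^i} : 0 \leq i < m\}$ for some $\gamma$ of order $d_a$.

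The main verification is that these $2m$ eigenvalues are distinct. By the lifting-the-exponent lemma for $p = 2$ (with $q$ odd and $\ell \geq 1$), $v_2(q^\ell - 1)$ equals $v_2(q-1)$ when $\ell$ is odd and equals $v_2(q-1) + v_2(q+1) + v_2(\ell) - 1$ when $\ell$ is even; similarly $v_2(q^\ell + 1) = v_2(q+1)$ when $\ell$ is odd and equals $1$ when $\ell$ is even. In particular $v_2(d_a) = v_2(q-1) + v_2(q+1) + a - 1$, and for every $\ell$ with $1 \leq \ell < m = 2^a$ one has $v_2(\ell) < a$, forcing $v_2(q^\ell \pm 1) < v_2(d_a)$. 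Hence $\gamma^{q^i} = \gamma^{\pm q^k}$ with $0 \leq k < i < m$ would force $d_a \mid q^{i-k} \mp 1$, which is impossible.

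Finally, the explicit formula shows $d_0 < d_1 < d_2 < \ldots$ form a strictly increasing sequence of powers of $2$, all at least $4$; so the eigenvalue sets of distinct blocks are pairwise disjoint (they have distinct $2$-power orders), and $s$ therefore has $2n$ distinct $\overline{\F}_q$-eigenvalues, none of them $\pm 1$. This proves $s$ is a regular $2$-element of $\Sp_{2n}(q)$ with no $\pm 1$ eigenvalue. The principal obstacle in the proof is precisely the $2$-adic valuation bookkeeping of the previous paragraph, which simultaneously has to rule out collisions among the $\gamma^{\pm q^i}$ within a single block and guarantee that different blocks contribute eigenvalues of different order; once one writes down the LTE formulas explicitly, everything else amounts to standard block-diagonal assembly.
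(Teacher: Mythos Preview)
Your proof is correct and follows essentially the same approach as the paper: binary expansion of $n$, block-diagonal assembly from $\Sp_{2\cdot 2^a}(q)$ factors using the $\GL_1(q^{2^a}) \hookrightarrow \GL_{2^a}(q) \hookrightarrow \Sp_{2^{a+1}}(q)$ chain for $a\ge 1$, and a special order-$d_0$ element in $\Sp_2(q)$ for the $a=0$ block. The paper simply asserts that the eigenvalues of each $s_m$ are distinct and of order $(q^{2^m}-1)_2$ and that the assembled element ``has the desired properties'', whereas you supply the explicit $2$-adic (lifting-the-exponent) computation; this is a welcome elaboration rather than a different method. Two trivial remarks: your distinctness check within a block should also cover the case $i=k$ with the minus sign (i.e.\ $\gamma^{q^i}\neq \gamma^{-q^i}$), which is immediate since $d_a\ge 8>2$; and your $t_0$ has order $(q-\alpha)_2$ rather than the paper's choice of order exactly $4$, but either works since both are strictly less than $d_1=(q^2-1)_2$.
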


\pf
First we consider the special case $n = 2^m \geq 2$.  We fix $\g \in  \overline{\F}_q^\times$ with $|\g| = (q^{2^m}-1)_2 \geq 8$
and use the element $s_m$ constructed in part (i) of the proof of Lemma \ref{reg-slu} via  
the embeddings
$$\GL_1(q^{2^m}) \hookrightarrow \GL_{2^{m}}(q) \hookrightarrow \Sp_{2n}(q) = G.$$
Note that $s_m$ is conjugate over $\overline\F_q$ to 
$$\diag(\g,\g^{q},\g^{q^2}, \ldots,\g^{q^{n-1}},\g^{-1},\g^{-q},\g^{-q^2}, \ldots,\g^{-q^{n-1}}).$$
In particular, all eigenvalues of $s_m$ appear with multiplicity one and 
have order $(q^{2^m}-1)_2$, whence $s_m$ is regular. 

Consider the general case
$$n = 2^{m_1} + 2^{m_2} + \ldots + 2^{m_t}$$
with $m_1 > m_2 > \ldots > m_t \geq 0$ and $t \geq 1$. If $m_t \geq 1$, set
$$s := \diag(s_{m_1},s_{m_2}, \ldots,s_{m_t}) \in \Sp_{2^{m_1}}(q) \times \ldots \times Sp_{2^{m_t}}(q) \leq G.$$ 
If $m_t = 0$, then we can choose
$$s:= \diag(s_{m_1},s_{m_2}, \ldots,s_{m_{t-1}},s_2(1)) \in \Sp_{2^{m_1}}(q) \times \ldots \times Sp_{2^{m_{t-1}}}(q) \times \Sp_2(q) < G,$$
where $s_2(1)$ is constructed in part (ii) of the proof of Lemma \ref{reg-slu}. It is easy to check that $s$ has 
the desired properties.
\hal

Recall that the {\it spinor norm} $\theta(g)$ of $g \in \SO^\e_{n}(q)$ is defined in 
\cite[pp.\ 29--30]{KL}.
 
\begin{lem}\label{reg-so}
Let $G = \SO^\e_n(q)$ with $n \geq 2$, $\e = \pm 1$,  $q$ an odd prime power. 
For $\delta = \pm 1$, there exists a regular $2$-element 
$s = s^\e_n(\delta)$ of $G$, such that $\theta(s) = \delta$; moreover, every 
$\beta \in \F_{q^2}^\times$ can appear as an eigenvalue of $s$ with 
multiplicity at most two, and multiplicity two can occur only when $\beta = \pm 1$.
\end{lem}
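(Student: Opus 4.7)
The plan is to follow the block-diagonal construction of Lemmas \ref{reg-slu} and \ref{reg-sp}. I would decompose the natural module $V$ as a non-degenerate orthogonal sum $V = V_1 \perp \cdots \perp V_k$ and take $s = s_1 \perp \cdots \perp s_k$ with each $s_i \in \SO(V_i)$ a regular $2$-element. Since the spinor norm is multiplicative over such orthogonal decompositions, i.e.\ $\theta(s) = \prod_i \theta(s_i)$, and the overall type $\e$ is the product of the types $\e_i$ of the $V_i$, matching the prescribed pair $(\e,\delta)$ reduces to an assembly problem for small blocks with controlled type and spinor norm.

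The building blocks I plan to use are: for each $m \geq 1$ and each sign $\e' \in \{\pm\}$, the cyclic maximal torus $\SO^{\e'}_2(q^{2^m}) \hookrightarrow \SO^{\e'}_{2^{m+1}}(q)$ contains an element $t^{\e'}_m$ of order $(q^{2^m}-\e')_2 \geq 4$ which, exactly as in the proof of Lemma \ref{reg-sp}, is regular semisimple with all $2^{m+1}$ eigenvalues distinct and different from $\pm 1$; the cyclic groups $\SO^{\pm}_2(q)$ contain $2$-elements realizing each value of the spinor norm; and for odd $n$, the $1$-dimensional $\pm 1$-blocks (whose spinor norm is controlled by the discriminant of the line modulo squares), as well as regular $2$-elements of $\SO_3(q) \cong \PGL_2(q)$. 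I would then write $n = 2^{m_1+1} + \cdots + 2^{m_t+1} + r$ with distinct $m_i \geq 1$ and remainder $r \in \{0,1,2,3\}$, and take $s$ to be the orthogonal sum of cyclic-torus blocks $t^{\e_i}_{m_i}$ together with an adjustment block of dimension $r$. The freedom to choose each sign $\e_i$ independently, together with the type and spinor norm of the adjustment block, provides more than enough parameters to hit any of the four targets $(\e,\delta) \in \{\pm\} \times \{\pm 1\}$. Regularity and the multiplicity bound would then be immediate from the construction: the cyclic-torus blocks have pairwise distinct orders and so pairwise disjoint sets of non-$\pm 1$ eigenvalues, while the only eigenvalue that can appear with multiplicity two is $\pm 1$, coming from a $2$-dimensional $\SO^\pm_2$ adjustment block.

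The main obstacle will be the explicit computation of $\theta(t^{\e'}_m)$ for the cyclic-torus blocks and of $\theta$ on the small adjustment blocks. One computes these by transferring the spinor norm under the embedding $\SO^{\e'}_2(q^{2^m}) \hookrightarrow \SO^{\e'}_{2^{m+1}}(q)$, reducing $\theta$ on the inner torus to a field-norm-modulo-squares computation in $\F_{q^{2^m}}/\F_q$. Once this table of values is in hand, the combinatorial verification that every pair $(\e,\delta)$ is achieved by some choice of the signs $\e_i$ and adjustment block is routine. The small cases $n = 2, 3, 4$ must be handled separately by direct inspection in $\SO^\e_2(q)$, $\SO_3(q) \cong \PGL_2(q)$, and $\SO^\e_4(q)$ (using the isogenies with $\SL_2(q) \circ \SL_2(q)$ and $\PSL_2(q^2)$ from (\ref{iso1})), where the $2$-elements and their spinor norms can be written down by hand.
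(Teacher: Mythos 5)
Your overall strategy is the same as the paper's (orthogonal block assembly of cyclic-torus $2$-elements plus a small adjustment block, using multiplicativity of the spinor norm and of the type), but one of your two families of building blocks does not exist, and this breaks the parameter count in half of the congruence classes of $n$. For odd $q$ and $m\geq 1$ we have $q^{2^m}\equiv 1 \bmod 8$, so $(q^{2^m}+1)_2=2$; hence the cyclic torus $\SO^-_2(q^{2^m})\cong C_{q^{2^m}+1}$ contains no $2$-element of order $\geq 4$, and its only $2$-elements are $\pm I$, which are neither regular nor compatible with your multiplicity constraint (eigenvalue $\pm1$ with multiplicity $2^{m+1}$). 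So the claimed $t^-_m$ is not available: the only usable large blocks are the plus-type ones of order $(q^{2^m}-1)_2\geq 8$, and each of these has a \emph{fixed} spinor norm (namely $-1$, computed via \cite[Lemma 2.7.2]{KL} from the determinant of the corresponding element of $\GL_{2^m}(q)$). The "freedom to choose each sign $\e_i$ independently" on which your parameter count rests therefore evaporates.

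Concretely, with the rigid decomposition $n=2^{m_1+1}+\cdots+2^{m_t+1}+r$, $r=n\bmod 4$: when $n\equiv 0 \bmod 4$ there is no adjustment block at all, so the type is forced to be $+$ and $\theta(s)$ is forced to be $(-1)^t$, and you cannot reach $\epsilon=-$ or the other value of $\delta$; when $n\equiv 1\bmod 4$ the $1$-dimensional block carries only the identity (note $\SO_1(q)=\{1\}$; a reflection $-1$ on a line has determinant $-1$ and cannot be used as a standalone block), so $\delta$ is again forced. The missing idea — which is exactly what the paper supplies — is to re-expand the smallest $2$-power block, writing $2^{m_t+1}=2^{m_t}+2^{m_t-1}+\cdots+8+4+4$, so as to always reserve a block of dimension $2$ or $4$, and then to exhibit explicit elements $s^{\pm}_4(\pm1)\in\SO^{\pm}_4(q)$ (built inside $\SO^{\a}_2(q)\times\SO^{\pm\a}_2(q)$ with $4\mid q-\a$, using a torus element of order $(q-\a)_2\geq 4$ of spinor norm $-1$ together with $\pm I_2$) realizing all four combinations of type and spinor norm. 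With that adjustment-block table in hand, the rest of your plan (multiplicativity of $\theta$, regularity, and the multiplicity-two bound occurring only at $\pm1$) does go through as in the paper; without it, the construction fails for $n\equiv 0,1 \bmod 4$.
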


\pf
(i) First we consider the special case $n = 2^{m+1} \geq 4$.  We fix $\g \in  \overline{\F}_q^\times$ with 
$|\g| = (q^{2^m}-1)_2 \geq 8$
and use the element $s_m$ constructed in part (i) of the proof of Lemma \ref{reg-slu} via  
the embeddings
$$\GL_1(q^{2^m}) \hookrightarrow \GL_{2^{m}}(q) \hookrightarrow \SO^+_{n}(q)$$
Note that $s_m$ is conjugate over $\overline\F_q$ to 
$$\diag(\g,\g^{q},\g^{q^2}, \ldots,\g^{q^{2^m-1}},\g^{-1},\g^{-q},\g^{-q^2}, \ldots,\g^{-q^{2^m-1}}).$$
In particular, all eigenvalues of $s_m$ appear with multiplicity one and 
have order $(q^{2^m}-1)_2$, whence $s_m$ is regular. As an element of 
$\GL_{2^m}(q)$, $s_m$ has determinant
$$\nu : = \g^{1+q+q^2+ \ldots + q^{2^m-1}} = \g^{(q^{2^m}-1)/(q-1)}.$$
It follows that $\nu^{(q-1)/2} = \g^{(q^{2^m}-1)/2} =-1$, so $\theta(s_m) = -1$ by \cite[Lemma 2.7.2]{KL}.

\smallskip
(ii) Suppose that $n = 2$. We take $s^\e_2(-1) \in \SO^\e_2(q) \cong C_{q-\a}$ of order $(q-\e)_2$,
and $s^\e_2(1) = I_2$.
%if $4 \nmid (q-\e)$, and $s^\e_2(1) = -I_2$ if $4|(q-\e)$.

Next suppose that $n = 4$ and choose $\a = \pm 1$ such that $q \equiv \a \bmod 4$. We also fix 
$s_0 \in \SO^\a_2(q)$ of order $(q-\a)_2 \geq 4$ so that $\theta(s_0) = -1$ (note that 
we can take $s_0 = s^\a_2(-1)$).
Since $\SO^\e_4(q) > \SO^\a_2(q) \times \SO^{\e\a}_2(q)$, we can choose
$$s^+_4(1) = \diag(-I_2,I_2),~~s^-_4(1) = \diag(s_0,-I_2),~~s^+_4(-1) = \diag(s_0,-I_2),~~s^-_4(-1) = \diag(s_0,I_2).$$
Note that $|s^\e_n(\delta)| < (q^2-1)_2$ for all $\delta = \pm 1$ and $n = 2,4$. Also, we need later
the fact that $s^\e_4(-\e)$ does not have $1$ as an eigenvalue. 

\smallskip
(iii) Suppose that $6 \leq n \equiv 2 \bmod 4$. We write  
$$n = 2^{m_1+1} + 2^{m_2+1} + \ldots + 2^{m_t+1} + 2$$
with $m_1 > m_2 > \ldots > m_t \geq 1$, and choose 
$$s := \diag(s_{m_1},s_{m_2}, \ldots,s_{m_t},s^\e_2(\a)) \in \SO^+_{2^{m_1+1}}(q) \times \ldots \times 
    \SO^+_{2^{m_t+1}}(q) \times \SO^\e_{2}(q) < G$$
with $\a := (-1)^t\delta$, so that $\theta(s) = \delta$. 

Consider the case $n \equiv 0 \bmod 4$ and write 
$$n = 2^{m_1+1} + 2^{m_2+1} + \ldots + 2^{m_t+1}$$
with $m_1 > m_2 > \ldots > m_t \geq 1$. We can rewrite 
$$n = 2^{a_1+1} + 2^{a_2+1} + \ldots + 2^{a_{t-1}+1} + 2^{a_t+1} + 2^{a_{t+1}+1} + \ldots + 2^{a_k+1},$$
where $a_i = m_i$ for $1 \leq i \leq t-1$, $k = t+m_t-1$, and 
$$(a_{t},a_{t+1}, \ldots ,a_k) = \left\{ \begin{array}{ll}(m_t-1,m_t-2, \ldots,2,1,1), & m_t \geq 2,\\
    (m_t), & m_t = 1. \end{array} \right.$$ 
Now we can choose 
$$s := \diag(s_{a_1},s_{a_2}, \ldots,s_{a_{k-1}},s^\e_4(\a)) \in \SO^+_{2^{a_1+1}}(q) \times \ldots \times 
    \SO^+_{2^{a_{k-1}+1}}(q) \times \SO^\e_{4}(q) < G$$
with $\a := (-1)^{k-1}\delta$, so that $\theta(s) = \delta$. 

\smallskip
(iv) From now on, we may assume 
$$n = 2^{m_1+1} + 2^{m_2+1} + \ldots + 2^{m_t+1} + 1$$
with $m_1 > m_2 > \ldots > m_t \geq 0$ and $t \geq 1$. Again choose $\a = \pm 1$ such that 
$4|(q-\a)$. 

If $m_t = 0$, then we choose 
$$s:= \left\{ \begin{array}{ll} 
    \diag(s_{m_1}, s_{m_2}, \ldots ,s_{m_{t-1}},s_{m_t},1), & \delta = (-1)^t,\\
    \diag(s_{m_1}, s_{m_2}, \ldots ,s_{m_{t-1}},-I_2,1), & \delta = (-1)^{t-1},\end{array} \right.$$
and note that $s \in \SO^+_{2^{m_1+1}}(q) \times \ldots \times \SO^+_{2^{m_{t-1}+1}}(q) \times \SO^\a_2(q) \times \SO_1(q) < G$.    
%Similarly, if $m_t \geq 1$ and $\delta = (-1)^t$, we can also choose 
%$$s =  \diag(s_{m_1}, s_{m_2}, \ldots ,s_{m_{t-1}},s_{m_t},1) \in G.$$

Finally, suppose that $m_t \geq 1$.  We rewrite 
$$n = 2^{a_1+1} + 2^{a_2+1} + \ldots + 2^{a_{t-1}+1} + 2^{a_t+1} + 2^{a_{t+1}+1} + \ldots + 2^{a_k+1}+1,$$
where $a_i = m_i$ for $1 \leq i \leq t-1$, $k = t+m_t-1$, and 
$$(a_{t},a_{t+1}, \ldots ,a_k) = \left\{ \begin{array}{ll}(m_t-1,m_t-2, \ldots,2,1,1), & m_t \geq 2,\\
    (m_t), & m_t = 1. \end{array} \right.$$ 
Next, we set 
$$s:= \diag(s_{a_1}, s_{a_2}, \ldots ,s_{a_{k-1}},s^\beta_4(-\beta),1)$$
which belongs to  
$$\SO^+_{2^{a_1+1}}(q) \times \ldots \times \SO^+_{2^{a_{k-1}+1}}(q) \times \SO^\beta_4(q) \times \SO_1(q) < G,$$
where $\beta = (-1)^{k}\delta$.        

In all cases, one can verify that $\theta(s) = \delta$, and $s$ has the desired properties. 
\hal
 
\subsection{Proof of Theorem \ref{main2} for classical groups in odd characteristics}
First we deal with special linear and unitary groups in dimensions $3$ and $4$. 

The {\sf CHEVIE} project~\cite{CHEVIE} provides \emph{generic character tables}
for the groups $\SL_3(q)$ and $\SU_3(q)$; these are symbolic parametrized
descriptions of the character tables of all of these groups. 
To establish Lemma \ref{main2-slu34}, it suffices to prove that 
\[ c_{x,y,z} =  \frac{|x^G| \cdot |y^G|}{|G|} 
\sum_{\chi \in \Irr(G)} \frac{\chi(x)\chi(y)\chi(z^{-1})}{\chi(1)} > 0, \]
for all $x,y,z \in G$.  While, in principle there is a function which computes 
$c_{x,y,z}$ from the generic tables, 
its application is often difficult because the
result may depend in a complicated way on the parameters for a conjugacy
class.  We thank Frank L\"ubeck
for providing us with the following alternative proof of this result.

%In principle, there is also a function which computes 
%in general the application of this function can be difficult because the
%result may depend in a complicated way on the parameters for a conjugacy
%class.  

%Also, the generic character tables for $\SL_4(q)$ and $\SU_4(q)$ are not
%explicitly known (however, for our purpose, one could use character tables of
%$\GL_4(q)$ and $\GU_4(q)$ instead).

\begin{lemma}\label{main2-slu34}
Let $q$ be a power of an odd prime and let $G$ be one of the groups
$\SL_n(q)$ or $\SU_n(q)$ with $n \in \{3,4\}$. Every element of $G$ is a
product of three $2$-elements in $G$.
\end{lemma}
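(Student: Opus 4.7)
The plan is to apply the Frobenius class-algebra criterion stated just above the lemma: $z \in x^G \cdot y^G$ if and only if $c_{x,y,z} > 0$. To prove every $z \in G$ is a product of three $2$-elements, I would fix in advance a convenient $2$-element $w \in G$ (for instance, the regular $2$-element of determinant one supplied by Lemma \ref{reg-slu}) and then verify that $zw^{-1} \in x^G \cdot y^G$ for suitable $2$-element class representatives $x, y$ depending at most mildly on the class of $z$. Equivalently, one may work directly with the triple-class structure constant, summing products of character ratios over three specified $2$-element classes and showing positivity.

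For $n = 3$, the generic character tables of $\SL_3(q)$ and $\SU_3(q)$ supplied by the CHEVIE project \cite{CHEVIE} give a uniform parametric description of all irreducible characters on all conjugacy classes. Since $q$ is odd, every $2$-element of $G$ is semisimple (the unipotent part has odd order), and the $2$-element conjugacy classes correspond to Weyl orbits of $2$-elements in a maximal torus. The steps would be: (a) enumerate the generic families of $2$-element classes and select representatives via Lemma \ref{reg-slu}; (b) for each generic class type of $z$, express $c_{x,y,zw^{-1}}$ as a rational function in $q$ and any relevant class parameters using the generic character table; (c) verify positivity by case analysis on the residue of $q$ modulo $4$ (and $8$), which governs the $2$-part of the maximal tori that supply the $2$-elements.

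For $n = 4$ the generic character tables are considerably more intricate and parameter-rich, which is why the specialized symbolic routine of L\"ubeck referenced in the opening footnote becomes essential: it efficiently evaluates $c_{x,y,zw^{-1}}$ and verifies positivity uniformly in $q$ despite the parameter complexity. One may additionally exploit subgroup structure to reduce many cases by hand: classes of $z$ lying in proper subgroups of $\SL_4^\e(q)$ (such as natural $\GL_2(q^2)$-type or $\SL_2(q)\times\SL_2(q)$-type subgroups) can be handled by reduction to lower-rank computations using Lemma \ref{reg-slu}.

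The principal obstacle is the parameter-dependent case analysis. Target classes $z^G$ with small order and large centralizer (near-central elements, or elements of small $2$-power order) yield character sums in which many non-principal characters contribute comparable magnitudes with delicate sign cancellations, so positivity is not immediate from a coarse Cauchy--Schwarz bound and requires either tight control of specific character values or explicit sign analysis. Small values of $q$ (say $q \leq 7$), where the supply of suitable $2$-elements is limited and the character tables have exceptional behavior, need to be singled out and handled by direct finite computation, in the spirit of Lemma \ref{odd-base}.
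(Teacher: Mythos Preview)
Your plan is a reasonable brute-force strategy, but it misses the paper's key simplification and would be substantially harder to execute than you suggest. The paper does \emph{not} choose all three classes to be $2$-element classes. Instead, for $n=3$ it takes $x$ a regular $2$-element (eigenvalues $c,c^{\pm q},c^{\mp q\mp 1}$ with $|c|=(q^2-1)_2$) and $y$ a regular semisimple element of a maximal torus of order $q^2\pm q+1$, which is certainly \emph{not} a $2$-element. The point is that, by Deligne--Lusztig theory, the only semisimple class in the dual group whose centralizer contains tori of both types is the identity, so only unipotent characters can be nonzero on both $x$ and $y$; inspection of the Weyl-group table then shows only $1_G$ and $\St$ survive. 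Hence
\[
c_{x,y,z}=\frac{|x^G|\,|y^G|}{|G|}\Bigl(1-\frac{\St(z^{-1})}{q^3}\Bigr)>0
\]
for every non-central $z$. Setting $z=x$ shows that $y$ itself is a product of two $2$-elements, and the bootstrap gives every non-central $z$ as a product of three $2$-elements. Central elements are handled by estimating $c_{x,x,xz}$ directly. For $n=4$ the same idea works with $y$ in a cyclic torus of order $q^3\pm 1$; now three unipotent characters survive, but their values on regular semisimple elements are polynomial in $q$, so $c_{x,x,y}$ and $c_{x,y,z}$ are computed exactly and positivity checked.

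By contrast, your plan forces all three classes to be $2$-element classes, so essentially every character of $G$ contributes to the Frobenius sum, and you are thrown into exactly the ``delicate sign cancellations'' you anticipate. The paper's device of pairing a $2$-element with an element from a torus of coprime order, then bootstrapping, collapses the sum to two or three terms and eliminates the case analysis on $q\bmod 4,8$ that you propose. Your reduction-to-subgroups idea for $n=4$ is also unnecessary once only unipotent characters are in play.
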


\pf
We choose conjugacy classes carefully, such that only
very few character values from the generic character tables are needed (and
these are also available for $n=4$).

We first consider $G = \SL_3(q)$ or $G = \SU_3(q)$. Let $c \in
\F_{q^2}^\times$ have order $(q^2-1)_2$. %, the highest $2$-power dividing $q^2-1$. 
Since $q-1$ and $q+1$ are even, $c \notin \F_q$,  $c \neq
c^q$ and $c \neq c^{-q}$. 

Let $x$ be a regular semisimple element with
eigenvalues $\{c,c^q,c^{-q-1}\}$ (in case $\SL$), or $\{c, c^{-q}, c^{q-1}\}$
(in case $\SU$). The centralizer of $x$ in $G$ is a maximal torus of order 
$q^2-1$. Let $y$ be a regular semisimple element of the maximal torus of
order $q^2 \pm q +1$. 

By inspecting the generic character tables for $\SL_3$ and $\SU_3$ in {\sf CHEVIE}, we
notice that there are only two irreducible characters which both have a 
non-zero value on the conjugacy classes of $x$ and $y$ (the trivial
character and the Steinberg character of degree $q^3$). This can be
explained in terms of Deligne-Lusztig theory and Lusztig's Jordan 
decomposition of characters,
see~\cite[13.16]{DM}: The only semisimple element of the dual group of $G$
whose centralizer contains maximal tori of types of the centralizers of $x$ 
and of $y$ is the trivial element. From information about the values of
Deligne-Lusztig characters, it follows that only unipotent characters can be
non-zero on both $x$ and $y$. Which unipotent characters have this property
can be read from the character table of the Weyl group of $G$, isomorphic to
the symmetric group on $3$ points, because up to sign this describes the
values of unipotent characters on regular semisimple elements.

Now let $z \in G$. %The number $c_{x,y,z}$ is easy to evaluate:
Observe that 
\[ c_{x,y,z} = \frac{|x^G|\cdot |y^G|}{|G|} (1 - \frac{\St(z^{-1})}{q^3}). \]
Hence $c_{x,y,z} > 0$ for every non-central element $z$. 
The case $z = x$ shows that $y$ is the product of two $2$-power elements,
so every non-central $z$ is a product of three $2$-power elements.

For some $q$ there are non-trivial $z$ in the center of $G$. To show that 
such $z$ can be written as product of three $2$-power elements, we have a
closer look at the generic character table to establish that $c_{x,x,xz} > 0$.
We can compute readily a sufficient lower bound for this number: 
for example, in $\SL_3(q)$ there are $q-2$ irreducible
characters of degree $q^2+q+1$ whose value on $x$ and $xz$ are some root
of unity; for a lower bound we can substitute the corresponding terms in
$c_{x,x,xz}$ by $-(q-2)/(q^2+q+1)$.

Now we turn to the case $G = \SL_4(q)$ and $G = \SU_4(q)$. In this case the
center of $G$ has order $2$ or $4$, so there is nothing to show for center
elements. All groups of type $\SL_n(q)$ contain pairs of regular semisimple
elements such that only two characters are non-zero on both elements. But
for $n=4$ there are no such pairs containing $2$-power elements, therefore 
we need a slightly more complicated argument than before. 

Let $c \in \F_{q^2}^\times$ have order $(q^2-1)_2$. Now $c \neq
c^{-1}$ and $c \neq c^{\pm q}$, and $G$ contains a regular $2$-power element
$x$ with eigenvalues $\{c,c^q,c^{-1},c^{-q}\}$; its centralizer in $G$ is a
maximal torus of order $(q^2-1)(q \pm 1)$.
We choose as $y$ a regular element of a cyclic maximal torus of
order $q^3 \pm 1$. With the same arguments as sketched in the
$\SL_3/\SU_3$-case, we find that only unipotent characters can have non-zero
value on both $x$ and $y$. 
%We do not know the generic character tables for $\SL_4/\SU_4$, but 
The unipotent characters of $G$ are obtained
by restricting the unipotent characters of $\GL_4(q)$ or $\GU_4(q)$, 
respectively. These
are available in {\sf CHEVIE}, and their values are all given by evaluating
polynomials over the integers at $q$. 
%(no complicated dependency on parameters for conjugacy classes). 

There are three unipotent
characters with non-zero value on $x$ and $y$, and we can compute the
precise values of $c_{x,x,y}$ and $c_{x,y,z}$ for every non-central $z \in G$.
For all resulting  polynomials, it is easy to see that they evaluate to a
positive number for all prime powers $q$. This shows that $y$ is a product
of two $2$-power elements, so every non-central element is a product of
three $2$-power elements.
\hal

\begin{prop}\label{main2-sl}
Theorem \ref{main2} holds for all quasisimple covers of $S = \PSL_n(q)$, if $n \geq 5$ and $2 \nmid q$. 
\end{prop}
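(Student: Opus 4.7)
By Lemma \ref{schur}, it suffices to prove that every element of $G := \SL_n(q)$ is a product of three $2$-elements: any quasisimple cover of $\PSL_n(q)$ is a $2'$-central quotient of $G$, and quotient maps send $2$-elements to $2$-elements. We view $G$ as a subgroup of $\tilde G := \GL_n(q)$ and imitate the character-theoretic arguments of Lemma \ref{main2-slu34} and Proposition \ref{sl-large}, using the regular $2$-elements of Lemma \ref{reg-slu} as building blocks. Given $g \in G$, the plan is to construct three $2$-elements $s_1, s_2, s_3 \in \tilde G$, each with determinant $1$, whose product is $g$. The third factor $s_3 \in \SL_n(q)$ will be supplied by Lemma \ref{reg-slu}: a regular $2$-element with $|\bfC_{\tilde G}(s_3)| \leq q^n - 1$. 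Writing $h := g s_3^{-1} \in G$, the task reduces to showing $h \in s_1^{\tilde G} \cdot s_2^{\tilde G}$ for suitable regular $2$-elements $s_1, s_2 \in \SL_n(q)$.

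We take $s_1, s_2$ from Lemma \ref{reg-slu}: each is regular with $|\bfC_{\tilde G}(s_i)| \leq q^n - 1$, and its eigenvalue pattern places at most two eigenvalues in $\mu_{q-1}$, each of multiplicity one. These eigenvalue constraints yield the Weil-character bound $|\tau_{i,j}(s_k)| \leq 1$ of (\ref{weil-sl1}). Following the template of Proposition \ref{sl-large}, we then analyse the sum $\sum_\chi \chi(s_1)\chi(s_2)\overline{\chi(h)}/\chi(1)$ arising in Lemma \ref{basic}(i): the $q - 1$ linear characters contribute the principal term $q - 1$ (since $\det(s_1) = \det(s_2) = 1$), the Weil characters contribute a small amount bounded using the estimate (\ref{weil-sl2}), and higher-degree characters are controlled via Lemma \ref{basic}(ii) with $|\bfC_{\tilde G}(s_i)| \leq q^n - 1$. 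Provided $|\bfC_{\tilde G}(h)|$ is not too large, this sum stays positive, so $h \in s_1^{\tilde G} \cdot s_2^{\tilde G}$ and hence $g = s_1 s_2 s_3$.

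The main obstacle is to ensure that $|\bfC_{\tilde G}(h)|$ is adequately bounded for every $g \in G$. Since $h = g s_3^{-1}$, we use the flexibility in Lemma \ref{reg-slu}, which produces regular $2$-elements of prescribed determinant with considerable variation in eigenvalues, to choose $s_3$ so that the eigenvalues of $g s_3^{-1}$ are spread out and $|\bfC_{\tilde G}(h)|$ stays small. Certain extremal classes of $g$ require separate treatment: central elements $g = \lambda I_n \in \bfZ(G)$ are either themselves $2$-elements (if $|\lambda|$ is a $2$-power, in which case $g = g \cdot 1 \cdot 1$) or handled by an analogue of the $c_{x,x,xz}$ argument of Lemma \ref{main2-slu34}; unipotent elements, where $|\bfC_{\tilde G}(g)|$ can be as large as $q^{n(n-1)}$, demand a more careful selection of $s_3$ so that $g s_3^{-1}$ becomes essentially regular semisimple. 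Verifying that these pieces combine uniformly across all $g \in G$, particularly for the moderate range $n = 5, 6$ where generic degree bounds are tight, is the principal technical task.
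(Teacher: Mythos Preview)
Your approach has a genuine gap, and it is precisely the obstacle you identify yourself: controlling $|\bfC_{\tilde G}(h)|$ for $h = g s_3^{-1}$. You propose to choose $s_3$ adaptively so that $h$ has small centralizer, but you give no mechanism for doing this, and in fact no such uniform mechanism is available. The element $h$ is an essentially arbitrary element of $\SL_n(q)$, and the bound from Lemma \ref{basic}(ii) then requires $|\bfC_{\tilde G}(h)|^{1/2}$ to be dominated by a quantity of order $q^{2n}/(q^n-1) \approx q^n$, i.e.\ $|\bfC_{\tilde G}(h)|$ must be at most roughly $q^{2n}$. Many non-central elements fail this badly, and there is no reason the finitely many regular $2$-elements produced by Lemma \ref{reg-slu} should suffice to push every $g$ into a small-centralizer class after one multiplication.

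The paper sidesteps this entirely by using the three-class Frobenius formula directly: one shows $g \in s^G \cdot s^G \cdot s^G$ for a single fixed regular $2$-element $s = s_n(1)$ by verifying
\[
\sum_{\chi \in \Irr(G)} \frac{\chi(s)^3\,\bar\chi(g)}{\chi(1)^2} \neq 0.
\]
The point is that the trivial bound $|\chi(g)/\chi(1)| \leq 1$ reduces this to the condition
\[
\sum_{1_G \neq \chi \in \Irr(G)} \frac{|\chi(s)|^3}{\chi(1)} < 1,
\]
which is \emph{independent of $g$}. Now only $|\bfC_G(s)|$ enters, and $s$ is regular with centralizer of order at most $q^n-1$ (or $(q^4-1)(q^2-1)/(q-1)$ when $(n,q)=(6,3)$). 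The Weil characters are handled as you suggest (with $|\tau_i(s)| \leq 3$, not $1$, since Lemma \ref{reg-slu} allows up to two eigenvalues in $\mu_{q-1}$), and the rest goes through by the standard splitting at the degree threshold $D$. No adaptive choice, no case analysis on $g$, no centralizer bound for $g$ or any auxiliary element is needed.
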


\pf
(i) By Lemma \ref{schur}, it suffices to prove Theorem \ref{main2} for $G = \SL_n(q)$. Let $s = s_n(1) \in G$ be as constructed
in Lemma \ref{reg-slu}. It suffices to show that every $g \in G$ is a product of three conjugates of $s$, which is equivalent to
\begin{equation}\label{3class1}
  \sum_{\chi \in \Irr(G)}\frac{\chi(s)^3\bar{\chi}(g)}{\chi(1)^2} \neq 0.
\end{equation}
As $|\chi(g)/\chi(1)| \leq 1$, it suffices to prove 
\begin{equation}\label{3class2}
 \sum_{1_G \neq \chi \in \Irr(G)}\frac{|\chi(s)|^3}{\chi(1)} < 1.
\end{equation}
Set 
$$D := \left\{ \begin{array}{ll}(q^n-1)(q^{n-1}-q^2)/(q-1)(q^2-1), & (n,q) \neq (6,3),\\
            (q^5-1)(q^3-1), & (n,q) = (6,3). \end{array} \right.$$
By \cite[Theorem 3.1]{TZ1}, every character $\chi \in \Irr(G)$ of degree less than $D$ is either $1_G$ or 
one of $q-1$ irreducible Weil characters $\tau_{i}$, $0 \leq i \leq q-2$. 

\smallskip
(ii) Consider the case $n \geq 6$. The construction of 
$s$ in Lemma \ref{reg-slu} shows that 
$$|\bfC_G(s)| \leq \left\{ \begin{array}{ll}(q^n-1)/(q-1), & (n,q) \neq (6,3),\\
            (q^4-1)(q^2-1)/(q-1), & (n,q) = (6,3). \end{array} \right.$$      
Hence 
$$\sum_{\chi \in \Irr(G),~\chi(1) \geq D}\frac{|\chi(s)|^3}{\chi(1)} <  
    \frac{|\bfC_G(s)|^{1/2}}{D}\cdot\sum_{\chi \in \Irr(G)}|\chi(s)|^2 =  \frac{|\bfC_G(s)|^{3/2}}{D}   
    < 0.9099.$$            
Next we estimate $|\tau_i(s)|$. Recall that $1_G+\tau_0$ is just the permutation character of $G$ acting on 
the set of $1$-spaces of $\F_q^n$. In the notation of the proof of Proposition \ref{sl-large}, 
by Lemma \ref{reg-slu}, $e(g,\delta^l) \leq 1$ for all $0 \leq l \leq q-2$ and
equality can be attained at most twice. It follows that $s$ fixes at most two $1$-spaces, i.e.\ 
$0 \leq \tau_0(s)  +1 \leq 2$, so $|\tau_0(s)| \leq 1$. 
Arguing as in part (i) of the proof of Proposition \ref{sl-large}, for $1 \leq i \leq q-2$ we obtain
$$|\tau_i(s)| \leq \frac{q+q + 1 \cdot (q-3)}{q-1} = 3.$$        
Hence
$$\sum_{\chi \in \Irr(G),~1 < \chi(1) < D}\frac{|\chi(s)|^3}{\chi(1)} = \sum^{q-2}_{i=0}\frac{|\tau_i(s)|^3}{\tau_i(1)}
    \leq \frac{1+(q-2) \cdot 3^3}{(q^n-q)/(q-1)} < 0.0772.$$
Thus
$$\sum_{1_G \neq \chi \in \Irr(G)}\frac{|\chi(s)|^3}{\chi(1)} < 0.9099 + 0.0772 =  0.9871,$$   
so we are done by (\ref{3class2}).     

\smallskip
(iii) Assume now that $n = 5$. The construction of  $s$ in Lemma \ref{reg-slu} implies that
$|\bfC_G(s)| \leq q^4-1$; furthermore,  $e(g,\delta^l) \leq 1$ for all $0 \leq l \leq q-2$ and
equality can be attained at most once.  It follows by (\ref{weil-sl1}) that 
$|\tau_i(s)| \leq 1$ for all $i$. Arguing as in (ii), we obtain 
$$\sum_{\chi \in \Irr(G),~\chi(1) \geq D}\frac{|\chi(s)|^3}{\chi(1)} <  \frac{(q^4-1)^{1.5}}{q^2(q^5-1)/(q-1)},$$
$$\sum_{\chi \in \Irr(G), 1 < \chi(1) < D}\frac{|\chi(s)|^3}{\chi(1)} = \sum^{q-2}_{i=0}\frac{|\tau_i(s)|^3}{\tau_i(1)} 
    \leq \frac{q-1}{(q^5-q)/(q-1)}.$$
Thus    
$$\sum_{1_G \neq \chi \in \Irr(G)}\frac{|\chi(s)|^3}{\chi(1)} <  \frac{(q^4-1)^{1.5}}{q^2(q^5-1)/(q-1)} + \frac{q-1}{(q^5-q)/(q-1)}
    < \frac{q^4+q-1}{(q^5-q)/(q-1)} < 1,$$
so we are done again.  
\hal 

\begin{prop}\label{main2-su-large}
Theorem \ref{main2} holds for all quasisimple covers of $S = \PSU_n(q)$, 
if $n \geq 5$ and $q \geq 5$ is odd, or 
if $(n,q) = (5,3)$. 
\end{prop}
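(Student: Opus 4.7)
By Lemma \ref{schur}, it suffices to prove Theorem \ref{main2} for the $2'$-universal cover of $\PSU_n(q)$ in each case; since any such cover is a central quotient of $\SU_n(q)$ by a $2$-subgroup of its center, an expression as a product of three $2$-elements in $\SU_n(q)$ descends to the quotient. So it is enough to work with $G := \SU_n(q)$. The special case $(n,q) = (5,3)$ is then immediate from Lemma \ref{odd-base}(ii): since $\gcd(5,q+1) = \gcd(5,4) = 1$, the group $\SU_5(3) = \PSU_5(3)$ is simple, and if $g \in \SU_5(3)$ is written as $g = xyz$ with $x,y,z$ being $2$-elements of $\GU_5(3)$ and $\det(x) = \det(y) = 1$, then $\det(z) = \det(g) = 1$, so $z \in \SU_5(3)$ as well.

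For $n \geq 5$ and $q \geq 5$ odd, set $G = \SU_n(q)$ and take $s := s_n(1) \in G$ produced by Lemma \ref{reg-slu} with $\e = -$ and $\delta = 1$. This $s$ is a regular $2$-element whose distinct eigenvalues on $\overline{\F}_q^n$ include at most two lying in $\mu_{q+1}$, each of multiplicity one. To show that every $g \in G$ is a product of three $G$-conjugates of $s$, Frobenius's formula together with $|\chi(g)/\chi(1)| \leq 1$ reduces the task to proving
\begin{equation}\label{three-su}
\sum_{1_G \neq \chi \in \Irr(G)} \frac{|\chi(s)|^3}{\chi(1)} < 1.
\end{equation}
I split \eqref{three-su} at the threshold $D$ given by \cite[Theorem 4.1]{TZ1} and \cite[Lemma 4.7]{TZ2}: for $n \geq 6$, the only non-principal $\chi \in \Irr(G)$ with $\chi(1) < D$ are the $q+1$ restricted Weil characters $\zeta_i := \zeta_{i,0}|_{G}$ from Proposition \ref{su-large}. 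Using the explicit formula for $\zeta_{i,0}$ together with the eigenvalue constraint on $s$, a direct calculation parallel to part (i) of the proof of Proposition \ref{su-large} yields $|\zeta_i(s)| \leq 2$ for every $i$. The Weil contribution to \eqref{three-su} is then bounded by roughly $8(q+1)^2/(q^n-(-1)^n - q - 1)$, which is negligible for $n \geq 5$, $q \geq 5$. The remaining tail $\sum_{\chi(1) \geq D} |\chi(s)|^3/\chi(1)$ is bounded via Cauchy--Schwarz by $|\bfC_G(s)|^{3/2}/D$, and the product torus structure given by Lemma \ref{reg-slu} yields an adequate estimate on $|\bfC_G(s)|$ (since $s$ is regular semisimple, its centralizer in $\GU_n(q)$ is a maximal torus whose order is a concrete product of factors $q^{2^{m_i}}-1$ and $q+1$, dividing the $\SU$-intersection by $q+1$).

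The main obstacle will be the small case $n = 5$, where the \cite{TZ1} threshold $D$ admits one additional non-principal low-degree character, the unipotent character $\varphi$ of degree $q^2(q^5+1)/(q+1)$. Following exactly part (ii) of the proof of Proposition \ref{su-large}, I would recognize $\rho := 1_G + \varphi + \sigma$ as the rank-$3$ permutation character of $G$ on isotropic $1$-spaces of $\F_{q^2}^5$, where $\sigma$ is the unipotent character labeled by $(3,1,1)$. Since $\sigma$ has defect zero at a suitable prime dividing $|s|$ it vanishes at $s$, while the number of isotropic $1$-spaces of $\F_{q^2}^5$ fixed by $s$ is bounded by the number of eigenvalues of $s$ in $\mu_{q+1}$, which is at most $2$. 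Hence $|\varphi(s)| \leq 2$, and combined with the high-degree estimate this closes \eqref{three-su} as in part (iii) of Proposition \ref{main2-sl}. A brief numerical verification at $(n,q) = (5,5)$ and $(5,7)$, where the ratios are tightest, suffices to complete the proof.
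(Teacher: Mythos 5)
Your reduction to $G=\SU_n(q)$ via Lemma \ref{schur}, your choice of the regular $2$-element $s=s_n(1)$ from Lemma \ref{reg-slu}, and your treatment of $n\ge 6$, $q\ge 5$ are essentially the paper's proof (the paper bounds $|\zeta_i(s)|\le (3q-1)/(q+1)$ and gives explicit centralizer bounds $(q+1)^{n-1}$, $(q^4-1)(q+1)^2$, $(q^4-1)(q+1)$ for $n\ge 8$, $n=7$, $n=6$; your sharper $|\zeta_i(s)|\le 2$ is correct). Your disposal of $(n,q)=(5,3)$ via Lemma \ref{odd-base}(ii) plus the determinant argument is a legitimate and even simpler route than the paper's, which handles $q=3$ inside its uniform $n=5$ computation. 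The genuine gap is in the case $n=5$, $q\ge 5$ odd, and it is twofold.

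First, your bound $|\varphi(s)|\le 2$ relies on $\sigma(s)=0$ ``since $\sigma$ has defect zero at a suitable prime dividing $|s|$''. But $s$ is a $2$-element, so the only prime dividing $|s|$ is $2$, and $\sigma$, of degree $q^3(q^2+1)(q^2-q+1)$ with $2$-part just $2$, is nowhere near $2$-defect zero; the defect-zero vanishing invoked in Proposition \ref{su-large} was available there only because $g_1,g_2$ had orders divisible by the primitive prime divisors $s,s'$, which is not the case here. Second, and more fatally, the numerics cannot close with your cutoff: if only $1_G$, the Weil characters and $\varphi$ are treated individually, the Cauchy--Schwarz tail is bounded by $|\bfC_G(s)|^{3/2}/D$ with $|\bfC_G(s)|=q^4-1$ and $D\le (q^2+1)(q^5+1)/(q+1)$, the next character degree; this bound is roughly $(q^4-1)^{3/2}(q+1)/\bigl((q^2+1)(q^5+1)\bigr)\approx 1+1/q>1$ for every $q$ (about $1.15$ at $q=5$), so no numerical check at $(5,5)$ or $(5,7)$ can rescue the estimate. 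This is precisely why the paper, for $n=5$, raises the cutoff to $D=(q^2+1)(q^5+1)$ and must then control not one but $q+1$ further characters $\a_0,\dots,\a_q$ of degrees $q^2(q^5+1)/(q+1)$ and $(q^2+1)(q^5+1)/(q+1)$; it does so by reading off from Nozawa's character table of $\GU_5(q)$ \cite{Noz2} that each $\a_i$ extends to $\GU_5(q)$ and satisfies $|\a_i(s)|\le 1$, after which the sum is below $0.72$. Your $n=5$ argument needs to be repaired along these lines, i.e.\ by a genuine bound at the $2$-element $s$ for all the characters of degree about $q^6$, not just the unipotent one.
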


\pf
(i) By Lemma \ref{schur}, it suffices to prove Theorem \ref{main2} for $G = \SU_n(q)$. Let $s = s_n(1) \in G$ be as constructed
in Lemma \ref{reg-slu}. It suffices to show that every $g \in G$ is a product of three conjugates of $s$. Hence, it suffices
to prove (\ref{3class2}). Set 
$$D := \frac{(q^n-1)(q^{n-1}-q^2)}{(q-1)(q^2-1)}.$$
By \cite[Theorem 4.1]{TZ1}, every character $\chi \in \Irr(G)$ of degree less than $D$ is either $1_G$ or 
one of $q+1$ irreducible Weil characters $\zeta_{i}$, $0 \leq i \leq q$. 

Consider the case $n \geq 6$. The construction of $s$ in Lemma \ref{reg-slu} shows that 
$$|\bfC_G(s)| \leq \left\{ \begin{array}{ll}(q+1)^{n-1}, & n \geq 8,\\
            (q^4-1)(q+1)^2, & n = 7,\\
            (q^4-1)(q+1), & n=6. \end{array} \right.$$      
Hence, as in the proof of Lemma \ref{main2-sl}, 
$$\sum_{\chi \in \Irr(G),~\chi(1) \geq D}\frac{|\chi(s)|^3}{\chi(1)} < \frac{|\bfC_G(s)|^{3/2}}{D}   
    < 0.6992.$$            
Next we estimate $|\zeta_i(s)|$. In the notation of the proof of 
Proposition \ref{su-large}, by 
Lemma \ref{reg-slu},  $e(g,\xi^l) \leq 1$ for all $0 \leq l \leq q$ and
equality can be attained at most twice. 
Arguing as in part (i) of the proof of Proposition \ref{su-large}, we obtain
$$|\zeta_i(s)| \leq \frac{q+q + 1 \cdot (q-1)}{q+1} = \frac{3q-1}{q+1}.$$        
Hence
$$ \sum_{\chi \in \Irr(G),~1 < \chi(1) < D}\frac{|\chi(s)|^3}{\chi(1)} = \sum^{q}_{i=0}\frac{|\zeta_i(s)|^3}{\zeta_i(1)}
    \leq \frac{(q+1)((3q-1)/(q+1))^3}{(q^n-q)/(q+1)} < 0.1467.$$
Thus
$$\sum_{1_G \neq \chi \in \Irr(G)}\frac{|\chi(s)|^3}{\chi(1)} < 0.6992 + 0.1467 =  0.8459,$$   
so we are done by (\ref{3class2}).     

\smallskip
(ii) Assume now that $n = 5$. The construction of  $s$ in Lemma \ref{reg-slu} implies that
$|\bfC_G(s)| \leq q^4-1$; furthermore,  $e(g,\xi^l) \leq 1$ for all $0 \leq l \leq q$ and
equality can be attained at most once.  It follows by (\ref{weil-su1}) that 
$|\zeta_i(s)| \leq 1$ for all $i$. Set 
$$D = (q-1)(q^2+1)(q^5+1)/(q-1).$$
Using \cite{Lu}, we check that if $\chi \in \Irr(G)$ satisfies $1 < \chi(1) < D$ then 
$\chi$ is either one of $q+1$ Weil characters $\zeta_i$, $0 \leq i \leq q$, or one of
$q+1$ characters $\a_i$, $0 \leq i \leq q$, where
$$\a_0(1) = q^2(q^5+1)/(q+1),~~\a_i(1) = (q^2+1)(q^5+1)/(q+1),~1 \leq i \leq q.$$
Inspecting the character table of $\GU_5(q)$ as given in \cite{Noz2}, we observe that
each $\a_i$ extends to
$\GU_5(q)$ and $|\a_i(s)| \leq 1$. Hence, 
$$\sum_{\chi \in \Irr(G), 1 < \chi(1) < D}\frac{|\chi(s)|^3}{\chi(1)} = 
    \sum^{q}_{i=0}\frac{|\zeta_i(s)|^3}{\zeta_i(1)} + \sum^{q}_{i=0}\frac{|\zeta_i(s)|^3}{\zeta_i(1)} 
    \leq \frac{2(q+1)}{(q^5-q)/(q+1)} < 0.1334.$$
On the other hand,
$$\sum_{\chi \in \Irr(G),~\chi(1) \geq D}\frac{|\chi(s)|^3}{\chi(1)} <  \frac{(q^4-1)^{1.5}}{(q-1)(q^2+1)(q^5+1)/(q-1)} < 0.5866.$$
Thus    
$$\sum_{1_G \neq \chi \in \Irr(G)}\frac{|\chi(s)|^3}{\chi(1)} < 0.1334 +0.5866 = 0.72,$$
so we are done again.  
\hal

For $\PSU_n(3)$, respectively $\PSp_{2n}(q)$, we again employ the notion of breakable elements 
as defined in Definition \ref{br-slu}(iii), respectively Definition \ref{unbr-spo}.

\begin{prop}\label{main2-su3}
Theorem \ref{main2} holds for all quasisimple covers of $S = \PSU_n(3)$ if $n \geq 5$.
\end{prop}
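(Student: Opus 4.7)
\pf
The plan is to prove by induction on $n \geq 5$ the following stronger statement: every $g \in \GU_n(3)$ can be written as a product $g = xyz$ of three $2$-elements in $\GU_n(3)$ with $\det(x) = \det(y) = 1$. Restricting to $g \in \SU_n(3)$ forces $\det(z) = 1$ as well, so this yields Theorem \ref{main2} for $\SU_n(3)$, and then Lemma \ref{schur} lifts it to all quasisimple covers of $\PSU_n(3)$. The base cases $n \in \{5, 6\}$ are supplied directly by Lemma \ref{odd-base}(ii).

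For the induction step with $n \geq 7$, I would split according to whether $g$ is breakable or unbreakable in $\GU_n(3)$, in the sense of Definition \ref{br-slu}(iii). If $g = \diag(g_1, g_2)$ lies in a natural subgroup $\GU_a(3) \times \GU_b(3)$ with $a+b = n$, $1 \leq a \leq b$, and $a,b \neq 2$, then either $a = 1$ (in which case $g_1 \in \GU_1(3) \cong C_4$ is already a $2$-element, and we write $g_1 = 1 \cdot 1 \cdot g_1$ while applying the inductive hypothesis to $g_2 \in \GU_{n-1}(3)$), or $3 \leq a \leq b$, in which case both $g_1$ and $g_2$ have decompositions by the base case or inductive hypothesis; the block-diagonal concatenation automatically preserves the determinant conditions on the first two factors, since $\det$ multiplies over block factors.

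For unbreakable $g$, let $s = s_n(1) \in \SU_n(3)$ be the regular $2$-element of determinant $1$ supplied by Lemma \ref{reg-slu}, and aim to show $g \in (s^G)^3$. This would give $g = h_1 h_2 h_3$ with each $h_i$ a $\GU_n(3)$-conjugate of $s$, hence a $2$-element of determinant $1$, and we would be done. By Frobenius' formula, it suffices to prove
\[
\sum_{\chi \in \Irr(G)} \frac{\chi(s)^3 \overline{\chi(g)}}{\chi(1)^2} \neq 0,
\]
which I would estimate via the three ingredients already developed in the paper: the centralizer bound $|\bfC_G(g)| \leq 3^{n+2} \cdot 2^4$ from Lemma \ref{LUL3} (applicable since unbreakable in the sense of Definition \ref{br-slu}(iii) matches the hypothesis there), the eigenspace estimate $e(s, \xi^l) \leq 1$ from Lemma \ref{reg-slu} yielding $|\zeta_{i,j}(s)| \leq (3q-1)/(q+1) = 2$ for the $16$ Weil characters (as in part (i) of the proof of Proposition \ref{main2-su-large}), and the classification of low-degree characters of $\GU_n(3)$ via \cite{TZ1} for $n \geq 7$, with the unipotent character $\varphi$ of partition $(n-2,2)$ evaluated at $s$ through the rank-$3$ permutation character $\rho = 1_G + \varphi + \psi$ on isotropic $1$-spaces.

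The main obstacle is that when $q = 3$ the crude Cauchy--Schwarz tail estimate $|\bfC_G(s)|^{3/2}/D$ is not small enough, because the gap between $|\bfC_G(s)| \lesssim 4^{n-1}$ and the character-degree threshold $D \sim 3^{2n-3}/16$ is much narrower than in the $q \geq 5$ case. To overcome this, I would replace the crude bound with the sharper estimate $|\chi(g)| \leq |\bfC_G(g)|^{1/2} \leq 4 \cdot 3^{(n+2)/2}$, which combined with Lemma \ref{LUL3} contracts the tail by a factor of order $q^{(n-2)/2}$, and handle the unipotent characters $\varphi$ and $\psi$ individually via the permutation-character identity above. For the smallest cases $n = 7, 8$ an explicit verification from \cite{Lu} may be needed to confirm that no further low-degree characters intervene.
\hal
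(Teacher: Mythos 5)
Your overall architecture (induction on $n$ via the breakable/unbreakable dichotomy of Definition \ref{br-slu}(iii), base cases from Lemma \ref{odd-base}(ii), Frobenius sum for unbreakable elements with the centralizer bound of Lemma \ref{LUL3} and the Weil-character estimates) is the same as the paper's, but your treatment of the unbreakable case has a genuine gap. Your inductive statement must cover \emph{all} of $\GU_n(3)$, including elements of nontrivial determinant, because a breakable element of $\SU_n(3)$ decomposes into blocks whose individual determinants need not be $1$. However, for unbreakable $g$ you propose to show $g \in (s^G)^3$ with $s = s_n(1) \in \SU_n(3)$: a product of three conjugates of a determinant-one element has determinant $1$, so this is impossible whenever $\det(g) \neq 1$ (and such unbreakable elements certainly exist, e.g.\ generators of a cyclic torus of order $3^n+1$, or $zu$ with $z$ a scalar of order $4$ and $u$ regular unipotent). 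Equivalently, in your Frobenius sum the contribution of the four linear characters is $\sum_i \lambda_i(s)^3\overline{\lambda_i(g)} = \sum_i \overline{\lambda_i(g)} = 0$ when $\det(g)\neq 1$, so there is no dominant main term for the character estimates to beat.

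The repair is exactly what the paper does: take the three classes to be $s_1 = s_2 = s_n(1)$ and $s_3 = s_n(\det(g))$, all supplied by Lemma \ref{reg-slu} (note $\det(g) \in \mu_4$ is a $2$-element of $\mu_{q+1}$, so the lemma applies), so that $\sum_i \lambda_i(s_1)\lambda_i(s_2)\lambda_i(s_3)\overline{\lambda_i(g)} = 4$ while the conclusion $g = xyz$ with $\det(x)=\det(y)=1$ and $x,y,z$ $2$-elements is precisely the inductive statement $\sQ_u(n)$. With this change your remaining estimates go through and in fact simplify: choosing $D = (3^n-1)(3^{n-1}-9)/32$, the characters of degree below $D$ are only the $4$ linear and $16$ Weil characters (by \cite[Proposition 6.6]{ore}), with $|\zeta_{i,j}(s_k)| \leq 2$ and $|\zeta_{i,j}(g)| \leq 3^{n-4}$ from (\ref{weil-su31}), and the tail is handled by Cauchy--Schwarz using $|\bfC_G(s_i)| \leq 4^n$ and $|\chi(g)| \leq |\bfC_G(g)|^{1/2} \leq 4\cdot 3^{(n+2)/2}$ — which is the sharper bound you anticipated needing; no analysis of the unipotent characters $\varphi,\psi$ and no separate \cite{Lu} check for $n = 7,8$ is required at this threshold. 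Also, in your breakable step note that Lemma \ref{odd-base}(ii) covers $3 \leq n \leq 6$, which is what you need for blocks of size $3$ and $4$, not just the base cases $n = 5,6$.
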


\pf
By Lemma \ref{schur}, it suffices to prove Theorem \ref{main2} for $L := \SU_n(3)$.  Consider 
the following statements for $G := \GU_n(3)$:
$$\sQ(n):\begin{array}{l}\mbox{Every }g \in G\mbox{ can be written as }g = xyz,\\
    \mbox{where }x,y,z \in G\mbox{ are 2-elements and }\det(x) = \det(y) = 1,\end{array}$$
$$\sQ_u(n):\begin{array}{l}\mbox{Every unbreakable }g \in G\mbox{ can be written as }g = xyz,\\
    \mbox{where }x,y,z \in G\mbox{ are 2-elements and }\det(x) = \det(y) = 1,\end{array}$$    
By Lemma \ref{odd-base}(ii),   $\sQ(n)$ holds for $3 \leq n \leq 6$.   
It is straightforward to check that Theorem \ref{main2} holds for $L$ with $n \geq 7$ once we 
show that $\sQ_u(n)$ holds. 

We now prove $\sQ_u(n)$ for $n \geq 7$. Consider an unbreakable 
$g \in G$. Lemma \ref{LUL3} implies that $|\bfC_G(g)| \leq 3^{n+2} \cdot 2^4$. 
Let $s_1 =s_2 := s_n(1)$ and $s_3 := s_n(\det(g))$, where $s_n(\delta)$ is constructed in
Lemma \ref{reg-slu}; in particular, $|\bfC_G(s_i)| \leq 4^n$. Choosing
$$D := (3^n-1)(3^{n-1}-9)/32,$$
by the Cauchy-Schwarz inequality, 
$$\sum_{\chi \in \Irr(G),~\chi(1) \geq D}\frac{|\chi(s_1)\chi(s_2)\chi(s_3)\bar\chi(g)|}{\chi(1)^2}
    < \frac{(4^n)^{3/2}(3^{n+2} \cdot 2^4)^{1/2}}{((3^n-1)(3^{n-1}-9)/32)^2} \leq 0.4866.$$
By \cite[Proposition 6.6]{ore}, the characters $\chi \in \Irr(G)$ of degree less than $D$ consist of
$4$ linear characters and $4^2$ Weil characters $\zeta_{i,j}$, $0 \leq i,j \leq 3$. 
Arguing as in part (i) of the proof of Proposition \ref{su-large}, we obtain
$$|\zeta_{i,j}(s_k)| \leq \frac{q+q + 1 \cdot (q-1)}{q+1} = \frac{3q-1}{q+1} = 2$$
for $q= 3$. Together with (\ref{weil-su31}), this implies that
$$\sum_{{\tiny \begin{array}{l}\chi \in \Irr(G),\\1 < \chi(1) < D\end{array}}}
    \frac{|\prod^3_{k=1}\chi(s_k)\cdot\bar\chi(g)|}{\chi(1)^2} = 
    \sum_{{\tiny \begin{array}{l}\chi=\zeta_{i,j}\\0 \leq i,j \leq 3\end{array}}}
    \frac{|\prod^3_{k=1}\chi(s_k)\cdot\bar\chi(g)|}{\chi(1)^2}
    \leq \frac{4^2 \cdot 2^3 \cdot 3^{n-4}}{((3^n-3)/4)^2} < 0.0117.$$  
Since   
$$\sum_{\chi \in \Irr(G),~\chi(1) =1}\frac{\chi(s_1)\chi(s_2)\chi(s_3)\bar\chi(g)}{\chi(1)^2}  = 4,$$
we conclude that 
$$\sum_{\chi \in \Irr(G)}\frac{\chi(s_1)\chi(s_2)\chi(s_3)\bar\chi(g)}{\chi(1)^2} \neq 0,$$
i.e.\ $g \in (s_1)^G\cdot(s_2)^G\cdot(s_3)^G$, as stated.
\hal
 
\begin{prop}\label{main2-sp}
Theorem \ref{main2} holds for all quasisimple covers of $S = \PSp_{2n}(q)$ if $n \geq 1$,
$2 \nmid q$, and $(n,q) \neq (1,3)$.
\end{prop}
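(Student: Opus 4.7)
The plan is to follow the template of Propositions \ref{main2-sl}, \ref{main2-su-large}, and \ref{main2-su3}: reduce via Lemma \ref{schur}, construct a regular $2$-element $s$ using Lemma \ref{reg-sp}, and show by a character estimate that every element of $G := \Sp_{2n}(q)$ is a product of three $G$-conjugates of $s$.

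By Lemma \ref{schur}, since the Schur multiplier of $\PSp_{2n}(q)$ for odd $q$ is a $2$-group (modulo finitely many exceptional covers, such as $3.\A_6 \cong 3.\PSL_2(9)$, which are handled by Lemma \ref{alt-odd}), it suffices to prove Theorem \ref{main2} for $G = \Sp_{2n}(q)$. I would first dispose of the small $(n,q)$ cases. The case $n = 1$ is $\PSL_2(q)$: for $q \in \{5,9\}$ it is an alternating group covered by Lemma \ref{alt-odd}, and $\PSL_2(7) \cong \PSL_3(2)$ is of Lie type in characteristic~$2$, covered by Lemma \ref{lie2}; for the remaining odd $q \ge 11$ I would argue directly, using that every semisimple element of $\PSL_2(q)$ is real (hence a product of two $2$-elements by Lemma \ref{real1}) and writing each unipotent $u$ as $u = (us)s$ for an involution $s$ chosen so that $us$ is semisimple. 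The cases $\PSp_4(q)$ for $q \ge 5$ odd and $\PSp_{2n}(3)$ for $n$ beyond the list in Lemma \ref{odd-base} should be handled either by the same main argument (with extra care) or by direct character-table calculations analogous to those in Lemma \ref{odd-base}.

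For the main range, say $n \ge 3$ with $(n,q)$ outside the exceptional list, take the regular $2$-element $s \in G$ from Lemma \ref{reg-sp}: $s$ has neither $1$ nor $-1$ as an eigenvalue on $V = \F_q^{2n}$, and a careful accounting of the block decomposition in that construction yields a small bound for $|\bfC_G(s)|$ (of order $q^n$ up to constants). The target
\[
\sum_{\chi \in \Irr(G)} \frac{\chi(s)^3\,\overline{\chi(g)}}{\chi(1)^2} \ne 0,
\]
reduces via $|\chi(g)|/\chi(1) \le 1$ to
\[
\sum_{1_G \ne \chi \in \Irr(G)} \frac{|\chi(s)|^3}{\chi(1)} < 1.
\]
Pick a threshold such as $D = (q^{2n}-1)(q^{n-1}-q)/(2(q^2-1))$ (as in Proposition \ref{sp-odd-large}). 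By \cite[Theorem 5.2]{TZ1}, the only $\chi$ with $1 < \chi(1) < D$ are the four irreducible Weil characters $\eta_1, \eta_2, \xi_1, \xi_2$ of degrees $(q^n \mp 1)/2$. Applying \cite[Lemma 2.4]{GT2} to the reducible Weil character $\omega = \eta_1 + \xi_1$ at both $s$ and $zs$ (where $z$ is the central involution of $G$), and using that $\pm 1$ are not eigenvalues of $s$, gives $|\omega(s)|, |\omega(zs)| \le 1$, hence $|\eta_j(s)|, |\xi_j(s)| \le 1$, exactly as in part~(a) of the proof of Proposition \ref{sp-odd-large}. So the Weil contribution is at most $8/(q^n - 1)$, while for $\chi(1) \ge D$, the bound $|\chi(s)| \le |\bfC_G(s)|^{1/2}$ and $\sum_\chi |\chi(s)|^2 = |\bfC_G(s)|$ give a tail bounded by $|\bfC_G(s)|^{3/2}/D$. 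For $n \ge 3$ and $q$ not too small, both contributions are comfortably below $1$.

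The main obstacle will be finishing all the small cases uniformly. The case $n = 1$ lies outside the reach of the Tiep--Zalesski Weil-character machinery above and requires the direct reality argument sketched. For $n = 2$ the low-rank character classification is weaker; there one must use the explicit characters of $\Sp_4(q)$ (cf.\ \cite{E, Sri}) in place of \cite{TZ1}. The most delicate range is $q = 3$ with $n$ near the threshold, where the generic tail $|\bfC_G(s)|^{3/2}/D$ is close to $1$; here one needs to refine the analysis by peeling off additional intermediate-degree characters using the explicit list of Lemma \ref{sp3-lowdim} and sharper eigenvalue-multiplicity bounds from Lemma \ref{blocks}, exactly in the spirit of Proposition \ref{sp3-large}.
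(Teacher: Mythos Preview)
Your overall strategy matches the paper's, but you are missing two reductions that the paper uses and without which your numerics do not close.

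First, your threshold $D = (q^{2n}-1)(q^{n-1}-q)/(2(q^2-1))$ is \emph{not} the Weil gap from \cite[Theorem 5.2]{TZ1}; that result only guarantees that the four Weil characters are the sole $\chi$ with $1 < \chi(1) < (q^n-1)(q^n-q)/(2(q+1))$. Below your larger $D$ there sit the further unipotent characters $\alpha_\pm,\beta_\pm$ (and, for odd $n$, the characters in part (b3) of Proposition \ref{sp-odd-large}), which you do not address. If instead you use the correct TZ1 threshold, your tail $|\bfC_G(s)|^{3/2}/D$ exceeds $1$ not only for $q=3$ but also for $(n,q)=(3,7)$ and $(n,q)=(3,5),(4,5)$; so the difficulty is broader than you acknowledge.

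Second, the paper resolves this with two reductions you omit. It first disposes of all $q\equiv 1 \bmod 4$ by observing that $\Sp_{2n}(q)$ is real (via \cite[Theorem 1.2]{TZ3}) and invoking Lemma \ref{real1}; this removes $q=5,9,13,\ldots$ entirely. For $q\equiv 3 \bmod 4$ (and $(n,q)\neq(2,3),(3,3),(4,3)$, handled separately) it then reduces to \emph{unbreakable} $g$ via Definition \ref{unbr-spo}, so that Lemma \ref{spunb} gives a bound $|\bfC_G(g)|\le B$. With this in hand the paper bounds $\sum_{\chi\neq 1}|\chi(s)^3\bar\chi(g)|/\chi(1)^2$ rather than your cruder $\sum_{\chi\neq 1}|\chi(s)|^3/\chi(1)$, and the tail becomes $C^{3/2}B^{1/2}/D^2$ (with the TZ1 $D$), which is comfortably below $1$ uniformly. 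Your plan to ``peel off'' characters via Lemma \ref{sp3-lowdim} might be made to work for $q=3$, but the unbreakable reduction is what actually carries the argument across all the small-$(n,q)$ cases at once.
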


\pf
(i) Consider the case $n = 1$. The cases $\PSp_2(5) \cong \Sp_2(4)$,
$\PSp_2(7) \cong \SL_3(2)$,  and $\PSp_2(9) \cong \A_6$ are covered by Lemma \ref{lie2}, so we 
may assume $q \geq 11$.  By Lemma \ref{schur}, it suffices to prove Theorem \ref{main2} for $L := \Sp_2(q)$.
Using the character table of $L$ as given in \cite{DM}, it is straightforward to check that 
$g \in s^L \cdot s^L \cdot s^L$ for all $g \in L$ if $|s| = 4$.   

From now on we may assume $n \geq 2$. Hence by Lemma \ref{schur}, it suffices to prove Theorem \ref{main2} 
for $L := \Sp_{2n}(q)$. If $q \equiv 1 \bmod 4$, then $L$ is real by \cite[Theorem 1.2]{TZ2}, whence we are done
by Lemma \ref{real1}. Also, the case $\PSp_4(3) \cong \SU_4(2)$ is covered by Lemma \ref{lie2}. 
Note that Theorem \ref{main2} holds for $\Sp_6(3)$ and $\Sp_8(3)$ by Lemma \ref{odd-base}(i). 
So we may assume $q \equiv 3 \bmod 4$ and $(n,q) \neq (2,3)$, $(3,3)$, $(4,3)$. 

\smallskip
(ii) It suffices to prove that every unbreakable $g \in L$ is a product of three $2$-elements of $L$.
By Lemma \ref{spunb}, 
$$|\bfC_L(g)| \leq B := \left\{ \begin{array}{ll}2q^n, & 2|n,~q \geq 5,\\
    48 \cdot 3^{2n+1}, & 2|n,~q = 3,\\
    q^{2n-1}(q^2-1), & 2 \nmid q. \end{array} \right.$$ 
Let $s$ be as constructed in Lemma \ref{reg-sp}; in particular, 
$$|\bfC_L(s)| \leq C := \left\{ \begin{array}{ll}q^2-1, & n=2,\\
    (q^2-1)(q+1), & n = 3,\\
    (q+1)^n, & n \geq 4. \end{array} \right.$$ 
Choosing
$$D := (q^n-1)(q^n-q)/2(q+1),$$
by the Cauchy-Schwarz inequality, 
$$\sum_{\chi \in \Irr(L),~\chi(1) \geq D}\frac{|\chi(s)^3\cdot\bar\chi(g)|}{\chi(1)^2}
    < \frac{C^{3/2}\cdot B^{1/2}}{D^2} \leq 0.5255.$$
By \cite[Theorem 5.2]{TZ1}, the characters $\chi \in \Irr(L)$ of degree less than $D$ consist of $1_L$ 
and four Weil characters: $\eta_{1,2}$ of degree $(q^n - 1)/2$ and $\xi_{1,2}$ of degree $(q^n-1)/2$. 
Recall by Lemma \ref{reg-sp} that neither $1$ nor $-1$ is an eigenvalue of $s$. Hence,
(\ref{sp-weil1}) holds for $s$.  Since $|\chi(g)| \leq \chi(1)$, 
$$\sum_{\chi \in \Irr(L),~1 < \chi(1) < D}
    \frac{|\chi(s)^3\cdot\bar\chi(g)|}{\chi(1)^2} \leq 
    \sum_{\chi \in \Irr(L),~1 < \chi(1) < D}\frac{|\chi(s)^3|}{\chi(1)} \leq \frac{4}{(q^n-1)/2} < 0.1668.$$  
Thus
$$\sum_{1_L \neq \chi \in \Irr(L)}\frac{|\chi(s)^3\cdot\bar\chi(g)|}{\chi(1)^2} < 0.5255 + 0.1668 = 0.6923,$$
so $g \in s^L\cdot s^L \cdot s^L$, as stated.
\hal
 
\begin{prop}\label{main2-so}
Theorem \ref{main2} holds for all quasisimple covers of $S = {\rm P}\Omega^\e_{m}(q)$ if $m \geq 7$ and
$2 \nmid q$.
\end{prop}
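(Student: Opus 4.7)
The plan is to mirror the strategy of Propositions \ref{main2-sp} and \ref{main2-su3}. First, by Corollary \ref{main-real} together with Lemma \ref{real1}, every element of any real simple group is a product of two $2$-elements; this disposes at once of $\Omega_{2n+1}(q)$ with $q \equiv 1 \bmod 4$, $\Omega_9(q)$, $\PO^+_{4n}(q)$ with $q \equiv 1 \bmod 4$, $\PO^+_8(q)$, and $\PO^-_{4n}(q)$. For the remaining orthogonal simple groups $S$ with $m \geq 7$, Lemma \ref{schur} reduces the problem to their $2'$-universal covers; since the Schur multipliers are $2$-groups in the generic case, these are essentially $\Omega^\e_m(q)$, with the exception of $\O_7(3)$ which is already handled by Lemma \ref{odd-base}(i). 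It therefore suffices to prove that every $g \in L := \Omega^\e_m(q)$ is a product of three $2$-elements of $L$.

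I would proceed by induction on $m$, using the breakable/unbreakable dichotomy of Definition \ref{unbr-spo}. If $g \in L$ is breakable, say $g = g_1 \oplus g_2$ acting on an orthogonal decomposition $V = U_1 \perp U_2$, then induction (together with the isomorphisms (\ref{iso1}), Lemmas \ref{alt-odd}, \ref{lie2}, \ref{odd-base}, and Propositions \ref{main2-sl}, \ref{main2-su-large}, \ref{main2-su3}, \ref{main2-sp}, which supply three-$2$-element decompositions in every classical summand of dimension smaller than $m$) furnishes $g_i = x_iy_iz_i$ with each factor a $2$-element of $\Cl(U_i)$; orthogonal direct sums then recombine these into three $2$-elements of $L$ with product $g$. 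The case $g_i = \pm 1_{U_i}$ is trivial, since $\pm 1_{U_i}$ is itself a $2$-element.

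For unbreakable $g \in L$, I apply the Frobenius character formula with the regular $2$-element $s := s^\e_m(1) \in L$ of Lemma \ref{reg-so} (which has spinor norm $+1$, hence lies in $\Omega$). By construction $|\bfC_L(s)| \leq C$ with $C$ of order $(q+1)^{\lfloor m/2 \rfloor}$ up to small factors, and by Lemma \ref{orthogunb} $|\bfC_L(g)| \leq M$ as in Table \ref{orthogbd}. It then suffices to establish
\begin{equation*}
\sum_{1_L \neq \chi \in \Irr(L)} \frac{|\chi(s)|^3}{\chi(1)} < 1,
\end{equation*}
which forces $g \in s^L \cdot s^L \cdot s^L$. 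For $\chi$ with $\chi(1) \geq D$, where $D$ is a suitable threshold chosen via the classification of low-dimensional irreducibles of $\Omega^\e_m(q)$ in \cite[Theorems 1.3 and 1.4]{Ng}, the contribution is bounded by $C^{3/2}/D$ via Cauchy--Schwarz. The remaining low-dimensional contribution comes from the small family of Weil-type characters $D^\circ_\alpha$ of \cite[Proposition 5.7]{ore} (exactly those appearing in Propositions \ref{so-odd} and \ref{so3-even}); Lemma \ref{blocks} together with the near-regular eigenvalue structure of $s$ (each non-$\pm 1$ eigenvalue has multiplicity one on $\overline\F_q^m$) bounds $|\chi(s)|$ by a small constant, as in the unbreakable estimates of Proposition \ref{main2-sp}.

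The principal obstacle will be the small-$q$ regime, notably $q = 3$, where $M$ is weaker and the set of small-dimensional irreducible characters is larger, analogous to Lemma \ref{sp3-lowdim} and Proposition \ref{sp3-large}. There I expect to need $s_1^L \cdot s_2^L \cdot s_3^L$ in place of $s^L \cdot s^L \cdot s^L$, with the $s_i$ chosen in distinct maximal tori and possibly with both spinor norms supplied by Lemma \ref{reg-so}, and to use the rank-three permutation characters of $L$ on (non-)singular $1$-spaces (as in Proposition \ref{so2-even}) to pin down the unipotent-character values. A handful of small base cases over $\F_3$ (say $m \in \{7,8,10,12\}$) may require direct verification in the spirit of Lemma \ref{odd-base}(i).
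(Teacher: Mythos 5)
Your core mechanism is the one the paper actually uses: the regular $2$-element $s = s^\e_m(1)$ of Lemma \ref{reg-so}, the criterion $\sum_{1_L \ne \chi}|\chi(s)|^3/\chi(1) < 1$, a degree threshold $D$ with a Cauchy--Schwarz bound above it, and the characters $D^\circ_\alpha$ of \cite[Propositions 5.3, 5.7 and Corollary 5.8]{ore} below it, with $|D_\alpha(s)|$ controlled by the eigenvalue multiplicities of $s$. There are two genuine differences of route. First, the paper invokes the breakable/unbreakable dichotomy only for $m = 7$: for $m = 2n \ge 10$ and $m = 2n+1 \ge 11$ the displayed criterion uses only $|\chi(g)| \le \chi(1)$, hence covers every $g$ (breakable or not) in one stroke, so your inductive layer over orthogonal summands is unnecessary (though harmless, and it is exactly what the paper does at $m=7$, where breakable elements are absorbed by Proposition \ref{main2-sp} and Lemma \ref{main2-slu34} via (\ref{iso1})). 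Second, the $q=3$ obstruction you anticipate does not materialize: unlike $\Sp_{2n}(3)$, where the long list of low-degree characters (Lemma \ref{sp3-lowdim}) forces the elaborate argument of Proposition \ref{sp3-large}, here the characters of degree below $D$ form the short family of $q+4$ characters $D^\circ_\alpha$ for all odd $q$ including $3$; the relevant bound is $|D_\alpha(s)| \le q^2\alpha(1)$, coming from multiplicity at most $2$ of eigenvalues of $s$ in $\F_{q^2}^\times$ (so not an absolute constant, contrary to your phrasing, but the degrees of order $q^{2n-2}\alpha(1)$ absorb it, with a sharper bound $q\alpha(1)$ or $2\alpha(1)$ worked out when $m=10$), and the resulting sums are less than $1$ uniformly. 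Consequently no second and third class, no permutation-character bookkeeping, and no machine verification at $m \in \{8,10,12\}$ over $\F_3$ are needed (note $m = 8,9$ are real and already excluded, and $\O_7(3)$ is the only computational case, via Lemma \ref{odd-base}). For unbreakable $g \in \O_7(q)$ with $q \ge 7$ the paper closes as you suggest, combining $|\bfC_L(g)| \le q^4(q+1)^2$ from Lemma \ref{orthogunb}, $|\bfC_L(s)| \le (q+1)^3$, and the character-degree bound $\chi(1) \ge q^4+q^2+1$ of \cite[Theorem 1.1]{TZ1}.
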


\pf
By Lemma \ref{real1} and \cite[Theorem 1.2]{TZ2}, we may assume that $m \neq 8,9$ and 
$q \equiv 3 \bmod 4$ if $m = 7$. 
Note that Theorem \ref{main2} holds for $\Omega_7(3)$ by Lemma \ref{odd-base}.
By Lemma \ref{schur}, it suffices to prove Theorem \ref{main2} for $L := \Omega^\e_m(q)$.
Let $s = s^\e_m(1) \in L$ be as constructed in Lemma \ref{reg-so}.

\smallskip
(i) First we consider the case $m = 2n$; in particular, $n \geq 5$. The construction of $s$ implies that
$$|\bfC_L(s)| \leq C := \left\{ \begin{array}{ll}(q^4-1)(q+1), & n=5,\\    
   (q+1)^n, & n \geq 6. \end{array} \right.$$ 
Choosing
$$D :=  \left\{ \begin{array}{ll}q^{4n-10}, & (n,\e) \neq (5,-),\\   
    (q-1)(q^2+1)(q^3-1)(q^4+1), & (n,\e) = (5,-), \end{array} \right.$$
by the Cauchy-Schwarz inequality, 
$$\sum_{\chi \in \Irr(L),~\chi(1) \geq D}\frac{|\chi(s)^3|}{\chi(1)}
    < \frac{C^{3/2}}{D} \leq 0.135.$$
By \cite[Propositions 5.3, 5.7]{ore}, the characters $\chi \in \Irr(L)$ of degree less than $D$ consist of $1_L$ 
and $q+4$ characters $\DC_\a$, $\a \in \Irr(X)$, where $X := \Sp_2(q)$. 
By Lemma \ref{reg-so}, each $\beta \in \F_{q^2}^\times$ can 
appear as an eigenvalue of $s$ of multiplicity at most $2$.  Arguing as in the proof of \cite[Proposition 5.11]{ore}, we obtain
that $|D_\a(s)| \leq q^2\a(1)$. Recalling that $\DC_\a$ equals $D_\a$ if $\a \neq 1_X$, $\St_X$ and 
$D_\a - 1_L$ otherwise, cf.\ \cite[Table II]{ore}, for $n \geq 6$ 
$$\sum_{{\tiny \begin{array}{l}\chi \in \Irr(L),\\1 < \chi(1) < D \end{array}}}\frac{|\chi(s)^3|}{\chi(1)} \leq 
    \sum_{\a = 1_X,\St_X}\frac{(q^2\a(1)+1)^3}{\DC_\a(1)} +
    \sum_{{\tiny \begin{array}{l}\a \in \Irr(X),\\ \a \neq 1_X,\St_X\end{array}}}\frac{(q^2\a(1))^3}{\DC_\a(1)} < 0.849.$$  
    
Consider the case $n = 5$. If $4|(q-\e)$, then each $\beta \in \F_{q^2}^\times$ can 
appear as an eigenvalue of $s$ of multiplicity at most $1$, so arguing as above 
$|D_\a(s)| \leq q\a(1)$.  Suppose that $4\nmid(q-\e)$. In this case, the only eigenvalue $\beta$ of $s$
that belongs to $\F_{q^2}^\times$ is $-1$ and its multiplicity is $2$. In the notation of the 
proof of \cite[Proposition 5.11]{ore}, for every $x \in X$
$$|\o(xs)| \leq q^{\dim \Ker(xs-I_{2m})/2} = q^{\dim \Ker(x+I_2)}.$$
When $x $ runs over $X$, $\dim \Ker(x+I_2)$ is $2$ only for $x = -I_2$, 
it is $1$ for $q^2-1$ elements, and it is $0$ for the rest. Hence,
$$|D_\a(s)| \leq \frac{1}{|X|}\sum_{x \in X}|\o(xs)\overline{\a(x)}| 
    \leq \frac{\a(1)}{|X|}(q^2 + q \cdot (q^2-1) + 1 \cdot(q(q^2-1)-q^2)) = 2\a(1).$$    
We have shown that $|D_\a(s)| \leq q\a(1)$. Hence,     
$$\sum_{{\tiny \begin{array}{l}\chi \in \Irr(L),\\1 < \chi(1) < D \end{array}}}\frac{|\chi(s)^3|}{\chi(1)} \leq 
    \sum_{\a = 1_X,\St_X}\frac{(q\a(1)+1)^3}{\DC_\a(1)} +
    \sum_{{\tiny \begin{array}{l}\a \in \Irr(X),\\ \a \neq 1_X,\St_X\end{array}}}\frac{(q\a(1))^3}{\DC_\a(1)} < 0.329.$$      
Thus in all cases
$$\sum_{1_L \neq \chi \in \Irr(L)}\frac{|\chi(s)^3|}{\chi(1)} < 1,$$
so $g \in s^L\cdot s^L \cdot s^L$ by (\ref{3class2}), as stated.

\smallskip
(ii) Now we consider the case $m = 2n+1 \geq 11$. Again 
$|\bfC_L(s)| \leq (q+1)^n$.  Set $D :=  q^{4n-8}$.
By the Cauchy-Schwarz inequality 
$$\sum_{\chi \in \Irr(L),~\chi(1) \geq D}\frac{|\chi(s)^3|}{\chi(1)}
    < \frac{C^{3/2}}{D} \leq 0.062.$$
By \cite[Corollary 5.8]{ore}, the characters $\chi \in \Irr(L)$ of degree less than $D$ consist of $1_L$ 
and $q+4$ characters $\DC_\a$, $\a \in \Irr(X)$. By Lemma \ref{reg-so}, each $\beta \in \F_{q^2}^\times$ can 
appear as an eigenvalue of $s$ of multiplicity at most $e$, where we can choose $e = 2$ for $n \geq 6$ and
$e=1$ for $n = 5$.  Arguing as in the proof of \cite[Proposition 5.11]{ore}, we obtain
that $|D_\a(s)| \leq q^e\a(1)$. Recalling that $\DC_\a$ equals $D_\a$ if $\a \neq \xi_{1,2}$
(the two Weil characters of degree $(q+1)/2$ of $X$) and 
$D_\a - 1_L$ otherwise, cf.\ \cite[Table I]{ore}, 
$$\sum_{\chi \in \Irr(L),~1 < \chi(1) < D}\frac{|\chi(s)^3|}{\chi(1)} \leq 
    \sum_{\a = \xi_{1,2}}\frac{(q^2\a(1)+1)^3}{\DC_\a(1)} +
    \sum_{\a \in \Irr(X),~ \a \neq \xi_{1,2}}\frac{(q^2\a(1))^3}{\DC_\a(1)} < 0.281,$$
so we are done by (\ref{3class2}).      

\smallskip
(iii) Finally, we consider the case $m=7$, so $q \geq 7$. Theorem 
\ref{main2} holds for 
$$\O_3(q) \cong \PSL_2(q),~\O^+_4(q) \cong \SL_2(q) \circ \SL_2(q),~\O^-_4(q) \cong \PSL_2(q^2), ~\O_5(q) \cong \PSp_4(q)$$ 
by Proposition \ref{main2-sp}, and for
$\Spin^+_6(q) \cong \SL_4(q),~\Spin^-_6(q) \cong \SU_4(q)$
by Lemma \ref{main2-slu34}.  Hence, if $g \in L = \O_7(q)$ is breakable in the sense of Definition \ref{unbr-spo}, then $g$ is a 
product of three $2$-elements of $L$. If $g \in L$ is unbreakable then 
$|\bfC_L(g)| \leq q^4(q+1)^2$ by Lemma \ref{orthogunb}.  Also, $\chi(1) \geq q^4+q^2+1$ for 
all $1_L \neq \chi \in \Irr(L)$ by \cite[Theorem 1.1]{TZ1}. As $|\bfC_L(s)| \leq (q+1)^3$, by the 
Cauchy-Schwarz inequality, 
$$\sum_{1_L \neq \chi \in \Irr(L)}\frac{|\chi(s)^3\cdot\bar\chi(g)|}{\chi(1)^2} \leq 
    \frac{(q+1)^{4.5} \cdot q^2(q+1)}{(q^4+q^2+1)^2} < 0.757,$$
so we are done as well.    
\hal
 
\subsection{Proof of Theorem \ref{main2} for exceptional groups in odd characteristics}
Our goal is to prove the following result, which, together with the results of \S\S7.1 and 7.3, 
completes the proof of Theorem \ref{main2}.

\begin{thm}\label{2eltexcep}
Let $G$ be a quasisimple group such that $G/\bfZ(G)$ is an exceptional simple group of Lie type in odd characteristic. Every element of $G$ is a product of three $2$-elements.
\end{thm}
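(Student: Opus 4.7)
The plan is to adapt the Frobenius class-multiplication / character-bound machinery used for the classical case in Section~7.3 to each family of exceptional groups of Lie type in odd characteristic. By Lemma~\ref{schur} it suffices to handle the $2'$-universal cover, which for most odd-characteristic exceptional types coincides with either the simply connected form or the simple group itself (the Schur multipliers of $E_6^\pm(q)$, $E_7(q)$ etc.\ are either trivial, $2$-groups, or handled by passing to $G_{\SC}$). The Ree groups $\tw2G_2(q^2)$ with $q^2 \geq 27$, together with a short list of small groups ($G_2(3)$, $\tw3D_4(3)$, and similar), should be handled directly from their generic character tables available in {\sf CHEVIE} / GAP, or via Lemma~\ref{exc-base}.

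First, for each remaining family $G = \cG^F$ of rank $\rank G = r$, I would construct an \emph{almost regular} $2$-element $s \in G$ whose centralizer $\bfC_G(s)$ is contained in a maximal torus $T_w$ (labelled by a Weyl element $w$) of small order and with a large $2$-part. The construction would leverage the regular $2$-element recipes of Section~7.2: embed a suitable subsystem classical subgroup $H \hookrightarrow G$ that meets $T_w$ in a cyclic factor with $2$-part $(q^{d}-\varepsilon)_2$ for an appropriate $d$, and take $s$ to be the image of a regular $2$-element of $H$ built via Lemmas~\ref{reg-slu}--\ref{reg-so}. The goal is $|\bfC_G(s)| = O(q^{r})$, with $s$ avoiding the eigenvalues $\pm 1$ on the adjoint module to the extent possible.

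Second, to prove that every $g \in G$ can be written as $xyz$ with $x,y,z$ conjugate to $s$, I would apply the Frobenius formula and reduce to showing
\[
\left|\sum_{\chi \in \Irr(G),\ \chi \neq 1_G} \frac{\chi(s)^3 \,\overline{\chi(g)}}{\chi(1)^2}\right| < 1.
\]
Choose a threshold $D$ (a polynomial in $q$ of degree roughly $\dim G - 3r$). The tail $\chi(1) \geq D$ is bounded by Cauchy--Schwarz using $|\bfC_G(s)|^{3/2}\cdot |\bfC_G(g)|^{1/2}/D$. For the finitely many low-degree characters $\chi(1) < D$, I would consult L\"ubeck's tables of generic character degrees together with Lusztig's Jordan decomposition to enumerate exactly which unipotent and semisimple characters can be nonzero on $s$; for each such $\chi$, bound $|\chi(s)|$ either by the interpretation of unipotent character values on regular semisimple elements through Weyl group characters, or by Deligne--Lusztig theory applied to the torus $T_w$. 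For central $g$ (arising when passing to $G_{\SC}$) one replaces one of the three factors of $s$ by $sz$ for $z \in \bfZ(G)$ and runs the same argument.

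The main obstacle lies in the large-rank exceptional groups $E_6^\pm(q)$, $E_7(q)$, and especially $E_8(q)$, where explicit character tables are unavailable and one must argue entirely through the Deligne--Lusztig/Lusztig machinery: one needs to identify precisely which characters of positive defect with respect to \emph{several} primes (namely primes dividing $|T_w|$) can occur, and then control $|\chi(s)|$ uniformly. A secondary difficulty will be the small/intermediate values of $q$ (e.g.\ $q=3,5$), where the low-degree bounds from \cite{TZ1} and \cite{Lu} are tight; here the argument may have to be supplemented by replacing $s$ by a product of a regular $2$-element of a smaller subsystem subgroup with a carefully chosen central adjustment, so that the contribution of the few offending characters (typically unipotent characters labelled by small partitions in the corresponding Weyl group) can be computed exactly rather than bounded in modulus.
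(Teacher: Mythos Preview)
Your overall framework is right---construct a $2$-element $s$ with small centralizer, then use the Frobenius formula and character bounds---and the construction of $s$ inside subsystem classical subgroups is exactly what the paper does (Lemma~\ref{2eltcent}). However, you have the difficulty backwards. The large-rank groups $E_8(q)$, $E_7(q)$, $G_2(q)$, $\tw2G_2(q)$, and $E_6^\e(q)$ with $q\equiv\e\bmod 4$ are the \emph{easy} cases: here the minimal nontrivial character degree $N$ from Lemma~\ref{chardeg} is so large compared to $C=|\bfC_G(s)|$ that the crude bound $\sum_{1\ne\chi}|\chi(s)|^3/\chi(1) \le C^{3/2}/N < 1$ already suffices (Lemma~\ref{first}), with no enumeration of low-degree characters needed at all.

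The genuinely delicate cases are $F_4(q)$, $\tw3D_4(q)$, and $E_6^\e(q)$ with $q\equiv -\e\bmod 4$, where $C^{3/2}/N$ is not $<1$. Your plan here is to enumerate and bound the contribution of each low-degree character individually; this is plausible but laborious, and you correctly flag it as the hard step. The paper bypasses this entirely with a different idea: the inequality $\sum_{1\ne\chi}|\chi(s)|^3|\chi(g)|/\chi(1)^2 < C^{3/2}|\bfC_G(g)|^{1/2}/N^2$ already gives the result unless $|\bfC_G(g)|\ge N^4/C^3$. One then shows, by a case analysis of unipotent classes and subsystem subgroups (Lemma~\ref{second}), that every such $g$ with large centralizer already lies in a commuting product of quasisimple classical subgroups, where the theorem is known from Section~7.3. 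So the paper trades your proposed character-theoretic enumeration for a structural reduction to the classical case---a cleaner route that avoids any fine control of individual character values on $s$.
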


The proof consists of a series of lemmas. The first is immediate from \cite{lub}.

\begin{lemma}\label{chardeg}
Let $G$ be as in Theorem $\ref{2eltexcep}$, and let $\chi$ be a nontrivial irreducible character of $G$. Then $\chi(1)\ge N$, where $N$ is as in Table $\ref{chartab}$.
\end{lemma}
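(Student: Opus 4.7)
The plan is simply to read off the desired lower bounds from known data. Since $G/\bfZ(G)$ is a simple exceptional group of Lie type in odd characteristic and $G$ is quasisimple, the possibilities for $G$ are very limited: the center $\bfZ(G)$ is a subgroup of the Schur multiplier, which for exceptional types is small (trivial except for $E_6(q)_\SC$, $\tw2 E_6(q)_\SC$, and $E_7(q)_\SC$, and with order coprime to small primes in most cases). So there are only finitely many families to consider, and within each family the minimal nontrivial character degree grows polynomially in $q$.

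First I would enumerate the families: $G_2(q)$, $\tw3 D_4(q)$, $\tw2 G_2(q^2)$, $F_4(q)$, $E_6(q)_\SC$, $\tw2 E_6(q)_\SC$, $E_7(q)_\SC$, $E_8(q)$, each with $q$ a power of an odd prime (and appropriate restrictions for the Ree case). For each such $G$, L\"ubeck's tables \cite{lub} list all irreducible character degrees up to a rather large threshold (well above what is needed here) as explicit polynomials in $q$. Then I would simply identify the smallest nontrivial degree appearing and verify that it matches the value $N$ recorded in Table~\ref{chartab}.

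The main thing to be careful about is the distinction between characters of the simple group $G/\bfZ(G)$ and characters of the full simply connected cover $G$. For types $E_6$, $\tw2 E_6$, $E_7$ one must use the character degrees of the simply connected groups, not just those of the simple quotient, since L\"ubeck's tables provide both and some faithful characters of $G$ can in principle have smaller degree than the smallest nontrivial character of $G/\bfZ(G)$. A quick check shows that the known lower bounds for faithful characters of the cover agree with (or exceed) the bounds for the simple group in the ranges that matter here, so the table entry is unaffected.

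There is no real obstacle; the lemma is essentially a citation of \cite{lub} together with a direct comparison against the entries of Table~\ref{chartab}. The only \emph{mild} subtlety is that for very small $q$ in each family, the generic polynomial formulas from L\"ubeck must be evaluated numerically and cross-checked against the ATLAS or explicit character tables (e.g.\ for $G_2(3)$, $G_2(5)$, $\tw3 D_4(3)$, $F_4(3)$, $E_6(3)$, $\tw2 E_6(3)$), to ensure no exceptional small-$q$ character of unusually low degree has been missed. This is a finite verification.
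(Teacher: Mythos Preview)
Your proposal is correct and takes essentially the same approach as the paper: the paper's entire proof is the single sentence ``The first is immediate from \cite{lub}.'' Your write-up adds sensible elaboration about covers versus simple quotients and small-$q$ checks, but the substance is identical. (One tiny slip: the table explicitly restricts to $q>3$ for $G_2(q)$ and $\tw2 G_2(q)$, so there is no need to verify $G_2(3)$.)
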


\begin{table}[htb]
\[
\begin{array}{|l|l|l|}
\hline
G & N & C \\
\hline
E_8(q) & q(q^6+1)(q^{10}+1)(q^{12}+1) & q^8-1 \\
E_7(q) & q(q^{14}-1)(q^6+1)/(q^4-1) & (q+1)^2q^7 \\
E_6^\e(q)\,(\e = \pm) & q(q^4+1)(q^6+\e q^3+1) & (q^4-1)(q-\e)^2,\,q\equiv 
\e \bmod 4 \\
       &   &  (q-\e)q^7,\,q\equiv -\e \bmod 4 \\
F_4(q) & q^8+q^4+1  & (q+1)^3q^3\\
G_2(q)\,(q>3) & q^3-1 & q^2-1 \\
\tw3D_4(q) & q(q^4-q^2+1) & (q^3-1)(q+1) \\
\tw2G_2(q)\,(q>3) & q^2-q+1 & q+1 \\
\hline
\end{array}
\]
\caption{Bounds for character degrees and centralizers}\label{chartab}
\end{table}

\begin{lemma}\label{2eltcent}
If $G$ is as in Theorem $\ref{2eltexcep}$, then $G$ has a $2$-element $s$ such that $|\bfC_G(s)|\le C$, where $C$ is as in Table $\ref{chartab}$.
\end{lemma}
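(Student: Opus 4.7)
For each row of Table \ref{chartab} I would exhibit an explicit $2$-element $s \in G$ and either compute $\bfC_G(s)$ outright or bound it using the classification of semisimple centralizers in the ambient algebraic group $\cG$. The general strategy splits according to whether $C$ matches the order of a maximal torus (in which case $s$ will be taken regular in $\cG$) or has the mixed shape (torus)$\times$($q$-power) (in which case $s$ will be chosen inside a classical subsystem subgroup using the constructions of Lemmas \ref{reg-slu}--\ref{reg-so}).

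In the four cases $E_8(q)$, $G_2(q)$, $\tw3 D_4(q)$, $\tw2 G_2(q)$, the bound $C$ coincides with the order of a known cyclic maximal torus $T$ of $G$: respectively, a torus of order $q^8-1$ inside a $D_8$-subsystem subgroup $\Omega^+_{16}(q)$ realized via the embedding $\GL_1(q^8) \hookrightarrow \GL_4(q^2) \hookrightarrow \SO^+_{16}(q)$; the split Coxeter torus of $G_2$; a torus of order $(q^3-1)(q+1)$ in $\tw3 D_4(q)$; and the cyclic torus of order $q+1$ in $\tw2 G_2(q)$. Because $q$ is odd and, in the Ree case, $q>3$, the $2$-part of $|T|$ is at least $8$ in each case, so inside $T$ I can pick a $2$-element $s$ whose $2$-power order exceeds the order of any root character on $T$; this forces $s$ to be a regular semisimple element of $\cG$, giving $\bfC_\cG(s) = T_{\cG}$ and hence $\bfC_G(s) = T$, as required.

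For $E_7(q)_{\SC}$, $E_6^\e(q)_{\SC}$, and $F_4(q)$, the bound $C$ has the form of a torus order multiplied by a $q$-power, which shows $\bfC_\cG(s)$ is a proper Levi subgroup of positive semisimple rank rather than a maximal torus. Here I would build $s$ inside a classical Levi or subsystem subgroup $L$ of $G$ and appeal to the classical constructions already established: for $F_4(q)$, take $L$ of type $B_4$ and choose $s$ to be a regular $2$-element of $\Spin_9(q)$ via Lemma \ref{reg-so}; for $E_6^\e(q)_\SC$, take $L$ of type $A_5^\e T_1$ and combine a regular $2$-element of $\SL_6^\e(q)$ from Lemma \ref{reg-slu} with a $2$-element of the central torus (the distinction $q \equiv \pm\e \bmod 4$ reflects how the $2$-part of $q-\e$ is apportioned between the simple factor and the $1$-dimensional central torus, which is exactly why the table lists two subcases); and for $E_7(q)_\SC$, take $L$ of type $A_1 D_6$ (or $E_6 T_1$) and combine regular $2$-elements of $\SL_2(q)$ and $\Spin^+_{12}(q)$ from Lemmas \ref{reg-slu} and \ref{reg-so}.

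The main obstacle, and the step requiring the most care, will be certifying that the centralizer of $s$ in $G$ does not exceed the centralizer computed inside the chosen classical subsystem $L$. Equivalently, one must verify that no root $\alpha$ of $\cG$ lying outside the root subsystem of $L$ satisfies $\alpha(s)=1$, so that $\bfC_\cG(s)^{\circ}$ is contained in $L_{\cG}$. This is a finite check done type by type using the tables of semisimple element centralizers in the exceptional algebraic groups (for instance from \cite{LSei} or Deriziotis--Liebeck), combined with the explicit eigenvalue structure of $s$ guaranteed by Lemmas \ref{reg-slu}--\ref{reg-so}. Once carried out, the desired bound $|\bfC_G(s)| \le C$ follows in each of the seven cases.
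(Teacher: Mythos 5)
Your overall strategy --- construct $s$ inside a classical subsystem subgroup via the constructions of \S 7.2 and then certify that $\bfC_G(s)$ does not exceed the centralizer computed there by checking that no root outside the subsystem annihilates $s$ --- is precisely the strategy of the paper, which performs that certification by restricting $\cL(G)$ to the subsystem (e.g.\ $\cL(E_8)\downarrow A_8$, $\cL(E_6)\downarrow A_1A_5$, $\cL(F_4)\downarrow B_4$) and computing the fixed space of $s$ on the complementary modules. The difficulty is that this certification \emph{is} the lemma, and your proposal both defers it and, in the ``maximal torus'' cases, replaces it by a criterion that is false: if $T$ is cyclic of order $n$ and a root $\alpha$ restricts to $T$ with image of order $m$, then the $2$-element $s$ generating the Sylow $2$-subgroup of $T$ lies in $\ker(\alpha|_T)$ exactly when $m$ is odd, no matter how large $n_2$ is; so choosing $s$ of ``$2$-power order exceeding the order of every root character'' does not force regularity. (Also, the auxiliary claim that $|T|_2\ge 8$ in each case fails for $\tw2 G_2(q)$, where $(q+1)_2=4$; the paper settles that case simply by quoting Ward's result that an element of order $4$ has centralizer of order $q+1$.)

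Moreover, the deferred check is delicate enough that your specific choices are not obviously salvageable. For $E_8$ you place a $2$-element $\gamma$ of order $(q^8-1)_2$ in a torus $C_{q^8-1}$ through a $D_8$-subsystem; the verification then takes place on the $128$-dimensional half-spin module, whose weight values on your element are $\gamma^{\frac{1}{2}\sum_i \pm q^i}$, and congruences $\sum_i \pm q^i \equiv 0 \pmod{2(q^8-1)_2}$ genuinely occur for suitable sign patterns when $(q^8-1)_2$ is small (already for $q=7$ such patterns exist), so regularity depends on which half-spin representation occurs and on the lift, and may simply fail. The paper avoids this by working in the $A_8$-subsystem, where the relevant weights on $\wedge^3 V_9$ are $\gamma^{q^i+q^j+q^k}$ with \emph{odd} exponent, so a $2$-element visibly has no fixed vectors; analogous parity and field-of-definition arguments, with carefully tuned eigenvalues, drive the $E_7$ (via $A_2^\e A_5^\e$ with $q\equiv -\e \bmod 4$), $E_6^\e$ (via $A_1A_5$) and $F_4$ (via $B_4$ and its spin module) cases --- in your $A_1D_6$ route for $E_7$ the same issue would reappear on $V_2\otimes(\hbox{half-spin of }D_6)$. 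So what is missing is not a routine finite check but the precise element choices and fixed-point computations that make the check succeed, and as written the argument has a genuine gap.
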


\pf For the most part we construct the element $s$ within a suitable product of classical groups inside $G$, using the methods of \S7.2. 

For $G = E_8(q)$ we work in a subsystem subgroup $A$ of type $A_8$. This has shape $d.L_9(q).e$, where $e = (3,q-1)$ and $d = (9,q-1)/e$ (see for example \cite[Table 5.1]{LSS}); the derived subgroup is a quotient of $\SL_9(q)$ by a central subgroup $Z$. We shall define $s$ in $\SL_9(q)$, and identify it with its image modulo $Z$. Choose $\g \in \F_{q^8}$ of order $(q^8-1)_2$, and define $s_8 \in \GL_1(q^8) \le \GL_8(q)$ to be conjugate over $\bar \F_q$ to $\hbox{diag}(\g,\g^q,\g^{q^2},\ldots ,\g^{q^7})$. Let $s = \hbox{diag}(s_8,\a) \in \SL_9(q)$, where $\a^{-1} = \hbox{det}(s_8)$. Then $|\bfC_A(s)| = q^8-1$. 
Now, by \cite[11.2]{LSei}, 
\[
\cL(E_8)\downarrow A_8 = \cL(A_8) + V_{A_8}(\l_3) + V_{A_8}(\l_6).
\]
Here $ V_{A_8}(\l_3) \cong \wedge^3(V_9)$, the wedge-cube of the natural module for $\SL_9(q)$, and $ V_{A_8}(\l_6) $ is the dual of this. Since $\g^{q^i+q^j+q^k}$ cannot equal 1 for distinct $i,j,k$ between 0 and 7, and also $\g^{q^i+q^j}$ cannot lie in $\F_q$, the element $s$ has no nonzero fixed points in $ \wedge^3(V_9)$, so $\dim \bfC_{\cL(E_8)}(s) = 8$. Hence $\bfC_G(s)$ is a maximal torus, so $\bfC_G(s)=\bfC_A(s)$ of order $q^8-1$.

Next consider $G = E_7(q)$. We shall work in the simply connected version of $G$; the element $s$ we construct works equally well for the adjoint version. Let $A$ be a subsystem subgroup of type $A_2^\e A_5^\e$ ($\e = \pm 1$), where 
$q\equiv -\e \bmod 4$. This has the subgroup 
$\SL_3^\e(q) \circ \SL_6^\e(q)$ of index $(3,q-\e)$. Let $\g \in \F_{q^4}$ have order $(q^4-1)_2$, and define
$\a = \g^{(q^2+1)(\e q+1)}$, $\b = \g^{2(\e q+1)}$. Now define $s_1 \in \SL_3^\e(q)$, $s_2\in \SL_6^\e(q)$ so that they are conjugate over $\bar \F_q$ to 
\[
\hbox{diag}(\g^{-2},\g^{-2\e q},\b) \in \SL_3,\;\;  \hbox{diag}(\g,\g^{\e q},\g^{q^2},\g^{\e q^3},1,\a) \in \SL_6,
\]
respectively. Let $s = s_1s_2 \in A$. Then $|\bfC_A(s)| = (q^4-1)(q^2-1)(q-\e)$. From \cite[11.8]{LSei},
\[
\cL(E_7)\downarrow A_2A_5 = \cL(A_2A_5) + (V_{A_2}(\l_1)\otimes V_{A_5}(\l_2))+(V_{A_2}(\l_2)\otimes V_{A_5}(\l_4)).
\]
Here $V_{A_2}(\l_1)\otimes V_{A_5}(\l_2) \cong V_3 \otimes \wedge^2(V_6)$, where $V_3,V_6$ are the natural modules for $\SL_3$ and $\SL_6$. One checks that $s$ has fixed space of dimension 1 on this module (coming from the product of the eigenvalues $\b,1,\a$). Hence $\dim \bfC_{\cL(E_7)}(s) = 9$, and 
so over $\bar \F_q$ we deduce that $\bfC_{E_7}(s) = A_1T_6$, where $T_6$ denotes a torus of rank 6. It follows that $|\bfC_G(s)| = |A_1(q)|\cdot |T_6(q)|$. As $\bfC_G(s)$ contains $\bfC_A(s)$, of the order given above, $|\bfC_G(s)| \le |A_1(q)|(q^4-1)(q+1)^2 < (q+1)^2q^7$.

If $G = E_6^\e(q)$, then we work in a subsystem subgroup $A$ of type $A_1A_5$ containing $\SL_2(q) \circ \SL_6^\e(q)$. Again let $\g \in \F_{q^4}$ have order $(q^4-1)_2$ and define $s_2 \in \SL_6^\e(q)$ as the previous paragraph. Define $s_1 \in \SL_2(q)$ to be conjugate over $\bar \F_q$ to $\hbox{diag}(\g^{2(q+\e)}, \g^{-2(q+\e)})$ if $q \equiv \e \bmod 4$, and to $I_2$ otherwise. Set $s = s_1s_2$. Then $|\bfC_A(s)|$ is equal to $(q^4-1)(q-\e)^2$ if $q \equiv \e \bmod 4$, and to 
$|A_1(q)|(q^4-1)(q-\e)$ otherwise. 
By \cite[11.10]{LSei},
\[
\cL(E_6)\downarrow A_1A_5 = \cL(A_1A_5) + (V_{A_1}(1)\otimes V_{A_5}(\l_3)),
\]
and the second summand is $V_2\otimes \wedge^3(V_6)$, where $V_2,V_6$ are the natural modules for $A_1,A_5$. We check that $s$ has no nonzero fixed points on this tensor product, and it follows that $\bfC_{E_6}(s) = \bfC_{A_1A_5}(s)$; hence 
$\bfC_G(s) = \bfC_A(s)$, giving the result.

Now let $G = F_4(q)$. Here we construct our element $s$ in a subsystem subgroup $A = B_4(q) \cong {\rm \Spin}_9(q)$. It is convenient to define it in the quotient $\Omega_9(q)$ and take a preimage. We follow the proof of Lemma 
\ref{reg-so}. Let $\g \in \F_{q^2}$ have order $(q^2-1)_2$, and define $s_4 \in \GL_1(q^2) \le \GL_2(q) \le SO_4^+(q)$ to be conjugate over $\bar \F_q$ to ${\rm diag}(\g,\g^q,\g^{-1},\g^{-q})$. Then $s_4$ has spinor norm $-1$. Let $q\equiv \e \bmod 4$ with $\e = \pm 1$, and define $s_2 \in SO_2^\e(q)$ to be conjugate to ${\rm diag}(\g^{q+\e},\g^{-(q+\e)})$. Then $s_2$ also has spinor norm $-1$, so 
$t_1:={\rm diag}(s_4,s_2) \in \Omega_6^\e(q)$. Finally, let $t_2:= {\rm diag}(-1,-1,1) \in \Omega_3(q)$ and define $s \in A$ to be the preimage of ${\rm diag}(t_1,t_2) \in \Omega_9(q)$. Then $|\bfC_A(s)| = (q^2-1)(q-\e)^2$. Now
\[
\cL(F_4) \downarrow B_4 = \cL(B_4) \oplus V_{B_4}(\l_4).
\]
The second summand is the spin module for $B_4(q)$, which restricts to the preimage of  $\O_6^\e(q) \times \O_3(q)$ as $(V_4\otimes V_2)\oplus (V_4^*\otimes V_2^*)$, where each summand is a tensor product of natural modules for the isomorphic group $\SL_4^\e(q) \times \SL_2(q)$. Elements of $\SL_4^\e(q)$, $\SL_2(q)$ inducing $t_1$, $t_2$ are $x_1:= {\rm diag}(1,\g,\g^{\e q},\g^{-\e q-1})$, $x_2:={\rm diag}(\g^{(q-\e)/2}, \g^{-(q-\e)/2})$, respectively. The tensor product of $x_1$ and $x_2$ has fixed point space of dimension at most 1, and it follows that $\dim \bfC_{\cL(F_4)}(s) =4$ or 6. If it is 4 then $\bfC_G(s) = \bfC_A(s)$, while if it is 6, then $\bfC_{F_4}(s) = A_1T_3$, whence $|\bfC_G(s)| \le |A_1(q)|(q+1)^3$, as in the conclusion. 

For $G = G_2(q)$ or $\tw3D_4(q)$, we pick our element $s$ in a subgroup $A = \SL_3(q)$: let $\g \in \F_{q^2}$ have order $(q^2-1)_2$ and take $s$ to be conjugate over $\bar \F_q$ to ${\rm diag}(\g,\g^q,\a)$ where $\a = \g^{-(q+1)}$. Now 
$\cL(G_2) \downarrow A_2 = \cL(A_2)+V_3+V_3^*$ and $\cL(D_4)\downarrow A_2 = \cL(A_2)+V_3^3+(V_3^*)^3+V_1^2$, where $V_3$ is the natural 3-dimensional module and $V_1$ is trivial. It follows that $\bfC_{\cL(G_2)}(s)$ and $\bfC_{\cL(D_4)}(s)$ have dimensions 2 and 4 respectively, so $\bfC_G(s)$ is a maximal torus, as in the conclusion.

Finally, for $G=\tw2 G_2(q)$, an element $s$ of order 4 has centralizer of order $q+1$ (see \cite{ward}). This completes the proof.
 \hal

\begin{lemma}\label{first}
Theorem $\ref{2eltexcep}$ holds for $E_8(q)$, $E_7(q)$, $G_2(q)$, $\tw2G_2(q)$, and also for $E_6^\e(q)$ 
with $q \equiv \e \bmod 4$.
\end{lemma}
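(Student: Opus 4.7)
The plan is to show, for each $G$ in the list, that every element $g \in G$ lies in the triple product $s^G \cdot s^G \cdot s^G$, where $s$ is the $2$-element furnished by Lemma \ref{2eltcent}. Since conjugates of $2$-elements remain $2$-elements, this at once expresses $g$ as a product of three $2$-elements of $G$, as required.

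By the Frobenius formula (Lemma \ref{basic}(i)), the inclusion $g \in s^G \cdot s^G \cdot s^G$ is guaranteed whenever
$$\sum_{\chi \in \Irr(G)}\frac{\chi(s)^3\overline{\chi(g)}}{\chi(1)^2} \neq 0.$$
Separating off the trivial character, which contributes $1$, and using $|\chi(g)|\leq \chi(1)$, it suffices to establish
$$\sum_{1_G \neq \chi \in \Irr(G)}\frac{|\chi(s)|^3}{\chi(1)} < 1.$$
Pulling the factor $|\chi(s)|/\chi(1)$ out and bounding it by $|\bfC_G(s)|^{1/2}/N$, where $N := \min_{\chi \neq 1_G}\chi(1)$ is the lower bound on nontrivial character degrees supplied by Lemma \ref{chardeg}, and then invoking the second orthogonality relation $\sum_{\chi \in \Irr(G)}|\chi(s)|^2 = |\bfC_G(s)|$, yields the uniform estimate
$$\sum_{1_G \neq \chi \in \Irr(G)}\frac{|\chi(s)|^3}{\chi(1)} \leq \frac{|\bfC_G(s)|^{3/2}}{N} \leq \frac{C^{3/2}}{N},$$
with $C$ the centralizer bound from Table \ref{chartab}.

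It then remains to verify the inequality $C^{3/2} < N$ using the entries of Table \ref{chartab}. For $E_8(q)$, $E_7(q)$, and $E_6^\e(q)$ with $q \equiv \e \bmod 4$, this holds with generous slack; indeed, the ratio $C^{3/2}/N$ is respectively of order $q^{-17}$, $q^{-7/2}$, and $q^{-2}$. The rank-two cases $G_2(q)$ and $\tw2 G_2(q)$ reduce to the sharper polynomial inequalities $(q^2-1)^3 < (q^3-1)^2$ and $(q+1)^3 < (q^2-q+1)^2$, both of which are easily checked for the relevant ranges of $q$ by a direct expansion (the first, for example, rearranges to $3q^4 - 2q^3 - 3q^2 + 2 > 0$).

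The primary obstacle, and the reason the hypothesis $q \equiv \e \bmod 4$ is imposed on $E_6^\e(q)$, is that in the complementary case $q \equiv -\e \bmod 4$ the only centralizer bound available from Lemma \ref{2eltcent} is $C = (q-\e)q^7$, so $C^{3/2}$ grows like $q^{12}$ and overshoots $N \sim q^{11}$. Handling that case thus demands either a sharper control on $|\chi(s)|$ for non-Weil characters, or a different choice of $s$ with smaller centralizer; this will be addressed by a separate argument. Beyond this, the proof is essentially routine, amounting to checking the explicit polynomial inequalities enumerated above.
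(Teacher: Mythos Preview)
Your proof is correct and follows essentially the same route as the paper's own argument: pick the $2$-element $s$ from Lemma \ref{2eltcent}, reduce via the Frobenius formula to the inequality $\sum_{1_G \neq \chi}|\chi(s)|^3/\chi(1) < 1$, bound this by $C^{3/2}/N$ using Lemma \ref{chardeg} and second orthogonality, and verify $C^{3/2} < N$ from Table \ref{chartab}. Your added discussion of why the case $q \equiv -\e \bmod 4$ for $E_6^\e(q)$ falls outside this estimate is accurate and matches the reason the paper defers that case to Lemma \ref{second}.
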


\pf Let $G$ be one of these groups, and let $s$ be the 2-element of $G$
produced in Lemma \ref{2eltcent}. As in the proof of Proposition \ref{main2-sl}, 
it is sufficient to establish that for every $g\in G$, 
\begin{equation}\label{eqn}
\sum_{\chi \in \Irr(G)} \frac{\chi(s)^3\bar \chi(g)}{\chi(1)^2} \ne 0,
\end{equation}
and to prove this it suffices to show
\[
\sum_{1\ne \chi \in \Irr(G)} \frac{|\chi(s)|^3}{\chi(1)} < 1.
\]
Lemma \ref{chardeg} implies that $\chi(1)\ge N$ for all nontrivial irreducible characters $\chi$, where $N$ is as in Table \ref{chartab}. Hence 
\[
\sum_{1\ne \chi \in Irr(G)} \frac{|\chi(s)|^3}{\chi(1)} < \frac{|\bfC_G(s)|^{1/2}}{N}\sum_{\chi \in Irr(G)} |\chi(s)|^2
= \frac{|\bfC_G(s)|^{3/2}}{N} \le \frac{C^{3/2}}{N},
\]
where $C$ is as in Table \ref{chartab}. One checks that $C^{3/2}/N < 1$ for the groups in the hypothesis, so the lemma follows. \hal

\begin{lemma}\label{second}
Theorem $\ref{2eltexcep}$ holds for $E_6^\e(q)$ with $q \equiv -\e \bmod 4$, $F_4(q)$ and 
$\tw3D_4(q)$.
\end{lemma}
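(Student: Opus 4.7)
The plan is to adapt the character-ratio strategy of Lemma~\ref{first}. Let $s \in G$ be the $2$-element produced by Lemma~\ref{2eltcent}; following the argument used in the proof of Proposition~\ref{main2-sl}, to show that every $g \in G$ lies in $s^G \cdot s^G \cdot s^G$ it suffices to verify that
$$\sum_{1_G \neq \chi \in \Irr(G)} \frac{|\chi(s)|^3}{\chi(1)} < 1.$$
The difficulty, relative to Lemma~\ref{first}, is that the crude bound $C^{3/2}/N$ used there (with $C,N$ from Table~\ref{chartab}) grows roughly like $q$ for the three families in the statement, so it no longer suffices on its own. I would therefore split $\Irr(G)$ into a high-degree tail $\chi(1) \geq D$ and a low-degree head $1 < \chi(1) < D$, with $D$ chosen slightly above $C^{3/2}$.

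For the high-degree tail, combining $\sum_{\chi \in \Irr(G)} |\chi(s)|^2 = |\bfC_G(s)|$ with $|\chi(s)| \leq |\bfC_G(s)|^{1/2}$ yields
$$\sum_{\chi(1) \geq D} \frac{|\chi(s)|^3}{\chi(1)} \leq \frac{|\bfC_G(s)|^{3/2}}{D} \leq \frac{C^{3/2}}{D},$$
which can be forced below $\tfrac{1}{2}$ by taking $D$ large enough.

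For the low-degree head, I would enumerate the finitely many irreducible characters of degree less than $D$ using L\"ubeck's tables~\cite{Lu}. Because the characteristic is odd and $s$ is a $2$-element, $s$ is semisimple; the explicit construction in Lemma~\ref{2eltcent} places $\bfC_G(s)$ inside a proper pseudo-Levi subgroup of classical type ($A_1 T_3$ in $F_4(q)$, a rank-$7$ subgroup of type $A_1A_5$ (or a torus) in $E_6^\e(q)$, and a maximal torus in $\tw3D_4(q)$). Lusztig's Jordan decomposition of characters then forces any $\chi \in \Irr(G)$ with $\chi(s) \neq 0$ to lie in a rational Lusztig series $\mathcal{E}(G,(t))$ for some semisimple $t \in G^*$ centralising the image of $s$ in $G^*$. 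This restricts the contributing low-degree characters severely---typically to $1_G$, $\St$, and a small bounded collection of unipotent characters of $G$. For each of these I would compute $|\chi(s)|$ using the generic character table from CHEVIE (cf.\ \cite{CHEVIE}); in particular the Steinberg character satisfies $|\St(s)| = |\bfC_G(s)|_p$, which is at most $q$ in all three cases, yielding a very small contribution. Summing the finite list of low-degree contributions must then come out below $\tfrac{1}{2}$.

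The main obstacle will be the $E_6^\e(q)$ case with $q \equiv -\e \pmod 4$, where the centralizer of $s$ is as large as $(q-\e)q^7$, so $D$ must be chosen correspondingly large and a longer list of unipotent and Lusztig-series characters has to be handled individually. A secondary issue is that the asymptotic estimates may fail for a few small values of $q$ (notably $q=3$ and possibly $q=5$ for each family), and these would be disposed of by direct computation from the generic character table, or alternatively by embedding $s$ into a classical subgroup of $G$ and invoking the results of~\S7.3 together with Lemma~\ref{odd-base}.
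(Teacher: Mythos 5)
Your reduction to the inequality $\sum_{1\neq\chi\in\Irr(G)}|\chi(s)|^3/\chi(1)<1$ and the tail estimate $\sum_{\chi(1)\geq D}|\chi(s)|^3/\chi(1)\leq C^{3/2}/D$ are fine, but the low-degree head is where the proof would actually have to live, and there you have a genuine gap. After choosing $D>2C^{3/2}$ you are left with the characters of degree between $N$ and roughly $2C^{3/2}$, and for these the trivial bound $|\chi(s)|\leq|\bfC_G(s)|^{1/2}$ is not good enough: for $F_4(q)$ each such term can a priori be as large as about $C^{3/2}/N\approx q$, and similarly for $E_6^\e(q)$ with $q\equiv-\e\bmod 4$, where $|\bfC_G(s)|$ can be of order $(q-\e)q^7$. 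So on precisely the smallest-degree characters you must beat the centralizer bound by a positive power of $q$, and you give no mechanism for doing so. {\sf CHEVIE} has a generic character table for $\tw3D_4(q)$ (your plan is workable there), but no generic tables exist for $F_4(q)$ or $E_6^\e(q)$, and L\"ubeck's data \cite{Lu} gives degrees, not values. Moreover $s$ is not regular semisimple in the critical cases (its centralizer contains an $A_1$ factor), so the Weyl-group evaluation of unipotent characters at regular semisimple elements is unavailable; and your vanishing criterion ``only series $\cE(G,(t))$ with $t$ centralizing the dual image of $s$ contribute'' is not a correct general statement --- the weak-orthogonality arguments of this kind need a \emph{pair} of regular semisimple elements in specially chosen tori. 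Whether the inequality $\sum_{1\neq\chi}|\chi(s)|^3/\chi(1)<1$ even holds for these three families is unclear; establishing it is exactly the delicate point, not a routine table look-up.

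The paper circumvents this by not discarding the factor $|\chi(g)|/\chi(1)$: it bounds $\sum_{1\neq\chi}|\chi(s)|^3|\chi(g)|/\chi(1)^2$ by $C^{3/2}|\bfC_G(g)|^{1/2}/N^2$, which settles all $g$ with $|\bfC_G(g)|<N^4/C^3$, and then treats the remaining elements structurally: using the unipotent class data of \cite[Table 22.2.4]{LSei} and a subsystem-subgroup analysis of $\bfC_G(x)$ for the semisimple part $x$ of $g$, every element with large centralizer is shown to lie in a commuting product of quasisimple classical groups (such as $B_4(q)$, $\SL_2(q)\circ\Sp_6(q)$, $A_2^\e(q)A_2^\e(q)$, or $A_1(q^3)\circ A_1(q)$), whereupon the classical-group results of \S7.3 apply; a few configurations at $q=3$ are checked by machine, and one non-unipotent case in $\tw3D_4(q)$ is reduced to real elements via Lemma \ref{real1}. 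This reduction of large-centralizer elements into classical subgroups is the key idea your proposal is missing; in particular your small-$q$ fallback of ``embedding $s$ into a classical subgroup and invoking \S7.3'' does not address arbitrary $g\in G$ --- it is $g$, not $s$, that must be embedded.
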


\pf Let $G$ be one of these groups, let $s$ be the 2-element of $G$ from 
Lemma \ref{2eltcent}, and let $g \in G$. As in the previous proof, 
\[
\sum_{1\ne \chi \in \Irr(G)} \frac{|\chi(s)|^3 |\chi(g)|}{\chi(1)^2} < \frac{|\bfC_G(s)|^{3/2}|\bfC_G(g)|^{1/2}}{N^2} \le 
\frac{C^{3/2}|\bfC_G(g)|^{1/2}}{N^2},
\]
where $C,N$ are as in Table \ref{chartab}. The result is proved if the above sum is less than 1, 
so we may assume that 
\begin{equation}\label{centeqn}
|\bfC_G(g)| \ge \frac{N^4}{C^3}.
\end{equation}
Our strategy is to show that an element $g$ satisfying this bound must lie in a subgroup of $G$ that is a commuting product of quasisimple classical groups. (A similar strategy was carried out in Section 7 of \cite{ore}.) The conclusion then follows immediately from the results in Section 7.3, where Theorem \ref{2eltexcep} is established for classical groups. 

Consider $G = F_4(q)$. Here (\ref{centeqn}) gives
\begin{equation}\label{f4cent}
|\bfC_G(g)| \ge \frac{(q^8+q^4+1)^4}{(q+1)^9q^9}.
\end{equation}
Assume first that $g$ is a unipotent element. The classes and centralizers of unipotent elements in $G$ are given in \cite[Table 22.2.4]{LSei}, and every centralizer satisfying the above bound has even order. Hence there is an involution $t$ such that $g \in \bfC_G(t)$. Now $\bfC_G(t)$ is either a quasisimple group $B_4(q)$, or a group of the form $(\SL_2(q)\circ \Sp_6(q)).2$, with the unipotent element $g$ lying in the subgroup $\SL_2(q)\circ \Sp_6(q)$. Hence $g$ is in a product of quasisimple classical groups, except possibly in the case where $q=3$ and $g \in \bfC_G(t) = (\SL_2(3)\circ \Sp_6(3)).2$. In the latter case, a computation shows that every element of $\bfC_G(t)$ is a product of three 2-elements. 

Now assume $g$ is not unipotent; say $g = xu$ has semisimple part $x\ne 1$ and unipotent part $u \in \bfC_G(x)$. Now $\bfC_G(x)$ is a subsystem subgroup of $G$, and the bound (\ref{f4cent}) forces this  to have a normal subgroup $D = B_4(q)$, $D_4^\e(q)$, $B_3(q)$, $C_3(q)$, $A_3^\e(q)$, $B_2(q)$ or $A_2^\e(q)$. Then $x \in \bfC_G(D)$, and the unipotent elements of 
$\bfN_G(D)$ generate a subgroup of $D\bfC_G(D)$, which is contained in a subsystem subgroup $S:=B_4(q)$, $A_1(q)C_3(q)$ or $A_2^\e(q)A_2^\e(q)$. Hence $g = xu \in S$. Observe that $S$ is a product of quasisimple classical groups, except for
$A_1(q)C_3(q)$ when $q=3$; however, we already noted that every element of this subgroup is 
a product of three 2-elements in its normalizer. This completes the proof for $G = F_4(q)$.

The proof for $G = E_6^\e(q)$ is similar. If $g$ is unipotent then the bound (\ref{centeqn}) and \cite[Table 22.2.3]{LSei} imply that $\bfC_G(g)$ has even order, so $g \in \bfC_G(t)$ for some involution $t$. This centralizer is either $(q-\e)\circ D_5^\e(q)$ or 
$(\SL_2(q) \circ \SL_6^\e(q)).2$. Hence the unipotent element $g$ lies in $D_5^\e(q)$ or $\SL_2(q) \circ \SL_6^\e(q)$, and this is a product of quasisimple groups, apart from the latter when $q=3$, in which case a computation shows that every element of $\bfC_G(t)$ is a product of three 2-elements. When $g$ is not unipotent, the bound (\ref{centeqn}) is actually stronger than the bound used in the proof 
of \cite[Theorem 7.1]{ore} for non-unipotent elements of $E_6^\e(q)$,
% in \cite[p.\ 1004]{ore}, 
and this proof shows  that such elements lie in a product of quasisimple classical subgroups. 
Alternatively, an argument 
similar to that for $F_4(q)$ gives the result in this case.

Finally, let $G = \tw3D_4(q)$. The unipotent classes and centralizers can be found in \cite{spalt}, and the unipotent case is handled exactly as for $F_4(q)$. For $g=xu$ non-unipotent as above, (\ref{centeqn}) implies that $\bfC_G(x)$ has a normal subgroup $D = A_1(q^3)$ or $A_2^\e(q)$. In the first case we argue as before that $g=xu$ lies in $D\bfC_G(D) = A_1(q^3)\circ A_1(q)$. In the second case $\bfC_G(s) = ((q^2+\e q+1)\circ D).(3,q-\e)$, and we can assume that $u\ne 1$ (otherwise $g=x$ is real, and the result follows from Lemma \ref{real1}. The group generated by the unipotent elements of $\bfC_G(s)$ is just $D$, so $u \in D$. But the centralizer of a nontrivial unipotent element of $D = A_2^\e(q)$ has order at most $(q+1)q^3$ (see \cite[Chapter 3]{LSei}), so this gives $|\bfC_G(g)| \le (q^2+\e q+1)(q+1)q^3$, which contradicts (\ref{centeqn}).  \hal

\section{Asymptotic surjectivity: Proofs of Theorems \ref{main3} and \ref{main4}}
\begin{lem}\label{center}
Let $k, Q \geq 2$ be integers. There is an integer $D = D(k,Q)$ depending on
$k$ and $Q$ such that, for every integer $N$ with $\O(N) \leq k$ and for every $q < Q$, 
every central element of $G \in \{\SL_m(q),\SU_m(q),\Sp_{2m}(q),\O^+_{2m}(q)\}$
is an $N$th power in $G$ whenever $m|D$. 
\end{lem}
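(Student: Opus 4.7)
The plan is to embed the center $Z := \bfZ(G)$ of each of the four groups into a cyclic torus $T \leq G$ whose order $n := |T|$ grows polynomially in $q^m$, and then to choose $D$ so that the $p$-adic valuations of $n$ are large enough to guarantee $Z \subseteq T^N$, where $T^N$ denotes the subgroup of $N$th powers in $T$. First I would observe that in each of the four families, $Z$ is cyclic of order $d$ dividing, respectively, $\gcd(m, q-1)$, $\gcd(m, q+1)$, $\gcd(2, q-1)$, or $\gcd(4, q^m - 1)$; in particular $d \leq Q$, every prime divisor of $d$ is $< Q$, and $v_p(d) \leq \log_2 Q$ for all primes $p$.

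Next I would exhibit in each case a cyclic Coxeter-type torus $T \supseteq Z$. For $\SL_m(q)$ take $T = \F_{q^m}^\times \cap \SL_m(q)$, of order $(q^m-1)/(q-1)$; then $Z \subseteq T$ because $\lambda \in \F_q^\times$ with $\lambda^m = 1$ satisfies $\lambda^{(q^m-1)/(q-1)} = \lambda^{1+q+\cdots+q^{m-1}} = \lambda^m = 1$. The analogous torus of order $(q^m - (-1)^m)/(q+1)$ works for $\SU_m(q)$ (using $\lambda^q = \lambda^{-1}$ for $\lambda$ of norm one). For $\Sp_{2m}(q)$ with $q$ odd, the torus of order $q^m + 1$ coming from $\Sp_2(q^m) \hookrightarrow \Sp_{2m}(q)$ contains $Z = \{\pm I\}$ since $|T|$ is even. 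Finally, for $\O^+_{2m}(q)$ with $q$ odd, one embeds $Z$ in the cyclic torus of order $q^m - 1$ coming from the standard embedding $\GL_m(q) \hookrightarrow \O^+_{2m}(q)$, $g \mapsto \diag(g, (g^T)^{-1})$.

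Once $T$ is in hand, since it is cyclic of order $n$ the image $T^N$ is the unique subgroup of index $\gcd(N,n)$, so $Z \subseteq T^N$ if and only if $d \cdot \gcd(N, n) \mid n$, which is equivalent to $v_p(n) \geq v_p(d) + v_p(N)$ for every prime $p \mid d$. Using $v_p(d) \leq \log_2 Q$ and $v_p(N) \leq \O(N) \leq k$, it suffices to arrange $v_p(n) \geq k + \lceil \log_2 Q \rceil$ for every prime $p < Q$ and every $q < Q$. By lifting-the-exponent, for each odd prime $p < Q$ and each $q < Q$ with $p \nmid q$, writing $\ell_0 = \ell_0(p,q)$ for the multiplicative order of $q$ modulo $p$, one has $v_p(q^m - 1) = v_p(q^{\ell_0} - 1) + v_p(m/\ell_0)$ whenever $\ell_0 \mid m$; the case $p = 2$ requires the standard minor adjustment and is handled by forcing $2 \mid D$. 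Taking $D$ to be $\mathrm{lcm}_{p, q < Q}\bigl(\ell_0(p,q)\cdot p^{k + \lceil\log_2 Q\rceil + 2}\bigr)$ ensures all the desired divisibilities for every $m$ with $D \mid m$.

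The main obstacle is the structural step: uniformly identifying a cyclic torus containing $Z$ across all four families, especially for $\O^+_{2m}(q)$, where $|Z|$ depends on both the parity of $m$ and $q \bmod 4$, and for $\SU_m(q)$, where the Coxeter torus depends on the parity of $m$. Once those embeddings are in place, the reduction to finitely many $p$-adic valuation inequalities and their verification via lifting-the-exponent is routine.
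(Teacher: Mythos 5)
Your overall strategy is the same as the paper's: put the center inside a cyclic torus, observe that in a cyclic group of order $n$ the subgroup of $N$th powers has index $\gcd(N,n)$, and then build enough prime powers into $D$ (reading the hypothesis as $D\mid m$, which is also what the paper's proof and its application in Section 8 require) so that $v_p(n)\ge v_p(|\bfZ(G)|)+v_p(N)$ for the finitely many relevant primes $p$. For $\SL_m(q)$ and $\SU_m(q)$ your tori are exactly the paper's $T_1=C_{q^m-1}\cap G$ (the paper reaches $C_{q^m-1}$ via $\GL_{m/2}(q^2)\le\GL^\e_m(q)$, using $2\mid m$), and your valuation bookkeeping coincides with the paper's identity $\bigl((q^m-1)/(q^2-1)\bigr)_p=(m/2)_p$.

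There is, however, a genuine failure in the symplectic case. You take $T\cong C_{q^m+1}$ from $\Sp_2(q^m)\hookrightarrow\Sp_{2m}(q)$ and need $v_2(|T|)\ge 1+v_2(N)$; but $v_2(q^m+1)$ is bounded independently of $m$ (it is at most $v_2(q+1)$), and once $D\mid m$ forces $m$ even we have $q^m\equiv 1\pmod 4$, so $v_2(q^m+1)=1$. In a cyclic group of order $2\cdot(\mathrm{odd})$ the unique involution is not an $N$th power for any even $N$, so $-I_{2m}$ is not an $N$th power in your torus whenever $2\mid N$, and no choice of $D$ can repair this with that torus. The correct choice (the paper's) is the cyclic torus of order $q^m-1$ inside a Levi subgroup $\GL_m(q)\le\Sp_{2m}(q)$: it contains $-I_{2m}$ (the image of $-1\in\F_{q^m}^\times$) and $v_2(q^m-1)=v_2(q^2-1)+v_2(m)-1$ grows with $v_2(m)$. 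A second, minor, point concerns $\Omega^+_{2m}(q)$: the image of a Singer torus under $g\mapsto\diag\bigl(g,(g^{T})^{-1}\bigr)$ lies in $\SO^+_{2m}(q)$ but not in $\Omega^+_{2m}(q)$ (its generator has nonsquare spinor norm), so you must pass to $T\cap\Omega^+_{2m}(q)$, of index $2$; this subgroup still contains $-I$ whenever $-I\in\Omega^+_{2m}(q)$, and the lost factor of $2$ is absorbed into $D$. With these two corrections your lifting-the-exponent argument does close, and it is then essentially identical to the paper's proof.
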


\pf
We define $D = 2(Q!)^{k+1}$ in the case $G = \SL$ or $\SU$, and $D = 2^{k+1}$ in 
the case $G = \Sp$ or $\O^+$. It suffices to prove the claim for nontrivial 
$z \in \bfZ(G)$. 

Consider the case $G = \SL_m(q)$ or $\SU_m(q)$, and set $\e = +$, respectively $\e = -$. 
Since $2|m$, 
$$\GL^\e_m(q) > \GL_{m/2}(q^2) \geq T:= C_{q^{m}-1}.$$
Furthermore, $T_1:= T \cap G$ has index dividing $q-\e$ and 
contains $\bfZ(G)$; in particular, $z \in T_1$. 
%Now, for any prime $p$ dividing $|z|$, 
If $p$ is a prime dividing $|z|$, then  $p|(q-\e)$, whence $p|(q^2-1)$  and 
$p \leq q+1 \leq Q$. Thus 
$$\left( \frac{q^m-1}{q^2-1} \right)_p =  \left( \frac{m}{2} \right)_p \geq ((q-\e)_p)^{k+1},$$
so
$$\left( \frac{|T_1|}{|z|} \right)_p \geq ((q-\e)_p)^k \geq p^k.$$
Write $N = N_1N_2$, where all prime divisors of $N_1$ divide $|z|$ and $\gcd(N_2,|z|) = 1$. 
Since $\O(N) \leq k$, we have shown that $N_1$ divides $|T_1|/|z|$. As $T_1$ is cyclic,
we can find $t \in T_1$ such that all prime divisors of $|t|$ divide $|z|$ and 
$t^{N_1} = z$. Since $\gcd(N_2,|t|) = 1$, $t = h^{N_2}$ for some $h \in T_1$. It follows that
$z = h^N$, as desired. 

% In the case $G = 
If $G$ is 
$\Sp_{2m}(q)$ or $\O^+_{2m}(q)$, then $|z| = 2$ and $q$ is odd. 
We can use the same argument as above, taking $T_1$ to be a cyclic maximal torus of 
order $q^m-1$ in $\Sp_{2m}(q)$, respectively $\SO^+_{2m}(q)$. 
\hal

Let $q$ be a prime power, let $n \geq 13$ be an integer, and let $\e = \pm$. 
If $\e = +$, then we use $\ps(q^n-\e)$ to denote a primitive prime divisor $\p(q,n)$ if 
$2 \nmid n$, and $\p(q,n)\p(q,n/2)$ if $2|n$.
If $\e = -$, then we use $\ps(q^n-\e)$ to denote a primitive prime divisor $\p(q,2n)$. 
These primitive prime divisors exist by \cite{Zs}. 

\begin{lem}\label{pair}
Let $q$ be a prime power, let $n \geq m \geq 13$ be integers, and 
let $\alpha, \beta = \pm$.
Suppose that $\gcd(\ps(q^n-\alpha),\ps(q^m-\beta)) > 1$. Then either $(n,\alpha) = (m,\beta)$, or 
$\alpha = +$ and $n \in \{2m,4m\}$.
\end{lem}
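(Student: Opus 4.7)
\medskip

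\noindent
\textbf{Proof plan for Lemma \ref{pair}.} The plan is to reduce the claim to a comparison of possible multiplicative orders of $q$ modulo a shared prime divisor, and then to do a small case analysis. The crucial fact I will use throughout is that, by the defining property of Zsygmondy primes, for any prime $\ell = \p(q,k)$, the multiplicative order of $q$ modulo $\ell$ equals exactly $k$. Consequently, each prime divisor $\ell$ of $\ps(q^n - \alpha)$ has a prescribed order: if $\alpha = +$ and $n$ is odd, then $\mathrm{ord}_\ell(q) = n$; if $\alpha = +$ and $n$ is even, then $\mathrm{ord}_\ell(q) \in \{n, n/2\}$; and if $\alpha = -$, then $\mathrm{ord}_\ell(q) = 2n$.

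Now suppose $\ell$ is a common prime divisor of $\ps(q^n-\alpha)$ and $\ps(q^m - \beta)$. Then the possible ``order set'' attached to $(n,\alpha)$ must intersect the one attached to $(m,\beta)$. I will split into the four sign cases and use $n \geq m \geq 13$ to rule out degenerate collisions.

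\emph{Case $\alpha = \beta = -$:} the only available order on each side is $2n$ (resp.\ $2m$), forcing $n = m$, so $(n,\alpha) = (m,\beta)$.

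\emph{Case $\alpha = \beta = +$:} the intersection of $\{n, n/2\} \cap \{m, m/2\}$ (with the convention that the half appears only when the index is even) must be nonempty. Using $n \geq m$ and checking parities, the only possibilities are $n = m$, or $n$ even with $n/2 = m$, i.e.\ $n = 2m$; in the latter case, $\alpha = +$.

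\emph{Case $\alpha = -$, $\beta = +$:} the shared order must equal $2n$ and lie in $\{m, m/2\}$, forcing $m \geq 2n$, contradicting $n \geq m$.

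\emph{Case $\alpha = +$, $\beta = -$:} the shared order must equal $2m$ and lie in $\{n, n/2\}$. This yields $n = 2m$ or $n = 4m$, both with $\alpha = +$ and both compatible with $n \geq m$.

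Combining the cases gives exactly the stated alternatives: either $(n,\alpha) = (m,\beta)$, or $\alpha = +$ and $n \in \{2m, 4m\}$.

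I do not expect any real obstacle: the only subtlety is keeping track of the parity conventions in the definition of $\ps(q^n - +)$, and of the bookkeeping needed to ensure that, when $n$ and $m$ are both even with $\alpha = \beta = +$, possibilities like $n/2 = m/2$ collapse to $n = m$ rather than yielding a spurious new solution. The hypothesis $n, m \geq 13$ is used only implicitly: it guarantees the existence of the primitive prime divisors invoked via \cite{Zs}, so that the ``order'' assignment above is well-defined.
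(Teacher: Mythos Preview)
Your proof is correct and follows essentially the same approach as the paper's: both arguments hinge on the observation that any prime $\ell = \p(q,k)$ has multiplicative order exactly $k$ modulo $\ell$, so a common prime divisor forces a match among the associated order sets. The paper organizes the case split by first separating $n=m$ from $n>m$ and then eliminating $\alpha=-$, whereas you split directly on the four sign pairs $(\alpha,\beta)$; the resulting casework is the same in substance.
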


\pf
If $n = m$, then $\gcd(\ps(q^n-\alpha),\ps(q^m-\beta)) > 1$ certainly implies $\alpha = \beta$. Suppose 
$n > m$. If $\alpha = -$, then $\ps(q^n-\alpha) = \p(q,2n)$ 
does not divide $\prod^{2n-1}_{i=1}(q^i-1)$,  so it cannot be non-coprime to
$\ps(q^m-\beta)$. So $\alpha = +$, and $\gcd(\ps(q^n-1),\ps(q^m-\beta)) > 1$ implies
that $n = 2m$ or $n = 4m$.
\hal

Now we prove an analogue of \cite[Proposition 3.4.1]{LarShT} for groups of type $A$ and $C$:

\begin{prop}\label{C-bounds}
Fix $a\ge 1$, and let $n > 2a + 2$ be an integer. 
Let $s$ and $t$ be regular semisimple elements
of $G := \Sp_{2n}(q)$ belonging to maximal tori $T_{1}$ and $T_{2}$ of type
$T_{n-a,a}^{\e_1,\e_2}$ and $T_{a+1,n-a-1}^{\e_3,\e_4}$
respectively, where $\e_i =  \pm$ and
$\e_1\e_2 = -\e_3\e_4$.
The number of distinct irreducible characters of $G$
which vanish neither on $s$ nor on $t$ is bounded, independent of $n$, $q$,
and the choices of $s$ and $t$.
Likewise, the absolute values of these characters on $s$ and $t$ are bounded
independent of $n$, $q$, and the choices of $s$ and $t$.
\end{prop}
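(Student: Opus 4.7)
The plan is to adapt the strategy of \cite[Proposition 3.4.1]{LarShT} (which treats the analogous question for type $A$) to the symplectic case, using Deligne--Lusztig theory together with Zsygmondy-type primitive prime divisor arguments tailored to the sign constraint $\e_1\e_2 = -\e_3\e_4$.

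First, by the Deligne--Lusztig character formula, if $s$ is regular semisimple in a maximal torus $T \le G$ and $\chi(s) \neq 0$, then $\chi$ is (up to sign) an irreducible constituent of some $R_T^G(\theta)$ with $\theta \in \Irr(T)$. Applying this to $s \in T_1$ and $t \in T_2$, and using Lusztig's Jordan decomposition, each such $\chi$ corresponds to a pair $(s^*, \phi)$ with $s^*$ a semisimple element of the dual group $G^* \cong \SO_{2n+1}(q)$ (up to isogeny) and $\phi$ a unipotent character of $\bfC_{G^*}(s^*)$. The condition that $\chi$ occurs in $R_{T_i}^G(\theta_i)$ forces $s^*$ to be $G^*$-conjugate to an element of the dual torus $(T_i)^*$, which is of the same type $T_{n-a,a}^{\e_1,\e_2}$ (resp.\ $T_{a+1,n-a-1}^{\e_3,\e_4}$) inside $G^*$.

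The heart of the matter is to show that these two torus-type constraints on $s^*$ force $\bfC_{G^*}(s^*)$ to have semisimple rank bounded purely in terms of $a$. For this I would decompose the natural module of $G^*$ according to the eigenvalues of $s^*$ and analyze which components can have order divisible by the primitive prime divisors $\ps(q^{n-a}-\e_1)$ and $\ps(q^{n-a-1}-\e_3)$. Lemma \ref{pair} (or its obvious analogue) combined with the hypothesis $\e_1\e_2 = -\e_3\e_4$ and $n > 2a+2$ rules out accidental coincidences between $\ps(q^{n-a}-\e_1)$ and $\ps(q^{n-a-1}-\e_3)$, so the ``large'' Zsygmondy-torus piece of $s^*$ must fit into a single classical block of $\bfC_{G^*}(s^*)$ of corank at most $2a+2$. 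Consequently $\bfC_{G^*}(s^*)$ is a direct product of a classical group of the same type as $G^*$ but of ``co-rank'' bounded in terms of $a$ on one side, and a factor of absolutely bounded rank on the other. Crucially, the number of conjugacy classes of such $s^*$ (after accounting for the dependence on $n,q$ through a single ``Zsygmondy'' parameter) is bounded by a constant.

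With the centralizer structure in hand, the counting and bounding are standard: the number of unipotent characters of $\bfC_{G^*}(s^*)$ depends only on its Weyl group type, hence is bounded, which yields the first assertion. For the character value bound, combine Lusztig's formula $\chi(s) = \pm \phi(1)^{-1}\cdot(\text{character of } \bfC_{G^*}(s^*))$-type identities (cf.\ \cite[Prop.\ 5.1]{LarShT}) with the fact that values of unipotent characters at semisimple elements are controlled by class functions on the Weyl group of the centralizer, whose absolute values are bounded in terms of rank only. Since the relevant rank is bounded by a function of $a$, this gives the uniform bound claimed.

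The principal obstacle is the structural step: rigorously showing that the simultaneous occurrence of $s^*$ in conjugates of $(T_1)^*$ and $(T_2)^*$ of the prescribed orthogonal types, together with the sign condition, forces the centralizer $\bfC_{G^*}(s^*)$ to have a very restricted shape. Everything afterwards follows the template of \cite{LarShT}; replicating the argument in type $C$ (equivalently type $B$ for the dual) requires careful bookkeeping of the orthogonal torus types $T_{k,\ell}^{\e,\e'}$ and their dual identification, but introduces no essentially new idea.
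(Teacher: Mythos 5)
Your opening reduction is fine: for a regular semisimple $s$ lying in the unique maximal torus $T_1$, nonvanishing of $\chi(s)$ does force $\chi$ to occur in some $R_{T_1}^G(\theta)$, hence the semisimple label $s^*$ of the Lusztig series of $\chi$ must lie in a conjugate of the dual torus, and likewise for $t$. The gap is in your ``heart of the matter''. The hypothesis places no restriction on the orders of $s$ and $t$, hence none on the order of $s^*$: it is merely an element of (conjugates of) both dual tori. Primitive prime divisors such as $\ps(q^{n-a}-\e_1)$ divide the \emph{orders of the tori}, not the order of an arbitrary element of them, so Lemma \ref{pair} gives no constraint on $s^*$ and cannot produce the ``large Zsygmondy block'' of the centralizer you describe. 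Moreover, even if one granted your structural conclusion, it is too weak to finish: if $\bfC_{G^*}(s^*)$ retains a classical factor of unbounded rank, the number of its unipotent characters grows with that rank (it is not ``bounded because it depends only on the Weyl group type''), so the number of characters in $\cE(G,(s^*))$ is unbounded without a further vanishing argument; and the number of admissible classes $(s^*)$ with such a centralizer shape would in general grow with $q$, contrary to your claim of a bounded count.

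What is actually needed --- and what the paper proves --- is much stronger: the two dual tori in $\SO_{2n+1}(q)$ are \emph{weakly orthogonal}, i.e.\ the only semisimple element lying in conjugates of both is the identity, so every $\chi$ nonvanishing at both $s$ and $t$ is unipotent. This is exactly where the hypothesis $\e_1\e_2=-\e_3\e_4$ does its work, and your proposal never uses it in this decisive way (the statement would be problematic without it, since a common involution would bring in Lusztig series attached to centralizers with two unbounded-rank orthogonal factors). The paper's argument writes the eigenvalue multiset of a putative common element $g$ in the two ways imposed by the torus types, uses $n>2a+2$ to show every eigenvalue $u$ satisfies all four equations $u^{q^{n-a}-\e_1}=u^{q^{a}-\e_2}=u^{q^{n-a-1}-\e_3}=u^{q^{a+1}-\e_4}=1$, combines these with the sign condition to force $u^2=1$, and finally excludes $g=\diag(-1,\ldots,-1,1)$ because its $(-1)$-eigenspace would have to be a quadratic space simultaneously of type $\e_1\e_2$ and of type $\e_3\e_4$. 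Once weak orthogonality is established, the reduction to unipotent characters and the bounded count and bounded values follow the template of \cite[Prop.~3.4.1]{LarShT} together with Lusztig's description \cite{Lusztig} of unipotent character values in types $B_n$ and $C_n$ --- the part of your outline that is unproblematic. The missing ingredient, therefore, is the weak-orthogonality (spectrum) argument; the primitive-prime-divisor route cannot substitute for it.
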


\pf
(i) First we show that the maximal tori $T_1$ and $T_2$ 
are {\it weakly orthogonal} in the sense of \cite[Definition 2.2.1]{LarShT} whenever 
$\e_1\e_2 = -\e_3\e_4$. We follow the proof of 
\cite[Proposition 2.6.1]{LarShT}. The dual group $G^{*}$ is 
$\SO(V) \cong \SO_{2n+1}(q)$, where
$V = \F_{q}^{2n+1}$ is endowed with a suitable quadratic form $Q$. Consider the tori dual to
$T_1$ and $T_2$, and assume $g$ is an element
belonging to both of them. We need to show that $g =1$.
We consider the spectrum $S$ of the semisimple element $g$ on $V$ as a
multiset. Then $S$ can be represented
as the joins of multisets $X \sqcup Y \sqcup \{1\}$ and $Z \sqcup T \sqcup \{1\}$, where
$$\begin{array}{l}
  X := \{x,x^{q}, \ldots,x^{q^{n-a-1}},x^{-1},x^{-q}, \ldots,
             x^{-q^{n-a-1}}\},\\
  Y := \{y,y^{q}, \ldots,y^{q^{a-1}},y^{-1},y^{-q},
             \ldots, y^{-q^{a-1}}\},\\
  Z := \{z,z^{q}, \ldots,z^{q^{n-a-2}},z^{-1},z^{-q}, \ldots,
             z^{-q^{a}}\},\\
  T := \{t,t^{q}, \ldots,t^{q^{a}},t^{-1},t^{-q},
             \ldots, t^{-q^{a}}\},\end{array}$$
for some $x,y,z,t \in \bar{\F}_{q}^{\times}$. Furthermore,
$$x^{q^{n-a}-\e_1} = y^{q^{a}-\e_2} = z^{q^{n-a-1}-\e_3} = t^{q^{a+1}-\e_4} = 1.$$
Let $A$ be a multiset of elements of $\bar{\F}_{q}$, where $1 \in A$, 
the multiplicity of each element of $A$ is $2n+1$, and with the property that
if $u \in A$ then $u^{q},u^{-1} \in A$. We claim that
if $|A \cap S| > 1$ then $A \supseteq S$. Indeed, since the multiplicity of every $u \in S$ is at most
$2n+1$, if $A \cap (X \sqcup \{1\}) > 1$ then $A \supseteq X$, and if
$|A \cap (X \sqcup \{1\})|, |A \cap (Y \sqcup \{1\})|  > 1$ then $A \supseteq S$; and similarly for
$Y$, $Z$, $T$. Now if $|A \cap S| > 1$ but $A \not\supseteq S$,
then $S = X \sqcup Y \sqcup \{1\}$ implies that 
$|A \cap S| \in \{2a+1, 2(n-a)+1\}$. But $S = Z \sqcup T \sqcup \{1\}$ also, so 
$|A \cap S| \in \{2a+3,2(n-a)-1\}$, which is a contradiction as
$n \geq 2a+3$.

Applying the claim to the multiset $A$ consisting of those $u \in \bar{\F}_{q}$ such that 
$u^{q^{n-a}-\e_1} = 1$, each with multiplicity $2n+1$, and
noting that $A \supseteq X \sqcup \{1\}$, we deduce that $u^{q^{n-a}-\e_1} = 1$ for
all $u \in S$. Arguing similarly, we obtain 
$$u^{q^{n-a}-\e_1} = u^{q^{a}-\e_2} = u^{q^{n-a-1}-\e_3} = u^{q^{a+1}-\e_4} = 1$$
for all $u \in S$.

Consider $u \in S$. 
Suppose for instance that $\e_3  \neq \e_1$.  In particular,
$$u^{q^{n-a-1}+\e_1} = u^{q^{n-a}-\e_1} = 1,$$
whence $u^{q+1} = 1$. The condition $\e_1\e_2 = -\e_3\e_4$ now implies
that $\e_2 = \e_4$, so $|u|$ divides $\gcd(q^{a+1}-\e_2,q^a-\e_2)|(q-1)$. It follows
that $u^2 = 1$ for all $u \in S$. The same argument applies to the case $\e_3 = \e_1$.
We have shown that $u^2 = 1$ for all $u \in S$. Now if $1$ has multiplicity at least $2$ in 
$S$, then applying the claim to the multiset $A'$ consisting only of $1$ with multiplicity 
$2n+1$, we see that $g = 1_V$ as stated. It remains to consider the case
$g = \diag(-1,-1, \ldots,-1,1)$. Now $\Ker(g+1_V)$ is a quadratic subspace of $V$
of type $\e_1\e_2$ and also of type $\e_3\e_4$, a contradiction.

\smallskip
(ii) Now we proceed exactly as in the proof of \cite[Proposition 3.4.1]{LarShT}, using 
the main result of \cite{Lusztig} which holds for both types $B_n$ and $C_n$. Also note
that the proof of \cite[Proposition 3.4.1]{LarShT} uses only the weak orthogonality of
the two tori  $T_1$ and $T_2$ but not the signs $\e_i$ in their definitions. 
\hal

\begin{prop}\label{A-bounds}
Fix $a\ge 1$, $\e = \pm$, and let $n$ be an integer greater than $2a+2$.
Let $s$ and $t$ be regular semisimple elements
of $G := \SL^\e_{n}(q)$ belonging to maximal tori $T_{1}$ and $T_{2}$ of type
$T_{n-a,a}$ and $T_{a+1,n-a-1}$.
The number of distinct irreducible characters of $G$
which vanish neither on $s$ nor on $t$ is bounded, independent of $n$, $q$,
and the choices of $s$ and $t$.
Likewise, the absolute values of these characters on $s$ and $t$ are bounded
independent of $n$, $q$, and the choices of $s$ and $t$.
\end{prop}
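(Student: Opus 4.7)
The plan is to mimic the two-step strategy of Proposition \ref{C-bounds}: first establish that the pair of maximal tori in $G^\ast \cong \PGL^\e_n(q)$ dual to $T_1, T_2$ is \emph{weakly orthogonal} in the sense of \cite[Definition 2.2.1]{LarShT}, then invoke the character-theoretic argument used in the proof of \cite[Proposition 3.4.1]{LarShT}, which rests on \cite{Lusztig} and is valid in all classical types including type $A$. The main work, as in Proposition \ref{C-bounds}, is the weak orthogonality step; the subsequent bounding of character numbers and values is purely formal once orthogonality is in hand.

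For weak orthogonality, I would take an element $g \in \GL^\e_n(q)$ whose image in $\PGL^\e_n(q)$ is conjugate into each of the two dual tori, and study its spectrum $S$ on $\bar\F_q^n$. By the structure of maximal tori in $\GL^\e_n$, the multiset $S$ admits two decompositions
\[
S \;=\; X \sqcup Y \;=\; Z \sqcup W, \qquad (|X|,|Y|)=(n-a,a),\quad (|Z|,|W|)=(a+1,n-a-1),
\]
where each of $X,Y,Z,W$ is a single Frobenius orbit under $\phi_\e : u \mapsto u^{\e q}$, thickened to the prescribed size with uniform multiplicity; explicitly, if $X$ has support an orbit of length $d_X$, then each element of $\mathrm{supp}(X)$ appears in $X$ with multiplicity $(n-a)/d_X$, and similarly for the others.

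I would then run a case analysis on the supports. Case (a): within one decomposition two supports coincide, say $\mathrm{supp}(X)=\mathrm{supp}(Y)$. Then all of $S$ is supported on a single $\phi_\e$-orbit with uniform multiplicity, and imposing the same coincidence forced by the second decomposition (or matching against $Z,W$) gives that the common orbit length divides $\gcd(a,a+1)=1$, so $g$ is a scalar. Case (b): both decompositions have two distinct supports; then $\{\mathrm{supp}(X),\mathrm{supp}(Y)\}=\{\mathrm{supp}(Z),\mathrm{supp}(W)\}$, and matching the uniform multiplicities on each support yields either $(n-a)/d=(a+1)/d$, so $n=2a+1$, or $(n-a)/d=(n-a-1)/d$, an outright contradiction; both possibilities are excluded by the hypothesis $n>2a+2$. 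Hence $g$ is scalar, its image in $\PGL^\e_n(q)$ is trivial, and weak orthogonality follows.

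The main obstacle I expect is handling the case $\e=-$ cleanly: there $\phi_\e$ couples the standard Frobenius with inversion, and one must verify that the orbit-length arithmetic for blocks of $\GU_k$-Coxeter tori yields exactly the same dichotomy, with $n>2a+2$ remaining the tight cutoff. Once weak orthogonality is proven, the boundedness of the number of characters nonvanishing on both $s$ and $t$ and of their absolute values on $s$ and $t$ follow verbatim from the argument of \cite[Proposition 3.4.1]{LarShT}, the bounds depending only on $a$.
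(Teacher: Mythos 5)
Your weak orthogonality step is correct and is essentially the paper's argument in different packaging. The paper works directly with the eigenvalue multiset $S$ and a covering claim: for any multiset $A$ closed under $u\mapsto u^{q\e}$ with all multiplicities $n$, either $A\cap S=\emptyset$ or $A\supseteq S$, because otherwise $|A\cap S|$ would lie in both $\{a,n-a\}$ and $\{a+1,n-a-1\}$, impossible for $n\ge 2a+3$; applying this to the roots of $u^{(q\e)^{n-a}-1}=1$ and $u^{(q\e)^{n-a-1}-1}=1$ forces $u^{q\e-1}=1$ for all $u\in S$ and then $S=\{x\}$ with multiplicity $n$. Your version instead decomposes $S$ into Frobenius orbits with uniform multiplicities and matches supports and multiplicities across the two decompositions; the dichotomy $n-a=a+1$ versus $n-a=n-a-1$ (and the $\gcd(a,a+1)=1$ argument when two supports coincide) is sound, works equally for $\e=-$ since only the abstract map $u\mapsto u^{q\e}$ is used, and the cutoff $n>2a+2$ plays the same role. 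Either route gives that $g$ is scalar, hence weak orthogonality.

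Where your proposal goes astray is the second step. The argument of \cite[Proposition 3.4.1]{LarSh} (the one invoked in Proposition \ref{C-bounds}) rests on Lusztig's paper \cite{Lusztig} on unipotent characters of the \emph{symplectic and odd orthogonal} groups; it is not "valid in all classical types including type $A$" as stated, so "follows verbatim" is not justified. What is actually needed, and what the paper does, is the type-$A$ analogue: weak orthogonality together with \cite[Proposition 2.2.2]{LarShT} reduces to unipotent characters $\chi_{\uni,\alpha}$, $\alpha\vdash n$, whose values at regular semisimple elements of $T_{n-a,a}$ and $T_{a+1,n-a-1}$ coincide (up to sign) with values of $\chi_\alpha\in\Irr(\SSS_n)$ at permutations of cycle types $(n-a,a)$ and $(n-a-1,a+1)$; then the Murnaghan--Nakayama-type count of \cite[Corollary 3.1.3]{LarShT}, as in \cite[Proposition 3.1.5]{LarShT}, gives at most $4a+6$ such $\alpha$ with $|\chi_\alpha|\le 4$ at these elements. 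This is a repairable citation/justification gap rather than a conceptual one, but as written the key bound in your step 2 is not supplied by the result you invoke.
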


\pf
(i) Again, we show that the maximal tori $T_1$ and $T_2$ are weakly orthogonal. Here, the dual group $G^{*}$ is $\PGL^\e(V) \cong \PGL^\e_{n}(q)$, where
$V = \F_q^n$ for $\e = +$ and $V = \F_{q^2}^n$ for $\e = -$. Consider the 
complete inverse images $T_{n-a,a}$ and $T_{n-a-1,a+1}$ of the tori dual to
$T_1$ and $T_2$ in $H := \GL^\e(V)$, and assume $g$ is an element
belonging to both of them. We need to show that $g \in \bfZ(H)$.
The multiset $S$ of eigenvalues of the semisimple element $g$ on $V$ can be represented
as the joins of multisets $X \sqcup Y \sqcup \{1\}$ and $Z \sqcup T \sqcup \{1\}$, where
$$\begin{array}{l}
  X := \{x,x^{q\e}, \ldots,x^{(q\e)^{n-a-1}}\},~
  Y := \{y,y^{q\e}, \ldots,y^{(q\e)^{a-1}}\},\\
  Z := \{z,z^{q\e}, \ldots,z^{(q\e)^{n-a-2}}\},~
  T := \{t,t^{q\e}, \ldots,t^{(q\e)^{a}}\},\end{array}$$
for some $x,y,z,t \in \bar{\F}_{q}^{\times}$; furthermore,
$$x^{(q\e)^{n-a}-1} = y^{(q\e)^{a}-1} = z^{(q\e)^{n-a-1}-1} = t^{(q\e)^{a+1}-1} = 1.$$
Let $A$ be a multiset of elements of $\bar{\F}_{q}$, where 
the multiplicity of each element of $A$ is $n$, and with the property that
if $u \in A$ then $u^{q\e} \in A$. We claim that
if $A \cap S \neq \emptyset$ then $A \supseteq S$. Indeed, since the multiplicity of every $u \in S$ is at most $n$, if $A \cap X \neq \emptyset$ then $A \supseteq X$, and if
$A \cap X, A \cap Y \neq \emptyset$ then $A \supseteq S$; and similarly for
$Y$, $Z$, $T$. Now if $A \cap S \neq \emptyset$ but $A \not\supseteq S$,
then $S = X \sqcup Y$ implies that 
$|A \cap S| \in \{a, n-a\}$. But $S = Z \sqcup T$ as well, so 
$|A \cap S| \in \{a+1,n-a-1\}$, which is a contradiction as
$n \geq 2a+3$.

Applying the claim to the multiset $A$ consisting of those $u \in \bar{\F}_{q}$ such that 
$u^{(q\e)^{n-a}-1} = 1$, each with multiplicity $n$, and
noting that $A \supseteq X$, we see that $u^{(q\e)^{n-a}-1} = 1$ for
all $u \in S$. Arguing similarly, we see that
$u^{(q\e)^{n-a-1}-1} = 1$, so $u^{q\e-1} = 1$ for all $u \in S$. Now applying the claim to the multiset $A'$ consisting of only $x$ but with multiplicity $n$, and
noting that $A \supseteq X$, we conclude that $A =S$ and $g = x \cdot 1_V$, as stated.

\smallskip
(ii) Now we proceed as in the proof of \cite[Proposition 3.1.5]{LarShT}. Assume that
$\chi \in \Irr(G)$ and $\chi(s)\chi(t) \neq 0$. By (i) and \cite[Proposition 2.2.2]{LarShT}, 
$\chi = \chi_{\uni,\alpha}$ is a unipotent character of $G$ labeled by a partition $\alpha \vdash n$.
If $\chi_\alpha \in \Irr(\SSS_n)$ corresponds to $\alpha$, then 
$$\chi_\alpha(s_1) = \chi(s) \neq 0,~~\chi_\alpha(t_1) = \chi(t) \neq 0,$$
where $s_1 \in \SSS_n$ has cycle type $(n-a,a)$ and $t_1 \in \SSS_n$ has cycle type
$(n-a-1,a+1)$. Arguing as in the proof of \cite[Corollary 3.1.3]{LarShT}, one can show 
that there are at most $4a+6$ possibilities for $\alpha$, and 
$|\chi_\alpha(s_1)|$, $|\chi_\alpha(t_1)| \leq 4$.
\hal

\begin{prop}\label{tori}
For every positive integer $k$, there are positive integers $A = A(k)$, $B_1 = B_1(k)$, and 
$B_2= B_2(k)$, each depending on $k$, with the following property. For every $n \geq A$ and 
for every prime power $q$, a group 
$G \in \{ \SL_n(q), \SU_n(q), \Sp_n(q), \Spin^\pm_n(q)\}$ contains $k+1$ pairs $(s_i,t_i)$ of regular semisimple elements, $1 \leq i \leq k+1$, such that: 
\begin{enumerate}[\rm(a)]
\item If $i \neq j$, then $\gcd(|s_i| \cdot |t_i|,|s_j| \cdot |t_j|) = 1$;

\item For each $i$, there are at most $B_1$ irreducible characters of $G$ that vanish neither on 
$s_i$ nor on $t_i$. The absolute values of these characters at $s_i$ and $t_i$ are 
at most $B_2$. 
\end{enumerate} 
\end{prop}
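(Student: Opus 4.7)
The plan is to construct the $k+1$ pairs by varying a parameter $a$ over carefully chosen integers $a_1 < a_2 < \cdots < a_{k+1}$, following the template of Propositions \ref{A-bounds} and \ref{C-bounds}. First I would choose $13 \leq a_1 < a_2 < \cdots < a_{k+1}$ such that the multiset of integers
$$M := \{a_i,\, a_i+1,\, n-a_i,\, n-a_i-1 \mid 1 \leq i \leq k+1\}$$
contains no two distinct members $m, m'$ with $m \in \{m', 2m', 4m'\}$; since only finitely many gap-patterns are forbidden and we have $2(k+1)$ degrees of freedom on the ``$a_i$-side'' of $M$, a geometrically spaced sequence (say $a_{i+1} > 5a_i$) suffices once $a_{k+1}$ exceeds a constant depending on $k$. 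Set $A(k) := 2a_{k+1}+15$, which ensures $n-a_i-1 \geq 13$ and $n > 2a_{k+1}+2$, so that Propositions \ref{A-bounds} and \ref{C-bounds} apply to each pair.

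Next, for each $i$ I would pick $s_i$ in a maximal torus of $G$ of type $T_{n-a_i,a_i}$ (with signs $\e_1^{(i)}, \e_2^{(i)}$) and $t_i$ in a maximal torus of type $T_{a_i+1,n-a_i-1}$ (with signs $\e_3^{(i)}, \e_4^{(i)}$), where in the symplectic and orthogonal cases the signs are chosen so that $\e_1^{(i)}\e_2^{(i)} = -\e_3^{(i)}\e_4^{(i)}$, satisfying the hypothesis of the weak-orthogonality argument in the proof of Proposition \ref{C-bounds}(i). The element $s_i$ is then chosen to have order $\ps(q^{n-a_i}-\e_1^{(i)}) \cdot \ps(q^{a_i}-\e_2^{(i)})$ inside the cyclic factors of the torus, and $t_i$ analogously. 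Such $s_i, t_i$ are automatically regular semisimple, because their orders do not divide the order of any proper Levi overgroup of the torus; moreover, by Lemma \ref{pair} together with our choice of $a_i$, the orders $|s_i| \cdot |t_i|$ are pairwise coprime across distinct $i$, yielding (a).

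For condition (b) in types $A$ and $C$ I would invoke Propositions \ref{A-bounds} and \ref{C-bounds} for each pair, obtaining bounds depending only on $a_i$; taking the maximum over $i$ gives $B_1(k), B_2(k)$. For $G = \Spin^\pm_n(q)$, the proof of Proposition \ref{C-bounds} has to be adapted: weak orthogonality of $T_{n-a,a}^{\e_1,\e_2}$ and $T_{a+1,n-a-1}^{\e_3,\e_4}$ in a spin group should follow by running the same multiset combinatorics on the natural module of the dual group (of type $B$ or $D$), with the sign condition $\e_1\e_2 = -\e_3\e_4$ again forcing any common element of the dual tori to act trivially. Once weak orthogonality is in hand, the number of non-vanishing characters, and the absolute values of such characters at $s_i, t_i$, are bounded by \cite{Lusztig} together with \cite[Proposition 2.2.2]{LarShT}, exactly as in the symplectic case.

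The main obstacle is the orthogonal case of (b), where Propositions \ref{A-bounds} and \ref{C-bounds} do not literally apply; however, the multiset argument of Proposition \ref{C-bounds}(i) is essentially type-insensitive and should go through with only bookkeeping changes for the sign conventions on $\Spin^\pm$. A secondary technicality is the simultaneous verification, across all $k+1$ indices and the two tori per index, that the chosen $a_i$ avoid every pairwise coincidence forbidden by Lemma \ref{pair}; this is an elementary combinatorial check met by the geometric spacing above.
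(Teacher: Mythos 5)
The type $A$ and $C$ parts of your plan do match the paper (same tori $T_{n-a_i,a_i}$, $T_{a_i+1,n-a_i-1}$ with spaced parameters, Propositions \ref{A-bounds} and \ref{C-bounds} for (b), Lemma \ref{pair} for (a)), but the spin case of (b), which you yourself flag as the main obstacle, is a genuine gap, and your proposed fix would not work as stated. In $\Omega^{\e}_{2n}(q)$ every maximal torus of type $T^{\delta_1,\delta_2}_{b,c}$ satisfies $\delta_1\delta_2=\e$, so for $\Spin^{\pm}_{2n}(q)$ you simply cannot choose the two tori with $\e_1\e_2=-\e_3\e_4$: both sign products are forced to equal $\e$, and the endgame of the proof of Proposition \ref{C-bounds} (excluding $g=\diag(-1,\ldots,-1,1)$ because $\Ker(g+1_V)$ would have to carry quadratic forms of two different types) is exactly the step that uses the opposite-sign hypothesis. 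For $\Spin_{2n+1}(q)$ the dual group is of type $C_n$, so the multiset argument lives in a $2n$-dimensional symplectic space with no distinguished eigenvalue $1$ and no form types at all, and again the final step of \ref{C-bounds} does not transfer verbatim; the torus pair that actually works there has equal sign products ($T^{+,+}_{n-a,a}$ and $T^{-,-}_{a+1,n-a-1}$). The paper does not adapt \ref{C-bounds} to these groups: it invokes the existing results \cite[Propositions 3.3.1 and 3.4.1]{LarShT} for types $D$ and $B$ (indeed \ref{C-bounds} is introduced as the type $A$/$C$ analogue of the latter), with tori $T^{\e,+}_{n-a_i,a_i}$, $T^{-\e,-}_{n-a_i-1,a_i+1}$ in $\Spin^{\e}_{2n}(q)$ and $T^{+,+}$, $T^{-,-}$ in $\Spin_{2n+1}(q)$. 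So the gap is closable, but by citation, not by the sign-condition transfer you describe.

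A secondary problem is quantitative: your condition on the multiset $M$ depends on $n$, and $A(k)=2a_{k+1}+15$ is too small to guarantee it for all $n\ge A$. For example, if $n=a_i+2a_j$ with $a_j$ odd, then $\ps(q^{\,n-a_i}-1)=\p(q,2a_j)\,\p(q,a_j)$ shares the prime $\p(q,a_j)$ with $\ps(q^{a_j}-1)$, and such $n$ can lie in the range $[2a_{k+1}+15,\,3a_{k+1}]$ under your geometric spacing; similar coincidences $n-a_i-1=4(a_j+1)$ are also possible. To rule these out uniformly in $n$ you need roughly $n>4(a_{k+1}+1)+a_{k+1}$, which is what the paper arranges by taking $a_i=2i+11$ and $n\ge 10k+65$, so that $n-a_i-1>\max(n-a_j,\,4(a_j+1))$ for all $i<j$. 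This part is fixable by enlarging $A(k)$, but as written condition (a) is not secured.
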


\pf
(i) First we consider the case $G = \Spin^\e_{2n}(q)$ with $n \geq 10k+65$. For 
{\it odd} $a_i = 2i+11$, $1 \leq i \leq k+1$, there 
are regular semisimple elements $s_i$, $t_i$ of $G $ belonging to maximal 
tori $T^1_{i}$ and $T^2_{i}$ of type
$T_{n-a_i,a_i}^{\e,+}$ (of order $(q^{n-a_i}-\e)(q^{a_i}-1)$) and 
$T_{n-a_i-1,a_i+1}^{-\e,-}$ (of order $(q^{n-a_i-1}+\e)(q^{a_i+1}+1)$) 
respectively. In fact, we can choose 
$$|s_i| = \ps(q^{n-a_i}-\e) \cdot \ps(q^{a_i}-1),~~|t_i| = \ps(q^{n-a_i-1}+\e) \cdot \ps(q^{a_i+1}+1).$$
By \cite[Proposition 3.3.1]{LarShT} the number of distinct irreducible characters of $G$
that vanish neither on $s_i$ nor on $t_i$ is bounded by some integer $B_1(k)$, dependent on $k$ but independent of $n$, $q$. Likewise, the absolute values of these characters on $s_i$ and $t_i$ are bounded by some integer $B_2(k)$, dependent on $k$ but independent of $n$, $q$.

It remains to check the condition (a). Let $1 \leq i < j \leq k+1$. By the choice of 
$n$, $n/5 \geq a_j+1 \geq a_i+3 \geq 16$. It follows that
$$2(n-a_j-1)   > n-a_i > n-a_i-1 > \max(n-a_j,4(a_j+1)).$$ 
Hence, by Lemma \ref{pair}, each of $\ps(q^{n-a_i}-\e)$ and $\ps(q^{n-a_i-1}+\e)$
is coprime to $\ps(q^{n-a_j}-\e)\cdot\ps(q^{n-a_j-1}+\e)\cdot\ps(q^{a_j}-1)\cdot\ps(q^{a_j+1}+1)$.
Similarly, as $n-a_j-1 > 4(a_i+1)$, each of $\ps(q^{a_i}-1)$ and $\ps(q^{a_i+1}+1)$
is coprime to $\ps(q^{n-a_j}-\e)\cdot\ps(q^{n-a_j-1}+\e)$.
Finally, since $a_j$ and $a_i$ are distinct odd integers, Lemma \ref{pair} also yields that 
$\ps(q^{a_i}-1)\cdot\ps(q^{a_i+1}+1)$ is coprime to $\ps(q^{a_j}-1)\cdot \ps(q^{a_j+1}+1),$
and we are done.

\smallskip
(ii) Suppose $G = \Spin_{2n+1}(q)$ with $n \geq 10k+65$. For 
{\it odd} $a_i = 2i+11$, $1 \leq i \leq k+1$, there 
are regular semisimple elements $s_i$, $t_i$ of $G $ belonging to 
maximal tori $T^1_{i}$ and $T^2_{i}$ of type
$T_{n-a_i,a_i}^{+,+}$ (of order $(q^{n-a_i}-1)(q^{a_i}-1)$) and 
$T_{n-a_i-1,a_i+1}^{-,-}$ (of order $(q^{n-a_i-1}+1)(q^{a_i+1}+1)$) 
respectively. In fact, we can choose 
$$|s_i| = \ps(q^{n-a_i}-1) \cdot \ps(q^{a_i}-1),~~|t_i| = \ps(q^{n-a_i-1}+1) \cdot \ps(q^{a_i+1}+1).$$
By \cite[Proposition 3.4.1]{LarShT} the number of distinct irreducible characters of $G$
that vanish neither on $s_i$ nor on $t_i$ is bounded by some integer $B_1(k)$, dependent on $k$ but independent of $n$, $q$. Likewise, the absolute values of these characters on $s_i$ and $t_i$ are bounded by some integer $B_2(k)$, dependent on $k$ but independent of $n$, $q$.
Finally, condition (a) is satisfied as shown in (i).

\smallskip
(iii) Consider the case $G = \Sp_{2n}(q)$ with $n \geq 10k+65$. For 
{\it odd} $a_i = 2i+11$, $1 \leq i \leq k+1$, there 
are regular semisimple elements $s_i$, $t_i$ of $G $ belonging to 
maximal tori $T^1_{i}$ and $T^2_{i}$ of type
$T_{n-a_i,a_i}^{+,+}$ (of order $(q^{n-a_i}-1)(q^{a_i}-1)$) and 
$T_{n-a_i-1,a_i+1}^{+,-}$ (of order $(q^{n-a_i-1}-1)(q^{a_i+1}+1)$) 
respectively. In fact, we can choose 
$$|s_i| = \ps(q^{n-a_i}-1) \cdot \ps(q^{a_i}-1),~~|t_i| = \ps(q^{n-a_i-1}-1) \cdot \ps(q^{a_i+1}+1).$$
Now we can finish as in (ii) but using Proposition \ref{C-bounds}.

\smallskip
(iv) Consider the case $G = \SL^\e_n(q)$ with $n \geq 4k+17$. For 
$a_i = 2i+5$, $1 \leq i \leq k+1$, there 
are regular semisimple elements $s_i$, $t_i$ of $G $ belonging to 
maximal tori $T^1_{i}$ and $T^2_{i}$ of type
$T_{n-a_i,a_i}$ (of order $(q^{n-a_i}-\e^{n-a_i})(q^{a_i}-\e^{a_i})$) and 
$T_{n-a_i-1,a_i+1}$ (of order $(q^{n-a_i-1}-\e^{n-a_i-1})(q^{a_i+1}-\e^{a_i+1})$) 
respectively. Next, observe that for every $m \geq 7$, there is a prime
$\p(-q,m)$ that divides $(-q)^m-1$ but does not divide $\prod^{m-1}_{i=1}((-q)^i-1$;
namely, we can take $\p(-q,m) = \p(q,2m)$ if $2 \nmid m$, $\p(-q,m) = \p(q,m)$ if 
$4|m$, and $\p(-q,m) = \p(q,m/2)$ if $4|(m-2)$. In particular, 
if $m \geq m' \geq 7$ and $\p(q\e,m) = \p(q\e,m')$, then $m = m'$. Now we can choose 
$$|s_i| = \p(q\e,n-a_i) \cdot \p(q\e,a_i),~~
    |t_i| = \p(q\e,n-a_i-1) \cdot \p(q\e,a_i+1).$$
Condition (b) follows from Proposition \ref{A-bounds}. By the choice of
$n$, $n/2 \geq a_j \geq a_i+2 \geq 9$ if $1 \leq i < j \leq k+1$. It follows that
$$n-a_i-1 > n-a_j > n-a_j-1 > a_j+1 > a_j > a_i+1,$$
so condition (a) is satisfied.
\hal

\noindent
{\bf Proof of Theorems \ref{main3} and \ref{main4}.}
Let $k$ be a positive integer. For Theorem \ref{main3} we assume that $N$ is a
positive integer with $\pi(N) \leq k$. For Theorem \ref{main4} we assume that $N$ is 
a positive integer with $\O(N) \leq k$. By Proposition \ref{alt2}, it suffices to prove 
the two theorems for finite simple classical groups $S$ of sufficiently large rank (and defined 
over a sufficiently large field $\F_q$, in the case of Theorem \ref{main3}). So we assume 
that $S = G/Z$, where $Z := \bfZ(G)$ and $G = \Cl_n(q)$ with 
$\Cl \in \{ \SL, \SU, \Sp, \O^\e\}$ (and $\e = \pm$). Let $V := \F_q^n$ (if $\Cl \neq \SU$) and 
$V := \F_{q^2}^n$ (for $\Cl = \SU$) denote the natural $G$-module. 
Also set $\e = +$ if $\Cl = \SL$ and $\e = -$ when $\Cl = \SU$.

\smallskip
(i) Apply Proposition \ref{tori} to $G$ and consider $n \geq A$. Since $\pi(N) \leq k$, by 
\ref{tori}(a) there is some $i_0$ between 
$1$ and $k+1$ such that the orders of $s := s_{i_0}$ and $t := t_{i_0}$ are coprime to $N$.  
Define 
$$Q = Q(k) := (B_1B_2^2)^{481}.$$  

We claim that if $q \geq Q$, then every $g \in G \setminus Z$ belongs to $s^G \cdot t^G$, so 
it is a product of two $N$th powers; in particular, we are done with Theorem \ref{main3}. 
Indeed, since $g \notin Z$, its {\it support} $\supp(g)$, as defined in \cite[Definition 4.1.1]{LarShT}, is at least 1. 
It follows by \cite[Theorem 4.3.6]{LarShT} and the condition on $q$ that 
$$\frac{|\chi(g)|}{\chi(1)} < q^{-1/481} \leq \frac{1}{B_1B_2^2}$$
for every $1_G \neq \chi \in \Irr(G)$. Now condition \ref{tori}(b) implies that
$$\sum_{1_G \neq \chi \in \Irr(G)}\frac{|\chi(s)\chi(t)\chi(g)|}{\chi(1)} < \frac{B_1B_2^2}{B_1B_2^2} 
    = 1,$$
so $g \in s^G \cdot t^G$ as desired.    

\smallskip
(ii) Now we consider the case $2 \leq q < Q$ and $\O(N) \leq k$. Suppose that $g \in G$ satisfies 
$$\supp(g) \geq C = C(k) := (\log_2 Q)^2.$$
By \cite[Theorem 4.3.6]{LarShT}, 
$$\frac{|\chi(g)|}{\chi(1)} < q^{-\sqrt{\supp(g)}/481} \leq 2^{-(\log_2 Q)/481} = \frac{1}{B_1B_2^2}$$
for every $1_G \neq \chi \in \Irr(G)$. Hence, as in (i), $g \in s^G \cdot t^G$, so
$g$ is a product of two $N$th powers. 

\smallskip
(iii) It remains to consider the non-central $g \in G$ with $\supp(g) < C$. Recall the 
integer $D = D(k,Q)$ defined in the proof of Lemma \ref{center}, according to which 
\begin{equation}\label{div}
  8|D, ~~(q-\e)|D.
\end{equation}   
We also choose 
\begin{equation}\label{large}
  n \geq \max(A,2C+(9k+4)D). 
\end{equation}  
Since $\supp(g) < C \leq n/2$, by \cite[Proposition 4.1.2]{LarShT} we see that $g$ has 
a {\it primary eigenvalue} $\lambda$, where $\lambda^{q-\e} = 1$ in the case $\Cl = \SL^\e$ and 
$\lambda= \pm 1$ in the case $\Cl = \Sp, \O$. Moreover, arguing as in the proof of 
\cite[Lemma 6.3.4]{LarShT}, we get that $g$ fixes an (orthogonal if $\Cl \neq \SL$) decomposition
$$V = U \oplus W,$$
where $\dim U \geq n-2C$ and $U \supseteq \Ker (g-\lambda \cdot 1_V)$.

Now we consider a chain of (non-degenerate if $\Cl \neq \SL$) subspaces 
$$U_1 \subset U_2 \subset \ldots \subset U_{k+1} \subset U$$
with $\dim U_j = jD$, and moreover $U_j$ is of type $+$ if $\Cl = \O^\pm$ 
(this can be achieved since $\dim U \geq n-2C \geq (9k+4)D$ by (\ref{large})).
We also define 
$$W_j:= W \oplus (U_j^\perp \cap U),$$
so 
$$V = U_j \oplus W_j,~~d_j := \dim W_j = n-jD.$$
Setting $\cR_j := \cR(\Cl(W_j))$, the set of primes defined in Theorem \ref{prime1}, we claim that
\begin{equation}\label{2primes}
   \cR_i \cap \cR_j = \emptyset
\end{equation}
whenever $1 \leq i \neq j \leq k+1$. Assume the contrary: so $\ell \in \cR_i \cap \cR_j$ for some 
$i < j$. By the construction of $\cR_i$, 
$$\ell |(q^{2d_i}-1)(q^{2d_i-2}-1)(q^{2d_i-4}-1),$$
and similarly for $j$. Note that 
$$kD \geq d_i-d_j = (j-i)D \geq D \geq 8$$
by (\ref{div}).
It follows that $\ell |(q^e-1)$, where 
$$12 \leq 2d_i-4-2d_j \leq e \leq 2d_i-2d_j+4 \leq 2kD+4.$$
On the other hand, (\ref{large}) implies that 
$$(d_j-2)/4 \geq (n-(k+1)D-4)/4 > 2kD+4.$$
We have shown that some $\ell \in \cR(\Cl_{d_j}(q))$ divides $p^{ef}-1$ with 
$12 \leq e < (d_j-2)/4$ and $q = p^f$. This contradicts the construction of $\cR(\Cl_{d_j}(q))$ in 
Theorem \ref{prime1}, according to which $\ell = \p(p,af)$ for some 
$a \geq (d_j-1)f/4$.  

Since $\pi(N) \leq k$, (\ref{2primes}) now implies that 
there is some $i$ such that $N$ is not divisible by any prime in $\cR_i$.  
Hence, by Theorem \ref{prime1}, $H:=\Cl(W_i)$ admits two regular semisimple elements 
$s',t'$, whose orders are coprime to $N$, and such that 
$(s')^H \cdot (t')^H \supseteq H \setminus \bfZ(H)$. 

Next, $G$ contains a subgroup $\Cl(U_i) \times \Cl(W_i)$. Note that 
$g$ acts on $U_i$ as the scalar $\lambda$. Condition (\ref{div}) now implies that
$x= g|_{U_i} \in \bfZ(\Cl(U_i))$, whence $x = u^N$ for some $u \in \Cl(U_i)$ by Lemma 
\ref{center} (as $\O(N) \leq k$). 
Since $g$ fixes $W_i$, it follows that $g = xy$ with $y = g|_{W_i} \in H=\Cl(W_i)$.
Note that $y$ has $\lambda$ as 
an eigenvalue but does not act as the scalar $\lambda$. It  follows that 
$y \in H \setminus \bfZ(H) \subseteq (s')^H \cdot (t')^H$, so $y = v^Nw^N$ with $v,w \in H$. As 
$x = u^N$ with $u \in \Cl(U_i)$ centralizing $v \in H$, we conclude that $g = (uv)^Nw^N$, as 
desired. 
\hal

\end{document}